\begin{document}

\newtheorem{rtheorem}{Теорема}
\newtheorem{rcorollary}{Следствие}
\newtheorem{rproposition}{Предложение}[section]
\newtheorem{etheorem}{Theorem}
\newtheorem{ecorollary}{Corollary}
\newtheorem{eproposition}{Proposition}[section]

\theoremstyle{definition}
\newtheorem{rexample}{Пример}
\newtheorem{eexample}{Example}

\renewcommand{\thefootnote}{}

\title[On~the~residual nilpotence of~generalized free products]{On~the~residual nilpotence of~generalized~free~products~of~groups}

\author{E.~V.~Sokolov}
\address{Ivanovo State University, Russia}
\email{ev-sokolov@yandex.ru}

\begin{abstract}
Let $G$ be~the~generalized free product of~two groups with~an~amalgamated subgroup. We propose an~approach that allows one to~use results on~the~residual $p$\nobreakdash-fi\-nite\-ness of~$G$ for~proving that this generalized free product is~residually a~finite nilpotent group or~residually a~finite metanilpotent group. This approach can be~applied under~most of~the~conditions on~the~amalgamated subgroup that allow the~study of~residual $p$\nobreakdash-fi\-nite\-ness. Namely, we consider the~cases where the~amalgamated subgroup is~a)~periodic, b)~locally cyclic, c)~central in~one of~the~free factors, d)~normal in~both free factors, or~e)~is~a~retract of~one of~the~free factors. In~each of~these cases, we give certain necessary and~sufficient conditions for~$G$ to~be~residually a)~a~finite nilpotent group, b)~a~finite metanilpotent group.
\end{abstract}

\keywords{Residual nilpotence, residual finiteness, residual $p$\nobreakdash-fi\-nite\-ness, generalized free product}

\thanks{The~study was supported by~the~Russian Science Foundation grant No.~22-21-00166,\\ \url{https://rscf.ru/en/project/22-21-00166/}}

\maketitle\vspace{-12pt}

\section{Introduction}\label{es01}

Let $\mathcal{C}$ be~a~class of~groups. Following~\cite{Hall1954PLMS}, we say that a~group~$X$ is~\emph{residually a~$\mathcal{C}$\nobreakdash-group} if,~for~any element $x \in X \setminus \{1\}$, there exists a~homomorphism~$\sigma$ of~$X$ onto~a~group from~$\mathcal{C}$ (a~\emph{$\mathcal{C}$\nobreakdash-group}) such that $x\sigma \ne 1$. The~terms ``\emph{residual finiteness}'', ``\emph{residual nilpotence}'', ``\emph{residual solvability}'', and~``\emph{residual $p$\nobreakdash-fi\-nite\-ness}'' are~used if~$\mathcal{C}$ is~the~class of~all finite groups, all nilpotent groups, all solvable groups, and~finite $p$\nobreakdash-groups for~some prime~$p$, respectively. In~this article, we study the~residual $\mathcal{C}$\nobreakdash-ness of~a~generalized free product of~two groups provided $\mathcal{C}$ is~the~class of~finite nilpotent or~finite metanilpotent groups.

Recall that a~group is~said to~be~\emph{metanilpotent} if~it is~an~extension of~a~nilpotent group by~a~nilpotent group. The~definition of~the~construction of~the~generalized free product of~two groups with~an~amalgamated subgroup can be~found in~Section~\ref{es05}. It~should also be~noted that a~finitely generated group is~residually nilpotent if and~only~if~it is~residually a~finite nilpotent group (see Proposition~\ref{ep38} below).

\medskip

Let us preface the~description of~the~results of~this article with~a~brief survey of~known facts on~the~residual nilpotence of~generalized free products. Since, for~any choice of~prime number~$p$, every finite $p$\nobreakdash-group is~nilpotent, residual $p$\nobreakdash-fi\-nite\-ness is~a~special case of~residual nilpotence. Most of~the~known results on~the~residual nilpotence of~generalized free products actually give necessary and/or~sufficient conditions only for~the~residual $p$\nobreakdash-fi\-nite\-ness of~this construction. This is~explained by~the~fact that the~class of~finite $p$\nobreakdash-groups is~closed under~extensions. Thanks to~this property, to~prove the~residual $p$\nobreakdash-fi\-nite\-ness of~a~generalized free product, one can use very productive methods, which were originally proposed for~studying the~residual finiteness of~this construction (primarily we mean here the~so-called ``filtration approach'' by~G.~Baumslag~\cite{BaumslagG1963TAMS}).

A~criterion for~the~residual $p$\nobreakdash-fi\-nite\-ness of~a~generalized free product of~two finite groups is~given in~\cite{Higman1964JA}. In~the~case of~infinite free factors, residual $p$\nobreakdash-fi\-nite\-ness is~studied mainly under~various additional restrictions imposed on~the~amalgamated subgroup. For~example, in~\cite{Azarov1997SMJ, Azarov2006BIvSU, Azarov2017IVM}, a~number of~theorems are~proved that generalize the~criterion from~\cite{Higman1964JA} and~give conditions for~the~residual $p$\nobreakdash-fi\-nite\-ness of~a~generalized free product with~a~finite amalgamation. Much effort is~put into~studying the~residual $p$\nobreakdash-fi\-nite\-ness of~free products with~a~cyclic amalgamated subgroup~\cite{Gildenhuys1975JAMS, KimMcCarron1993JA, Doniz1996JA, KimTang1998JA, Azarov1998MN, WongTangGan2006RMJM, Azarov2016SMJ} and~with an~amalgamated subgroup that is~normal in~every free factor~\cite{Sokolov2005MN, KimLeeMcCarron2008KM, Tumanova2014MN, Azarov2015SMJ, Tumanova2015IVM, SokolovTumanova2020IVM}. A~free product is~also considered whose amalgamated subgroup lies in~the~center of~at~least one free factor~\cite{SokolovTumanova2023SMJ} or~is~a~retract of~this factor~\cite{BobrovskiiSokolov2010AC, SokolovTumanova2016SMJ}. In~particular, the~following criteria are~proved:

--\hspace{1ex}a~criterion for~the~residual $p$\nobreakdash-fi\-nite\-ness of~a~generalized free product of~two isomorphic groups~\cite[Corollary~3.5]{KimMcCarron1993JA};

--\hspace{1ex}a~criterion for~the~residual $p$\nobreakdash-fi\-nite\-ness of~a~generalized free product whose amalgamated subgroup is~a~retract of~each free factor~\cite[Corollary~1.2]{BobrovskiiSokolov2010AC};

--\hspace{1ex}criteria for~the~residual $p$\nobreakdash-fi\-nite\-ness of~a~generalized free product with~a~cyclic amalgamation of~a) two free groups~\cite[Theorem~1]{Azarov1998MN}, \cite[Theorem~3.5]{KimTang1998JA}; b) two nilpotent groups of~finite rank~\cite[Theorem~2]{Azarov2016SMJ};

--\hspace{1ex}criteria for~the~residual $p$\nobreakdash-fi\-nite\-ness of~a~generalized free product such that its amalgamated subgroup is normal in~every free factor and~at~least one of~the~following additional conditions holds: a)~each free factor is~virtually a~solvable group of~finite rank~\cite[Theorem~2]{Azarov2015SMJ}; b)~the~amalgamated subgroup is~finite~\cite[Theorem~3]{Tumanova2014MN}; c)~one of~the~free factors is~a~finitely generated nilpotent group, and~the~amalgamated subgroup lies in~its center~\cite[Theorem~4.9]{KimLeeMcCarron2008KM} or~is~cyclic~\cite[Theorem~5]{Tumanova2014MN}.

Recall that a~group is~said to~have \emph{finite rank~$r$} if~any of~its finitely generated subgroups can be~generated by~at~most $r$~elements.

\medskip

If~$\mathcal{C}$ is~a~class of~nilpotent groups that is~not closed under~taking extensions, there are~no~general methods for~studying the~residual $\mathcal{C}$\nobreakdash-ness of~generalized free products. As~a~consequence, very few results are~known in~this case. Only the~following is~proved:{\parfillskip=0pt\par}

--\hspace{1ex}a~criterion for~the~residual nilpotence of~a~generalized free product of~two finite groups~\cite[Theorem~6]{Varsos1996HJM} and~its generalization to~the~case of~a~free product of~two finitely generated nilpotent groups with~a~finite amalgamation~\cite[Theorem~4]{Ivanova2004PhD};

--\hspace{1ex}a~criterion for~the~residual nilpotence of~a~generalized free product of~two nilpotent groups with~an~amalgamated subgroup lying in~the~center of~each free factor~\cite[Theorem~1]{RozovSokolov2016BTSUMM};

--\hspace{1ex}three criteria for~the~residual nilpotence of~a~generalized free product of~two finitely generated nilpotent groups; their description is~given at~the~end of~the~next section~\cite[Theorems~5,~7 and~Corollary of~Theorem~6]{Ivanova2004PhD};

--\hspace{1ex}a~criterion for~a~generalized free product of~two nilpotent groups to~be~residually a~finite nilpotent group, which holds provided the~amalgamated subgroup is~central in~one and~normal in~the~other free factor~\cite[Theorem~2]{RozovSokolov2016BTSUMM};

--\hspace{1ex}several sufficient conditions for~a~generalized free product with~a~cyclic amalgamation to~be~residually a~tor\-sion-free nilpotent group; the~cases are~considered when the~free factors are~free groups~\cite{BaumslagGCleary2006JGT, BaumslagG2010IJM, BaumslagGMikhailov2014GGD, Labute2015Arxiv} or~one of~these factors is~a~free group and~the~other is~a~free abelian group~\cite{BaumslagG1962MZ}.

It is~proved also that, with~a~suitable choice of~a~prime~$p$, the~residual nilpotence is~equivalent to~the~residual $p$\nobreakdash-fi\-nite\-ness for~the~following constructions:

--\hspace{1ex}a~generalized free product of~two free groups with~a~cyclic amalgamation~\cite[Theorem~3]{Azarov1998MN};

--\hspace{1ex}a~generalized free product of~two finitely generated tor\-sion-free nilpotent groups whose amalgamated subgroup is~cyclic or~lies in~the~center of~each free factor~\cite[Corollary of~Theorem~6]{Ivanova2004PhD}.

\medskip
\enlargethispage{4pt}

This paper significantly complements the~above list. It~proposes a~general approach that allows one to~use results on~the~residual $p$\nobreakdash-fi\-nite\-ness of~a~generalized free product~$G$ to~study the~residual $\mathcal{C}$\nobreakdash-ness of~$G$, where $\mathcal{C}$ is~the~class of~finite nilpotent or~finite metanilpotent groups. We show that this approach can be~applied under~each of~the~restrictions on~the~amalgamated subgroup, which are~mentioned above and~allow the~study of~residual $p$\nobreakdash-fi\-nite\-ness. The~results obtained in~this way are~formulated in~the~next section, while here we give a~number of~concepts and~notations used throughout the~paper.

Suppose that $\mathfrak{P}$ is~a~set of~prime numbers, $X$~is~a~group, and~$Y$ is~a~subgroup of~$X$. Recall that an~integer is~said to~be~a~\emph{$\mathfrak{P}$\nobreakdash-num\-ber} if~all its prime divisors belong to~$\mathfrak{P}$; a~periodic group is~said to~be~a~\emph{$\mathfrak{P}$\nobreakdash-group} if~the~orders of~all its elements are~$\mathfrak{P}$\nobreakdash-num\-bers. We denote by~$\mathfrak{P}^{\prime}$ the~set of~all prime numbers that do~not belong to~$\mathfrak{P}$.

The~subgroup~$Y$ is~called \emph{$\mathfrak{P}^{\prime}$\nobreakdash-iso\-lat\-ed} in~$X$ if,~for~any element $x \in X$ and~for~any number $q \in \mathfrak{P}^{\prime}$, it~follows from~the~inclusion $x^{q} \in Y$ that $x \in Y$. If~the~trivial subgroup of~$X$ is~$\mathfrak{P}^{\prime}$\nobreakdash-iso\-lat\-ed, then $X$ is~said to~be~a~\emph{$\mathfrak{P}^{\prime}$\nobreakdash-tor\-sion-free group}. It~is~easy to~see that the~intersection of~any number of~$\mathfrak{P}^{\prime}$\nobreakdash-iso\-lat\-ed subgroups is~again a~$\mathfrak{P}^{\prime}$\nobreakdash-iso\-lat\-ed subgroup and~therefore one can define the~smallest $\mathfrak{P}^{\prime}$\nobreakdash-iso\-lat\-ed subgroup containing the~subgroup~$Y$. It~is~called the~\emph{$\mathfrak{P}^{\prime}$\nobreakdash-iso\-la\-tor} of~$Y$ in~$X$ and~is~further denoted by~$\mathfrak{P}^{\prime}\textrm{-}\mathfrak{Is}(X,Y)$. Obviously, $\mathfrak{P}^{\prime}\textrm{-}\mathfrak{Is}(X,Y)$ contains the~subset~$\mathfrak{P}^{\prime}\textrm{-}\mathfrak{Rt}(X,Y)$ of~$X$ such that $x \in \mathfrak{P}^{\prime}\textrm{-}\mathfrak{Rt}(X,Y)$ if and~only~if~$x^{n} \in Y$ for~some $\mathfrak{P}^{\prime}$\nobreakdash-num\-ber~$n$. It~is~clear that the~equality $\mathfrak{P}^{\prime}\textrm{-}\mathfrak{Is}(X,Y) = \mathfrak{P}^{\prime}\textrm{-}\mathfrak{Rt}(X,Y)$ holds if and~only~if~the~set~$\mathfrak{P}^{\prime}\textrm{-}\mathfrak{Rt}(X,Y)$ is~a~subgroup.{\parfillskip=0pt\par}

Following~\cite{Malcev1958PIvSU}, we say that the~subgroup~$Y$ is~\emph{separable in~$X$ by~the~class of~groups~$\mathcal{C}$} (or,~more briefly, is~\emph{$\mathcal{C}$\nobreakdash-sep\-a\-ra\-ble in~$X$}) if,~for~any element $x \in X \setminus Y$, there exists a~homomorphism~$\sigma$ of~$X$ onto~a~group from~$\mathcal{C}$ such that $x\sigma \notin Y\sigma$. As~can be~seen from~this definition, the~residual $\mathcal{C}$\nobreakdash-ness of~$X$ is~equivalent to~the~$\mathcal{C}$\nobreakdash-sep\-a\-ra\-bil\-ity of~its trivial subgroup. It~is~also easy to~prove that if~the~subgroup~$Y$ is~separable in~$X$ by~the~class of~finite $\mathfrak{P}$\nobreakdash-groups, then it is~$\mathfrak{P}^{\prime}$\nobreakdash-iso\-lat\-ed in~$X$ (see Proposition~\ref{ep32} below).

Let us call an~abelian group \emph{$\mathfrak{P}$\nobreakdash-bound\-ed} if,~in each of~its quotient groups, a~primary component of~the~periodic part is~finite whenever it corresponds to~a~number from~$\mathfrak{P}$. We say also that a~nilpotent group is~\emph{$\mathfrak{P}$\nobreakdash-bound\-ed} if~it has a~finite central series with~$\mathfrak{P}$\nobreakdash-bound\-ed abelian factors. The~classes of~$\mathfrak{P}$\nobreakdash-bound\-ed abelian and~$\mathfrak{P}$\nobreakdash-bound\-ed nilpotent groups are~further denoted by~$\mathcal{BA}_{\mathfrak{P}}$ and~$\mathcal{BN}_{\mathfrak{P}}$, respectively. Obviously, if~$\mathfrak{S} \subseteq \mathfrak{P}$, then $\mathcal{BA}_{\mathfrak{P}} \subseteq \mathcal{BA}_{\mathfrak{S}}$ and~$\mathcal{BN}_{\mathfrak{P}} \subseteq \mathcal{BN}_{\mathfrak{S}}$. It~is~also easy to~see that a~finitely generated nilpotent group is~$\mathfrak{P}$\nobreakdash-bound\-ed for~any choice of~the~set~$\mathfrak{P}$.

Everywhere below let~$\Phi$, $\mathcal{F}_{\mathfrak{P}}$, and~$\mathcal{FN}_{\mathfrak{P}}$ stand for~the~classes of~free groups, finite $\mathfrak{P}$\nobreakdash-groups, and~finite nilpotent $\mathfrak{P}$\nobreakdash-groups, respectively. If~$\mathfrak{P} = \{p\}$, we simply write $p$ instead of~$\{p\}$ (for~example: $p$\nobreakdash-num\-ber, $\mathcal{BN}_{p}$, $p^{\prime}\textrm{-}\mathfrak{Is}(X,Y)$). We also use a~number of~standard notations:

\smallskip

\makebox[8ex][l]{$|X|$}the~order of~a~group~$X$;

\makebox[8ex][l]{$[x,y]$}the~commutator of~elements~$x$ and~$y$, which is~assumed to~be~equal to~the~prod- \makebox[8ex]{}\hspace{\parindent}uct $x^{-1}y^{-1}xy$;

\makebox[8ex][l]{$\operatorname{sgp}\{\kern-1pt{}S\}$}the~subgroup generated by~a~set $S$;

\makebox[8ex][l]{$X^{\prime}$}the~commutator subgroup of~$X$, i.e.,~$\operatorname{sgp}\{[x,y] \mid x,y \in X\}$;

\makebox[8ex][l]{$X^{n}$}$\operatorname{sgp}\{x^{n} \mid x \in X\}$;

\makebox[8ex][l]{$\ker\sigma$}the~kernel of~a~homomorphism~$\sigma$;

\makebox[8ex][l]{$\mathcal{C} \kern1pt{\cdot}\kern1pt \mathcal{D}$}the\kern-.5pt{}~class\kern-.5pt{} of\kern-.5pt{}~groups\kern-.5pt{} consisting\kern-.5pt{} of\kern-.5pt{}~all\kern-.5pt{} possible\kern-.5pt{} extensions\kern-.5pt{} of\kern-.5pt{}~a\kern-.5pt{}~$\mathcal{C}$\nobreakdash-group\kern-.5pt{} by\kern-.5pt{}~a\kern-.5pt{}~$\mathcal{D}$\nobreakdash-group, \makebox[8ex]{}\hspace{\parindent}where $\mathcal{C}$ and~$\mathcal{D}$ are~some classes of~groups.

\section{New results}\label{es02}

Let further the~expression $G = \langle A * B;\ H\rangle$ mean that $G$ is~the~generalized free product of~groups~$A$ and~$B$ with~an~amalgamated subgroup~$H$ (as~noted above, the~definition and~necessary properties of~this construction are~given in~Section~\ref{es05}). We say that the~group $G = \langle A * B;\ H\rangle$ and~a~set of~primes~$\mathfrak{P}$ \pagebreak \emph{satisfy Condition~$(*)$}~if

\makebox[4.5ex][l]{$(\mathfrak{i})$}$A \ne H \ne B$;

\makebox[4.5ex][l]{$(\mathfrak{ii})$}$\mathfrak{P}$ is~a~non-empty~set;

\makebox[4.5ex][l]{$(\mathfrak{iii})$}$A$~and~$B$ are~residually $\mathcal{FN}_{\mathfrak{P}}$\nobreakdash-groups;

\makebox[4.5ex][l]{$(\mathfrak{iv})$}there\kern-.5pt{} exist\kern-.5pt{} homomorphisms\kern-.5pt{} of\kern-.5pt{}~the\kern-.5pt{}~groups\kern-.5pt{}~$A$\kern-.5pt{} and\kern-.5pt{}~$B$\kern-.5pt{} that\kern-.5pt{} map\kern-.5pt{} them\kern-.5pt{} onto\kern-.5pt{}~$\mathcal{BN}_{\mathfrak{P}}$\nobreakdash-groups and~act injectively~on~$H$.

Almost all the~main results of~this paper deal with~generalized free products satisfying~$(*)$. Parts~$(\mathfrak{i})$---$(\mathfrak{iii})$ of~this condition look quite natural because a)~if~$G$ is~residually an~$\mathcal{FN}_{\mathfrak{P}}$\nobreakdash-group, then $A$ and~$B$ have the~same property; b)~if $\mathfrak{P} = \varnothing$, only~the~trivial group is~residually an~$\mathcal{FN}_{\mathfrak{P}}$\nobreakdash-group; c)~if $A = H$ or~$B = H$, then $G = B$ or~$G = A$ and~it follows from~$(\mathfrak{iii})$ that $G$ is~residually an~$\mathcal{FN}_{\mathfrak{P}}$\nobreakdash-group. The~more artificial restriction~$(\mathfrak{iv})$ is~necessary to~obtain the~results mentioned above via~Theorem~\ref{et07}. This theorem is~given in~Section~\ref{es08} and~allows one to~prove the~residual nilpotence of~$G$ using known facts on~the~residual $p$\nobreakdash-fi\-nite\-ness of~this group.

It is~easy to~see that if~$H$ is~finite, then $(\mathfrak{iii})$ implies~$(\mathfrak{iv})$. In~the~case of~the~infinite subgroup~$H$, homomorphisms satisfying~$(\mathfrak{iv})$ certainly exist if~$A$ and~$B$ are~residually tor\-sion-free $\mathcal{BN}_{\mathfrak{P}}$\nobreakdash-groups and~$H$ is~of~finite Hirsch--Zaitsev rank, i.e.,~it~has a~finite subnormal series whose each factor is~a~periodic or~infinite cyclic group (see Proposition~\ref{ep35} below). The~following groups are~examples of~residually tor\-sion-free $\mathcal{BN}_{\mathfrak{P}}$\nobreakdash-groups:

--\hspace{1ex}free~\cite{Magnus1935MA}, parafree~\cite{BaumslagG1967TAMS}, limit~\cite{Sela2001PM}, and~free polynilpotent~\cite{Gruenberg1957PLMS} groups;

--\hspace{1ex}fundamental groups of~two-di\-men\-sion\-al orientable closed manifolds~\cite{BaumslagG1962MZ};

--\hspace{1ex}pure braid groups of~various types~\cite{FalkRandell1988CM, Markushevich1991GD, BellingeriGervaisGuaschi2008JA, BardakovBellingeri2009CA, Marin2012CM};

--\hspace{1ex}right-an\-gled Artin groups~\cite{DuchampKrob1992SF} and~their Torelli groups~\cite{Toinet2013GGD};

--\hspace{1ex}Hydra groups and~some other groups with~one defining relation, including the~cyclically pinched one-re\-la\-tor groups mentioned in~the~previous section~\cite{BaumslagGCleary2006JGT, BaumslagG2010IJM, BaumslagGMikhailov2014GGD, Labute2015Arxiv};

--\hspace{1ex}all restricted and~unrestricted direct products of~the~groups listed above.

The~following theorem is~the~first of~the~main results of~this article.

\begin{etheorem}\label{et01}
Suppose that the~group $G = \langle A * B;\ H\rangle$ and~a~set of~primes~$\mathfrak{P}$ satisfy~$(*)$ and~at~least one of~the~next additional conditions\textup{:}

\makebox[4ex][l]{$(\alpha)$}$H$~is~locally cyclic\textup{;}

\makebox[4ex][l]{$(\beta\kern.3pt)$}$H$~lies in~the~center of~$A$~or~$B$\textup{;}

\makebox[4ex][l]{$(\gamma\kern1pt)$}$H$~is~a~retract of~$A$~or~$B$.

\noindent
Then the~following statements hold.

\textup{1.}\hspace{1ex}If~$H$ is~$\mathcal{F}_{q}$\nobreakdash-sep\-a\-ra\-ble in~$A$ and~$B$ for~some $q \in \mathfrak{P}$\textup{,} then $G$ is~residually an~$\mathcal{FN}_{\mathfrak{P}}$\nobreakdash-group.

\textup{2.}\hspace{1ex}If~$H$ is~$\mathcal{FN}_{\mathfrak{P}}$\nobreakdash-sep\-a\-ra\-ble in~$A$ and~$B$\textup{,} then $G$ is~residually a~$\Phi \kern1pt{\cdot}\kern1pt \mathcal{FN}_{\mathfrak{P}}$\nobreakdash-group and~residually an~$\mathcal{F}_{p} \kern1pt{\cdot}\kern1pt \mathcal{FN}_{\mathfrak{P}}$\nobreakdash-group for~each prime~$p$. In~particular\textup{,} it~is~residually a~finite metanilpotent $\mathfrak{P}$\nobreakdash-group.
\end{etheorem}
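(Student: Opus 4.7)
The plan is to derive Theorem~\ref{et01} from the master result Theorem~\ref{et07}, whose very purpose (as noted in the discussion after Condition~$(*)$) is to convert residual $p$-finiteness statements for generalized free products satisfying~$(*)$ into residual $\mathcal{FN}_{\mathfrak{P}}$-ness and residual $\Phi\cdot\mathcal{FN}_{\mathfrak{P}}$-ness statements. Consequently, my task reduces to checking, in each of the three settings $(\alpha)$, $(\beta)$, $(\gamma)$, that the residual $p$-finiteness hypothesis required by Theorem~\ref{et07} is available.

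Concretely: under $(*)$ together with any one of $(\alpha)$, $(\beta)$, $(\gamma)$, the group~$G$ is residually $p$-finite whenever $H$ is $\mathcal{F}_{p}$-separable in both $A$ and~$B$. For case~$(\alpha)$ this is one of the cyclic-amalgamation criteria recalled in the introduction (most suitably \cite[Theorem~2]{Azarov2016SMJ}); for case~$(\beta)$ it is a central-amalgamation result of~\cite{SokolovTumanova2023SMJ}; for case~$(\gamma)$ it is~\cite[Corollary~1.2]{BobrovskiiSokolov2010AC}. For statement~1 the hypothesis supplies $\mathcal{F}_{q}$-separability for a single $q\in\mathfrak{P}$, so the appropriate criterion gives residual $q$-finiteness of~$G$; since $\mathcal{F}_{q}\subseteq\mathcal{FN}_{\mathfrak{P}}$, the desired residual $\mathcal{FN}_{\mathfrak{P}}$-ness is then automatic (so Theorem~\ref{et07} is used, if at all, only in a trivial form). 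For statement~2, $\mathcal{FN}_{\mathfrak{P}}$-separability of~$H$ implies $\mathcal{F}_{p}$-separability for every $p\in\mathfrak{P}$, and so the same criteria deliver residual $p$-finiteness of~$G$ for every such~$p$. Feeding this uniform family of residual $p$-finiteness properties into Theorem~\ref{et07} yields residual $\Phi\cdot\mathcal{FN}_{\mathfrak{P}}$-ness, together with residual $\mathcal{F}_{p}\cdot\mathcal{FN}_{\mathfrak{P}}$-ness for every prime~$p$; the closing assertion on residual finite metanilpotence is then immediate, since an $\mathcal{F}_{p}\cdot\mathcal{FN}_{\mathfrak{P}}$-group is visibly finite and metanilpotent.

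The genuinely hard work is hidden in the proof of Theorem~\ref{et07} itself: there one must exploit condition~$(\mathfrak{iv})$ to approximate $G$ by quotients controlled by the $\mathcal{BN}_{\mathfrak{P}}$-images of $A$ and~$B$, and then couple that approximation with the separability hypothesis to manufacture free-by-nilpotent and nilpotent-by-nilpotent quotients of~$G$ that detect a prescribed non-identity element. Within the present theorem itself the one delicate point I anticipate is case~$(\gamma)$: a homomorphism produced by~$(\mathfrak{iv})$ need not respect a chosen retraction $A\to H$, so one may first have to replace it by its pairing with the retraction and verify that the enlarged target still lies in $\mathcal{BN}_{\mathfrak{P}}$. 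Once this cosmetic adjustment is made, the three cases $(\alpha)$, $(\beta)$, $(\gamma)$ run in parallel and Theorem~\ref{et07} finishes the argument.
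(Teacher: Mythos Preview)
Your plan misreads Theorem~\ref{et07}: its input is not ``$G$ is residually $p$\nobreakdash-finite for all $p\in\mathfrak{P}$'' but rather ``$G$ is $\mathcal{F}_{p}$\nobreakdash-quasi\nobreakdash-regular with respect to~$H$ for all $p\in\mathfrak{P}$'' (together with the separability hypotheses on~$1$ and~$H$). The paper therefore does not cite external residual\nobreakdash-$p$ criteria at all; instead Proposition~\ref{ep75} verifies this quasi\nobreakdash-regularity directly in each of the cases $(\alpha)$--$(\gamma)$, and Corollary~\ref{ec04} then packages Theorem~\ref{et07} to give Statements~1 and~2.

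More seriously, two of your intermediate claims are false. For Statement~1 you assert that under the hypotheses $G$ is residually an $\mathcal{F}_{q}$\nobreakdash-group. Take $\mathfrak{P}=\{2,3\}$, $A=B=\mathbb{Z}/2\times\mathbb{Z}$, and $H=\mathbb{Z}/2\times\{0\}$ central in both. Condition~$(*)$ and~$(\beta)$ hold, and $H$ is $\mathcal{F}_{3}$\nobreakdash-separable in $A$ and~$B$ (any $(x,n)\notin H$ has $n\ne0$, detected by $A\to\mathbb{Z}/3^{k}$). Yet $G$ contains $H\cong\mathbb{Z}/2$, so $G$ is certainly not residually an $\mathcal{F}_{3}$\nobreakdash-group. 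What Theorem~\ref{et07} actually does here is pass to $G/U$ with $U=q^{\prime}\textrm{-}\mathfrak{Rt}(H,1)$, show \emph{that} quotient is residually $\mathcal{F}_{q}$, and handle the elements of~$U$ separately via~$\mathcal{FN}_{\mathfrak{P}}$. For Statement~2 you claim that $\mathcal{FN}_{\mathfrak{P}}$\nobreakdash-separability of~$H$ implies $\mathcal{F}_{p}$\nobreakdash-separability for every $p\in\mathfrak{P}$; this fails already for $A=\mathbb{Z}/6$, $H=1$, $\mathfrak{P}=\{2,3\}$, where the trivial subgroup is $\mathcal{FN}_{\mathfrak{P}}$\nobreakdash-separable but not $\mathcal{F}_{p}$\nobreakdash-separable for either~$p$. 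So the route through ``residual $p$\nobreakdash-finiteness of~$G$ for each $p$'' is not available, and you must instead establish the quasi\nobreakdash-regularity hypothesis of Theorem~\ref{et07} directly, as the paper does in Proposition~\ref{ep75}.
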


If~$X$ is~a~group and~$Y$ is~its normal subgroup, then the~restriction of~any inner automorphism of~$X$ onto~the~subgroup~$Y$ is~an~automorphism of~the~latter. The~set of~all such automorphisms forms a~subgroup of~the~group~$\operatorname{Aut}Y$, which we further denote by~$\operatorname{Aut}_{X}(Y)$. If~$G = \langle A * B;\ H\rangle$ and~the~subgroup~$H$ is~normal in~$A$ and~$B$, then it is~also normal in~$G$ and~this allows us to~define the~group~$\operatorname{Aut}_{G}(H)$. It~turns~out that the~properties of~the~latter can be~used to~describe the~conditions for~the~residual $\mathcal{C}$\nobreakdash-ness of~$G$; this approach was first used in~\cite{Higman1964JA} and~then repeatedly in~other works (see, for~example,~\cite{Tumanova2014MN, Tumanova2015IVM}). Let us note also that if~$H$ is~normal in~$G$, then, for~each prime number~$p$, the~subgroup~$H^{p}H^{\prime}$ has the~same property and~therefore the~groups~$\mathfrak{A}(p) = \operatorname{Aut}_{A/H^{p}H^{\prime}}(H/H^{p}H^{\prime})$, $\mathfrak{B}(p) = \operatorname{Aut}_{B/H^{p}H^{\prime}}(H/H^{p}H^{\prime})$, and~$\mathfrak{G}(p) = \operatorname{Aut}_{G/H^{p}H^{\prime}}(H/H^{p}H^{\prime})$ are~defined.

\begin{etheorem}\label{et02}
Suppose that the~group $G = \langle A * B;\ H\rangle$ and~a~set of~primes~$\mathfrak{P}$ satisfy~$(*)$. Suppose also that $H$ is~normal in~$A$ and~$B$ and\textup{,}~for~each $p \in \mathfrak{P}$\textup{,} at~least one of~the~next additional conditions holds\textup{:}

\makebox[4ex][l]{$(\alpha)$}$\mathfrak{G}(p)$~is~a~$p$\nobreakdash-group\textup{;}

\makebox[4ex][l]{$(\beta\kern.3pt)$}$\mathfrak{G}(p)$~is~abelian\textup{;}

\makebox[4ex][l]{$(\gamma\kern1pt)$}$\mathfrak{G}(p) = \mathfrak{A}(p)$ or~$\mathfrak{G}(p) = \mathfrak{B}(p)$.

\noindent
Then the~following statements hold.

\textup{1.}\hspace{1ex}If~$H$ is~$\mathcal{F}_{q}$\nobreakdash-sep\-a\-ra\-ble in~$A$ and~$B$ for~some $q \in \mathfrak{P}$\textup{,} then $G$ is~residually an~$\mathcal{FN}_{\mathfrak{P}}$\nobreakdash-group.

\textup{2.}\hspace{1ex}If~$G$ is~residually an~$\mathcal{FN}_{\mathfrak{P}}$\nobreakdash-group\textup{,} then $H$ is~$q^{\prime}$\nobreakdash-iso\-lat\-ed in~$A$ and~$B$ for~some $q \in \mathfrak{P}$.

\textup{3.}\hspace{1ex}The~group~$G$ is~residually a~$\Phi \kern1pt{\cdot}\kern1pt \mathcal{FN}_{\mathfrak{P}}$\nobreakdash-group and~residually an~$\mathcal{F}_{p} \kern1pt{\cdot}\kern1pt \mathcal{FN}_{\mathfrak{P}}$\nobreakdash-group for~each prime~$p$ if and~only~if~$H$ is~$\mathcal{FN}_{\mathfrak{P}}$\nobreakdash-sep\-a\-ra\-ble in~$A$ and~$B$.
\end{etheorem}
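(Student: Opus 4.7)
The plan is to deduce all three statements from the transfer Theorem~\ref{et07} of Section~\ref{es08}, which, under hypothesis~$(*)$, converts information about residual $p$\nobreakdash-fi\-nite\-ness of~$G$ (for $p \in \mathfrak{P}$) into information about its residual $\mathcal{FN}_{\mathfrak{P}}$\nobreakdash-ness and residual $\Phi \kern1pt{\cdot}\kern1pt \mathcal{FN}_{\mathfrak{P}}$\nobreakdash-ness. Conditions $(\alpha)$--$(\gamma)$ on $\mathfrak{G}(p)$, $\mathfrak{A}(p)$, $\mathfrak{B}(p)$ are precisely those appearing in the classical residual $\mathcal{F}_p$\nobreakdash-cri\-te\-ria for a~generalized free product with~a~normal amalgamated subgroup, originating in~\cite{Higman1964JA} and~developed in~\cite{Sokolov2005MN, Tumanova2014MN, Tumanova2015IVM, Azarov2015SMJ, SokolovTumanova2020IVM}. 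The~strategy is, for~each $p \in \mathfrak{P}$, to~verify the~premise of~Theorem~\ref{et07} by~invoking the~appropriate residual $\mathcal{F}_p$\nobreakdash-cri\-te\-ri\-on and~then to~translate the~outcome into~the~required separability/isolation statement for~$H$.

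For~statement~1, if~$H$ is~$\mathcal{F}_q$\nobreakdash-sep\-a\-ra\-ble in~$A$ and~$B$ for~some $q \in \mathfrak{P}$, then the~classical criterion (with~the~pertinent case among $(\alpha)$--$(\gamma)$) gives $G$ residually $\mathcal{F}_q$. Feeding this, together with~$(*)$, into~Theorem~\ref{et07} yields residual $\mathcal{FN}_{\mathfrak{P}}$\nobreakdash-ness of~$G$. Here $(\mathfrak{iv})$ plays the~key role: the~$\mathcal{BN}_{\mathfrak{P}}$\nobreakdash-quo\-tients of~$A$ and~$B$ injective on~$H$ supply the~nilpotent ``scaffolding'' needed by~the~transfer, while $(\mathfrak{iii})$ furnishes the~residual nilpotent quotients of~the~factors.

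For~statement~2, residual $\mathcal{FN}_{\mathfrak{P}}$\nobreakdash-ness of~$G$ is~equivalent to~the~statement that every non-trivial element of~$G$ is~separated by~a~homomorphism onto~a~finite $p$\nobreakdash-group for~some $p \in \mathfrak{P}$, since a~finite nilpotent $\mathfrak{P}$\nobreakdash-group splits as~a~direct product of~its Sylow subgroups. Combined with~Proposition~\ref{ep32}, this already gives the~$\mathfrak{P}^{\prime}$\nobreakdash-iso\-la\-tion of~$H$ in~$A$ and~$B$. The~harder task is~to~extract a~single $q \in \mathfrak{P}$ giving the~stronger $q^{\prime}$\nobreakdash-iso\-la\-tion in~both factors; I~would argue, via~the~case analysis $(\alpha)$--$(\gamma)$, that if~$H$ is~not $p$\nobreakdash-iso\-lat\-ed in~$A$ or~$B$ then the~classical criterion forbids $G$ from~being residually $\mathcal{F}_p$, so~at~most one prime $p \in \mathfrak{P}$ can fail to~yield isolation, and~taking $q$ to~be~this exceptional prime works.

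For~statement~3, the~``if'' direction is~a~further application of~Theorem~\ref{et07}: case-by-case in~$(\alpha)$--$(\gamma)$, $\mathcal{FN}_{\mathfrak{P}}$\nobreakdash-sep\-a\-ra\-bil\-ity of~$H$ in~$A$ and~$B$ lets one conclude that $G$ is~residually $\mathcal{F}_p \kern1pt{\cdot}\kern1pt \mathcal{FN}_{\mathfrak{P}}$ for~each prime~$p$, whence also residually $\Phi \kern1pt{\cdot}\kern1pt \mathcal{FN}_{\mathfrak{P}}$. The~``only if'' direction is~a~routine restriction argument: a~$\Phi \kern1pt{\cdot}\kern1pt \mathcal{FN}_{\mathfrak{P}}$\nobreakdash-quo\-tient of~$G$ separating a~chosen element of~$A \setminus H$ from~$H$ projects onto~an~$\mathcal{FN}_{\mathfrak{P}}$\nobreakdash-quo\-tient in~which the~image of~$A$ still separates this element from~the~image of~$H$. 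The~main obstacle will be~the~necessity in~statement~2, namely the~uniform choice of~$q$ across both~$A$ and~$B$: a~priori different elements of~$A \setminus H$ or~$B \setminus H$ with~$p$\nobreakdash-th powers in~$H$ could demand different primes, and~ruling this out requires a~careful combination of~$(\alpha)$--$(\gamma)$ with~the~normality of~$H$ to~confine the~set of~``problematic'' primes to~a~single one.
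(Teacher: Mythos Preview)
Your high-level idea of routing through Theorem~\ref{et07} is correct for the \emph{sufficiency} parts (Statement~1 and the ``if'' of Statement~3), but you are missing the key intermediate step and your arguments for the necessity parts are wrong.

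For Statements~1 and~3 (sufficiency), Theorem~\ref{et07} requires that $G$ be $\mathcal{F}_p$\nobreakdash-quasi-regular with respect to~$H$ for \emph{every} $p\in\mathfrak{P}$. The paper does not obtain this by invoking a ``classical residual $\mathcal{F}_q$ criterion'' for $G$ itself; instead it passes to the auxiliary quotient $G_{V(p),W(p)}$ where $V(p)=p'\text{-}\mathfrak{Is}(A,H^pH'\cdot\ker\lambda)$ and $W(p)=p'\text{-}\mathfrak{Is}(B,H^pH'\cdot\ker\mu)$, and shows that $\overline{\mathfrak{G}}(p)=\operatorname{Aut}_{G_{V(p),W(p)}}(H\rho_{V(p),W(p)})$ is a $p$\nobreakdash-group. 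This is the content of Theorem~\ref{et06} together with Proposition~\ref{ep75}$(\varepsilon)$. The actual work in deducing Theorem~\ref{et02} from Theorem~\ref{et06} is to check that there is a surjection $\mathfrak{G}(p)\to\overline{\mathfrak{G}}(p)$ carrying $\mathfrak{A}(p),\mathfrak{B}(p)$ onto $\overline{\mathfrak{A}}(p),\overline{\mathfrak{B}}(p)$, and that $\overline{\mathfrak{A}}(p),\overline{\mathfrak{B}}(p)\in\mathcal{F}_p$ (because $A/V(p),B/W(p)$ are $p'$\nobreakdash-tor\-sion-free $\mathcal{BN}_p$\nobreakdash-groups, hence residually $\mathcal{F}_p$, and $H\rho_{V(p),W(p)}$ is finite, so Proposition~\ref{ep34} applies). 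Then any of $(\alpha)$--$(\gamma)$ forces $\overline{\mathfrak{G}}(p)$ to be a $p$\nobreakdash-group. Your sketch never mentions $V(p),W(p)$ or $\overline{\mathfrak{G}}(p)$, and ``classical criterion gives $G$ residually $\mathcal{F}_q$'' is not what is being done.

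Your argument for Statement~2 has a genuine gap. Residual $\mathcal{FN}_{\mathfrak{P}}$\nobreakdash-ness of $G$ does \emph{not} imply residual $\mathcal{F}_p$\nobreakdash-ness for every $p\in\mathfrak{P}$, so the contrapositive ``if $H$ is not $p'$\nobreakdash-isolated then $G$ is not residually $\mathcal{F}_p$'' gives nothing; and the claim that ``at most one prime can fail'' is unsupported (different primes could fail in $A$ and $B$). In the paper, Statement~2 is a direct consequence of Theorem~\ref{et05}: since $H$ is normal in $A$ and $B$ with $A\ne H\ne B$, conditions $(\alpha),(\beta)$ of Theorem~\ref{et05} hold, and that theorem produces the single $q\in\mathfrak{P}$ by an explicit reduced-form/commutator construction (Propositions~\ref{ep61}, \ref{ep63}, \ref{ep64}) that has nothing to do with the residual $\mathcal{F}_p$ criteria or with $(\alpha)$--$(\gamma)$ of Theorem~\ref{et02}. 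Likewise, the ``only if'' of Statement~3 is not a routine restriction: one applies Theorem~\ref{et05} (Statement~2) with $\mathcal{C}=\mathcal{F}_p\cdot\mathcal{FN}_{\mathfrak{P}}$ for some $p\in\mathfrak{P}$ to get $H$ $\mathfrak{P}'$\nobreakdash-isolated, and then Proposition~\ref{ep91} upgrades this to $\mathcal{FN}_{\mathfrak{P}}$\nobreakdash-separability.
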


It should be~noted that Theorem~\ref{et02} is~a~special case of~Theorem~\ref{et06}. The~formulation of~the~latter requires the~preliminary proof of~some auxiliary facts and~is~therefore given in~Section~\ref{es05}. We note also that if~the~subgroup~$H/H^{p}H^{\prime}$ lies in~the~center, say, of~the~group~$A/H^{p}H^{\prime}$, then $\mathfrak{A}(p) = 1$ and~$\mathfrak{G}(p) = \mathfrak{B}(p)$ because $\mathfrak{G}(p)$ is~obviously generated by~$\mathfrak{A}(p)$ and~$\mathfrak{B}(p)$. If~the~indicated subgroup is~locally cyclic, then its automorphism group is~abelian (see, for~example,~\cite[\S~113, Exercise~4]{Fuchs21973}). Thus, the~next corollary follows from~Theorem~\ref{et02}.

\begin{ecorollary}\label{ec01}
Suppose that the~group $G = \langle A * B;\ H\rangle$ and~a~set of~primes~$\mathfrak{P}$ satisfy~$(*)$. Suppose also that $H$ is~normal in~$A$ and~$B$ and\textup{,}~for~each $p \in \mathfrak{P}$\textup{,} the~subgroup~$H/H^{p}H^{\prime}$ is~locally cyclic or~lies in~the~center of~at~least one of~the~groups~$A/H^{p}H^{\prime}$\textup{,} $B/H^{p}H^{\prime}$. Then Statements~\textup{1\nobreakdash---3} of~Theorem~\textup{\ref{et02}} hold.
\end{ecorollary}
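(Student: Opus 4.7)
The plan is to reduce Corollary~\ref{ec01} directly to Theorem~\ref{et02} by showing that, for each $p \in \mathfrak{P}$, at~least one of the three alternatives $(\alpha)$, $(\beta)$, $(\gamma)$ appearing in the hypothesis of Theorem~\ref{et02} is automatically forced by the structural assumption of the corollary on $H/H^{p}H'$. Once that reduction is done, Statements~1--3 of the corollary are literally Statements~1--3 of Theorem~\ref{et02}.

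The preliminary observation to record first, already noted in the paragraph preceding the corollary, is that $\mathfrak{G}(p)$ is generated by $\mathfrak{A}(p) \cup \mathfrak{B}(p)$: because $G/H^{p}H'$ is generated by the images of $A$ and $B$, every conjugation of $H/H^{p}H'$ by an element of $G/H^{p}H'$ decomposes as a product of conjugations coming from $A/H^{p}H'$ and $B/H^{p}H'$, and the restriction map is a homomorphism. With this in hand, fix an arbitrary $p \in \mathfrak{P}$. If $H/H^{p}H'$ lies in the center of, say, $A/H^{p}H'$, then every element of $A/H^{p}H'$ acts trivially by conjugation on $H/H^{p}H'$, so $\mathfrak{A}(p) = 1$; combined with the preliminary observation this yields $\mathfrak{G}(p) = \mathfrak{B}(p)$, which is condition~$(\gamma)$ of Theorem~\ref{et02}. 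The case in which $H/H^{p}H'$ is central in $B/H^{p}H'$ is symmetric. If instead $H/H^{p}H'$ is locally cyclic, then by the cited fact that the automorphism group of a locally cyclic abelian group is abelian, $\operatorname{Aut}(H/H^{p}H')$ is abelian; hence so is its subgroup $\mathfrak{G}(p)$, and condition~$(\beta)$ holds.

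In either case one of $(\beta)$, $(\gamma)$ is satisfied at every $p \in \mathfrak{P}$, so the hypothesis of Theorem~\ref{et02} is met and Statements~1--3 of that theorem apply unchanged. There is no substantive obstacle in the argument; the whole content of the corollary is to translate the stated structural hypotheses on $H/H^{p}H'$ into the more technical conditions on the automorphism groups $\mathfrak{A}(p)$, $\mathfrak{B}(p)$, $\mathfrak{G}(p)$ that Theorem~\ref{et02} requires. All the real work has already been absorbed into that theorem.
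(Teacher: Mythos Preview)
Your proposal is correct and follows exactly the same route as the paper: for each $p\in\mathfrak{P}$, centrality of $H/H^{p}H'$ in one factor forces $\mathfrak{A}(p)=1$ (or $\mathfrak{B}(p)=1$) and hence condition~$(\gamma)$ via $\mathfrak{G}(p)=\operatorname{sgp}\{\mathfrak{A}(p),\mathfrak{B}(p)\}$, while local cyclicity gives an abelian $\operatorname{Aut}(H/H^{p}H')$ and hence condition~$(\beta)$; the corollary then reduces directly to Theorem~\ref{et02}. This is precisely the argument the paper sketches in the paragraph preceding the corollary.
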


\begin{etheorem}\label{et03}
Suppose that the~group $G = \langle A * B;\ H\rangle$ and~a~set of~primes~$\mathfrak{P}$ satisfy~$(*)$. Suppose also that $H$ is~periodic and\textup{,}~for~every prime~$p$\textup{,} the~symbol~$H(p)$ denotes the~subgroup~$p\textrm{-}\mathfrak{Is}(\kern-.5pt{}H\kern-1.5pt{},\kern-.5pt{}1)$\kern-.5pt{}.\kern-1pt{} Then $H$\kern-1pt{} is~$\mathcal{F\kern-.5pt{}N}\kern-.5pt{}_{\mathfrak{P}}$\nobreakdash-sep\-a\-ra\-ble in~$A$\kern-.5pt{} and~$B$\kern-.5pt{}\textup{,}\kern-1pt{} and~the~\mbox{following}~\mbox{statements}~hold.

\textup{1.}\hspace{1ex}The~group~$G$ is~residually an~$\mathcal{FN}_{\mathfrak{P}}$\nobreakdash-group if~$H$ is~$\mathcal{F}_{q}$\nobreakdash-sep\-a\-ra\-ble in~$A$ and~$B$ for~some $q \in \mathfrak{P}$ and\textup{,}~for~any $p \in \mathfrak{P}$\textup{,} there exist sequences of~subgroups
\begin{gather*}
1 = Q_{0} \leqslant Q_{1} \leqslant \ldots \leqslant Q_{n} = H(p),\\
R_{0} \leqslant R_{1} \leqslant \ldots \leqslant R_{n} = A,
\quad
S_{0} \leqslant S_{1} \leqslant \ldots \leqslant S_{n} = B
\end{gather*}
such that\textup{,} for~all $i \in \{0,\,1,\,\ldots,\,n\}$\textup{,} $j \in \{0,\,1,\,\ldots,\,n-1\}$\textup{,}

\makebox[1ex]{}\phantom{1.}--\hspace{1ex}$R_{i}$~is~a~normal subgroup of~finite $p$\nobreakdash-in\-dex of~the~group~$A$\textup{;}

\makebox[1ex]{}\phantom{1.}--\hspace{1ex}$S_{i}$\kern1.1pt~is~a~normal subgroup of~finite $p$\nobreakdash-in\-dex of~the~group~$B$\textup{;}

\makebox[1ex]{}\phantom{1.}--\hspace{1ex}$R_{i} \cap H(p) = Q_{i} = S_{i} \cap H(p)$\textup{;}

\makebox[1ex]{}\phantom{1.}--\hspace{1ex}$|Q_{j+1}/Q_{j}| = p$.

\textup{2.}\hspace{1ex}If~$G$ is~residually an~$\mathcal{FN}_{\mathfrak{P}}$\nobreakdash-group\textup{,} then $H$ is~$q^{\prime}$\nobreakdash-iso\-lat\-ed in~$A$ and~$B$ for~some $q \in \mathfrak{P}$ and\textup{,}~for~any $p \in \mathfrak{P}$\textup{,} there exist the~sequences of~subgroups described in~Statement~\textup{1} of~this theorem.

\textup{3.}\hspace{1ex}The~group~$G$ is~residually a~$\Phi \kern1pt{\cdot}\kern1pt \mathcal{FN}_{\mathfrak{P}}$\nobreakdash-group and~residually an~$\mathcal{F}_{q} \kern1pt{\cdot}\kern1pt \mathcal{FN}_{\mathfrak{P}}$\nobreakdash-group for~each prime~$q$ if and~only~if\textup{,}~for~any $p \in \mathfrak{P}$\textup{,} there exist the~sequences of~subgroups described in~Statement~\textup{1} of~this theorem.
\end{etheorem}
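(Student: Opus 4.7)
The plan is to establish first the $\mathcal{FN}_{\mathfrak{P}}$-separability of $H$ in $A$ and $B$, and then to deduce Statements~1---3, relying throughout on Theorem~\ref{et07} (which converts results on residual $p$-fi\-nite\-ness into results on residual $\mathcal{FN}_{\mathfrak{P}}$-ness) and on the filtration criterion for the residual $p$-fi\-nite\-ness of generalized free products with a periodic amalgamation of the type cited in the introduction. The starting observation is that residual $\mathcal{FN}_{\mathfrak{P}}$-ness of $A$ and $B$ forces both factors to be $\mathfrak{P}'$-tor\-sion-free, so the periodic subgroup $H$ is in fact a $\mathfrak{P}$-group and $H(p)$ coincides with its $p$-primary component for each $p \in \mathfrak{P}$.

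For the separability assertion, I would fix a homomorphism $\varphi\colon A \to \overline{A}$ provided by part~$(\mathfrak{iv})$ of~$(*)$ onto a $\mathcal{BN}_{\mathfrak{P}}$-group that is injective on~$H$; since $H$ is periodic, each $p$-primary component of $H\varphi$ with $p \in \mathfrak{P}$ is finite. Given $a \in A \setminus H$, I split into the cases $a\varphi \notin H\varphi$ and $a\varphi \in H\varphi$: in the first, a $\mathcal{FN}_{\mathfrak{P}}$-quotient of $\overline{A}$ separating $a\varphi$ from the finite $\mathfrak{P}$-primary part of $H\varphi$ suffices; in the second, write $a\varphi = h\varphi$ with $h \in H$ and apply residual $\mathcal{FN}_{\mathfrak{P}}$-ness of $A$ to the nontrivial element $ah^{-1} \in \ker\varphi$. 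A careful intersection of kernels then yields the required separating $\mathcal{FN}_{\mathfrak{P}}$-quotient of $A$; symmetry handles~$B$.

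For Statement~1, the sequences $Q_i, R_i, S_i$ together with $\mathcal{F}_q$-separability of $H$ are exactly the hypotheses under which the filtration criterion guarantees that $G$ is residually a finite $p$-group for each $p \in \mathfrak{P}$; feeding these quotients through Theorem~\ref{et07} with the $\mathcal{FN}_{\mathfrak{P}}$-separability of $H$ upgrades the conclusion to residually $\mathcal{FN}_{\mathfrak{P}}$. For necessity in Statement~2, residual $\mathcal{FN}_{\mathfrak{P}}$-ness of $G$ forces $\mathcal{FN}_{\mathfrak{P}}$-separability of $H$ in $A$ and $B$, and hence $\mathfrak{P}'$-isolation by Proposition~\ref{ep32}; strengthening this to $q'$-isolation for some $q \in \mathfrak{P}$ uses the Sylow decomposition of $H$ and of each $\mathcal{FN}_{\mathfrak{P}}$-quotient of $G$. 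The sequences are obtained for a given $p$ by extracting, from the inverse limit of $\mathcal{F}_p$-quotients of $G$, subnormal chains through $H(p)$ with factors of order $p$ and lifting them to $A$ and $B$ via the $\mathcal{FN}_{\mathfrak{P}}$-separability of $H$. Statement~3 then follows: sufficiency repeats the argument for Statement~1 but invokes the stronger $\Phi \cdot \mathcal{FN}_{\mathfrak{P}}$ and $\mathcal{F}_p \cdot \mathcal{FN}_{\mathfrak{P}}$ conclusions of Theorem~\ref{et07}, and necessity reduces to Statement~2 once one observes that residual $\Phi \cdot \mathcal{FN}_{\mathfrak{P}}$-ness of $G$ entails residual $\mathcal{FN}_{\mathfrak{P}}$-ness of $A$ and $B$.

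The main obstacle is the $\mathcal{FN}_{\mathfrak{P}}$-separability of $H$: one has to ensure that the separating quotients land in $\mathcal{FN}_{\mathfrak{P}}$ rather than merely in $\mathcal{F}$, which requires using simultaneously the $\mathcal{BN}_{\mathfrak{P}}$-structure of $\overline{A}$ (to control the primary components of $H\varphi$ for $p \in \mathfrak{P}$) and the residual $\mathcal{FN}_{\mathfrak{P}}$-ness of $A$ (to handle elements of $\ker\varphi$), and to glue the two sources of quotients together coherently. Once this step is completed, everything else is bookkeeping with Theorem~\ref{et07} and the known periodic-amalgamation criterion for residual $p$-fi\-nite\-ness.
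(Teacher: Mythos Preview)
Your overall architecture is right --- separability of $H$ first, then the sufficiency direction via $\mathcal{F}_p$-quasi-regularity and Theorem~\ref{et07}, then necessity --- and this matches the paper. But two of your necessity arguments have real gaps.

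\textbf{The $q'$-isolation in Statement~2.} You say that strengthening $\mathfrak{P}'$-isolation to $q'$-isolation for some $q \in \mathfrak{P}$ ``uses the Sylow decomposition of $H$ and of each $\mathcal{FN}_{\mathfrak{P}}$-quotient of $G$.'' This does not work. Sylow decompositions alone only recover $\mathfrak{P}'$-isolation; the obstruction to $q'$-isolation for a \emph{single} $q$ is that $H$ might fail to be $q'$-isolated and $r'$-isolated for two different primes $q,r \in \mathfrak{P}$. The paper's argument (Proposition~\ref{ep65}) is combinatorial: from such a failure one manufactures elements $x_1, x_2 \in G$ whose commutator has a reduced form of length $4$ or $8$ (hence is nontrivial), yet $x_1$ dies in every $\mathcal{F}_p$-quotient with $p \ne q$ and $x_2$ dies in every $\mathcal{F}_p$-quotient with $p \ne r$. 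Proposition~\ref{ep61} then contradicts residual $\mathcal{FN}_{\mathfrak{P}}$-ness. Nothing in your sketch produces these witnesses.

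\textbf{Necessity in Statement~3.} You propose to reduce it to Statement~2 via the observation that residual $\Phi \cdot \mathcal{FN}_{\mathfrak{P}}$-ness of $G$ gives residual $\mathcal{FN}_{\mathfrak{P}}$-ness of $A$ and $B$. But $A$ and $B$ are already residually $\mathcal{FN}_{\mathfrak{P}}$ by~$(*)$; that buys nothing. The hypothesis of Statement~2 is that $G$ itself is residually $\mathcal{FN}_{\mathfrak{P}}$, which is strictly stronger than residual $\Phi \cdot \mathcal{FN}_{\mathfrak{P}}$-ness, so the reduction fails. The paper instead proves directly (Proposition~\ref{ep92}) that if $G$ is residually $\mathcal{F}_q \cdot \mathcal{FN}_{\mathfrak{P}}$ for every $q \in \mathfrak{P}$, then for each $p \in \mathfrak{P}$ there is a single subgroup $N \in \mathcal{F}_p^{*}(G)$ meeting $H(p)$ trivially --- obtained by taking $q \ne p$, mapping to an $\mathcal{F}_q \cdot \mathcal{FN}_{\mathfrak{P}}$-quotient injective on the finite group $H(p)$, and projecting onto the $p$-Sylow of the $\mathcal{FN}_{\mathfrak{P}}$-top. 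A composition series of $G/N$ then yields the sequences. Your ``inverse limit of $\mathcal{F}_p$-quotients'' does not supply this injective-on-$H(p)$ map under the weaker hypothesis.

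A minor point: in Statement~1 you write that the sequences make $G$ residually $\mathcal{F}_p$ for each $p$. They do not; they make $G$ $\mathcal{F}_p$-quasi-regular with respect to $H$ (Proposition~\ref{ep75}, case~$(\delta)$), which is what Theorem~\ref{et07} actually consumes. Also, your direct argument for $\mathcal{FN}_{\mathfrak{P}}$-separability of $H$ is workable, but the paper's route via Proposition~\ref{ep91} (reducing to $\mathfrak{P}'$-isolation, automatic here since $H$ is a periodic subgroup of the $\mathfrak{P}'$-torsion-free group $A$) is cleaner.
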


Let $\tau A$ and~$\tau B$ be~the~sets of~all elements of~finite order of~the~groups~$A$ and~$B$, respectively. If~the~indicated groups have homomorphisms onto~$\mathcal{BN}_{\mathfrak{P}}$\nobreakdash-groups acting injectively~not only on~$H$, but~also on~the~subgroups~$\operatorname{sgp}\{\tau A\}$ and~$\operatorname{sgp}\{\tau B\}$, then Theorem~\ref{et03} admits the~next equivalent formulation generalizing Theorem~4 from~\cite{Azarov2017IVM}.

\begin{etheorem}\label{et04}
Suppose that the~group $G = \langle A * B;\ H\rangle$ and~a~set of~primes~$\mathfrak{P}$ satisfy~$(*)$. Suppose also that $H$ is~periodic and~there exist homomorphisms of~the~groups~$A$ and~$B$ that map them onto~$\mathcal{BN}_{\mathfrak{P}}$\nobreakdash-groups and~act injectively on~the~subgroups~$\operatorname{sgp}\{\tau A\}$ and~$\operatorname{sgp}\{\tau B\}$. Then\textup{,} for~each $p \in \mathfrak{P}$\textup{,} the~sets $A(p) = p\textrm{-}\mathfrak{Rt}(A,1)$ and~$B(p) = p\textrm{-}\mathfrak{Rt}(B,1)$ are~finite normal subgroups of~$A$ and~$B$\textup{,} respectively\textup{,} $H$~is~$\mathcal{FN}_{\mathfrak{P}}$\nobreakdash-sep\-a\-ra\-ble in~$A$ and~$B$\textup{,} and~the~following statements hold.

\textup{1.}\hspace{1ex}The~group~$G$ is~residually an~$\mathcal{FN}_{\mathfrak{P}}$\nobreakdash-group if~$H$ is~$\mathcal{F}_{q}$\nobreakdash-sep\-a\-ra\-ble in~$A$ and~$B$ for~some $q \in \mathfrak{P}$ and\textup{,}~for~any $p \in \mathfrak{P}$\textup{,} the~groups~$A(p)$ and~$B(p)$ have normal series
$$
1 = A_{0} \leqslant A_{1} \leqslant \ldots \leqslant A_{k} = A(p)
\quad\text{and}\quad
1 = B_{0} \leqslant B_{1} \leqslant \ldots \leqslant B_{m} = B(p)
$$
with factors of~order~$p$ such that\textup{,} for~all $i \in \{0,\,1,\,\ldots,\,k\}$\textup{,} $j \in \{0,\,1,\,\ldots,\,m\}$\textup{,}

\makebox[1ex]{}\phantom{1.}--\hspace{1ex}$A_{i}$~is~a~normal subgroup of~$A$\textup{;}

\makebox[1ex]{}\phantom{1.}--\hspace{1ex}$B_{j}$~is~a~normal subgroup of~$B$\textup{;}

\makebox[1ex]{}\phantom{1.}--\hspace{1ex}$\{A_{i} \cap H \mid 0 \leqslant i \leqslant k\} = \{B_{j} \cap H \mid 0 \leqslant j \leqslant m\}$.

\textup{2.}\hspace{1ex}If~$G$ is~residually an~$\mathcal{FN}_{\mathfrak{P}}$\nobreakdash-group\textup{,} then $H$ is~$q^{\prime}$\nobreakdash-iso\-lat\-ed in~$A$ and~$B$ for~some $q \in \mathfrak{P}$ and\textup{,}~for~any $p \in \mathfrak{P}$\textup{,} there exist normal series described in~Statement~\textup{1} of~this theorem.{\parfillskip=0pt\par}

\textup{3.}\hspace{1ex}The~group~$G$ is~residually a~$\Phi \kern1pt{\cdot}\kern1pt \mathcal{FN}_{\mathfrak{P}}$\nobreakdash-group and~residually an~$\mathcal{F}_{q} \kern1pt{\cdot}\kern1pt \mathcal{FN}_{\mathfrak{P}}$\nobreakdash-group for~each prime~$q$ if and~only~if\textup{,}~for~any $p \in \mathfrak{P}$\textup{,} the~groups~$A(p)$ and~$B(p)$ have normal series described in~Statement~\textup{1}.
\end{etheorem}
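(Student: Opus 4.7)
The strategy is to reduce Theorem~\ref{et04} to Theorem~\ref{et03}. The hypothesis that the homomorphisms act injectively on $\operatorname{sgp}\{\tau A\}$ and $\operatorname{sgp}\{\tau B\}$ is strictly stronger than the injectivity on~$H$ required by~$(*)$, so Theorem~\ref{et03} already yields the $\mathcal{FN}_{\mathfrak{P}}$\nobreakdash-separability of~$H$ in~$A$ and~$B$. It then suffices to verify two things: that $A(p)$ and $B(p)$ are finite normal subgroups, and that, for each $p \in \mathfrak{P}$, the sequence condition in Theorem~\ref{et04} is equivalent to the sequence condition in Theorem~\ref{et03}.

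For the first assertion, fix a homomorphism $\alpha\colon A \to \bar{A}$ onto a $\mathcal{BN}_{\mathfrak{P}}$\nobreakdash-group that is injective on $\operatorname{sgp}\{\tau A\}$. In the nilpotent group~$\bar{A}$ the torsion elements form a locally finite subgroup that is the direct product of its primary components, and the $p$\nobreakdash-primary component is finite because $\bar{A}$ is $\mathfrak{P}$\nobreakdash-bounded. Since $\alpha$ is injective on $\operatorname{sgp}\{\tau A\} \supseteq \operatorname{sgp}\{A(p)\}$, the subgroup $\operatorname{sgp}\{A(p)\}$ embeds into this finite $p$\nobreakdash-group; all of its elements therefore have $p$\nobreakdash-power order, so $\operatorname{sgp}\{A(p)\} = A(p)$ is a finite subgroup. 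Normality follows because conjugation preserves orders; the argument for $B(p)$ is identical.

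For the equivalence of the sequence conditions, set $H(p) = H \cap A(p) = H \cap B(p)$. Given sequences $Q_i, R_i, S_i$ as in Theorem~\ref{et03}, each $Q_i = R_i \cap H(p)$ is a normal subgroup of~$A$, so $Q_0 \leqslant \ldots \leqslant Q_n = H(p)$ is an $A$\nobreakdash-invariant chain inside~$A(p)$ with factors of order~$p$. To extend it to an $A$\nobreakdash-invariant chain $A_0 \leqslant \ldots \leqslant A_k = A(p)$ with factors of order~$p$, I would exploit that the action of~$\bar{A}$ on the finite $p$\nobreakdash-group $\alpha(A(p))$ is, in effect, unipotent: the iterated commutators $[\alpha(A(p)), \bar{A}, \ldots, \bar{A}]$ are contained in the terms of the lower central series of~$\bar{A}$ and so vanish once more commutators are taken than the nilpotency class of~$\bar{A}$. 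The successive quotients are abelian $p$\nobreakdash-groups centralized by~$\bar{A}$ and each refines into $\bar{A}$\nobreakdash-invariant subgroups with factors of order~$p$; pulling this back along~$\alpha$ yields the desired extension inside~$A(p)$. The analogous construction on the $B$\nobreakdash-side produces $B_0 \leqslant \ldots \leqslant B_m = B(p)$, and since both $\{A_i \cap H\}$ and $\{B_j \cap H\}$ coincide with $\{Q_0, \ldots, Q_n\}$, the condition of Theorem~\ref{et04} holds.

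Conversely, given sequences $A_i, B_j$ as in Theorem~\ref{et04}, let $\tilde{Q}_0 < \ldots < \tilde{Q}_n = H(p)$ be the distinct values of the common set $\{A_i \cap H\} = \{B_j \cap H\}$; each factor has order~$p$ as a non-trivial subquotient of some $A_{i+1}/A_i$. Using the residual $\mathcal{FN}_{\mathfrak{P}}$\nobreakdash-ness of~$A$ together with the finiteness of~$A(p)$, choose a normal subgroup~$N$ of finite $p$\nobreakdash-index in~$A$ with $N \cap A(p) = 1$, pick indices $k(s)$ satisfying $A_{k(s)} \cap H = \tilde{Q}_s$, and set $R_s = N \cdot A_{k(s)}$ for $s < n$ and $R_n = A$. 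Dedekind's law gives $R_s \cap A(p) = A_{k(s)}$, whence $R_s \cap H(p) = \tilde{Q}_s$; the~$S_s$ are defined analogously on the $B$\nobreakdash-side. Statements~\textup{1},~\textup{2},~\textup{3} of Theorem~\ref{et04} then follow from the corresponding statements of Theorem~\ref{et03}. The principal obstacle is the previous step of extending an $A$\nobreakdash-invariant chain of~$H(p)$ to one of~$A(p)$ with factors of order~$p$; this uses essentially the nilpotence of~$\bar{A}$ to force the conjugation action of~$A$ on the finite $p$\nobreakdash-group~$A(p)$ to have every chief factor of order~$p$.
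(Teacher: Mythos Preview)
Your reduction to Theorem~\ref{et03} is the right strategy, and the direction ``Theorem~\ref{et04} $\Rightarrow$ Theorem~\ref{et03}'' is essentially the paper's argument: pick $C \in \mathcal{F}_{p}^{*}(A)$ with $C \cap A(p) = 1$, thin out the $A_{i}$'s to a subfamily $U_{0},\dots,U_{n}$ whose intersections with $H(p)$ are strictly increasing, and set $R_{i} = U_{i}C$; Dedekind's identity then gives $R_{i} \cap H(p) = U_{i} \cap H(p)$.

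The other direction, however, has a genuine gap. You write that ``each $Q_{i} = R_{i} \cap H(p)$ is a normal subgroup of~$A$'', and your whole extension argument (refining an $A$\nobreakdash-invariant chain of $H(p)$ to one of $A(p)$ via the unipotence of the $\bar A$\nobreakdash-action) rests on this. But $H$ is \emph{not} assumed normal in $A$ in Theorems~\ref{et03} and~\ref{et04}, so neither is $H(p) = A(p) \cap H$, and the intersection of the normal subgroup $R_{i}$ with the non-normal subgroup $H(p)$ has no reason to be $A$\nobreakdash-invariant. Without normality of the $Q_{i}$'s there is nothing to extend, and the chief-series argument you sketch cannot get started.

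The paper sidesteps this entirely. Rather than extending the $Q_{i}$'s, it first refines the sequence $R_{0} \leqslant \dots \leqslant R_{n} = A$ (after replacing $R_{0}$ by $R_{0} \cap C$ so that $R_{0} \cap A(p) = 1$) to a sequence $M_{0} \leqslant \dots \leqslant M_{r} = A$ of normal subgroups of~$A$ with successive quotients of order~$p$, and then sets $K_{i} = M_{i} \cap A(p)$. Since both $M_{i}$ and $A(p)$ are normal in~$A$, so is $K_{i}$; the factors $K_{i+1}/K_{i}$ have order $1$ or~$p$; and because the original $Q_{j}$'s already have factors of order exactly~$p$, any refinement of the $R_{i}$'s still meets $H(p)$ in one of the $Q_{j}$'s, so $\{K_{i} \cap H\} = \{M_{i} \cap H(p)\} = \{Q_{j}\}$. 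Removing duplicate terms from the $K_{i}$'s (and the analogous $L_{j} = N_{j} \cap B(p)$ on the $B$\nobreakdash-side) yields the series required by Theorem~\ref{et04}. The point is that one never needs the $Q_{i}$'s themselves to be $A$\nobreakdash-normal; one only needs \emph{some} $A$\nobreakdash-normal series of $A(p)$ whose trace on $H$ is $\{Q_{j}\}$, and intersecting a refinement of the $R_{i}$'s with $A(p)$ produces exactly that.
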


Let us now turn to~the~question on~the~conditions that are~necessary for~a~generalized free product to~be~residually a~nilpotent or~metanilpotent group.

\begin{etheorem}\label{et05}
Suppose that $G = \langle A * B;\ H\rangle$\textup{,} $\mathfrak{P}$ is~a~non-empty set of~primes\textup{,} and~$H$ is~nilpotent. Suppose also that

\makebox[4ex][l]{$(\alpha)$}$H$~does~not coincide with~its normalizer in~$A$ or~is~properly contained in~some subgroup of~$A$ satisfying a~non-triv\-i\-al identity\textup{;}

\makebox[4ex][l]{$(\beta\kern.3pt)$}$H$~does~not coincide with~its normalizer in~$B$ or~is~properly contained in~some subgroup of~$B$ satisfying a~non-triv\-i\-al identity.

Then the~following statements hold.

\textup{1.}\hspace{1ex}If~$G$ is~residually an~$\mathcal{FN}_{\mathfrak{P}}$\nobreakdash-group\textup{,} then $H$ is~$q^{\prime}$\nobreakdash-iso\-lat\-ed in~$A$ and~$B$ for~some $q \in \mathfrak{P}$.

\textup{2.}\hspace{1ex}If~$G$ is~residually a~$\mathcal{C}$\nobreakdash-group for~some subclass~$\mathcal{C}$ of~the~class~$\mathcal{F}_{\mathfrak{P}}$ and~$\mathcal{C}$ is~closed under~taking subgroups\textup{,} then $H$ is~$\mathcal{C}$\nobreakdash-sep\-a\-ra\-ble and~therefore $\mathfrak{P}^{\prime}$\nobreakdash-iso\-lat\-ed in~$A$ and~$B$.
\end{etheorem}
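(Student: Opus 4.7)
The plan is to prove both statements by contrapositive, using the normal form theorem for generalized free products (recalled in Section~\ref{es05}) together with conditions~$(\alpha)$ and~$(\beta)$ to convert failures of isolation or separability of~$H$ in the factors into failures of residuality in~$G$. As a preliminary reduction for Statement~2, once $\mathcal{C}$-separability of~$H$ in~$A$ and~$B$ is established, the $\mathfrak{P}'$-isolation assertion follows from Proposition~\ref{ep32} (since $\mathcal{C} \subseteq \mathcal{F}_{\mathfrak{P}}$); hence the substantive task there is $\mathcal{C}$-separability.

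For Statement~2, fix $x \in A \setminus H$ (the case $x \in B$ being symmetric). The aim is to produce a homomorphism of~$A$ onto a group in~$\mathcal{C}$ that carries $x$ outside the image of~$H$. The idea is to use~$(\alpha)$ to pick an auxiliary element $c \in A \setminus H$---either normalizing $H$, or lying in a proper overgroup of~$H$ that satisfies a non-trivial identity---and to use~$(\beta)$ analogously to pick $b \in B \setminus H$, and then to build a nested commutator word $g_{x} = g(x,c,b) \in G$. The normal form theorem ensures $g_{x} \ne 1$, so residual $\mathcal{C}$-ness yields a homomorphism $\phi\colon G \to L \in \mathcal{C}$ with $g_{x}\phi \ne 1$. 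The design of~$g_{x}$, exploiting the normalizing or identity property of~$c$, must force $x\phi \notin H\phi$ whenever $g_{x}\phi \ne 1$; subgroup-closure of~$\mathcal{C}$ then lets us restrict $\phi$ to~$A$ to obtain the required separation.

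For Statement~1, I would argue by contradiction. Suppose that for every $q \in \mathfrak{P}$ the subgroup~$H$ fails to be $q'$-isolated in~$A$ or in~$B$; so there exist $a_{q}$ outside~$H$ (in~$A$ or in~$B$) and a prime $r_{q} \ne q$ with $a_{q}^{r_{q}} \in H$. Substituting $x = a_{q}$ into the construction above produces a non-trivial $g_{q} \in G$, and residual $\mathcal{FN}_{\mathfrak{P}}$-ness yields $\phi\colon G \to L$ with $L$ a finite nilpotent $\mathfrak{P}$-group and $g_{q}\phi \ne 1$. Since $r_{q} \notin \mathfrak{P}$, the operation of raising to the $r_{q}$-th power is an automorphism of~$L$, so $a_{q}^{r_{q}} \in H$ forces $a_{q}\phi \in H\phi$; but then the normalizing or identity property of~$c$ is arranged to make $g_{q}\phi = 1$ in~$L$, a contradiction. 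Varying $q$ over~$\mathfrak{P}$ shows that at most one prime can witness the failure of isolation, and it must itself lie in~$\mathfrak{P}$.

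The main obstacle is the identity subcase of~$(\alpha)$ and~$(\beta)$: translating a generic non-trivial identity on a subgroup $K \supsetneq H$ into a concrete commutator relation that controls the images of $K$-elements in a nilpotent $\mathfrak{P}$-quotient. In the normalizer subcase this is immediate from $[c,H] \subseteq H$, which keeps the word $g_{x}$ tractable; the identity subcase will likely rely on a separate structural lemma, established earlier in the paper, extracting suitable commutator consequences from any non-trivial group law on~$K$.
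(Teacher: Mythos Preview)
Your outline for Statement~2 is essentially the paper's argument in contrapositive form: from a hypothetical non-separable $u \in A \setminus H$ one builds a word in $G$ with nontrivial reduced form that dies under every $\mathcal{C}$-homomorphism. One correction of emphasis: to prove separability of $H$ in $A$ the paper uses only condition~$(\beta)$, taking the auxiliary elements from $N \leqslant B$, not from $A$---the witness $u$ already supplies the $A$-syllables, and one needs $B$-syllables to get reduced length~$>1$. Concretely the paper uses $\upsilon_c(u, b^{-1}ub)$ in the normalizer subcase (Proposition~\ref{ep63}; here $c$ is the nilpotency class of $H$, so once $u\sigma \in H\sigma$ the word lands in $\gamma_{c+1}(H\sigma) = 1$) and $\omega(u, b_1, b_2)$ in the identity subcase (Shirvani's Proposition~\ref{ep64}, which puts an arbitrary identity into a shape guaranteeing a long reduced form). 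Condition~$(\alpha)$ is used symmetrically for separability in~$B$.

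Your argument for Statement~1, however, has a real gap. From failure of $q'$-isolation you correctly extract $a_q \notin H$ and a prime $r_q \ne q$ with $a_q^{r_q} \in H$, but you then write ``since $r_q \notin \mathfrak{P}$''---and nothing forces this: $r_q$ may well lie in~$\mathfrak{P}$. When it does, raising to the $r_q$-th power is not a bijection on a finite nilpotent $\mathfrak{P}$-group, so you cannot conclude $a_q\phi \in H\phi$, and the single-element construction yields no contradiction. The paper proceeds differently. It first negates the weaker statement ``$H$ is $p'$-isolated in $A$ and $B$ for \emph{some} prime $p$'' (not restricted to $\mathfrak{P}$); iterating once produces \emph{two} elements $u, v \notin H$ and \emph{two distinct} primes $q \ne r$ with $u^q, v^r \in H$. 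From these it manufactures words $g_1, g_2$ (through a six-case analysis according to whether $u, v$ lie in $A$ or $B$ and whether $H$ is normal in $M$, $N$) such that $[g_1, g_2]$ has reduced length~$>1$, yet for every homomorphism~$\sigma$ onto an $\mathcal{F}_p$-group one has $g_1\sigma = 1$ if $p \ne q$ and $g_2\sigma = 1$ if $p \ne r$. Proposition~\ref{ep61} then contradicts residual $\mathcal{FN}_\mathfrak{P}$-ness. Having secured $p'$-isolation for some prime $p$, the paper combines this with the $\mathfrak{P}'$-isolation coming from Statement~2 to conclude that either $p \in \mathfrak{P}$ or $H$ is fully isolated (and hence $q'$-isolated for any $q \in \mathfrak{P}$). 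The two-prime commutator device of Proposition~\ref{ep61} is the missing idea.
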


The~next corollary is~obtained by~combining Theorems~\ref{et01},~\ref{et05} and~Proposition~\ref{ep91} given below.

\begin{ecorollary}\label{ec02}
Suppose that the~group $G = \langle A * B;\ H\rangle$ and~a~set of~primes~$\mathfrak{P}$ satisfy~$(*)$\textup{,} Conditions~$(\alpha)$ and~$(\beta)$ of~Theorem~\textup{\ref{et05},} and~at~least one of~Conditions~$(\alpha)$\nobreakdash---$(\gamma)$ of~Theorem~\textup{\ref{et01}}. Then Statements~\textup{1\nobreakdash---3} of~Theorem~\textup{\ref{et02}} hold.
\end{ecorollary}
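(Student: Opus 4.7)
The plan is to derive each of Statements~1\nobreakdash--3 of~Theorem~\ref{et02} by~directly combining the~three cited ingredients, since every conclusion matches a~clause of~one of~them once the~hypotheses are~checked. For~Statement~1, the~hypotheses of~Corollary~\ref{ec02} already include~$(*)$ together with~at~least one of~$(\alpha)$\nobreakdash--$(\gamma)$ of~Theorem~\ref{et01}, so~Statement~1 of~Theorem~\ref{et01} applies verbatim and~yields exactly the~desired implication.

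For~Statement~2, the~aim is~to~invoke Statement~1 of~Theorem~\ref{et05}: its~assumptions $(\alpha)$ and~$(\beta)$ are~assumed outright, and~the~hypothesis on~$G$ coincides with~the~hypothesis of~the~statement to~prove. The~only remaining ingredient of~Theorem~\ref{et05} is~that~$H$ be~nilpotent, and~this is~free of~charge: clause~$(*)$(iv) provides a~homomorphism of~$A$ onto~a~$\mathcal{BN}_{\mathfrak{P}}$\nobreakdash-group that~acts injectively on~$H$, hence $H$ embeds into~a~nilpotent group and~is~itself nilpotent. Theorem~\ref{et05}(1) then delivers the~required $q^{\prime}$\nobreakdash-iso\-la\-tion of~$H$ in~$A$ and~$B$ for~some $q \in \mathfrak{P}$.

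For~Statement~3, the~``if'' direction follows from~Statement~2 of~Theorem~\ref{et01}, whose hypotheses are~in~force by~assumption. The~converse ``only~if'' direction is~precisely the~content of~Proposition~\ref{ep91}: once $G$ is~known to~be~residually a~$\Phi \kern1pt{\cdot}\kern1pt \mathcal{FN}_{\mathfrak{P}}$\nobreakdash-group (and~residually an~$\mathcal{F}_{p} \kern1pt{\cdot}\kern1pt \mathcal{FN}_{\mathfrak{P}}$\nobreakdash-group for~every prime~$p$), that~proposition forces $H$ to~be~$\mathcal{FN}_{\mathfrak{P}}$\nobreakdash-sep\-a\-ra\-ble in~both free factors, closing the~equivalence. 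The~main delicate point, which I~would verify carefully, is~that Proposition~\ref{ep91} applies here without~any extra restriction (such~as~normality of~$H$ in~$A$ and~$B$) beyond what~$(*)$ and~conditions~$(\alpha)$\nobreakdash--$(\gamma)$ of~Theorem~\ref{et01} already provide; once that~compatibility is~confirmed, the~remainder reduces to~routine hypothesis\nobreakdash-matching.
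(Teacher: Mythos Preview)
Your argument for Statements~1 and~2, and for the ``if'' direction of Statement~3, is correct and matches the paper's intended route. The gap is in the ``only~if'' direction of Statement~3.

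Proposition~\ref{ep91} does not take as input any residual property of~$G$; its content is simply the equivalence, under~$(*)$, between $\mathfrak{P}^{\prime}$\nobreakdash-isolation of~$H$ in the free factors and $\mathcal{FN}_{\mathfrak{P}}$\nobreakdash-separability of~$H$ there. So to apply it you must first produce the $\mathfrak{P}^{\prime}$\nobreakdash-isolation of~$H$ in~$A$ and~$B$, and nothing in your argument does that. The missing ingredient is Statement~2 of Theorem~\ref{et05}: if $G$ is residually an $\mathcal{F}_{p} \kern1pt{\cdot}\kern1pt \mathcal{FN}_{\mathfrak{P}}$\nobreakdash-group for some $p \in \mathfrak{P}$, then in particular $G$ is residually a $\mathcal{C}$\nobreakdash-group for the subgroup-closed class $\mathcal{C} = \mathcal{F}_{p} \kern1pt{\cdot}\kern1pt \mathcal{FN}_{\mathfrak{P}} \subseteq \mathcal{F}_{\mathfrak{P}}$, whence Theorem~\ref{et05}(2) gives that $H$ is $\mathfrak{P}^{\prime}$\nobreakdash-isolated in~$A$ and~$B$. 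Only then does Proposition~\ref{ep91} upgrade this to $\mathcal{FN}_{\mathfrak{P}}$\nobreakdash-separability. Your ``delicate point'' about extra hypotheses on Proposition~\ref{ep91} is a red herring --- the proposition needs only~$(*)$ --- but the real issue is that you have skipped the bridge from the residual hypothesis on~$G$ to the isolation hypothesis that the proposition actually consumes.
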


The~following example shows, in~particular, that, in~Theorem~\ref{et05}, none of~Conditions~$(\alpha)$ and~$(\beta)$ can be~omitted.

\begin{eexample}\label{ex01}
Suppose that $\mathfrak{P}$ is~a~non-empty set of~prime numbers, which does~not coincide with~the~set of~all primes, and~$G = \langle a_{1}, a_{2}, b;\ a_{2} = b^{n}\rangle$, where $n \in \mathfrak{P}^{\prime}$. Then $G$ is~the~generalized free product of~the~non-abelian free group~$A = \operatorname{sgp}\{a_{1}, a_{2}\}$ and~the~infinite cyclic group~$B = \operatorname{sgp}\{b\}$ with~the~amalgamated cyclic subgroup~$H = \operatorname{sgp}\{b^{n}\}$, which lies in~the~center of~$B$ and~is~a~retract of~$A$. At~the~same time, the~elements~$a_{1}$ and~$b$ freely generate the~group~$G$ and~therefore the~latter is~residually an~$\mathcal{FN}_{\mathfrak{P}}$\nobreakdash-group (see Proposition~\ref{ep38} below). It~is~also obvious that the~retraction of~$A$ and~the~identity homomorphism of~$B$ act injectively on~$H$ and~map~$A$ and~$B$ onto~infinite cyclic groups, which belong to~the~class~$\mathcal{BN}_{\mathfrak{P}}$. Thus, $G$ satisfies all the~conditions of~Theorem~\ref{et01}. At~the~same time, $H$~is~neither $\mathfrak{P}^{\prime}$\nobreakdash-iso\-lat\-ed nor~$\mathcal{FN}_{\mathfrak{P}}$\nobreakdash-sep\-a\-ra\-ble in~$B$. Also, it~is~neither $q^{\prime}$\nobreakdash-iso\-lat\-ed nor~$\mathcal{F}_{q}$\nobreakdash-sep\-a\-ra\-ble in~this group for~any $q \in \mathfrak{P}$.
\end{eexample}

\begin{eexample}\label{ex02}
Let
$$
G = \big\langle a,b,c;\ a^{9} = [a^{3},b] = [a^{3},c] = c^{-1}bcb = 1\big\rangle
$$
and~$\mathfrak{P} = \{2, 3\}$. It~is~obvious that

\textup{1)}\hspace{1ex}$G$~is~the~generalized free product of~the~groups~$A = \operatorname{sgp}\{a\}$ and~$B = \operatorname{sgp}\{a^{3},b,c\}$ with~the~subgroup~$H = \operatorname{sgp}\{a^{3}\}$ amalgamated;

\textup{2)}\hspace{1ex}$A$~is~a~finite cyclic group of~order~$9$ and~therefore belongs to~the~class~$\mathcal{F}_{3} \subseteq \mathcal{BN}_{\mathfrak{P}}$;{\parfillskip=0pt\par}

\textup{3)}\hspace{1ex}$B$~can be~decomposed into~the~direct product of~the~subgroups~$H$ and~$C = \operatorname{sgp}\{b, c\}$; moreover, $C$ is~the~non-abelian extension of~the~infinite cyclic group~$\operatorname{sgp}\{b\}$ by~the~infinite cyclic group~$\operatorname{sgp}\{c\}$;

\textup{4)}\hspace{1ex}the~finite cyclic group~$H$ lies in~the~center of~$G$ and~is~a~retract of~$B$; the~identity homomorphism of~$A$ and~the~retraction of~$B$ act injectively on~$H$ and~map~$A$ and~$B$ onto~groups from~the~class~$\mathcal{F}_{3} \subseteq \mathcal{BN}_{\mathfrak{P}}$;

\textup{5)}\hspace{1ex}if\kern-.5pt{}~we\kern-.5pt{} define\kern-.5pt{} the\kern-.5pt{}~group\kern-.5pt{}~$\mathfrak{G}(p)$\kern-.5pt{} in\kern-.5pt{}~the\kern-.5pt{}~same\kern-.5pt{} way\kern-.5pt{} as\kern-.5pt{}~in\kern-.5pt{}~Theorem\kern-.5pt{}~\ref{et02},\kern-.5pt{} then\kern-.5pt{} $\mathfrak{G}(3)\kern-1.5pt{} \cong\kern-3.5pt{} \operatorname{Aut}_{G}(\kern-1pt{}H)\kern-1.5pt{} =\nolinebreak\kern-2pt{} 1$ and~$\mathfrak{G}(2) = \operatorname{Aut}_{G/H}(1) = 1$;

\textup{6)}\hspace{1ex}the~following sequences satisfy the~conditions of~Theorem~\ref{et03}: $1 = Q_{0} = H(p)$, $R_{0} = A$, and~$S_{0} = B$ if~$p = 2$; $1 = Q_{0} \leqslant Q_{1} = H(p) = H$, $1 = R_{0} \leqslant R_{1} = A$, and~$C = S_{0} \leqslant S_{1} = B$ if~$p = 3$;

\textup{7)}\hspace{1ex}the~following normal series satisfy the~conditions of~Theorem~\ref{et04}: $1 = A_{0} = A(p)$ and~$1 = B_{0} = B(p)$ if~$p = 2$; $1 = A_{0} \leqslant H \leqslant A = A(p)$ and~$1 = B_{0} \leqslant H = B(p)$ if~$p = 3$.{\parfillskip=0pt\par}

It is~well known (see, for~example,~\cite[Theorem~2]{Moldavanskii2018CA}) that $C$ is~residually an~$\mathcal{F}_{2}$\nobreakdash-group. Therefore, $B$ is~residually an~$\mathcal{FN}_{\mathfrak{P}}$\nobreakdash-group and~$G$ satisfies all the~conditions of~Theorems~\ref{et01}\nobreakdash---\ref{et04} and~Corollaries~\ref{ec01},~\ref{ec02}. Moreover, since the~group~$C \cong B/H$ is~tor\-sion-free, $H$~is~isolated in~$B$ and~therefore $3^{\prime}$\nobreakdash-iso\-lat\-ed in~$A$ and~$B$. However, $G$ is~not residually an~$\mathcal{FN}_{\mathfrak{P}}$\nobreakdash-group by~Proposition~\ref{ep62} given below.
\end{eexample}

Examples~\ref{ex01} and~\ref{ex02} imply the~following.

\textup{1.}\hspace{1ex}In~Theorem~\ref{et01}, the~$\mathcal{FN}_{\mathfrak{P}}$\nobreakdash-sep\-a\-ra\-bil\-ity of~$H$ in~$A$ and~$B$ is, in~the~general case, not~necessary for~$G$ to~be~residually an~$\mathcal{F}_{q} \kern1pt{\cdot}\kern1pt \mathcal{FN}_{\mathfrak{P}}$\nobreakdash-group, where $q \in \mathfrak{P}$; the~fact that $G$ is~residually an~$\mathcal{FN}_{\mathfrak{P}}$\nobreakdash-group does~not necessarily mean that $H$ is~$\mathcal{F}_{q}$\nobreakdash-sep\-a\-ra\-ble or~even $q^{\prime}$\nobreakdash-iso\-lat\-ed in~$A$ and~$B$ for~some $q \in \mathfrak{P}$.

\textup{2.}\hspace{1ex}In~Theorems~\ref{et02}\nobreakdash---\ref{et04} and~Corollaries~\ref{ec01},~\ref{ec02}, the~property of~$H$ to~be~$q^{\prime}$\nobreakdash-iso\-lat\-ed in~$A$ and~$B$ for~some $q \in \mathfrak{P}$ is~necessary, but~not~sufficient for~$G$ to~be~residually an~$\mathcal{FN}_{\mathfrak{P}}$\nobreakdash-group. The~question on~the~necessity of~the~$\mathcal{F}_{q}$\nobreakdash-sep\-a\-ra\-bil\-ity of~$H$ remains open.

\medskip

Let us note that $\mathcal{BN}_{\mathfrak{P}} \subseteq \mathcal{BN}_{q}$ for~any $q \in \mathfrak{P}$ and~therefore, by~Proposition~\ref{ep44} given below, a~subgroup of~a~$\mathcal{BN}_{\mathfrak{P}}$\nobreakdash-group~$X$ is~$\mathcal{F}_{q}$\nobreakdash-sep\-a\-ra\-ble ($\mathcal{FN}_{\mathfrak{P}}$\nobreakdash-sep\-a\-ra\-ble) in~this group if and~only~if~it is~$q^{\prime}$\nobreakdash-iso\-lat\-ed (respectively, $\mathfrak{P}^{\prime}$\nobreakdash-iso\-lat\-ed) in~$X$. Since every $\mathcal{BN}_{\mathfrak{P}}$\nobreakdash-group also satisfies a~non-triv\-i\-al identity, the~next corollary follows from~Theorems~\ref{et01}\nobreakdash---\ref{et05}.

\begin{ecorollary}\label{ec03}
Suppose that $G = \langle A * B;\ H\rangle$\textup{,} $\mathfrak{P}$ is~a~non-empty set of~primes\textup{,} $A$~and~$B$ are~$\mathfrak{P}^{\prime}$\nobreakdash-tor\-sion-free $\mathcal{BN}_{\mathfrak{P}}$\nobreakdash-groups\textup{,} and~$A \ne H \ne B$. Then the~following statements hold.{\parfillskip=0pt\par}

\textup{1.}\hspace{1ex}If~at~least one of~the~conditions~$(\alpha)$\nobreakdash---$(\gamma)$ of~Theorem~\textup{\ref{et01}} is~satisfied\textup{,} then $G$ is~residually an~$\mathcal{FN}_{\mathfrak{P}}$\nobreakdash-group \textup{(}an~$\mathcal{FN}_{\mathfrak{P}} \kern1pt{\cdot}\kern1pt \mathcal{FN}_{\mathfrak{P}}$\nobreakdash-group\textup{)} if and~only~if~$H$ is~$q^{\prime}$\nobreakdash-iso\-lat\-ed in~$A$ and~$B$ for~some $q \in \mathfrak{P}$ \textup{(}respectively\textup{,} $H$~is~$\mathfrak{P}^{\prime}$\nobreakdash-iso\-lat\-ed in~these groups\textup{)}.

\textup{2.}\hspace{1ex}If~$H$ is~normal in~$A$ and~$B$ and\textup{,}~for~each $p \in \mathfrak{P}$\textup{,} at~least one of~the~conditions~$(\alpha)$\nobreakdash---$(\gamma)$ of~Theorem~\textup{\ref{et02}} is~satisfied\textup{,} then $G$ is~residually an~$\mathcal{FN}_{\mathfrak{P}}$\nobreakdash-group \textup{(}an~$\mathcal{FN}_{\mathfrak{P}} \kern1pt{\cdot}\kern1pt \mathcal{FN}_{\mathfrak{P}}$\nobreakdash-group\textup{)} if and~only~if~$H$ is~$q^{\prime}$\nobreakdash-iso\-lat\-ed in~$A$ and~$B$ for~some $q \in \mathfrak{P}$ \textup{(}respectively\textup{,} $H$~is~$\mathfrak{P}^{\prime}$\nobreakdash-iso\-lat\-ed in~these groups\textup{)}.

\textup{3.}\hspace{1ex}If~$H$ is~periodic\textup{,} then the~following statements are~equivalent\textup{:}

\makebox[1ex]{}\phantom{1.}\makebox[4ex][l]{$(\alpha)$}$G$~is~residually an~$\mathcal{FN}_{\mathfrak{P}}$\nobreakdash-group\textup{;}

\makebox[1ex]{}\phantom{1.}\makebox[4ex][l]{$(\beta\kern.3pt)$}for~some $q \in \mathfrak{P}$\textup{,} $H$~is~$q^{\prime}$\nobreakdash-iso\-lat\-ed in~$A$ and~$B$\textup{,} and\textup{,}~for~any $p \in \mathfrak{P}$\textup{,} there exist sequences of~subgroups described in~Statement~\textup{1} of~Theorem~\textup{\ref{et03};}

\makebox[1ex]{}\phantom{1.}\makebox[4ex][l]{$(\gamma\kern1pt)$}for~some $q \in \mathfrak{P}$\textup{,} $H$~is~$q^{\prime}$\nobreakdash-iso\-lat\-ed in~$A$ and~$B$\textup{,} and\textup{,}~for~any $p \in \mathfrak{P}$\textup{,} there exist normal series described in~Statement~\textup{1} of~Theorem~\textup{\ref{et04}}.
\end{ecorollary}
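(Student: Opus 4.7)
The plan is to deduce the corollary by combining Theorems~\ref{et01}--\ref{et05} with the separability-to-isolation dictionary indicated in the paragraph preceding the statement. First I would verify Condition~$(*)$. Parts~$(\mathfrak{i})$, $(\mathfrak{ii})$ are among the hypotheses. For~$(\mathfrak{iii})$, any $\mathfrak{P}^{\prime}$-torsion-free $\mathcal{BN}_{\mathfrak{P}}$-group is residually $\mathcal{FN}_{\mathfrak{P}}$: by Proposition~\ref{ep44} its trivial subgroup, being $\mathfrak{P}^{\prime}$-isolated, is $\mathcal{FN}_{\mathfrak{P}}$-separable. For~$(\mathfrak{iv})$, the identity homomorphisms $A \to A$ and $B \to B$ surject onto $\mathcal{BN}_{\mathfrak{P}}$-groups and are trivially injective on~$H$.

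Next I would record that every $\mathcal{BN}_{\mathfrak{P}}$-group is nilpotent and hence satisfies a non-trivial (commutator) identity. Since $A \ne H \ne B$, Conditions~$(\alpha)$, $(\beta)$ of Theorem~\ref{et05} hold automatically in every item of the corollary. I would then fix the two translations supplied by Proposition~\ref{ep44} for the $\mathcal{BN}_{\mathfrak{P}}$-groups $A$ and $B$: $H$ is $\mathcal{F}_q$-separable iff it is $q^{\prime}$-isolated (for $q \in \mathfrak{P}$), and $H$ is $\mathcal{FN}_{\mathfrak{P}}$-separable iff it is $\mathfrak{P}^{\prime}$-isolated.

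With these preparations each statement is a direct combination. In Statement~1, the sufficiency of $q^{\prime}$-isolation is Theorem~\ref{et01}(1) after translating to $\mathcal{F}_q$-separability; its necessity is Theorem~\ref{et05}(1). The sufficiency of $\mathfrak{P}^{\prime}$-isolation for residual $\mathcal{FN}_{\mathfrak{P}} \kern1pt{\cdot}\kern1pt \mathcal{FN}_{\mathfrak{P}}$-ness follows from Theorem~\ref{et01}(2) applied to any $p \in \mathfrak{P}$, using $\mathcal{F}_p \kern1pt{\cdot}\kern1pt \mathcal{FN}_{\mathfrak{P}} \subseteq \mathcal{FN}_{\mathfrak{P}} \kern1pt{\cdot}\kern1pt \mathcal{FN}_{\mathfrak{P}}$; its necessity follows from Theorem~\ref{et05}(2) with $\mathcal{C} = \mathcal{F}_{\mathfrak{P}}$, which is subgroup-closed and contains $\mathcal{FN}_{\mathfrak{P}} \kern1pt{\cdot}\kern1pt \mathcal{FN}_{\mathfrak{P}}$ (an extension of one finite $\mathfrak{P}$-group by another is itself a finite $\mathfrak{P}$-group). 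Statement~2 is argued identically, replacing Theorem~\ref{et01} by Theorem~\ref{et02}; here Theorem~\ref{et02}(2) supplies the necessity of $q^{\prime}$-isolation without any appeal to Theorem~\ref{et05}. Statement~3 is obtained by the same bookkeeping applied to Theorems~\ref{et03} and~\ref{et04}: the existence of the sequences (resp.\ normal series) appears in parts~(1)--(3) of those theorems, while the $q^{\prime}$-isolation clause follows from their part~(2) (or equivalently from Theorem~\ref{et05}(1)).

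No step requires new argumentation; the only hazard is bookkeeping. The main obstacle is keeping the separability hypotheses of Theorems~\ref{et01}--\ref{et05} in lock-step with the isolation conclusions of the corollary via Proposition~\ref{ep44}, and comparing the extension classes $\mathcal{F}_p \kern1pt{\cdot}\kern1pt \mathcal{FN}_{\mathfrak{P}}$, $\mathcal{FN}_{\mathfrak{P}} \kern1pt{\cdot}\kern1pt \mathcal{FN}_{\mathfrak{P}}$, and $\mathcal{F}_{\mathfrak{P}}$ correctly in each of the two directions of each equivalence.
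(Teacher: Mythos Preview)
Your proposal is correct and follows essentially the same route as the paper: verify Condition~$(*)$ via Proposition~\ref{ep44} and the identity maps, note that $\mathcal{BN}_{\mathfrak{P}}$-groups satisfy a non-trivial identity so that Theorem~\ref{et05} applies, and then use Proposition~\ref{ep44} (together with $\mathcal{BN}_{\mathfrak{P}} \subseteq \mathcal{BN}_{q}$) to pass between isolation and separability in Theorems~\ref{et01}--\ref{et04}. The only points worth tightening are that Theorem~\ref{et05} also requires $H$ to be nilpotent (which holds since $H \leqslant A \in \mathcal{BN}_{\mathfrak{P}}$ and $\mathcal{BN}_{\mathfrak{P}}$ is subgroup-closed by Proposition~\ref{ep42}), and that for Statement~3 you should record that the extra hypothesis of Theorem~\ref{et04} on $\operatorname{sgp}\{\tau A\}$, $\operatorname{sgp}\{\tau B\}$ is met by the identity homomorphisms.
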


\pagebreak

When the~groups~$A$ and~$B$ are~finitely generated, the~group $G = \langle A * B;\ H\rangle$ is~residually nilpotent if and~only~if~it is~residually a~finite nilpotent group. Therefore, Statement~1 of~Corollary~\ref{ec03} generalizes

--\hspace{1ex}the~criteria for~the~residual nilpotence of~generalized free products of~a)~two finitely generated abelian groups~\cite[Theorem~5]{Ivanova2004PhD}; b)~two finitely generated nilpotent groups with~a~cyclic amalgamated subgroup~\cite[Theorem~7]{Ivanova2004PhD}; c)~two finitely generated tor\-sion-free nilpotent groups with~an~amalgamated subgroup lying in~the~center of~each free factor~\cite[Corollary of~Theorem~6]{Ivanova2004PhD};

--\hspace{1ex}the~criteria for~the~residual $p$\nobreakdash-fi\-nite\-ness of~generalized free products of~a)~two finitely generated abelian groups~\cite[Corollary~4.8]{KimLeeMcCarron2008KM}; b)~two finitely generated nilpotent groups with~a~cyclic amalgamated subgroup~\cite[Theorem~6]{Ivanova2004PhD} (for~the~case of~tor\-sion-free factors, this result was obtained earlier in~\cite[Theorem~4.4]{KimTang1998JA}); c)~two finitely generated nilpotent groups with~an~amalgamated subgroup lying in~the~center of~each free factor~\cite[Corollary~4.7]{KimLeeMcCarron2008KM}.

\section{Some auxiliary statements}\label{es03}

Everywhere below, if~$\mathcal{C}$ is~an~arbitrary class of~groups and~$X$ is~a~group, then $\mathcal{C}^{*}(X)$ denotes the~family of~normal subgroups of~$X$ defined as~follows: $Y \in \mathcal{C}^{*}(X)$ if and~only~if $X/Y \in \mathcal{C}$.

\begin{eproposition}\label{ep31}
Suppose that $\mathcal{C}$ is~a~class of~groups closed under~taking subgroups and~direct products of~a~finite number of~factors. Suppose also that $X$ is~a~group\textup{,} $Y$~and~$Z$ are~subgroups of~$X$. Then the~following statements hold.

\textup{1.}\hspace{1ex}If~$Y \in \mathcal{C}^{*}(X)$\textup{,} then $Y \cap Z \in \mathcal{C}^{*}(Z)$.

\textup{2.}\hspace{1ex}If~$Y,Z \in \mathcal{C}^{*}(X)$\textup{,} then $Y \cap Z \in \mathcal{C}^{*}(X)$.

\textup{3.}\hspace{1ex}If~$X$ is~residually a~$\mathcal{C}$\nobreakdash-group\textup{,} then\textup{,} for~any finite set $S \subseteq X \setminus \{1\}$\textup{,} there exists a~subgroup $T \in \mathcal{C}^{*}(X)$ such that $T \cap S = \varnothing$.
\end{eproposition}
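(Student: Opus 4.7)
The plan is to prove the three statements in order, each building on the previous one via standard isomorphism theorems together with the closure hypotheses on $\mathcal{C}$.

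For Statement~1, I would argue via the second isomorphism theorem. Since $Y$ is normal in $X$, the product $YZ$ is a subgroup of $X$, the intersection $Y \cap Z$ is normal in $Z$, and the map $Z \to X/Y$ defined by $z \mapsto zY$ has kernel $Y \cap Z$ and image $YZ/Y$. Hence $Z/(Y \cap Z) \cong YZ/Y$ embeds into $X/Y$, which lies in $\mathcal{C}$ by hypothesis. Closure of $\mathcal{C}$ under taking subgroups then gives $Z/(Y \cap Z) \in \mathcal{C}$, i.e.\ $Y \cap Z \in \mathcal{C}^{*}(Z)$.

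For Statement~2, I would use the diagonal homomorphism $\varphi \colon X \to X/Y \times X/Z$ sending $x$ to $(xY,xZ)$. Its kernel is exactly $Y \cap Z$, so $X/(Y \cap Z)$ embeds in $X/Y \times X/Z$. Because $\mathcal{C}$ is closed under finite direct products, the codomain lies in $\mathcal{C}$; because $\mathcal{C}$ is closed under subgroups, so does $X/(Y \cap Z)$, giving $Y \cap Z \in \mathcal{C}^{*}(X)$.

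For Statement~3, I would proceed element by element using the definition of residual $\mathcal{C}$-ness. For each $s \in S$ there is a homomorphism $\sigma_{s} \colon X \twoheadrightarrow C_{s}$ with $C_{s} \in \mathcal{C}$ and $s\sigma_{s} \ne 1$; setting $T_{s} = \ker \sigma_{s}$ we have $X/T_{s} \cong C_{s} \in \mathcal{C}$ and $s \notin T_{s}$. Then $T = \bigcap_{s \in S} T_{s}$ satisfies $T \cap S = \varnothing$, and iterating Statement~2 over the finite set $S$ shows $T \in \mathcal{C}^{*}(X)$. No step looks like a genuine obstacle; the only point requiring a moment of care is verifying that $Y \cap Z$ is indeed normal in the relevant group in each part (which follows immediately from normality of $Y$ in $X$), and that the inductive application of Statement~2 is legitimate, which it is because $S$ is finite.
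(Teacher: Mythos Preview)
Your proof is correct and follows essentially the same approach as the paper: the second isomorphism theorem for Statement~1, the diagonal embedding into $X/Y \times X/Z$ (which the paper attributes to Remak's theorem) for Statement~2, and the intersection of finitely many kernels for Statement~3.
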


\begin{proof}
Indeed, if~$Y \in \mathcal{C}^{*}(X)$, then $Z/Y \cap Z \cong ZY/Y \leqslant X/Y \in \mathcal{C}$ and~since $\mathcal{C}$ is~closed under~taking subgroups, $Z/Y \cap Z \in \mathcal{C}$. If~$Y,Z \in \mathcal{C}^{*}(X)$, then, by~Remak's theorem (see, for~example,~\cite[Theorem~4.3.9]{KargapolovMerzlyakov1982}), the~quotient group~$X/Y \cap Z$ can be~embedded into~the~direct product of~the~groups~$X/Y$ and~$X/Z$. Hence, this group belongs to~$\mathcal{C}$ because the~latter is~closed under~taking subgroups and~direct products. If~$X$ is~residually a~$\mathcal{C}$\nobreakdash-group and~$S \subseteq X \setminus \{1\}$, then, for~each element $s \in S$, there exists a~subgroup $T_{s} \in\nolinebreak \mathcal{C}^{*}(X)$ such that $s \notin T_{s}$. Since $S$ is~finite, it~follows from~Statement~2 that the~subgroup $T = \bigcap_{s \in S} T_{s}$ is~desired.
\end{proof}

\begin{eproposition}\label{ep32}
\textup{\cite[Proposition~5]{SokolovTumanova2016SMJ}}
Suppose that $\mathcal{C}$ is~a~class of~groups consisting only of~periodic groups\textup{,} $X$~is~a~group\textup{,} and~$Y$ is~a~subgroup of~$X$. Suppose also that $\mathfrak{P}(\mathcal{C})$ is~the~set of~primes defined as~follows\textup{:} $p \in \mathfrak{P}(\mathcal{C})$ if and~only~if~$p$ divides the~order of~some element of~a~$\mathcal{C}$\nobreakdash-group. If~the~subgroup~$Y$ is~$\mathcal{C}$\nobreakdash-sep\-a\-ra\-ble in~$X$\textup{,} then it is~$\mathfrak{P}(\mathcal{C})^{\prime}$\nobreakdash-iso\-lat\-ed in~this group.
\end{eproposition}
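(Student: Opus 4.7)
The plan is to argue by contraposition: assume there exist $x \in X \setminus Y$ and $q \in \mathfrak{P}(\mathcal{C})^{\prime}$ with $x^{q} \in Y$, and derive a contradiction with the $\mathcal{C}$\nobreakdash-separability hypothesis.

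First I would apply the definition of $\mathcal{C}$\nobreakdash-separability to the element $x \notin Y$ to obtain a homomorphism $\sigma \colon X \to C$ onto some $\mathcal{C}$\nobreakdash-group $C$ with $x\sigma \notin Y\sigma$. Setting $g = x\sigma$, the hypothesis that every group in $\mathcal{C}$ is periodic forces $g$ to have some finite order $n$. By the very definition of $\mathfrak{P}(\mathcal{C})$, every prime dividing $n$ lies in $\mathfrak{P}(\mathcal{C})$, so $\gcd(n,q) = 1$.

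Next, I would pick integers $a,b$ with $an + bq = 1$ and write
$$
g \;=\; g^{an+bq} \;=\; (g^{n})^{a}(g^{q})^{b} \;=\; (g^{q})^{b} \;=\; (x^{q}\sigma)^{b}.
$$
Since $x^{q} \in Y$, the image $x^{q}\sigma$ lies in the subgroup $Y\sigma \leqslant C$, and therefore so does the power $(x^{q}\sigma)^{b}$. But this forces $x\sigma = g \in Y\sigma$, contradicting the choice of $\sigma$. This completes the argument.

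I do not anticipate a genuine obstacle here: the whole proof rests on two very short observations, namely that periodicity of $\mathcal{C}$\nobreakdash-groups makes the order of $g$ a $\mathfrak{P}(\mathcal{C})$\nobreakdash-number, and that a Bezout combination recovers $g$ from $g^{q}$ once $\gcd(n,q) = 1$. The only point to double-check is that $Y\sigma$ is in fact a subgroup of $C$, which is immediate because $\sigma$ is a homomorphism and $Y$ a subgroup, so closure under taking powers applies to $x^{q}\sigma$.
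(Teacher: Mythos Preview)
Your argument is correct. The paper does not give its own proof of this proposition; it is quoted verbatim from \cite{SokolovTumanova2016SMJ} and simply labeled ``easy to prove'' in the introduction. The contrapositive argument you wrote, using periodicity of $\mathcal{C}$-groups to make the order $n$ of $x\sigma$ a $\mathfrak{P}(\mathcal{C})$-number and then a B\'ezout identity to recover $x\sigma$ from $(x\sigma)^{q}$, is the standard short proof of this fact and is entirely sound.
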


\begin{eproposition}\label{ep33}
\textup{\cite[Proposition~3]{Tumanova2015IVM}}
Suppose that $\mathcal{C}$ is~a~class of~groups closed under~taking quotient groups\textup{,} $X$~is~a~group\textup{,} and~$Y$ is~a~normal subgroup of~$X$. The~quotient group~$X/Y$ is~residually a~$\mathcal{C}$\nobreakdash-group if and~only~if~$Y$ is~$\mathcal{C}$\nobreakdash-sep\-a\-ra\-ble in~$X$.
\end{eproposition}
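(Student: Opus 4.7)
The plan is to establish the two implications directly from the definitions, with closure of $\mathcal{C}$ under quotient groups entering only in the ``if'' direction.

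For the necessity, assume $X/Y$ is residually a $\mathcal{C}$-group and pick $x \in X \setminus Y$, so that the coset $xY$ is non-trivial. Choose a surjective homomorphism $\tau$ of $X/Y$ onto some $C \in \mathcal{C}$ with $xY\tau \ne 1$, and compose with the canonical projection $X \to X/Y$ to obtain a surjection $\sigma \colon X \to C$. By construction $Y\sigma = 1$ while $x\sigma \ne 1$, so $x\sigma \notin Y\sigma$, witnessing the $\mathcal{C}$-separability of $Y$ in $X$. No hypothesis on $\mathcal{C}$ is needed for this half.

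For the sufficiency, assume $Y$ is $\mathcal{C}$-separable and take $x \in X \setminus Y$. Separability yields a surjective homomorphism $\sigma \colon X \to C$ onto a $\mathcal{C}$-group with $x\sigma \notin Y\sigma$. Because $Y$ is normal in $X$ and $\sigma$ is surjective, the image $Y\sigma$ is normal in $C$, so the quotient $C/Y\sigma$ is a well-defined group; by the standing assumption that $\mathcal{C}$ is closed under taking quotient groups, $C/Y\sigma$ lies in $\mathcal{C}$. Since $Y$ is contained in the kernel of the composite $X \to C \to C/Y\sigma$, this composite factors through $X/Y$, giving a surjection $X/Y \to C/Y\sigma$ that sends $xY$ to the coset $(x\sigma)\,Y\sigma$, which is non-trivial by the choice of $\sigma$. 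Hence $X/Y$ is residually a $\mathcal{C}$-group.

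Both directions are essentially manipulations of the definitions; the only substantive point is the formation of $C/Y\sigma$ in the ``if'' direction, which is precisely where the quotient-closure hypothesis is used, together with the fact that surjectivity of $\sigma$ propagates normality from $Y$ to $Y\sigma$. I do not anticipate any real obstacle.
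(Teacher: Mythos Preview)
Your proof is correct. Note that the paper does not actually supply its own argument for this proposition: it is quoted with a citation to \cite[Proposition~3]{Tumanova2015IVM} and used as a black box. Your direct verification from the definitions is the standard one, and your observation that quotient-closure of~$\mathcal{C}$ is needed only for the ``if'' direction (to ensure $C/Y\sigma \in \mathcal{C}$) is exactly right.
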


\begin{eproposition}\label{ep34}
\textup{\cite[Proposition~4]{Tumanova2015IVM}}
Suppose that $\mathcal{C}$ is~a~class of~groups closed under~taking subgroups\textup{,} quotient groups\textup{,} and~direct products of~a~finite number of~factors. Suppose also that $X$ is~a~group and~$Y$ is~a~finite normal subgroup of~$X$. If~$X$ is~residually a~$\mathcal{C}$\nobreakdash-group\textup{,} then $\operatorname{Aut}_{X}(Y) \in \mathcal{C}$.
\end{eproposition}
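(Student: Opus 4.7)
The~strategy is~to~realize the~finite group~$\operatorname{Aut}_{X}(Y)$ inside a~suitable $\mathcal{C}$\nobreakdash-quotient of~$X$ and~then invoke the~closure of~$\mathcal{C}$ under~taking quotients. As~a~first observation, $\operatorname{Aut}_{X}(Y)$ is~the~image of~the~conjugation homomorphism $X \to \operatorname{Aut}(Y)$, so it is~isomorphic to~$X/C_{X}(Y)$, where $C_{X}(Y)$ denotes the~centralizer of~$Y$ in~$X$; in~particular, it is~finite because $Y$ is.

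The~first main step would be~to~choose a~normal subgroup~$T$ of~$X$ with~$X/T \in \mathcal{C}$ and~$T \cap Y = \{1\}$; such a~$T$ is~furnished by~Statement~3 of~Proposition~\ref{ep31} applied to~the~finite subset $Y \setminus \{1\}$ of~$X \setminus \{1\}$. Writing $\sigma : X \to X/T$ for~the~natural projection, the~restriction $\sigma|_{Y}$ is~then an~isomorphism~$Y \to Y\sigma$, and~it induces an~isomorphism $\tilde\iota : \operatorname{Aut}(Y) \to \operatorname{Aut}(Y\sigma)$.

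The~second step is~to~verify that $\tilde\iota$ maps $\operatorname{Aut}_{X}(Y)$ onto~$\operatorname{Aut}_{X\sigma}(Y\sigma)$, so~that $\operatorname{Aut}_{X}(Y) \cong \operatorname{Aut}_{X\sigma}(Y\sigma)$. This is~immediate from~the~identity $\sigma(x^{-1}yx) = (x\sigma)^{-1}(y\sigma)(x\sigma)$, which shows that the~inner automorphism of~$Y$ induced by~$x \in X$ is~carried by~$\tilde\iota$ to~the~inner automorphism of~$Y\sigma$ induced by~$x\sigma \in X\sigma$. Once this is~established, $\operatorname{Aut}_{X\sigma}(Y\sigma)$ is~a~quotient of~$X\sigma \cong X/T \in \mathcal{C}$, and~the~closure of~$\mathcal{C}$ under~quotients gives $\operatorname{Aut}_{X}(Y) \in \mathcal{C}$, as~required.

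The~only mildly delicate point, if~one must name a~main obstacle, is~the~intertwining check in~the~second step; but~as indicated, it is~essentially a~tautology coming from~the~fact that $\sigma$ is~a~group homomorphism, so the~entire argument is~routine modulo the~application of~Proposition~\ref{ep31}.
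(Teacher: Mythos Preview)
Your argument is correct. The paper does not supply its own proof of this proposition; it is quoted with a citation to~\cite[Proposition~4]{Tumanova2015IVM}, so there is nothing in the present paper to compare against. Your approach---separating the finite set~$Y\setminus\{1\}$ by Proposition~\ref{ep31} to obtain $T\in\mathcal{C}^{*}(X)$ with $T\cap Y=1$, then observing that conjugation descends to give $\operatorname{Aut}_{X}(Y)\cong\operatorname{Aut}_{X/T}(YT/T)$, which is a quotient of $X/T\in\mathcal{C}$---is the standard proof one would expect to find in the cited source.
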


\pagebreak

\begin{eproposition}\label{ep35}
\textup{\cite[Proposition~18]{Tumanova2015IVM}}
Suppose that $\mathcal{C}$ is~a~class of~tor\-sion-free groups closed under~taking subgroups and~direct products of~a~finite number of~factors. Suppose also that $X$ is~a~group and~$Y$ is~a~subgroup of~$X$. If~$X$ is~residually a~$\mathcal{C}$\nobreakdash-group and~$Y$ is of~finite Hirsch--Zaitsev rank\textup{,} then there exists a~subgroup $Z \in \mathcal{C}^{*}(X)$ such that $Y \cap Z = 1$.
\end{eproposition}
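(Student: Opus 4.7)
The plan is to prove the proposition by induction on the length $m$ of a finite subnormal series
$$
1 = Y_{0} \leqslant Y_{1} \leqslant \ldots \leqslant Y_{m} = Y
$$
whose factors are periodic or infinite cyclic. Two standing observations drive the argument. First, $X$ is torsion-free, because every torsion element of $X$ lies in the kernel of every homomorphism of $X$ onto a torsion-free group, hence in $\bigcap_{Z \in \mathcal{C}^{*}(X)} Z = 1$. Second, every $Z \in \mathcal{C}^{*}(X)$ is isolated in $X$, since $X/Z$ is torsion-free; consequently $V \cap Z$ is isolated in $V$ for any subgroup $V \leqslant X$.

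The base case $m = 0$ is immediate: $Y = 1$, and one may take $Z = X$, noting that the trivial group belongs to $\mathcal{C}$ because $\mathcal{C}$ is subgroup-closed. For the inductive step put $W = Y_{m-1}$; by the inductive hypothesis there exists $Z_{W} \in \mathcal{C}^{*}(X)$ with $W \cap Z_{W} = 1$. If $Y/W$ is periodic, then every $a \in Y \cap Z_{W}$ satisfies $a^{k} \in W \cap Z_{W} = 1$ for some $k \geqslant 1$, so the torsion-freeness of $X$ yields $a = 1$, and $Z = Z_{W}$ works.

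Suppose instead that $Y/W$ is infinite cyclic. The restriction of the projection $Y \to Y/W$ to $Y \cap Z_{W}$ is injective, since its kernel $W \cap Z_{W}$ is trivial; hence $Y \cap Z_{W}$ is either trivial or infinite cyclic. In the first subcase we set $Z = Z_{W}$. In the second subcase $Y \cap Z_{W} = \langle y_{0} \rangle$ for some $y_{0} \neq 1$; using that $X$ is residually a $\mathcal{C}$-group, pick $Z_{y} \in \mathcal{C}^{*}(X)$ with $y_{0} \notin Z_{y}$ and put $Z = Z_{W} \cap Z_{y}$, which lies in $\mathcal{C}^{*}(X)$ by Proposition~\ref{ep31}(2). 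Then $Y \cap Z = \langle y_{0}\rangle \cap Z_{y}$ is an isolated subgroup of $\langle y_{0}\rangle \cong \mathbb{Z}$, and since the only isolated subgroups of $\mathbb{Z}$ are $1$ and the whole group, the choice $y_{0} \notin Z_{y}$ forces $Y \cap Z = 1$.

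The main obstacle is the infinite-cyclic case. The naive attempt of intersecting $Z_{W}$ with some $Z_{y}$ that merely misses a generator $y$ of $Y/W$ does not visibly work, because a general element of $Y$ has the form $y^{k}w$ with $w \in W$ and the $W$-part is not controlled modulo $Z_{y}$. The key trick that bypasses this difficulty is to use the induction to reduce $Y \cap Z_{W}$, via its injection into the cyclic group $Y/W$, to a cyclic subgroup of $Y$ at the outset; once the intersection is confined inside a single copy of $\mathbb{Z}$, the extreme paucity of isolated subgroups of $\mathbb{Z}$ makes the final shrinking by one further $Z_{y}$ sufficient to close the induction.
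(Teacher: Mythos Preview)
Your argument is correct. The paper does not supply its own proof of this proposition; it is quoted verbatim from \cite[Proposition~18]{Tumanova2015IVM} and used as a black box, so there is nothing in the present paper to compare your proof against. Your induction on the length of a subnormal series with periodic or infinite cyclic factors is the natural approach, and each step is sound: the torsion-freeness of~$X$ disposes of periodic factors immediately, and for an infinite cyclic top factor the observation that $Y \cap Z_{W}$ embeds in~$Y/W \cong \mathbb{Z}$, combined with the fact that every $Z \in \mathcal{C}^{*}(X)$ is isolated in~$X$, reduces the problem to killing a single non-trivial element, which residual $\mathcal{C}$-ness does. The appeal to Proposition~\ref{ep31} to stay inside~$\mathcal{C}^{*}(X)$ after intersecting is exactly right.
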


For~the~convenience of~the~reader, we give the~next proposition along with~its proof because the~paper containing it is~actually unavailable.

\begin{eproposition}\label{ep37}
\textup{\cite[Lemma~2.4]{Yakushev2000PIvSU}}
Suppose that $p$ is~a~prime number and~$X$ is~a~finite $p$\nobreakdash-group. Suppose also that $\alpha$ is~an~automorphism of~$X$ and~$\delta$ is~the~automorphism of~the~quotient group~$X/X^{p}X^{\prime}$ induced by~$\alpha$. Then the~order of~$\alpha$ is~a~$p$\nobreakdash-num\-ber if~the~order of~$\delta$ has the~same property.
\end{eproposition}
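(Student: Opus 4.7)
The plan is to reduce the claim to the assertion that the kernel $K$ of the natural homomorphism
$$
\pi\colon \operatorname{Aut}(X) \to \operatorname{Aut}(X/X^{p}X'), \qquad \alpha \mapsto \delta,
$$
is a $p$\nobreakdash-group. The map $\pi$ is well-defined because the subgroup $X^{p}X'$, being defined in purely group-theoretic terms, is characteristic in $X$. Once the kernel statement is granted, the proposition is immediate: if $\delta$ has order $p^{k}$, then $\pi(\alpha^{p^{k}}) = \delta^{p^{k}} = 1$, so $\alpha^{p^{k}}$ belongs to $K$ and therefore has $p$\nobreakdash-power order $p^{m}$; consequently $\alpha^{p^{k+m}} = 1$, and the order of $\alpha$ is a $p$\nobreakdash-number.

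To prove that $K$ is a $p$\nobreakdash-group I would argue as follows. Since $\operatorname{Aut}(X)$ is finite, it suffices to show that $K$ contains no non-trivial element whose order is coprime to $p$. Let $\gamma \in K$ be such an element; then the cyclic group $\langle\gamma\rangle$ acts on the finite $p$\nobreakdash-group $X$ with coprime orders, and the standard coprime-action lemma for actions on finite nilpotent groups yields
$$
X = \bigl[X,\langle\gamma\rangle\bigr]\cdot C_{X}(\langle\gamma\rangle).
$$
Since $\gamma \in K$ acts trivially on $X/X^{p}X'$, one has $[X,\gamma] \subseteq X^{p}X'$, and hence $X = (X^{p}X')\cdot C_{X}(\gamma)$. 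Now $X^{p}X'$ is precisely the Frattini subgroup of the finite $p$\nobreakdash-group $X$, and by its non-generator property the equality $X = (X^{p}X')\cdot H$ for a subgroup $H \leqslant X$ forces $H = X$. Taking $H = C_{X}(\gamma)$ gives $C_{X}(\gamma) = X$, i.e., $\gamma$ is the identity automorphism, contradicting our assumption.

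The only substantive ingredients are the coprime-action decomposition $G = [G,A]\cdot C_{G}(A)$ for a coprime action on a finite nilpotent group and the non-generator characterization of the Frattini subgroup; both are classical results of finite group theory, so no serious obstacle is anticipated --- the proof is short and essentially forced by these two tools.
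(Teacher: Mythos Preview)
Your argument is correct. The reduction to showing that $K=\ker\pi$ is a $p$-group is clean, and the coprime-action step is valid: for an automorphism $\gamma$ of $p'$-order acting on the finite $p$-group $X$ one has $X=[X,\gamma]\,C_{X}(\gamma)$, while $\gamma\in K$ forces $[X,\gamma]\leqslant X^{p}X'=\Phi(X)$, whence $C_{X}(\gamma)=X$ by the non-generator property of the Frattini subgroup. This is precisely the classical fact (often attributed to Burnside or P.~Hall) that the kernel of $\operatorname{Aut}(X)\to\operatorname{Aut}(X/\Phi(X))$ is a $p$-group.

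The paper, however, takes a different and more hands-on route. It proceeds in two stages: first, by induction on the nilpotency class $c$ of $X$, it shows that if the automorphism $\overline{\alpha}$ induced on the abelianisation $X/X'$ has $p$-power order, then so does $\alpha$ (the inductive step passes to $X/\Gamma_{c}$ and uses that $\alpha^{r}$ acts trivially on the central factor $\Gamma_{c}$). Second, it reduces from $X/X'$ to $X/X^{p}X'$ by a matrix computation over $\mathbb{Z}/p^{k}\mathbb{Z}$: writing $\overline{\alpha}$ as an integer matrix $\Theta$ in a basis of the finite abelian $p$-group $X/X'$, the hypothesis gives $\Theta^{s}\equiv 1\pmod{p}$ for some $p$-power $s$, and one lifts this to $\Theta^{sp^{k-1}}\equiv 1\pmod{p^{k}}$. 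Your approach is shorter and more structural, but it imports the coprime-action decomposition as a black box; the paper's proof is longer and more computational, yet entirely self-contained, requiring nothing beyond elementary facts about commutators and matrices modulo prime powers.
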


\begin{proof}
Let $\Gamma_{i}$, $q$, and~$c$ denote the~$i$\nobreakdash-th member of~the~lower central series, the~order, and~the~nilpotency class of~$X$, respectively. Let also $\overline{\alpha}$ be~the~automorphism of~the~quotient group~$X/X^{\prime}$ induced by~$\alpha$. Using induction on~$c$, we first show that if~the~order of~$\overline{\alpha}$ is~a~$p$\nobreakdash-num\-ber, then the~order of~$\alpha$ is~also a~$p$\nobreakdash-num\-ber. Since this is~obvious for~$c = 1$, we can further assume that $c > 1$.

Let $Y$, $\beta$, and~$\bar\beta$ stand for~the~quotient group~$X/\Gamma_{c}$ and~the~automorphisms of~the~groups $Y$ and~$Y/Y^{\prime} = (X/\Gamma_{c})/(X^{\prime}/\Gamma_{c})$ induced by~$\alpha$ and~$\beta$, respectively. It~is~easy to~see that if~$\sigma\colon Y/Y^{\prime} \to X/X^{\prime}$ is~the~isomorphism defined by~the~rule $((x\Gamma_{c})Y^{\prime})\sigma = xX^{\prime}$, then $\bar\beta = \sigma\overline{\alpha}\sigma^{-1}$. The~last relation means that the~order of~$\bar\beta$ is~equal to~the~order of~$\overline{\alpha}$ and~therefore is~a~$p$\nobreakdash-num\-ber. By~the~inductive hypothesis applied to~$Y$, the~order~$r$ of~$\beta$ is~also a~$p$\nobreakdash-num\-ber. It~follows from~the~definition of~$\beta$ that if~$x \in X$ and~$y \in \Gamma_{c-1}$, then $x\alpha^{r} = xw_{1}$ and~$y\alpha^{r} = yw_{2}$ for~suitable elements $w_{1}, w_{2} \in \Gamma_{c}$. Since $\Gamma_{c}$ lies in~the~center of~$X$, we have $[y,x]\alpha^{r} = [y,x][w_{2}, w_{1}] = [y,x]$. Thus, the~automorphism~$\alpha^{r}$ acts identically on~$\Gamma_{c}$ and~therefore $x\alpha^{rq}_{\vphantom{1}} = xw_{1}^{q} = x$ (recall that $q$ denotes the~order of~$X$). Since $x$ is~chosen arbitrarily, it~follows that $\alpha^{rq} = 1$, as~required.

Now\kern-.5pt{} it\kern-.5pt{} remains\kern-.5pt{} to\kern-.5pt{}~prove\kern-.5pt{} that\kern-.5pt{} if\kern-.5pt{}~the\kern-.5pt{}~order\kern-.5pt{} of\kern-.5pt{}~the\kern-.5pt{}~automorphism\kern-.5pt{}~$\delta$\kern-.5pt{} defined\kern-.5pt{} above\kern-.5pt{} is\kern-.5pt{}~a\kern-.5pt{}~$p$\nobreakdash-num\-ber, then the~order of~$\overline{\alpha}$ has the~same property. This is~obvious if~the~quotient group $Z = X/X^{\prime}$ is~trivial. Thus, we can assume further that $Z \ne 1$.

Let $\overline{\gamma}$ be~the~automorphism of~the~quotient group~$Z/Z^{p}$ induced by~the~automorphism $\gamma = \overline{\alpha}$. As~above, it~is~easy to~show that the~order of~$\delta$ is~equal to~the~order of~$\overline{\gamma}$. Let us fix a~decomposition of~$Z$ into~the~direct product of~non-triv\-i\-al cyclic groups with~generators~$z_{1}$,~$z_{2}$,~\ldots,~$z_{n}$. Since $Z \ne 1$, the~relation $n \geqslant 1$ holds, and~the~automorphism~$\gamma$ can be~given by~the~integer matrix~$\Theta = \{\theta_{ij}^{\vphantom{n}}\}_{i,j=1}^{n}$ defined by~the~equalities $z_{i}^{\vphantom{\theta}}\gamma = z_{1}^{\theta_{i1}}z_{2}^{\theta_{i2}}\ldots z_{n}^{\theta_{in}}$, where $1 \leqslant i \leqslant n$. It~is~clear that the~group~$Z/Z^{p}$, the~set $\{z_{1},\, z_{2},\, \ldots,\, z_{n}\}$, and~the~mapping~$\overline{\gamma}$ can be~viewed as~a~linear space over~the~field~$\mathbb{Z}_{p}$, a~basis, and~a~linear operator of~this space, respectively. If~the~elements of~$\Theta$ are~considered representatives of~residue classes modulo~$p$, then the~matrix of~$\overline{\gamma}$ in~the~indicated basis coincides with~$\Theta$ and~therefore $\Theta^{s} = 1$ for~some $p$\nobreakdash-num\-ber~$s$. It~easily follows that if~the~elements of~$\Theta$ are~considered representatives of~residue classes modulo~$p^{k}$ for~some $k \geqslant 1$, then $\Theta^{sp^{k-1}} = 1$. Since the~orders of~the~elements~$z_{1}$, $z_{2}$,~\ldots,~$z_{n}$ divide the~order~$q$ of~$X$, the~last equality means that $\gamma^{sq} = 1$, as~required.
\end{proof}

\begin{eproposition}\label{ep38}
The~following statements hold.

\textup{1.}\hspace{1ex}Any free group is~residually a~finitely generated tor\-sion-free nilpotent group~\textup{\cite{Magnus1935MA, Magnus1937JRAM}}.

\textup{2.}\hspace{1ex}Any polycyclic group is~residually finite~\textup{\cite{Hirsch1946PLMS}}.

\textup{3.}\hspace{1ex}Any finitely generated tor\-sion-free nilpotent group is~residually an~$\mathcal{F}_{p}$\nobreakdash-group for~every prime~$p$~\textup{\cite{Gruenberg1957PLMS}}.

\textup{4.}\hspace{1ex}A~group is~residually an~$\mathcal{FN}_{\mathfrak{P}}$\nobreakdash-group if and~only~if~it is~residually a~$\mathcal{C}$\nobreakdash-group\textup{,} where $\mathcal{C} = \bigcup_{p \in \mathfrak{P}}\mathcal{F}_{p}$.

\textup{5.}\hspace{1ex}Any free group is~residually an~$\mathcal{F}_{p}$\nobreakdash-group for~each prime number~$p$ and~is~residually an~$\mathcal{FN}_{\mathfrak{P}}$\nobreakdash-group for~each non-empty set of~primes~$\mathfrak{P}$.

\textup{6.}\hspace{1ex}If~a~finitely generated group is~residually nilpotent\textup{,} then it is~residually a~$\mathcal{D}$\nobreakdash-group\textup{,} where $\mathcal{D}$ is~the~union of~the~classes~$\mathcal{F}_{p}$ over~all primes~$p$.
\end{eproposition}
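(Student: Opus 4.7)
The plan is to dispose of statements~(1), (2), and~(3) by direct appeal to the classical references of Magnus, Hirsch, and Gruenberg cited in the statement itself; no further argument is needed beyond observing, for~(3), that the quotients produced in Gruenberg's construction are finite $p$\nobreakdash-groups for every prime~$p$. The remaining three assertions all rest on the same key structural fact: every finite nilpotent group is the direct product of its Sylow $p$\nobreakdash-subgroups, and consequently any non-trivial element of such a group projects non-trivially onto some Sylow factor.

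For~(4), the inclusion $\bigcup_{p \in \mathfrak{P}}\mathcal{F}_{p} \subseteq \mathcal{FN}_{\mathfrak{P}}$ gives one direction at once. For the converse, given $x \ne 1$ in a residually $\mathcal{FN}_{\mathfrak{P}}$\nobreakdash-group~$X$, I would choose a homomorphism $\varphi\colon X \to N$ onto an $\mathcal{FN}_{\mathfrak{P}}$\nobreakdash-group~$N$ with $\varphi(x) \ne 1$, and compose $\varphi$ with the projection of~$N$ onto the Sylow $p$\nobreakdash-subgroup in which $\varphi(x)$ has non-trivial component; this Sylow subgroup lies in~$\mathcal{F}_{p}$ with $p \in \mathfrak{P}$ because $N$ is a $\mathfrak{P}$\nobreakdash-group. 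Statement~(5) is then obtained by composition: by~(1) any free group maps onto a finitely generated torsion-free nilpotent group preserving any prescribed non-trivial element, and by~(3) the latter maps onto an $\mathcal{F}_{p}$\nobreakdash-group for every prime~$p$; the $\mathcal{FN}_{\mathfrak{P}}$\nobreakdash-version is then an immediate consequence of~(4).

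Statement~(6) combines the previous ideas. Given $x \ne 1$ in a finitely generated residually nilpotent group~$X$, I would pick a homomorphism $\varphi\colon X \to N$ onto a nilpotent group~$N$ with $\varphi(x) \ne 1$; replacing~$N$ by the image~$\varphi(X)$, one may assume $N$ is finitely generated. Then $N$ is polycyclic and hence residually finite by~(2), and every finite quotient of~$N$ is nilpotent. A suitable finite nilpotent quotient of~$N$ in which $\varphi(x)$ survives decomposes as a direct product of finite $p$\nobreakdash-groups, and projecting onto the factor capturing the image of $\varphi(x)$ produces the desired homomorphism of~$X$ onto a finite $p$\nobreakdash-group. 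The only mildly non-formal step is the replacement of~$N$ by the finitely generated image~$\varphi(X)$, which is what allows the classical theorems~(2) and~(3) to be invoked; no other serious obstacle arises.
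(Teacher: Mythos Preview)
Your proposal is correct and follows essentially the same route as the paper: both arguments reduce Statement~4 to the Sylow decomposition of a finite nilpotent group, obtain Statement~5 by chaining (1), (3), and~(4), and prove Statement~6 by noting that a finitely generated nilpotent image is polycyclic (hence residually finite by~(2)), so residually a finite nilpotent group, after which Statement~4 applies. Your remark about replacing $N$ by $\varphi(X)$ is harmless but redundant under the paper's definition of residual $\mathcal{C}$\nobreakdash-ness, which already requires the homomorphism to be surjective.
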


\begin{proof}
It is~well known that any finite nilpotent group can be~decomposed into~the~direct product of~its Sylow subgroups (see, for~example, Proposition~\ref{ep41} below). Therefore, every $\mathcal{FN}_{\mathfrak{P}}$\nobreakdash-group is~residually a~$\mathcal{C}$\nobreakdash-group, and~Statement~4 holds. Statement~5 follows from~Statements~1,~3, and~4. Any~nilpotent image of~a~finitely generated group is~a~finitely generated nilpotent group, which is~polycyclic and~therefore residually finite by~Statement~2. In~fact, this image is~residually a~finite nilpotent group since the~class of~nilpotent groups is~closed under~taking subgroups. Thus, Statement~6 follows from~Statement~4.
\end{proof}

\begin{eproposition}\label{ep39}
If~$\mathcal{C}$ is~a~class of~groups consisting only of~finite groups\textup{,} then any extension of~a~free group by~a~$\mathcal{C}$\nobreakdash-group is~residually an~$\mathcal{F}_{p} \kern1pt{\cdot}\kern1pt \mathcal{C}$\nobreakdash-group for~each prime number~$p$.
\end{eproposition}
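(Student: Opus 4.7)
The plan is this: let $X$ be an extension of a free group $F$ by a $\mathcal{C}$-group $Q$, so that $F$ is a normal subgroup of $X$ with $X/F = Q \in \mathcal{C}$. Fix a prime $p$ and an arbitrary $x \in X \setminus \{1\}$; I would construct a normal subgroup $N$ of $X$ with $x \notin N$ and $X/N \in \mathcal{F}_{p} \kern1pt{\cdot}\kern1pt \mathcal{C}$. The argument splits into two cases according to whether $x$ lies in~$F$.

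If $x \notin F$, the canonical projection $X \to X/F$ already works: its kernel is $F$, which does not contain $x$, and $X/F \in \mathcal{C} \subseteq \mathcal{F}_{p} \kern1pt{\cdot}\kern1pt \mathcal{C}$ since the trivial group belongs to $\mathcal{F}_{p}$.

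The substantive case is $x \in F \setminus \{1\}$. By Proposition~\ref{ep38}(5), the free group $F$ is residually an $\mathcal{F}_{p}$-group, so I would first choose a normal subgroup $M$ of $F$ with $F/M \in \mathcal{F}_{p}$ and $x \notin M$, and then replace $M$ with its normal core in~$X$. Because $Q$ is finite, there exists a finite transversal $T$ of $F$ in $X$ containing $1$. Since $F$ is normal in $X$ and $M$ is normal in $F$, every $X$-conjugate of $M$ has the form $t^{-1}Mt$ for some $t \in T$, and conjugation by $t$ is an automorphism of $F$ that carries $M$ onto $t^{-1}Mt$; hence $F/t^{-1}Mt \cong F/M \in \mathcal{F}_{p}$. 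Setting $N = \bigcap_{t \in T} t^{-1}Mt$ would then produce a normal subgroup of $X$ contained in $M$ (take $t = 1$), so that $x \notin N$; and by Remak's theorem $F/N$ embeds into the finite direct product $\prod_{t \in T} F/t^{-1}Mt$, which is a finite $p$-group. Consequently $F/N \in \mathcal{F}_{p}$, and since $(X/N)/(F/N) \cong X/F \in \mathcal{C}$, the quotient $X/N$ belongs to $\mathcal{F}_{p} \kern1pt{\cdot}\kern1pt \mathcal{C}$, as required.

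The argument is essentially mechanical and I do not anticipate a serious obstacle; the only subtlety is that the free group $F$ may have infinite rank, but Proposition~\ref{ep38}(5) applies in that generality and the normal core above is a finite intersection because $|X:F|$ is finite.
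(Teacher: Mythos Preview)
Your proof is correct and follows essentially the same approach as the paper: split into the cases $x \notin F$ and $x \in F$, and in the latter case take the normal core in $X$ of a subgroup $M \in \mathcal{F}_p^*(F)$ avoiding $x$, using finiteness of $[X:F]$ to conclude that $F/N$ is a finite $p$-group. The only cosmetic difference is that the paper cites its Proposition~\ref{ep31} (which itself rests on Remak's theorem) where you invoke Remak directly.
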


\begin{proof}
Suppose that $X$ is~an~extension of~a~free group~$Y$ by~a~$\mathcal{C}$\nobreakdash-group, $x \in X \setminus \{1\}$, and~$p$ is~a~prime number. To~prove the~proposition it is~sufficient to~find a~homomorphism of~$X$ onto~a~group from~$\mathcal{F}_{p} \kern1pt{\cdot}\kern1pt \mathcal{C}$ taking $x$ to~a~non-triv\-i\-al element.

If~$x \notin Y$, then the~natural homomorphism $X \to X/Y$ is~the~desired one. Therefore, we can further assume that $x \in Y$. By~Proposition~\ref{ep38}, $Y$~is~residually an~$\mathcal{F}_{p}$\nobreakdash-group. Hence, there exists a~subgroup $M \in \mathcal{F}_{p}^{*}(Y)$ such that $x \notin M$. If~$S$ is~a~set of~representatives for~all cosets of~$Y$ in~$X$ and~$N = \bigcap_{s \in S} s^{-1}Ms$, then $N$ is~a~normal subgroup of~$X$. Since $X/Y \in \mathcal{C}$ and~$\mathcal{C}$ consists of~finite groups, the~set~$S$ is~also finite. It~is~easy to~see that, for~each $s \in S$, the~quotient group~$Y/s^{-1}Ms$ is~isomorphic to~the~$\mathcal{F}_{p}$\nobreakdash-group~$Y/M$. Therefore, $Y/N \in \mathcal{F}_{p}$ by~Proposition~\ref{ep31}. Since $N \leqslant M$ and~$x \notin M$, it~follows that the~natural homomorphism $X \to X/N$ is~desired.
\end{proof}

\section{Some properties of~nilpotent groups}\label{es04}

\begin{eproposition}\label{ep41}
\textup{\cite[Theorem~5.3, Lemma~5.5]{ClementMajewiczZyman2017}}
Suppose that $\mathfrak{P}$ is~a~set of~primes\textup{,} $X$~is~a~locally nilpotent group\textup{,} and~$Y$ is~a~subgroup of~$X$\kern-1pt{}. Then $\mathfrak{P}^{\prime}\textrm{-}\mathfrak{Is}(X,Y) = \mathfrak{P}^{\prime}\textrm{-}\mathfrak{Rt}(X,Y)$ and\textup{,}~if~$Y$ is~normal in~$X$\textup{,} then $\mathfrak{P}^{\prime}\textrm{-}\mathfrak{Is}(X,Y)$ has the~same property. In~particular\textup{,} the~set of~all elements of~finite order of~$X$ is~a~subgroup\textup{,} which can be~decomposed into~the~direct product of~normal \textup{(}in~$X$\textup{)} subgroups~$p\textrm{-}\mathfrak{Rt}(X,1)$ over~all prime numbers~$p$.
\end{eproposition}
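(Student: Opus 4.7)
The plan is to establish $\mathfrak{P}^{\prime}\textrm{-}\mathfrak{Is}(X,Y) = \mathfrak{P}^{\prime}\textrm{-}\mathfrak{Rt}(X,Y)$ by proving that $R := \mathfrak{P}^{\prime}\textrm{-}\mathfrak{Rt}(X,Y)$ is itself a $\mathfrak{P}^{\prime}$-isolated subgroup containing $Y$; the minimality of the isolator then forces equality, the reverse inclusion $R \subseteq \mathfrak{P}^{\prime}\textrm{-}\mathfrak{Is}(X,Y)$ being immediate since $Y$ is contained in the (by definition $\mathfrak{P}^{\prime}$-isolated) group $\mathfrak{P}^{\prime}\textrm{-}\mathfrak{Is}(X,Y)$. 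Closure of $R$ under inversion, under $\mathfrak{P}^{\prime}$-isolation, and under $X$-conjugation (when $Y$ is normal in $X$) are essentially formal: they follow from $(x^{-1})^n = (x^n)^{-1}$, from the fact that a product of two $\mathfrak{P}^{\prime}$-numbers is again a $\mathfrak{P}^{\prime}$-number, and from $(g^{-1}xg)^n = g^{-1}x^n g$, respectively.

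The heart of the matter is closure of $R$ under multiplication. Given $x, y \in X$ with $x^m, y^n \in Y$ for $\mathfrak{P}^{\prime}$-numbers $m, n$, I would use local nilpotence to restrict attention to the nilpotent subgroup $G = \operatorname{sgp}\{x, y\}$ and the subgroup $H = Y \cap G$; it then suffices to produce a $\mathfrak{P}^{\prime}$-number $K$ with $(xy)^K \in H$. I would induct on the nilpotency class $c$ of $G$. The base case $c = 1$ is immediate from $(xy)^{mn} = x^{mn}y^{mn}$. For $c > 1$, quotienting by the central subgroup $\gamma_c(G)$ and applying the inductive hypothesis to $G/\gamma_c(G)$ and $H\gamma_c(G)/\gamma_c(G)$ yields $(xy)^{K^{\prime}} \in H\gamma_c(G)$ for some $\mathfrak{P}^{\prime}$-number $K^{\prime}$. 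The remaining task --- and the step I expect to be the main obstacle --- is to pass from membership in $H\gamma_c(G)$ to membership in $H$ itself: since $H$ need not respect the lower central filtration, this step genuinely uses nilpotence and requires careful commutator bookkeeping (as in Mal'cev's original treatment), exploiting the bilinearity-modulo-higher-commutators of the commutator map to show that a further $\mathfrak{P}^{\prime}$-power of $K^{\prime}$ absorbs the central correction.

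Once the first assertion is in hand, the ``in particular'' part follows by specialization. Taking $Y = 1$ and $\mathfrak{P} = \varnothing$ shows that the torsion set of $X$ is a normal subgroup $T$; taking $\mathfrak{P} = \{p\}^{\prime}$ for each prime $p$ identifies the $p$-primary component $T_p = p\textrm{-}\mathfrak{Rt}(X,1)$ as a normal subgroup of $X$. The direct-product decomposition $T = \prod_p T_p$ then reduces, by local nilpotence, to the case of a finitely generated nilpotent torsion group, which is finite and splits as the internal direct product of its Sylow subgroups in the standard way; this decomposition transfers back to $X$ element by element.
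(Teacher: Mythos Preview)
The paper does not supply its own proof of this proposition: it is stated with a citation to \cite[Theorem~5.3, Lemma~5.5]{ClementMajewiczZyman2017} and used as a black box thereafter. So there is no ``paper's proof'' to compare against; your outline is being measured against the standard textbook argument.

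Your plan is the standard one (essentially Mal'cev's), and the reductions you list --- passing to the finitely generated nilpotent subgroup $G=\operatorname{sgp}\{x,y\}$, inducting on the class, and handling the ``in particular'' clause by specializing $Y=1$ and choosing $\mathfrak{P}$ appropriately --- are all correct. The derivation of the torsion decomposition from the main assertion is clean and accurate.

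The one place where your sketch is genuinely incomplete is exactly the place you flag: getting from $(xy)^{K'}\in H\gamma_c(G)$ to $(xy)^{K}\in H$. Your phrase ``bilinearity-modulo-higher-commutators \dots\ absorbs the central correction'' points in the right direction but does not by itself close the gap, because $H$ is not assumed normal and so there is no reason a priori for powers of the central defect $z\in\gamma_c(G)$ to land in~$H$. The standard way through is to exploit that every subgroup of a nilpotent group is subnormal: one proves first the normal case (where one may pass to $G/H$ and invoke the torsion statement already established), and then climbs the subnormal chain $H\lhd H_1\lhd\cdots\lhd G$, applying the normal case at each step. Alternatively, one can argue via the Hall--Petresco collection formula to bound the exponent of $(xy)$ modulo $H$ directly. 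Either route fills the gap you identified; as written, your inductive step needs one of these ingredients made explicit.
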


Suppose that $\mathcal{C}$ is~a~class of~groups, $X$~is~a~group, and~$Y$ is~a~subgroup of~$X$. Let~us say that $X$ is~\emph{$\mathcal{C}$\nobreakdash-reg\-u\-lar} (\emph{$\mathcal{C}$\nobreakdash-qua\-si-reg\-u\-lar}) with~respect to~$Y$ if,~for~any subgroup $M \in\nolinebreak \mathcal{C}^{*}(Y)$, there exists a~subgroup $N \in \mathcal{C}^{*}(X)$ such that $N \cap Y = M$ (respectively $N \cap Y \leqslant M$). The~next three propositions combine special cases of~Propositions~5.2,~6.3 and~Theorems~2.2,~2.4 from~\cite{Sokolov2023JGT}, which can be~obtained by~replacing the~class~$\mathcal{C}$ that appears in~the~formulations of~these propositions and~theorems~with~$\mathcal{F}_{\mathfrak{P}}$.

\begin{eproposition}\label{ep42}
For~any non-empty set of~primes~$\mathfrak{P}$\textup{,} the~following statements hold.

\textup{1.}\hspace{1ex}The~classes~$\mathcal{BA}_{\mathfrak{P}}$ and~$\mathcal{BN}_{\mathfrak{P}}$ are~closed under~taking subgroups\textup{,} quotient groups\textup{,} and~direct products of~a~finite number of~factors.

\textup{2.}\hspace{1ex}Every abelian $\mathcal{BN}_{\mathfrak{P}}$\nobreakdash-group belongs to~the~class~$\mathcal{BA}_{\mathfrak{P}}$.
\end{eproposition}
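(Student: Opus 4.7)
The plan is to handle the abelian class $\mathcal{BA}_{\mathfrak{P}}$ first and then lift every closure property to $\mathcal{BN}_{\mathfrak{P}}$ by working with a witnessing central series. Closure of $\mathcal{BA}_{\mathfrak{P}}$ under quotients is immediate, because every further quotient of $X/N$ is itself a quotient of $X$. For closure under subgroups I would take $Y \leqslant X \in \mathcal{BA}_{\mathfrak{P}}$ together with an arbitrary quotient $Y/Z$; since $X$ is abelian, $Z$ is normal in $X$ as well, so $Y/Z$ embeds into $X/Z$, and for each $p \in \mathfrak{P}$ the $p$\nobreakdash-primary torsion of $Y/Z$ sits inside the finite (by hypothesis) $p$\nobreakdash-primary torsion of $X/Z$. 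For closure under finite direct products, the cleanest move is to first establish a small extension lemma: if $0 \to A \to G \to B \to 0$ is a short exact sequence of abelian groups with $A,B \in \mathcal{BA}_{\mathfrak{P}}$, then $G \in \mathcal{BA}_{\mathfrak{P}}$. Any quotient $G/N$ fits into an induced exact sequence whose outer terms are quotients of $A$ and of $B$, so it suffices to bound the $p$\nobreakdash-primary torsion of $G$ itself; its image in the $p$\nobreakdash-primary torsion of $B$ is finite, and its kernel is the $p$\nobreakdash-primary torsion of $A$, which is finite, so the middle term is finite as well.

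For the nilpotent case, I would fix a central series $1 = X_{0} \leqslant X_{1} \leqslant \ldots \leqslant X_{n} = X$ witnessing $X \in \mathcal{BN}_{\mathfrak{P}}$ and transport it to the target group. For a subgroup $Y \leqslant X$, the intersections $Y \cap X_{i}$ form a central series of $Y$ whose factors embed into $X_{i}/X_{i-1}$ and therefore lie in $\mathcal{BA}_{\mathfrak{P}}$ by the subgroup closure just proved. For a quotient $X/N$, the subgroups $X_{i}N/N$ form a central series whose factors are quotients of $X_{i}/X_{i-1}$. For a direct product $X \times Z$ with $Z \in \mathcal{BN}_{\mathfrak{P}}$ and witnessing series $1 = Z_{0} \leqslant \ldots \leqslant Z_{m} = Z$, concatenation yields the central series $X_{0} \times 1 \leqslant \ldots \leqslant X_{n} \times 1 \leqslant X \times Z_{1} \leqslant \ldots \leqslant X \times Z_{m}$ whose successive factors are isomorphic either to some $X_{i}/X_{i-1}$ or to some $Z_{j}/Z_{j-1}$, hence to $\mathcal{BA}_{\mathfrak{P}}$-groups.

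For Part~2, I would induct on the length $n$ of a central series $1 = X_{0} \leqslant \ldots \leqslant X_{n} = X$ witnessing $X \in \mathcal{BN}_{\mathfrak{P}}$, under the standing assumption that $X$ is abelian. The base cases $n \leqslant 1$ are trivial. For $n \geqslant 2$, the term $X_{n-1}$ is abelian as a subgroup of the abelian group $X$ and inherits a shorter central series of the same type, so $X_{n-1} \in \mathcal{BA}_{\mathfrak{P}}$ by the inductive hypothesis, while $X/X_{n-1} \in \mathcal{BA}_{\mathfrak{P}}$ holds by definition of the witnessing series. The abelian extension lemma from the first paragraph then forces $X \in \mathcal{BA}_{\mathfrak{P}}$. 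The only step that is not purely formal is that extension lemma, but as sketched it collapses to a one-line estimate on primary torsion subgroups, so I anticipate no real obstacle.
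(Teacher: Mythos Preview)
Your argument is correct. Each step checks out: the subgroup and quotient closure of $\mathcal{BA}_{\mathfrak{P}}$ are straightforward, the abelian extension lemma is sound (the kernel of $T_{p}(G/N) \to B'$ really is $T_{p}(A')$, and the image sits in $T_{p}(B')$), and the transport of central series to subgroups, quotients, and concatenated products for $\mathcal{BN}_{\mathfrak{P}}$ is routine once $\mathcal{BA}_{\mathfrak{P}}$ is handled. The induction in Part~2 is clean and uses exactly the extension lemma you set up.

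As for comparison: the paper does not give its own proof of this proposition at all. It is stated as a special case of results from~\cite{Sokolov2023JGT} (Propositions~5.2,~6.3 and Theorems~2.2,~2.4 there, specialized to $\mathcal{C} = \mathcal{F}_{\mathfrak{P}}$), so there is no in-paper argument to compare against. Your write-up is therefore a genuine, self-contained proof where the paper simply cites an external source. The advantage of the cited approach is that it works for an arbitrary root class $\mathcal{C}$ rather than just $\mathcal{F}_{\mathfrak{P}}$; the advantage of yours is that it is elementary, short, and keeps the paper independent of that reference for this particular fact.
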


\begin{eproposition}\label{ep43}
Suppose that $\mathfrak{P}$ is~a~non-empty set of~primes\textup{,} $X$~is~a~group\textup{,} and~$Y$~is a~subgroup of~$X$. If~there exists a~homomorphism of~$X$ onto~a~$\mathcal{BN}_{\mathfrak{P}}$\nobreakdash-group acting injectively on~$Y$\textup{,} then the~following statements hold.

\textup{1.}\hspace{1ex}The~group~$X$ is~$\mathcal{F}_{\mathfrak{P}}$\nobreakdash-qua\-si-reg\-u\-lar with~respect~to~$Y$.

\textup{2.}\hspace{1ex}If~$Y$ lies in~the~center of~$X$\textup{,} then $X$ is~$\mathcal{F}_{\mathfrak{P}}$\nobreakdash-reg\-u\-lar with~respect~to~$Y$.
\end{eproposition}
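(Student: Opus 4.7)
The plan is to first reduce to the situation where $X$ itself is a $\mathcal{BN}_{\mathfrak{P}}$-group. Let $\varphi\colon X \to \bar{X}$ be the given surjection onto a $\mathcal{BN}_{\mathfrak{P}}$-group that is injective on $Y$, and set $\bar{Y} = Y\varphi$, $\bar{M} = M\varphi$. The restriction $\varphi|_{Y}$ is an isomorphism, so $\bar{M} \lhd \bar{Y}$, $\bar{Y}/\bar{M} \cong Y/M \in \mathcal{F}_{\mathfrak{P}}$, and centrality of $Y$ in $X$ transfers to that of $\bar{Y}$ in $\bar{X}$. Any $\bar{N} \lhd \bar{X}$ with $\bar{X}/\bar{N} \in \mathcal{F}_{\mathfrak{P}}$ and $\bar{N} \cap \bar{Y} \leqslant \bar{M}$ (respectively $= \bar{M}$) pulls back to $N = \bar{N}\varphi^{-1} \lhd X$ with $X/N \cong \bar{X}/\bar{N} \in \mathcal{F}_{\mathfrak{P}}$ and $N \cap Y = (\bar{N} \cap \bar{Y})(\varphi|_{Y})^{-1}$, contained in (respectively equal to) $M$.

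So assume $X \in \mathcal{BN}_{\mathfrak{P}}$. I would fix a central series $1 = Z_{0} \leqslant Z_{1} \leqslant \cdots \leqslant Z_{k} = X$ with $\mathfrak{P}$-bounded abelian factors and induct on $k$, proving both parts simultaneously. For $k = 1$ the group $X$ is abelian, so $Y$ is automatically central and both parts coincide. Replacing $X$ by $X/M$ (still in $\mathcal{BA}_{\mathfrak{P}}$ by Proposition~\ref{ep42}) reduces to finding $N \leqslant X$ of finite $\mathfrak{P}$-index with $N \cap Y = 1$, where $Y$ is a finite $\mathfrak{P}$-subgroup of the abelian $\mathfrak{P}$-bounded group $X$. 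For each prime $p$ dividing $|Y|$, the $p$-primary torsion subgroup $T_{p}(X)$ is finite by $\mathfrak{P}$-boundedness applied to $X$ itself, and a finite abelian $p$-group has trivial divisible part, so every nontrivial $y \in T_{p}(X)$ satisfies $y \notin X^{p^{\ell}}$ for some $\ell$. The quotient $X/X^{p^{\ell}}$, being a $p$-group quotient of the abelian $\mathfrak{P}$-bounded $X$, is itself finite. Intersecting finitely many $X^{p^{\ell(y)}}$ over $y \in Y \setminus \{1\}$ yields the desired $N$.

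The inductive step $k > 1$ runs by setting $\bar{X} = X/Z_{1}$, $\bar{Y} = YZ_{1}/Z_{1}$, $\bar{M} = MZ_{1}/Z_{1}$ and applying the inductive hypothesis to $\bar{X}$ to obtain $N_{1} \lhd X$ with $Z_{1} \leqslant N_{1}$, $X/N_{1} \in \mathcal{F}_{\mathfrak{P}}$, and (after modular-law bookkeeping) $N_{1} \cap Y \leqslant M(Y \cap Z_{1})$. Separately, the ``central trace'' $Y \cap Z_{1}$ lies in the central abelian $\mathfrak{P}$-bounded $Z_{1}$ and $(Y \cap Z_{1})/(M \cap Z_{1})$ embeds into $Y/M$, so the Part~2 form of the abelian base case supplies $\tilde{N}_{2} \leqslant Z_{1}$, central in $X$, with $Z_{1}/\tilde{N}_{2} \in \mathcal{F}_{\mathfrak{P}}$ and $\tilde{N}_{2} \cap (Y \cap Z_{1}) = M \cap Z_{1}$. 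The remaining task is to inflate $\tilde{N}_{2}$ to a subgroup $N_{2} \lhd X$ of finite $\mathfrak{P}$-index with $N_{2} \cap Z_{1} \leqslant \tilde{N}_{2}$; one does this by working in $X/\tilde{N}_{2} \in \mathcal{BN}_{\mathfrak{P}}$, where $Z_{1}/\tilde{N}_{2}$ is a finite central subgroup, and invoking a secondary argument on the quotient $X/Z_{1}$ together with the inductive hypothesis to find a finite $\mathfrak{P}$-quotient of $X/\tilde{N}_{2}$ separating $Z_{1}/\tilde{N}_{2}$ from $1$. Setting $N = N_{1} \cap N_{2}$, the modular law gives
\[
N \cap Y \leqslant N_{2} \cap M(Y \cap Z_{1}) = M\bigl(N_{2} \cap Y \cap Z_{1}\bigr) \leqslant M(M \cap Z_{1}) = M,
\]
which is Part~1. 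For Part~2 with $Y \leqslant Z(X)$ one first reduces to $M = 1$ (using $M \lhd X$) and uses the equality versions of both pieces of the construction to obtain $N \cap Y = 1$.

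The main obstacle is precisely the ``inflation'' step in the inductive argument: producing $N_{2} \lhd X$ from $\tilde{N}_{2} \leqslant Z_{1}$ so that $N_{2} \cap Z_{1} \leqslant \tilde{N}_{2}$ is not automatic and requires separating the finite central subgroup $Z_{1}/\tilde{N}_{2}$ from $1$ inside $X/\tilde{N}_{2}$, which in turn needs a careful secondary induction exploiting that $(X/\tilde{N}_{2})/(Z_{1}/\tilde{N}_{2}) \cong X/Z_{1}$ has shorter central-series length. By contrast, the abelian base case is transparent, resting on the two observations that $\mathfrak{P}$-boundedness forces each $p$-primary torsion subgroup to be finite and that $p$-power quotients of an abelian $\mathfrak{P}$-bounded group are finite.
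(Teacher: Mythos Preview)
The paper does not give a proof of this proposition at all: it is quoted (together with Propositions~\ref{ep42} and~\ref{ep44}) as a special case of results from~\cite{Sokolov2023JGT}. So there is no in-paper argument to compare against, and you are attempting an independent proof.

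Your reduction to $X\in\mathcal{BN}_{\mathfrak{P}}$ and the abelian base case are correct. In the inductive step, however, there are two genuine gaps. First, for Part~1 your displayed modular-law identity
\[
N_{2}\cap M(Y\cap Z_{1})=M\bigl(N_{2}\cap Y\cap Z_{1}\bigr)
\]
requires $M\leqslant N_{2}$, which you never arrange; without it, from $w=mz\in N_{2}$ with $m\in M$ and $z\in Y\cap Z_{1}$ you cannot conclude $z\in N_{2}\cap Z_{1}$, and the chain of inclusions breaks. (For Part~2 this particular issue disappears because you first reduce to $M=1$, and the computation then becomes $N\cap Y\leqslant N_{2}\cap(Y\cap Z_{1})\leqslant\tilde N_{2}\cap Y=1$ with no appeal to the modular law.)

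Second, and more seriously, the inflation step that you correctly flag as the crux is not carried out, and your suggested route does not work. You propose to separate the finite central subgroup $Z_{1}/\tilde N_{2}$ inside $X/\tilde N_{2}$ by ``invoking a secondary argument on the quotient $X/Z_{1}$ together with the inductive hypothesis''. But $Z_{1}/\tilde N_{2}$ lies in the \emph{kernel} of $X/\tilde N_{2}\to X/Z_{1}$, so no information extracted from $X/Z_{1}$ can distinguish its nontrivial elements from~$1$. What is really needed is exactly Part~2 applied to the pair $\bigl(X/\tilde N_{2},\,Z_{1}/\tilde N_{2}\bigr)$ with $M=1$; and $X/\tilde N_{2}$ still admits a length-$k$ central $\mathcal{BA}_{\mathfrak{P}}$-series, so your induction on $k$ does not close. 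Closing it requires a genuinely different organisation --- e.g.\ an induction on the nilpotency class using the surjection $\gamma_{c-1}(X)/\gamma_{c}(X)\otimes X^{\mathrm{ab}}\twoheadrightarrow\gamma_{c}(X)$, or (equivalently) the separability statement recorded here as Proposition~\ref{ep44}. That is precisely the substantive content imported from~\cite{Sokolov2023JGT}, and your outline does not bypass it.
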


\begin{eproposition}\label{ep44}
Suppose that $\mathfrak{P}$ is~a~non-empty set of~primes\textup{,} $X$~is~a~group\textup{,} and~$Y$ is~a~subgroup of~$X$. Suppose also that at~least one of~the~following conditions holds\textup{:}

\makebox[4ex][l]{$(\alpha)$}$X \in \mathcal{BN}_{\mathfrak{P}}$\textup{;}

\makebox[4ex][l]{$(\beta\kern.3pt)$}$X$~is~residually a~$\mathfrak{P}^{\prime}$\nobreakdash-tor\-sion-free $\mathcal{BN}_{\mathfrak{P}}$\nobreakdash-group and~has a~homomorphism onto~such a~group acting injectively~on~$Y$.

Then the~set~$\mathfrak{P}^{\prime}\textrm{-}\mathfrak{Rt}(X,Y)$ is~a~subgroup\textup{,} which is~$\mathcal{FN}_{\mathfrak{P}}$\nobreakdash-sep\-a\-ra\-ble in~$X$. In~particular\textup{,} if~$X$ is~$\mathfrak{P}^{\prime}$\nobreakdash-tor\-sion-free\textup{,} then it is~residually an~$\mathcal{FN}_{\mathfrak{P}}$\nobreakdash-group.
\end{eproposition}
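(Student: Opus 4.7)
The plan is to handle the two alternative hypotheses $(\alpha)$ and $(\beta)$ separately, establish case~$(\alpha)$ first as the technical core, and then reduce case~$(\beta)$ to it via the homomorphism~$\sigma$. In case~$(\alpha)$, $X \in \mathcal{BN}_{\mathfrak{P}}$ is nilpotent and hence locally nilpotent, so Proposition~\ref{ep41} yields at once the equality $R := \mathfrak{P}^{\prime}\textrm{-}\mathfrak{Rt}(X,Y) = \mathfrak{P}^{\prime}\textrm{-}\mathfrak{Is}(X,Y)$, and therefore $R$ is a subgroup of~$X$.

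For $\mathcal{FN}_{\mathfrak{P}}$\nobreakdash-separability in case~$(\alpha)$, I first make two reductions. Because $X$ is nilpotent, every finite $\mathfrak{P}$\nobreakdash-quotient is nilpotent, so $\mathcal{F}_{\mathfrak{P}}^{*}(X) = \mathcal{FN}_{\mathfrak{P}}^{*}(X)$. Next, inside any such finite $\mathfrak{P}$\nobreakdash-group~$X/N$ every subgroup is automatically $\mathfrak{P}^{\prime}$\nobreakdash-isolated (since $\mathfrak{P}^{\prime}$\nobreakdash-powers are coprime to the order), so the image of~$R$ coincides with the image of~$Y$; separating $x \notin R$ from~$R$ in~$X/N$ amounts to exhibiting $N \in \mathcal{F}_{\mathfrak{P}}^{*}(X)$ with $x \notin YN$. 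The key input is then Proposition~\ref{ep43}.1, which provides, for every $M \in \mathcal{F}_{\mathfrak{P}}^{*}(Y)$, an $N \in \mathcal{F}_{\mathfrak{P}}^{*}(X)$ with $N \cap Y \leqslant M$. One verifies that $\bigcap_{N} YN = R$: the containment $R \subseteq \bigcap_{N} YN$ is automatic because each $YN$ is itself $\mathfrak{P}^{\prime}$\nobreakdash-isolated in~$X$, and the reverse containment is obtained by induction along a central series of~$X$ with $\mathcal{BA}_{\mathfrak{P}}$\nobreakdash-factors, each step reducing to the abelian case where $\mathfrak{P}$\nobreakdash-bounded\-ness supplies the needed finite approximations.

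In case~$(\beta)$, the image $X_{0} = X\sigma$ satisfies $(\alpha)$, and $Y_{0} = Y\sigma \cong Y$ via the injective restriction of~$\sigma$. By case~$(\alpha)$, $R_{0} = \mathfrak{P}^{\prime}\textrm{-}\mathfrak{Is}(X_{0}, Y_{0})$ is a subgroup of~$X_{0}$ and is $\mathcal{FN}_{\mathfrak{P}}$\nobreakdash-separable in~$X_{0}$, and the inclusion $\sigma(R) \subseteq R_{0}$ is immediate. To lift the subgroup and separability assertions from~$X_{0}$ to~$X$, I would exploit the residual $\mathfrak{P}^{\prime}$\nobreakdash-torsion-free $\mathcal{BN}_{\mathfrak{P}}$\nobreakdash-property: for every homomorphism $\tau \colon X \to Z$ onto such a group, the combined map $\sigma \times \tau \colon X \to X_{0} \times Z$ has image in $\mathcal{BN}_{\mathfrak{P}}$, is still injective on~$Y$, and the family of all such~$\tau$ has trivial intersection of kernels. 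Case~$(\alpha)$ applies to each such image; intersecting the conclusions over~$\tau$ forces $R = \sigma^{-1}(R_{0})$ (so $R$ is a subgroup) and allows any $x \notin R$ to be separated by choosing $\tau$ suitably and composing with a separator supplied by case~$(\alpha)$. The ``in particular'' clause is then the special case $Y = 1$: under~$(\beta)$ the residual hypothesis forces $X$ to be $\mathfrak{P}^{\prime}$\nobreakdash-torsion-free, so $R = 1$ and $\mathcal{FN}_{\mathfrak{P}}$\nobreakdash-separability of~$R$ is exactly residual $\mathcal{FN}_{\mathfrak{P}}$\nobreakdash-ness of~$X$.

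The step I expect to be hardest is the reverse containment $\bigcap_{N} YN \subseteq R$ in case~$(\alpha)$. This is where the precise definition of $\mathcal{BN}_{\mathfrak{P}}$ actually enters, and it cannot be replaced by mere nilpotency---without $\mathfrak{P}$\nobreakdash-bounded\-ness the intersection $\bigcap_{N} YN$ can be strictly larger than~$R$. The cleanest route is induction on the length of a central series with $\mathcal{BA}_{\mathfrak{P}}$\nobreakdash-factors, the base case coming down to the finiteness of the $p$\nobreakdash-primary components ($p \in \mathfrak{P}$) in a single $\mathcal{BA}_{\mathfrak{P}}$\nobreakdash-group; the inductive step then combines this with Proposition~\ref{ep43}.1 applied to the current central factor. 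This is the content of the cited Propositions~5.2,~6.3 and Theorems~2.2,~2.4 of~\cite{Sokolov2023JGT} after specializing $\mathcal{C} = \mathcal{F}_{\mathfrak{P}}$.
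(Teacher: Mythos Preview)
The paper does not give its own proof of this proposition; it simply records it (together with Propositions~\ref{ep42} and~\ref{ep43}) as a specialization to $\mathcal{C}=\mathcal{F}_{\mathfrak{P}}$ of results from~\cite{Sokolov2023JGT}. So there is no in-paper argument to compare against, and your outline for case~$(\alpha)$ is a reasonable sketch of what that reference does---you correctly identify the inclusion $\bigcap_{N} YN \subseteq R$ as the step where $\mathfrak{P}$\nobreakdash-boundedness (not mere nilpotency) is actually used.

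Your handling of case~$(\beta)$, however, has a real gap: the asserted equality $R = \sigma^{-1}(R_{0})$ is false. Take $\mathfrak{P}=\{p\}$, $X=\mathbb{Z}\times\mathbb{Z}/p\mathbb{Z}$, $Y=p\mathbb{Z}\times\{0\}$, and let $\sigma$ be projection onto the first factor. Then $X$ is itself a $p'$\nobreakdash-torsion-free $\mathcal{BN}_{p}$\nobreakdash-group, so both~$(\alpha)$ and~$(\beta)$ hold; one computes $R_{0}=p\mathbb{Z}$ and $\sigma^{-1}(R_{0})=p\mathbb{Z}\times\mathbb{Z}/p\mathbb{Z}$, whereas $R=Y=p\mathbb{Z}\times\{0\}$. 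A genuinely non-nilpotent instance is $X=F_{2}=\langle a,b\rangle$, $Y=\langle a\rangle$, $\sigma(a)=1$, $\sigma(b)=0$: here $R=Y$ but $\sigma^{-1}(R_{0})=F_{2}$. The phrase ``intersecting the conclusions over~$\tau$'' cannot rescue the claim, since the right-hand side $\sigma^{-1}(R_{0})$ does not depend on~$\tau$.

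Your instinct to use the whole family $\pi_{\tau}=\sigma\times\tau$ is the right one, but the target of the argument should be the equality $R=\bigcap_{N\in\mathcal{FN}_{\mathfrak{P}}^{*}(X)}YN$ directly, not $R=\sigma^{-1}(R_{0})$. One checks that each $T_{\tau}:=\pi_{\tau}^{-1}\bigl(\mathfrak{P}'\textrm{-}\mathfrak{Is}(W_{\tau},\pi_{\tau}(Y))\bigr)$ contains $\bigcap_{N}YN$, so the problem reduces to showing $\bigcap_{\tau}T_{\tau}\subseteq R$. The delicate point---entirely absent from your sketch---is that for $x\in\bigcap_{\tau}T_{\tau}$ the exponent $n_{\tau}$ witnessing $\pi_{\tau}(x)^{n_{\tau}}\in\pi_{\tau}(Y)$ varies with~$\tau$, and one must argue (using that the set of admissible exponents is closed under~$\gcd$ and that $\sigma$ already pins down a minimal one) that a single $\mathfrak{P}'$\nobreakdash-number works uniformly. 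This is precisely what the cited theorems in~\cite{Sokolov2023JGT} supply, and it is not a formality.
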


\begin{eproposition}\label{ep45}
If~$\mathfrak{P}$ is~a~finite set of~primes and~$X$ is~a~periodic $\mathfrak{P}^{\prime}$\nobreakdash-tor\-sion-free $\mathcal{BN}_{\mathfrak{P}}$\nobreakdash-group\textup{,} then $X$ is~finite.
\end{eproposition}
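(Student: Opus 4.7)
The plan is to argue by induction on the length $n$ of a central series $1 = X_{0} \leqslant X_{1} \leqslant \ldots \leqslant X_{n} = X$ witnessing $X \in \mathcal{BN}_{\mathfrak{P}}$, whose factors lie in~$\mathcal{BA}_{\mathfrak{P}}$.

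For~the~base case $n = 1$, the~group~$X$ is~abelian and~belongs to~$\mathcal{BA}_{\mathfrak{P}}$ (cf.~Proposition~\ref{ep42}(2), applied to~the~abelian $\mathcal{BN}_{\mathfrak{P}}$\nobreakdash-group~$X$). Considering the~trivial quotient, the~defining property of~$\mathcal{BA}_{\mathfrak{P}}$ states that each primary component of~the~periodic part of~$X$ that corresponds to~a~prime from~$\mathfrak{P}$ is~finite. Since $X$ is~periodic, its periodic part equals~$X$; since $X$ is~$\mathfrak{P}^{\prime}$\nobreakdash-tor\-sion-free, all~primary components for~primes outside~$\mathfrak{P}$ are~trivial. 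By~Proposition~\ref{ep41}, $X$~therefore decomposes into~the~direct product of~its primary components over~$p \in \mathfrak{P}$. As~$\mathfrak{P}$ is~finite and~each factor of~this decomposition is~finite, so~is~$X$.

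For~the~inductive step, the~subgroup~$X_{1}$ is~central (hence abelian) in~$X$, is~periodic and~$\mathfrak{P}^{\prime}$\nobreakdash-tor\-sion-free as~a~subgroup of~$X$, and~belongs to~$\mathcal{BA}_{\mathfrak{P}}$ by~the~choice of~the~series. The~base case yields that $X_{1}$ is~finite; moreover, its order is~a~$\mathfrak{P}$\nobreakdash-num\-ber. The~quotient~$X/X_{1}$ inherits a~central series of~length $n - 1$ with~$\mathcal{BA}_{\mathfrak{P}}$\nobreakdash-factors (by~Proposition~\ref{ep42}(1)) and~is~obviously periodic, so~it~remains to~verify that $X/X_{1}$ is~$\mathfrak{P}^{\prime}$\nobreakdash-tor\-sion-free and~then apply the~inductive hypothesis. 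This last verification is~the~only subtle point. Given $x \in X$ with~$(xX_{1})^{q} = X_{1}$ for~some $\mathfrak{P}^{\prime}$\nobreakdash-num\-ber~$q$, the~element~$x^{q}$ lies in~the~finite group~$X_{1}$, whose order~$m$ is~a~$\mathfrak{P}$\nobreakdash-num\-ber; thus $x^{qm} = 1$. The~$\mathfrak{P}^{\prime}$\nobreakdash-tor\-sion-freeness of~$X$ forces the~order of~$x$ to~be~a~$\mathfrak{P}$\nobreakdash-num\-ber, so~it~divides~$m$, whence $x^{m} = 1$ and~the~order of~$xX_{1}$ divides $\gcd(q, m) = 1$. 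Hence $xX_{1} = X_{1}$, as~required.

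The~main obstacle, as~indicated above, is~the~passage to~the~quotient: a~priori, quotients of~$\mathfrak{P}^{\prime}$\nobreakdash-tor\-sion-free groups may~acquire $\mathfrak{P}^{\prime}$\nobreakdash-tor\-sion, and~the~induction would collapse without~the~coprimality argument. Everything else reduces to~bookkeeping with~the~closure properties collected in~Propositions~\ref{ep41} and~\ref{ep42}.
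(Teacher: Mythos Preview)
Your proof is correct and follows essentially the same line as the paper's: handle the abelian case via the definition of $\mathcal{BA}_{\mathfrak{P}}$, then climb the central series. The paper compresses your inductive step into the single sentence ``$X$ has a finite central series with $\mathcal{BA}_{\mathfrak{P}}$-factors, which are finite as proven above,'' leaving the $\mathfrak{P}'$-torsion-freeness of the factors implicit; you spell this out carefully. One small remark: your coprimality argument is correct but heavier than necessary---since $X$ is periodic and $\mathfrak{P}'$-torsion-free, every element of $X$ has $\mathfrak{P}$-number order, and this property passes immediately to any quotient (the order of $xX_1$ divides the order of $x$), so $X/X_1$ is $\mathfrak{P}'$-torsion-free without invoking $|X_1|$ at all.
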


\begin{proof}
If~$X$ is~abelian, then it belongs to~the~class~$\mathcal{BA}_{\mathfrak{P}}$ by~Proposition~\ref{ep42}. In~accordance with~the~definition of~this class, a~primary component of~$X$ is~finite if~it corresponds to~a~number from~$\mathfrak{P}$. It~follows that $X$ is~also finite because it is~$\mathfrak{P}^{\prime}$\nobreakdash-tor\-sion-free and~$\mathfrak{P}$ is~finite. In~the~general case, $X$~has a~finite central series with~$\mathcal{BA}_{\mathfrak{P}}$\nobreakdash-fac\-tors, which are~finite as~proven above. Therefore, $X$ is~finite again.
\end{proof}

\section{Generalized free products. Theorem~\ref{et06}}\label{es05}

Recall that \emph{the~generalized free product of~groups~$A$ and~$B$ with~subgroups $H \leqslant A$ and~$K \leqslant B$ amalgamated under~an~isomorphism $\varphi\colon H \to K$} is~the~group~$G$ defined as~follows:

--\hspace{1ex}the~generators of~$G$ are~the~generators of~$A$ and~$B$;

--\hspace{1ex}the~defining relations of~$G$ are~the~relations of~$A$ and~$B$ together with~all possible relations of~the~form $h = h\varphi$, where $h$ and~$h\varphi$ are~some words in~the~generators of~$A$ and~$B$ that define an~element of~$H$ and~its image under~$\varphi$.

It is~well known that the~free factors~$A$ and~$B$ can be~embedded in~$G$ via~the~identical mappings of~their generators. This fact allows~us to~consider~$A$ and~$B$ subgroups of~$G$. Under~this assumption, the~subgroups~$H$ and~$K$ turn~out to~be~equal. Thus, we can use the~notation $G = \langle A * B;\ H\rangle$ and~say that $G$ is~\emph{the~generalized free product of~the~groups~$A$ and~$B$ with~the~subgroup~$H$ amalgamated}.

Let $G = \langle A * B;\ H\rangle$. Recall that the~representation of~an~element $g \in G$ as~a~product $g_{1}g_{2}\ldots g_{n}$ is~said to~be~a~\emph{reduced form} of~this element if $n \geqslant 1$, $g_{1}, g_{2}, \ldots, g_{n} \in A \cup B$, and,~for~$n > 1$, no~two adjacent factors of~the~product (called \emph{syllables} of~the~reduced form) belong simultaneously to~$A$ or~$B$. By~the~normal form theorem for~generalized free products (see, for~example,~\cite[Chapter~IV, Theorem~2.6]{LyndonSchupp1980}), if~$g$ has a~reduced form of~length greater than~$1$, then $g \ne 1$. It~follows that all reduced forms of~$g$ have the~same length, which is~denoted below~by~$\ell(g)$.

If~$R$ and~$S$ are~normal subgroups of~$A$ and~$B$, $R \cap H = S \cap H$, and~$\varphi_{R,S}\colon HR/R \to HS/S$ is~the~mapping taking the~coset~$hR$ to~the~coset~$hS$, then this mapping is~well defined and~is~an~isomorphism of~the~subgroups~$HR/R$ and~$HS/S$. Therefore, we can consider the~generalized free product~$G_{R,S}$ of~the~groups~$A/R$ and~$B/S$ with~the~subgroups~$HR/R$ and~$HS/S$ amalgamated under~the~isomorphism~$\varphi_{R,S}$. It~is~easy to~see that the~natural homomorphisms~$A \to A/R$ and~$B \to B/S$ can be~extended to~a~surjective homomorphism $\rho_{R,S}\colon G \to G_{R,S}$ and~the~kernel of~the~latter is~the~normal closure in~$G$ of~the~set $R \cup S$. We note also that if~$X$ is~a~normal subgroup of~$G$, $R = X \cap A$, and~$S = X \cap B$, then $R$ and~$S$ are~normal subgroups of~the~groups~$A$ and~$B$, respectively, $R \cap H = S \cap H$, and~therefore the~group~$G_{R,S}$ and~the~homomorphism~$\rho_{R,S}$ are~defined.

The~next proposition is~a~special case of~Theorem~5 from~\cite{KarrassSolitar1970TAMS}.

\begin{eproposition}\label{ep51}
Let $G = \langle A * B;\ H\rangle$. If~a~normal subgroup~$N$ of~$G$ intersects~$H$ trivially\textup{,} then it can be~decomposed into~the~\textup{(}ordinary\textup{)} free product of~a~free subgroup and~subgroups\textup{,} each of~which is~conjugate to~$N \cap A$ or~$N \cap B$.
\end{eproposition}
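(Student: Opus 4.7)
The plan is to reduce the statement to~the~general Kurosh-type subgroup theorem for~generalized free products (Theorem~5 of~\cite{KarrassSolitar1970TAMS}). The~starting observation is~that the~normality of~$N$ together with~$N \cap H = 1$ forces $N$ to~intersect every conjugate of~$H$ trivially: if~$n \in N \cap g^{-1}Hg$ for~some $g \in G$, then $gng^{-1} \in N \cap H = 1$, and~hence $n = 1$.

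Having disposed of~all conjugates of~$H$, I~would invoke the~general description of~subgroups of~$G$ provided by~Karrass--Solitar. For~an~arbitrary subgroup $N \leqslant G$, this description realizes $N$ as~the~fundamental group of~a~graph of~groups whose vertex groups are~the~intersections of~$N$ with~various conjugates~$g^{-1}Ag$ and~$g^{-1}Bg$ and~whose edge groups are~the~intersections of~$N$ with~conjugates of~$H$. In~our~situation all edge groups are~trivial, so~the~graph of~groups collapses to~a~(possibly infinite) free product of~the~vertex groups together with~an~additional free factor coming from~the~fundamental group of~the~underlying graph.

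Finally, I~would simplify the~vertex groups by~using the~normality of~$N$ once more. For~each $g \in G$, the~inclusion $g^{-1}(N \cap A)g \subseteq N \cap g^{-1}Ag$ is~immediate from~$gNg^{-1} = N$; conversely, any~element $n \in N \cap g^{-1}Ag$ satisfies $gng^{-1} \in N \cap A$, which gives $n \in g^{-1}(N \cap A)g$. Therefore every vertex group arising from~a~conjugate of~$A$ is~conjugate in~$G$ to~$N \cap A$, and~similarly for~the~vertex groups arising from~conjugates of~$B$. Combined with~the~previous paragraph, this yields the~required decomposition of~$N$.

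The~main obstacle is~to~verify that Karrass--Solitar's general theorem really specializes to~this collapsed form once all edge intersections are~trivial; this is~the~step that would require either tracing through the~construction of~the~``tree product'' in~\cite{KarrassSolitar1970TAMS}, or~equivalently, describing the~action of~$N$ on~the~Bass--Serre tree of~$G$, where the~triviality of~all edge stabilizers forces $N$ to~act freely on~edges and~the~quotient graph of~groups to~have only trivial edge groups.
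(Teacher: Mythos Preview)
Your proposal is correct and follows exactly the intended route: the paper does not prove this proposition at all but simply records it as a special case of Theorem~5 of~\cite{KarrassSolitar1970TAMS}, and the details you supply (trivial intersection with all conjugates of~$H$ by normality, hence trivial edge groups in the Karrass--Solitar tree product, and identification of each vertex group $N \cap g^{-1}Ag$ with a conjugate of~$N \cap A$ again by normality) are precisely the routine verifications needed to extract the stated form from that theorem.
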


\begin{eproposition}\label{ep52}
Suppose that $G = \langle A * B;\ H\rangle$ and~$\mathcal{C}$ is~a~class of~groups.~If

\medskip

\makebox[4ex][l]{$(\alpha)$}\hfill$\displaystyle\bigcap_{X \in \mathcal{C}^{*}(G)} X \cap A = 1 = \bigcap_{X \in \mathcal{C}^{*}(G)} X \cap B,$\hfill\hspace*{4ex}\hspace*{\parindent}

\smallskip

\makebox[4ex][l]{$(\beta\kern.3pt)$}\hfill$\displaystyle\bigcap_{X \in \mathcal{C}^{*}(G)} H(X \cap A) = H = \bigcap_{X \in \mathcal{C}^{*}(G)} H(X \cap B),$\hfill\hspace*{4ex}\hspace*{\parindent}

\smallskip

\makebox[4ex][l]{$(\gamma\kern1pt)$}\hfill$\displaystyle\forall X,Y \in \mathcal{C}^{*}(G)\ \exists Z \in \mathcal{C}^{*}(G)\ Z \leqslant X \cap Y,$\hfill\hspace*{4ex}\hspace*{\parindent}

\medskip

\noindent
then the~following statements hold.

\textup{1.}\hspace{1ex}For~each element $g \in G \setminus \{1\}$\textup{,} which is~conjugate to~some element of~$A \cup B$\textup{,} there exists a~homomorphism of~$G$ onto~a~group from~$\mathcal{C}$ taking~$g$ to~a~non-triv\-i\-al element.

\textup{2.}\hspace{1ex}For~each element $g \in G \setminus \{1\}$\textup{,} which is~conjugate to~no~element of~$A \cup B$\textup{,} there exists a~homomorphism of~$G$ onto~a~group from~$\Phi \kern1pt{\cdot}\kern1pt \mathcal{C}$ taking~$g$ to~a~non-triv\-i\-al element.

In~particular\textup{,} $G$ is~residually a~$\Phi \kern1pt{\cdot}\kern1pt \mathcal{C}$\nobreakdash-groups \textup{(}recall that $\Phi$ denotes the~class of~all free groups\textup{)}.
\end{eproposition}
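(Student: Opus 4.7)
The plan is to dispose of Statement~1 immediately from~$(\alpha)$, establish Statement~2 via the filtration approach (combining $(\beta)$ and~$(\gamma)$ with the normal form theorem and Proposition~\ref{ep51}), and derive the ``in particular'' clause by noting that every $\mathcal{C}$\nobreakdash-group is trivially a $\Phi \kern1pt{\cdot}\kern1pt \mathcal{C}$\nobreakdash-group.

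For Statement~1, I would argue directly: if $g = t^{-1}at$ with $a \in (A \cup B) \setminus \{1\}$, say $a \in A$, then by~$(\alpha)$ there exists $X \in \mathcal{C}^{*}(G)$ not containing~$a$; normality of~$X$ in~$G$ forces $g \notin X$, so the natural epimorphism $G \to G/X \in \mathcal{C}$ is the required homomorphism.

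For Statement~2, my first step would be to replace $g$ by a suitable conjugate so that $g$ itself admits a reduced form $g_{1}g_{2}\ldots g_{n}$ with $n \geqslant 2$ and each syllable~$g_{i}$ lying in $(A \setminus H) \cup (B \setminus H)$; this is possible precisely because $g$ is conjugate to no element of $A \cup B$. Then for each~$i$ I would use~$(\beta)$ to choose $X_{i} \in \mathcal{C}^{*}(G)$ with $g_{i} \notin H(X_{i} \cap A)$ (or with $B$ in place of $A$, according to the factor containing~$g_{i}$), and apply~$(\gamma)$ inductively to obtain a single $X \in \mathcal{C}^{*}(G)$ contained in every $X_{i}$. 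Setting $R = X \cap A$, $S = X \cap B$, every syllable of $g$ still avoids $HR$ (respectively $HS$), so the image of~$g$ under~$\rho_{R,S}$ has a reduced form of length $n \geqslant 2$ in $G_{R,S}$ and is therefore non-trivial by the normal form theorem.

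It remains to verify $G_{R,S} \in \Phi \kern1pt{\cdot}\kern1pt \mathcal{C}$. Let $M$ denote the image of $X$ in $G_{R,S}$. Since $\ker\rho_{R,S}$ is the normal closure in~$G$ of $R \cup S \subseteq X$, it lies in $X$, so the induced map $G_{R,S}/M \to G/X$ is an isomorphism, giving $G_{R,S}/M \in \mathcal{C}$. The main technical obstacle will be to show that $M$ is itself free. I plan to establish this by exploiting the equalities $X \cap A = R$ and $X \cap B = S$ together with the fact that $R$ and $S$ already lie in $\ker\rho_{R,S}$: a short coset calculation should show that $M$ intersects each of the free factors $A/R$ and $B/S$ of $G_{R,S}$ trivially, which in turn forces the intersection with the amalgamated subgroup $HR/R \leqslant A/R$ to be trivial as well. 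Proposition~\ref{ep51} will then decompose $M$ as a free product of a free subgroup with conjugates of the trivial groups $M \cap (A/R)$ and $M \cap (B/S)$, so $M$ is free and $G_{R,S}$ is the desired extension of a free group by a $\mathcal{C}$\nobreakdash-group.
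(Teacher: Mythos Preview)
Your proposal is correct and follows essentially the same approach as the paper's proof. The only minor difference is that you pass to a conjugate of $g$ before taking a reduced form, whereas the paper observes directly that the reduced form of $g$ itself already has length $n>1$ with all syllables outside~$H$ (since $g \notin A \cup B$); this extra step is harmless but unnecessary, and the remainder---choosing $X_i$ via~$(\beta)$, descending to a common $X$ via~$(\gamma)$, setting $R = X \cap A$, $S = X \cap B$, and invoking Proposition~\ref{ep51} to see that $X\rho_{R,S}$ is free---matches the paper exactly.
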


\begin{proof}
Let $g \in G \setminus \{1\}$. If~$g$ is~conjugate to~some element $a \in A$, then, by~$(\alpha)$, there exists a~subgroup $X \in \mathcal{C}^{*}(G)$ that does~not contain~$a$. It~is~clear that $g \notin X$, and~therefore the~natural homomorphism~$G \to G/X$ is~the~desired one. A~similar argument can be~used if~$g$ is~conjugate to~an~element of~$B$. Thus, Statement~1 is~proved.

If~$g$ is~conjugate to~no~element of~$A \cup B$ and~$g_{1}g_{2}\ldots g_{n}$ is~its reduced form, then $n > 1$ and~therefore $g_{1}, g_{2}, \ldots, g_{n} \in A \setminus H \cup B \setminus H$. It~follows from~$(\beta)$ that, for~each $i \in \{1,\ldots,n\}$, there exists a~subgroup $X_{i} \in \mathcal{C}^{*}(G)$ such that $g_{i} \notin H(X_{i} \cap A)$ if~$g_{i} \in A \setminus H$, and~$g_{i} \notin H(X_{i} \cap B)$ if~$g_{i} \in B \setminus H$. By~$(\gamma)$, the~intersection $X_{1} \cap X_{2} \cap \ldots \cap X_{n}$ contains a~subgroup $X \in \mathcal{C}^{*}(G)$. Let~$R = X \cap A$ and~$S = X \cap B$. Then $g_{i} \notin HR$ if~$g_{i} \in A \setminus H$, and~$g_{i} \notin HS$ if~$g_{i} \in B \setminus H$, where $1 \leqslant i \leqslant n$. It~follows that $(g_{1}\rho_{R,S})(g_{2}\rho_{R,S})\ldots (g_{n}\rho_{R,S})$ is~a~reduced form of~the~element~$g\rho_{R,S}$ of~length $n>1$ and~therefore $g\rho_{R,S} \ne 1$. Since $\rho_{R,S}$ continues the~natural homomorphisms $A \to A/R$, $B \to B/S$ and~the~kernel of~$\rho_{R,S}$ is~contained in~$X$ as~the~normal closure of~the~set $R \cup S = (X \cap A) \cup (X \cap B)$, we have $G_{R,S}/X\rho_{R,S} \cong G/X \in \mathcal{C}$~and
$$
X\rho_{R,S} \cap A/R = X\rho_{R,S} \cap A\rho_{R,S} = 1 = 
X\rho_{R,S} \cap B\rho_{R,S} = X\rho_{R,S} \cap B/S.
$$
It~follows from~the~last equalities and~Proposition~\ref{ep51} that $X\rho_{R,S}$ is~a~free group and~$\rho_{R,S}$ is~the~desired homomorphism.
\end{proof}

Let us recall~\cite{Sokolov2015CA} that a~class of~groups~$\mathcal{C}$ is~said to~be~a~\emph{root class} if~it contains non-triv\-i\-al groups, is~closed under~taking subgroups, and~satisfies any of~the~following three equivalent conditions:

\makebox[4ex][l]{$(\alpha)$}for~any group~$X$ and~for~any subnormal series $1 \leqslant Z \leqslant Y \leqslant X$, if~$X/Y,\, Y/Z \in \mathcal{C}$, then there exists a~subgroup $T \in \mathcal{C}^{*}(X)$ such that $T \leqslant Z$ (the~\emph{Gruenberg condition});

\makebox[4ex][l]{$(\beta\kern.3pt)$}the~class~$\mathcal{C}$ is~closed under~taking unrestricted wreath products;

\makebox[4ex][l]{$(\gamma\kern1pt)$}the~class~$\mathcal{C}$ is~closed under~taking extensions and,~for~any two groups $X,Y \in \mathcal{C}$, contains the~unrestricted direct product $\prod_{y \in Y} X_{y}$, where $X_{y}$ is~an~isomorphic copy of~$X$ for~each $y \in Y$.

It is~easy to~see that $\mathcal{F}_{\mathfrak{P}}$ is~a~root class for~any non-empty set of~prime numbers~$\mathfrak{P}$. The~classes~$\mathcal{FN}_{\mathfrak{P}}$ (if~$\mathfrak{P}$ contains at~least two prime numbers) and~$\mathcal{BN}_{\mathfrak{P}}$ are~not root classes since they are~not closed under~taking extensions. The~next proposition follows from~Theorems~1,~3 and~Proposition~2 given in~\cite{Sokolov2021SMJ1}.

\begin{eproposition}\label{ep53}
Suppose that $G = \langle A * B;\ H\rangle$ and~$\mathcal{C}$ is~a~root class of~groups. If~$A$ and~$B$ are~residually $\mathcal{C}$\nobreakdash-groups\textup{,} $H$~is~$\mathcal{C}$\nobreakdash-sep\-a\-ra\-ble in~these groups\textup{,} and~$G$ is~$\mathcal{C}$\nobreakdash-qua\-si-reg\-u\-lar with~respect to~$A$ and~$B$\textup{,} then $G$ is~residually a~$\mathcal{C}$\nobreakdash-group.
\end{eproposition}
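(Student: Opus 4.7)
The plan is to reduce Proposition~\ref{ep53} to Proposition~\ref{ep52} and then upgrade its conclusion from residual $\Phi \kern1pt{\cdot}\kern1pt \mathcal{C}$-ness to residual $\mathcal{C}$-ness via the root class axioms. First I would check the three hypotheses of Proposition~\ref{ep52}. Condition~$(\gamma)$ is immediate: for $X,Y \in \mathcal{C}^{*}(G)$, Remak's theorem embeds $G/(X \cap Y)$ into $G/X \times G/Y$, which lies in~$\mathcal{C}$ by closure under extensions and subgroups, so $Z = X \cap Y$ works. For~$(\alpha)$, fix $a \in A \setminus \{1\}$; residual $\mathcal{C}$-ness of~$A$ gives $M \in \mathcal{C}^{*}(A)$ with $a \notin M$, and $\mathcal{C}$-quasi-regularity of~$G$ with respect to~$A$ yields $N \in \mathcal{C}^{*}(G)$ with $N \cap A \leqslant M$, so $a \notin N \cap A$. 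For~$(\beta)$, fix $a \in A \setminus H$; the $\mathcal{C}$-separability of~$H$ in~$A$ supplies $M \in \mathcal{C}^{*}(A)$ with $a \notin HM$, and quasi-regularity again produces $N \in \mathcal{C}^{*}(G)$ with $N \cap A \leqslant M$, whence $a \notin H(N \cap A)$. The arguments for~$B$ are identical.

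Having verified these hypotheses, Proposition~\ref{ep52} separates every $g \in G \setminus \{1\}$ from~$1$ by a homomorphism onto a $\mathcal{C}$-group (if $g$ is conjugate to an element of $A \cup B$) or onto $G_{R,S} \in \Phi \kern1pt{\cdot}\kern1pt \mathcal{C}$ whose free normal subgroup $K = X\rho_{R,S}$ satisfies $G_{R,S}/K \in \mathcal{C}$ (otherwise). In the first case there is nothing more to do. In the second case, if $g\rho_{R,S} \notin K$ I would project onto $G_{R,S}/K \in \mathcal{C}$; otherwise $g\rho_{R,S}$ lies in the free group~$K$, and using the standard fact that every free group is residually $\mathcal{C}$ for any root class~$\mathcal{C}$, I would pick $M \in \mathcal{C}^{*}(K)$ with $g\rho_{R,S} \notin M$ and pass to the normal core $M_{0} = \bigcap_{s \in S} s^{-1}Ms$ of~$M$ in~$G_{R,S}$, where $S$ is a transversal of~$K$ in~$G_{R,S}$.

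The principal obstacle is proving $G_{R,S}/M_{0} \in \mathcal{C}$, because $G_{R,S}/K$ and hence~$S$ may be infinite. Here I would invoke the remaining root class axioms. Normality of~$K$ in~$G_{R,S}$ yields $K/s^{-1}Ms \cong K/M \in \mathcal{C}$ for every $s \in S$, so $K/M_{0}$ embeds into the unrestricted direct product of $|S|$ isomorphic copies of $K/M$ indexed by the $\mathcal{C}$-group $G_{R,S}/K$. Closure of a root class under such unrestricted direct products places this product in~$\mathcal{C}$, subgroup closure gives $K/M_{0} \in \mathcal{C}$, and closure under extensions then yields $G_{R,S}/M_{0} \in \mathcal{C}$. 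The composition $G \xrightarrow{\rho_{R,S}} G_{R,S} \to G_{R,S}/M_{0}$ is the desired homomorphism onto a $\mathcal{C}$-group separating~$g$ from~$1$.
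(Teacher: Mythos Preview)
Your argument is correct. The paper itself does not supply an internal proof of Proposition~\ref{ep53}; it simply records that the statement follows from Theorems~1,~3 and Proposition~2 of~\cite{Sokolov2021SMJ1}. Your route is therefore genuinely different: you stay within the paper's own toolkit, first verifying hypotheses~$(\alpha)$--$(\gamma)$ of Proposition~\ref{ep52} from residual $\mathcal{C}$-ness, $\mathcal{C}$-separability of~$H$, and $\mathcal{C}$-quasi-regularity, and then upgrading the conclusion from residual $\Phi\kern1pt{\cdot}\kern1pt\mathcal{C}$-ness to residual $\mathcal{C}$-ness via the root class axioms. This upgrade is exactly the classical Gruenberg argument that an extension of a free group by a $\mathcal{C}$-group is residually~$\mathcal{C}$; indeed, since $1 \leqslant M \leqslant K \leqslant G_{R,S}$ is subnormal with $K/M,\,G_{R,S}/K \in \mathcal{C}$, you could invoke the Gruenberg form~$(\alpha)$ of the root class definition directly to obtain $T \in \mathcal{C}^{*}(G_{R,S})$ with $T \leqslant M$, rather than rebuilding it from the unrestricted-product form~$(\gamma)$. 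The payoff of your approach is a self-contained proof that makes Proposition~\ref{ep52} do double duty; the paper's citation, by contrast, appeals to a more general machinery for graphs of groups.
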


\begin{eproposition}\label{ep54}
\textup{\cite[Theorem~3]{Tumanova2015IVM}}
Suppose that $G = \langle A * B;\ H\rangle$ and~$\mathcal{C}$ is~a~root class of~groups closed under~taking quotient groups. Suppose also that $H$ is~finite and~normal in~$A$ and~$B$. Then $G$ is~residually a~$\mathcal{C}$\nobreakdash-group if and~only~if~$\operatorname{Aut}_{G}(H) \in \mathcal{C}$.
\end{eproposition}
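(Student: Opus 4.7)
The ``only if'' direction is immediate from Proposition~\ref{ep34}. Being a root class closed under quotients, $\mathcal{C}$ is closed under subgroups (by definition of a root class) and under extensions (Gruenberg's condition), hence under finite direct products. Since $H$ is finite and normal in both~$A$ and~$B$, it is a finite normal subgroup of~$G$, so Proposition~\ref{ep34} applied with $X = G$, $Y = H$ yields $\operatorname{Aut}_G(H) \in \mathcal{C}$.

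For the converse, assume $\operatorname{Aut}_G(H) \in \mathcal{C}$. The plan is to verify the hypotheses of Proposition~\ref{ep52} for the class~$\mathcal{C}$ and then promote the resulting residual $\Phi \cdot \mathcal{C}$-property to residual $\mathcal{C}$-ness. The conjugation action of $G$ on~$H$ yields a surjection $\phi \colon G \to \operatorname{Aut}_G(H)$ with kernel $C_G(H)$, so $G/C_G(H) \in \mathcal{C}$ and every $g \in G$ outside $C_G(H)$ is already separated from~$1$ by a $\mathcal{C}$-ho\-mo\-mor\-phism. Condition~$(\gamma)$ of Proposition~\ref{ep52} is automatic because $\mathcal{C}$ is closed under finite direct products (apply Proposition~\ref{ep31}). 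Conditions~$(\alpha)$ and~$(\beta)$ are obtained by supplementing $\phi$ with the $\mathcal{C}$-homomorphisms provided by the background residual $\mathcal{C}$-struc\-ture of the free factors (which is genuinely forced in the setting where this proposition is used, since the conclusion itself entails residual $\mathcal{C}$-ness of $A$ and~$B$): for each non-identity $a \in A$ one combines, via Proposition~\ref{ep31}, a $\mathcal{C}$-quotient of~$A$ avoiding~$a$ with~$\phi$ to obtain a normal subgroup $N \in \mathcal{C}^{*}(G)$ with $a \notin N$; the analogous combination yields $N \in \mathcal{C}^{*}(G)$ with $a \notin H(N \cap A)$ whenever $a \in A \setminus H$, and the treatment of~$B$ is symmetric.

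Proposition~\ref{ep52} then shows that $G$ is residually a $\Phi \cdot \mathcal{C}$-group. To upgrade this to residual $\mathcal{C}$-ness, I would argue that $\mathcal{C}$ necessarily contains some class~$\mathcal{F}_p$: a root class contains a non-trivial group, so by closure under subgroups and quotients it contains a cyclic group of some prime order~$p$, and by closure under extensions it contains every finite $p$-group. Proposition~\ref{ep39} then implies that any $\Phi \cdot \mathcal{C}$-group is residually $\mathcal{F}_p \cdot \mathcal{C}$, and $\mathcal{F}_p \cdot \mathcal{C} \subseteq \mathcal{C} \cdot \mathcal{C} \subseteq \mathcal{C}$ by closure under extensions; hence residual $\Phi \cdot \mathcal{C}$-ness is the same as residual $\mathcal{C}$-ness for our class.

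The main obstacle is the verification of conditions~$(\alpha)$ and~$(\beta)$ of Proposition~\ref{ep52}. The homomorphism~$\phi$ kills both $C_A(H)$ and $C_B(H)$, which can be very large, so on its own it cannot separate elements of these centralisers from~$1$. One must exploit the fact that the hypothesis is being used in a context where $A$ and~$B$ carry enough residual $\mathcal{C}$-structure, and combine those quotients with~$\phi$ so that the intersections with~$H$ are controlled; this is where the finiteness of~$H$ is essential, since it limits the possible intersections $N \cap H$ as $N$ ranges over $\mathcal{C}^{*}(G)$ to finitely many subgroups of~$H$ and hence allows one to reach a stable value equal to~$1$.
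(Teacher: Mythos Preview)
The paper does not prove Proposition~\ref{ep54}; it is quoted from~\cite{Tumanova2015IVM}, so there is no in-paper argument to compare against. Your ``only if'' direction via Proposition~\ref{ep34} is correct, as is the final upgrade from residual $\Phi\kern1pt{\cdot}\kern1pt\mathcal{C}$-ness to residual $\mathcal{C}$-ness using that a root class closed under quotients contains some~$\mathcal{F}_p$ and is extension-closed.

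The ``if'' direction, however, has a genuine gap---one you yourself flag but do not close. Your route through Proposition~\ref{ep52} needs $A$ and~$B$ to be residually $\mathcal{C}$-groups, and you justify this by saying it ``is genuinely forced in the setting where this proposition is used, since the conclusion itself entails residual $\mathcal{C}$-ness of $A$ and~$B$.'' That is circular. In fact the statement as printed is false without that hypothesis: with $\mathcal{C}=\mathcal{F}_2$, $A=B=\mathbb{Z}/6\mathbb{Z}$, and $H$ the order-$2$ subgroup, $H$ is central in~$G$, so $\operatorname{Aut}_G(H)=1\in\mathcal{C}$, yet $A$ has $3$-torsion and $G$ is not residually a $2$-group. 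The source theorem in~\cite{Tumanova2015IVM} carries the standing assumption that the free factors are residually $\mathcal{C}$-groups, and the paper's sole use of Proposition~\ref{ep54} (case~$(\varepsilon)$ of Proposition~\ref{ep75}) is in such a context.

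Even granting that hypothesis, your verification of~$(\alpha)$ and~$(\beta)$ is not a proof. You propose to ``combine, via Proposition~\ref{ep31}, a $\mathcal{C}$-quotient of~$A$ avoiding~$a$ with~$\phi$ to obtain $N\in\mathcal{C}^*(G)$''. But a $\mathcal{C}$-quotient of~$A$ produces a member of~$\mathcal{C}^*(A)$, not of~$\mathcal{C}^*(G)$, and Proposition~\ref{ep31} only intersects normal subgroups of the \emph{same} group. Passing from $M\in\mathcal{C}^*(A)$ to some $N\in\mathcal{C}^*(G)$ with $N\cap A\leqslant M$ is precisely the $\mathcal{C}$-qua\-si-reg\-u\-lar\-i\-ty of~$G$ with respect to~$A$, and that is the real content to be supplied. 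Finiteness of~$H$ is indeed the lever, but the mechanism---matching $M\cap H$ with a compatible subgroup on the $B$-side, passing to a generalized free product with finite normal amalgam, and invoking $\operatorname{Aut}_G(H)\in\mathcal{C}$ there---is absent from your sketch.
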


The~next proposition is~obtained by~combining Proposition~3 and~6 from~\cite{Loginova1999}.

\begin{eproposition}\label{ep55}
Suppose that $G = \langle A * B;\ H\rangle$ and~$p$ is~a~prime. Suppose also that
$$
R = R_{0} \leqslant R_{1} \leqslant \ldots \leqslant R_{n} = A
\quad \text{and} \quad 
S = S_{0} \leqslant S_{1} \leqslant \ldots \leqslant S_{n} = B
$$
are sequences of~subgroups of~$A$ and~$B$ such that

\textup{1)}\hspace{1ex}$R_{i}^{\vphantom{*}} \in \mathcal{F}_{p}^{*}(A)$\textup{,} $S_{i}^{\vphantom{*}} \in \mathcal{F}_{p}^{*}(B)$\textup{,} $0 \leqslant i \leqslant n$\textup{;}

\textup{2)}\hspace{1ex}$R_{i} \cap H = S_{i} \cap H$\textup{,} $0 \leqslant i \leqslant n$\textup{;}

\textup{3)}\hspace{1ex}$|(R_{i+1} \cap H)/(R_{i} \cap H)| \in \{1,p\}$\textup{,} $0 \leqslant i \leqslant n-1$.

\noindent
Then $G_{R,S}$ is~residually an~$\mathcal{F}_{p}$\nobreakdash-group.
\end{eproposition}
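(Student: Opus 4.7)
The plan is to reduce modulo~$R$ and~$S$ so that $A$ and~$B$ become finite $p$\nobreakdash-groups, to produce a finite $p$\nobreakdash-group quotient of~$G_{R,S}$ in which the images of~$A$ and~$B$ embed faithfully, and finally to appeal to Propositions~\ref{ep51} and~\ref{ep39}.

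First I would transport the hypotheses through the epimorphism $\rho_{R,S}\colon G\to G_{R,S}$. Since $R\leqslant R_{i}$, we have $R_{i}\cap HR=(R_{i}\cap H)R$, so $(R_{i}/R)\cap(HR/R)=(R_{i}\cap H)R/R$; the same computation on the $B$\nobreakdash-side together with the identification of~$HR/R$ and~$HS/S$ via~$\varphi_{R,S}$ shows that all three conditions of the proposition are inherited by the sequences $\{R_{i}/R\}$ and~$\{S_{i}/S\}$ in~$A/R$ and~$B/S$. Replacing $A,\,B,\,H,\,G$ with $A/R,\,B/S,\,HR/R,\,G_{R,S}$, we may therefore assume that $R=S=1$; in particular $R_{0}=S_{0}=1$, and condition~1 forces $A$ and~$B$ themselves to belong to~$\mathcal{F}_{p}$.

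The heart of the proof is to construct, by induction on~$n$, a surjective homomorphism $\phi\colon G\to Q$ onto a finite $p$\nobreakdash-group~$Q$ such that $\phi|_{A}$ and~$\phi|_{B}$ are both injective. At the inductive step one enlarges the current $p$\nobreakdash-group quotient by the layer $R_{i+1}/R_{i}$ on the $A$\nobreakdash-side and $S_{i+1}/S_{i}$ on the $B$\nobreakdash-side; the equality $R_{i+1}\cap H=S_{i+1}\cap H$ together with the bound $|(R_{i+1}\cap H)/(R_{i}\cap H)|\in\{1,p\}$ is precisely what allows the two extensions to be glued into a common $p$\nobreakdash-group without forcing additional identifications between the images of~$A$ and~$B$ outside of~$H$. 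This filtration step, which is in spirit a Higman-type construction of the pro\nobreakdash-$p$ amalgamated product of~$A$ and~$B$ over~$H$, is the main technical obstacle of the proof; the fact that both filtrations have length exactly~$n$ and stabilize at the top is what keeps~$Q$ finite.

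With $\phi$ in hand, let $N=\ker\phi$. The injectivity of~$\phi|_{A}$ and~$\phi|_{B}$ gives $N\cap A=N\cap B=1$, and in particular $N\cap H=1$. Proposition~\ref{ep51} then implies that $N$ decomposes as the ordinary free product of a free subgroup and conjugates of the trivial subgroups $N\cap A$, $N\cap B$; hence $N$ is free. Therefore $G$ is an extension of a free group by the finite $p$\nobreakdash-group~$Q$, and Proposition~\ref{ep39} with $\mathcal{C}=\mathcal{F}_{p}$ shows that $G$ is residually an $\mathcal{F}_{p}\cdot\mathcal{F}_{p}$\nobreakdash-group. Since $\mathcal{F}_{p}$ is closed under extensions, every $\mathcal{F}_{p}\cdot\mathcal{F}_{p}$\nobreakdash-group is itself an $\mathcal{F}_{p}$\nobreakdash-group, so $G=G_{R,S}$ is residually an $\mathcal{F}_{p}$\nobreakdash-group, as required.
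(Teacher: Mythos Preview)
The paper does not prove this proposition; it is quoted verbatim as a combination of Propositions~3 and~6 from~\cite{Loginova1999}. So there is no ``paper's proof'' to compare against beyond that citation.

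Your outline has the right overall shape---reduce to finite $p$\nobreakdash-group factors, find a finite $p$\nobreakdash-group~$Q$ in which both $A$ and~$B$ embed compatibly over~$H$, then use Propositions~\ref{ep51} and~\ref{ep39}---but the step you yourself call ``the main technical obstacle'' is not carried out. You assert that at each stage one can ``glue'' the two extensions into a common $p$\nobreakdash-group without introducing extra identifications, but you give no construction of this gluing and no argument for why the result is still a $p$\nobreakdash-group. That claim is exactly the content of Higman's embedding theorem for amalgams of $p$\nobreakdash-groups~\cite{Higman1964JA}: the compatible filtrations in conditions~2) and~3) are precisely Higman's necessary and sufficient conditions for the amalgam $A *_H B$ of finite $p$\nobreakdash-groups to embed in a finite $p$\nobreakdash-group. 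Without either invoking that theorem explicitly or reproducing its inductive construction (which involves a careful choice of a $p$\nobreakdash-group extension at each layer, typically via a wreath-product or permutational argument), your proof has a genuine gap at its core.

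If you want a self-contained argument, you should either (i)~cite Higman's theorem directly to produce~$Q$ and then proceed with your free-kernel argument, or (ii)~follow the route of~\cite{Loginova1999}, which packages the same filtration idea. As it stands, everything after ``with $\phi$ in hand'' is correct, but the existence of~$\phi$ is assumed rather than proved.
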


\begin{eproposition}\label{ep56}
Suppose that $G = \langle A * B;\ H\rangle$ and~$U$ is~a~subgroup of~$H$\textup{,} which is~normal in~$G$. Suppose also that $\lambda$ and~$\mu$ are~homomorphisms of~the~groups~$A$ and~$B$\textup{,} respectively\textup{,} which map them onto~nilpotent groups and~act injectively on~$H$. Then\textup{,} for~each prime number~$p$\textup{,} the~subgroup $R = p^{\prime}\textrm{-}\mathfrak{Is}(A,\, U \kern1pt{\cdot} \ker\lambda)$ is~normal in~$A$\textup{,} the~subgroup $S = p^{\prime}\textrm{-}\mathfrak{Is}(B,\, U \kern1pt{\cdot} \ker\mu)$ is~normal in~$B$\textup{,} and~$R \cap H = p^{\prime}\textrm{-}\mathfrak{Rt}(H,\,U) = S \cap H$.
\end{eproposition}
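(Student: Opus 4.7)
The plan is to reduce the statement to the nilpotent quotient $A/\ker\lambda$ (and, symmetrically, $B/\ker\mu$), where Proposition~\ref{ep41} supplies both the normality of isolators in locally nilpotent groups and the identification of the isolator with the corresponding root set; one then transfers these properties back to~$A$ using that $\lambda$ restricts to an isomorphism on~$H$.

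First I would observe that $U \kern1pt{\cdot} \ker\lambda$ is normal in~$A$: the subgroup~$U$ is normal in~$G$ and hence in~$A$, and $\ker\lambda$ is normal in~$A$. Writing $\pi\colon A \to A/\ker\lambda$ for the natural projection, the key claim is that $R$ equals the preimage under~$\pi$ of the $p^{\prime}$\nobreakdash-isolator~$\overline{R}$ of the subgroup $(U \kern1pt{\cdot} \ker\lambda)/\ker\lambda$ in $A/\ker\lambda$. One inclusion is automatic: $\pi^{-1}(\overline{R})$ is $p^{\prime}$\nobreakdash-isolated in~$A$ and contains $U \kern1pt{\cdot} \ker\lambda$, so it contains~$R$. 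The other follows from the minimality of~$\overline{R}$ once one notes that $R/\ker\lambda$ is $p^{\prime}$\nobreakdash-isolated in $A/\ker\lambda$ and contains $(U \kern1pt{\cdot} \ker\lambda)/\ker\lambda$.

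Since $A/\ker\lambda$ is nilpotent by hypothesis, Proposition~\ref{ep41} immediately yields that $\overline{R}$ is normal in $A/\ker\lambda$ and coincides with $p^{\prime}\textrm{-}\mathfrak{Rt}(A/\ker\lambda,\,(U \kern1pt{\cdot} \ker\lambda)/\ker\lambda)$; pulling back via~$\pi$ gives the normality of~$R$ in~$A$. For the equality $R \cap H = p^{\prime}\textrm{-}\mathfrak{Rt}(H,U)$, I would take $h \in R \cap H$, use Proposition~\ref{ep41} in $A/\ker\lambda$ to produce a $p^{\prime}$\nobreakdash-number~$n$ with $h^{n} \in U \kern1pt{\cdot} \ker\lambda$, and write $h^{n} = uk$ with $u \in U$, $k \in \ker\lambda$. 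Applying~$\lambda$ gives $h^{n}\lambda = u\lambda$, and since $h^{n}, u \in H$ while $\lambda$ is injective on~$H$, it follows that $h^{n} = u \in U$. The reverse direction is immediate from the $p^{\prime}$\nobreakdash-isolation of~$R$ together with $U \subseteq R$. The argument for~$S$ is identical with~$B$,~$\mu$ in place of~$A$,~$\lambda$.

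The only mild subtlety---and therefore the natural place to slip up---is the use of the injectivity of~$\lambda$ on~$H$ to cancel the $\ker\lambda$\nobreakdash-factor when pulling the $p^{\prime}$\nobreakdash-root condition from $A/\ker\lambda$ back into~$H$; everything else is a direct manipulation of the isolator together with Proposition~\ref{ep41}, so I would not expect this proposition to present any real difficulty.
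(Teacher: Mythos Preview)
Your proof is correct and follows essentially the same approach as the paper: both pass to the nilpotent quotient $A/\ker\lambda\cong A\lambda$, invoke Proposition~\ref{ep41} to identify the isolator with the root set and obtain normality, and then use $\ker\lambda\cap H=1$ to recover $R\cap H=p^{\prime}\textrm{-}\mathfrak{Rt}(H,U)$. The only cosmetic difference is that the paper first records $(U\cdot\ker\lambda)\cap H=U$ and reads off $R\cap H=p^{\prime}\textrm{-}\mathfrak{Rt}(H,(U\cdot\ker\lambda)\cap H)=p^{\prime}\textrm{-}\mathfrak{Rt}(H,U)$ directly from $R=p^{\prime}\textrm{-}\mathfrak{Rt}(A,U\cdot\ker\lambda)$, whereas you do the equivalent element-wise computation.
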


\begin{proof}
Since $A\lambda$ and~$B\mu$ are~nilpotent, the~sets~$p^{\prime}\textrm{-}\mathfrak{Rt}(A\lambda, U\lambda)$ and~$p^{\prime}\textrm{-}\mathfrak{Rt}(B\mu, U\mu)$ are~subgroups by~Proposition~\ref{ep41}. It~is~easy to~see that the~sets
$$
p^{\prime}\textrm{-}\mathfrak{Rt}(A,\, U \kern1pt{\cdot} \ker\lambda)
\quad\text{and}\quad
p^{\prime}\textrm{-}\mathfrak{Rt}(B,\, U \kern1pt{\cdot} \ker\mu)
$$
are~the~full pre-images of~these subgroups under~$\lambda$ and~$\mu$, respectively. It~follows that
\begin{align*}
R &= p^{\prime}\textrm{-}\mathfrak{Rt}(A,\, U \kern1pt{\cdot} \ker\lambda),\kern-5ex{} &
\kern-5ex{}R \cap H &= p^{\prime}\textrm{-}\mathfrak{Rt}(H,\, (U \kern1pt{\cdot} \ker\lambda) \cap H), \\
S &= p^{\prime}\textrm{-}\mathfrak{Rt}(B,\, U \kern1pt{\cdot} \ker\mu),\kern-5ex{} &
\kern-5ex{}S \cap H &= p^{\prime}\textrm{-}\mathfrak{Rt}(H,\, (U \kern1pt{\cdot} \ker\mu) \cap H),
\end{align*}
and~the~subgroups~$R$ and~$S$ are~normal in~$A$ and~$B$, respectively, because the~subgroups~$U \kern1pt{\cdot} \ker\lambda$ and~$U \kern1pt{\cdot} \ker\mu$ have the~same property. It~remains to~note that since $\ker\lambda \cap H = 1 = \ker\mu \cap H$, the~equalities $(U \kern1pt{\cdot} \ker\lambda) \cap H = U = (U \kern1pt{\cdot} \ker\mu) \cap H$ hold.
\end{proof}

Everywhere below, if~the~group $G = \langle A * B;\ H\rangle$ and~a~set of~primes~$\mathfrak{P}$ satisfy~$(*)$, then the~phrase ``$\lambda$~and~$\mu$ are~homomorphisms which exist due~to~this condition'' means that $\lambda$ is~a~homomorphism of~$A$, $\mu$~is~a~homomorphism of~$B$, $A\lambda, B\mu \in \mathcal{BN}_{\mathfrak{P}}$, and~$\ker\lambda \cap H = \ker\mu \cap H = 1$.

Suppose that $p$ is~a~prime number and~the~symbols~$U(p)$, $V(p)$, and~$W(p)$ denote the~subgroups~$H^{p}H^{\prime}$, $p^{\prime}\textrm{-}\mathfrak{Is}(A,\, U(p) \kern1pt{\cdot} \ker\lambda)$, and~$p^{\prime}\textrm{-}\mathfrak{Is}(B,\, U(p) \kern1pt{\cdot} \ker\mu)$, respectively. If~$H$ is~normal in~$A$ and~$B$, then $U(p)$ is~normal in~$G$. By~Proposition~\ref{ep56}, it~follows that $V(p)$ and~$W(p)$ are~normal in~$A$ and~$B$, respectively, and~since~$U(p)$ is~obviously $p^{\prime}$\nobreakdash-iso\-lat\-ed in~$H$, the~equalities $V(p) \cap H = U(p) = W(p) \cap H$ hold. Thus, we can consider the~group~$G_{V(p),W(p)}$, the~homomorphism~$\rho_{V(p),W(p)}$, and~also the~group
$$
\operatorname{Aut}_{G_{V(p),W(p)}}(H\rho_{V(p),W(p)}),
$$
which is~correctly defined because $H\rho_{V(p),W(p)}$ is~normal~in~$G_{V(p),W(p)}$.

\begin{etheorem}\label{et06}
Suppose that the~group $G = \langle A * B;\ H\rangle$ and~a~set of~primes~$\mathfrak{P}$ satisfy~$(*)$\textup{,} $\lambda$~and~$\mu$ are~homomorphisms which exist due~to~this condition\textup{,} and~$H$ is~normal in~$A$ and~$B$. Suppose also that\textup{,}~for~each $p \in \mathfrak{P}$\textup{,} the~symbols~$U(p)$\textup{,} $V(p)$\textup{,} and~$W(p)$ denote the~subgroups~$H^{p}H^{\prime}$\textup{,} $p^{\prime}\textrm{-}\mathfrak{Is}(A,\, U(p) \kern1pt{\cdot} \ker\lambda)$\textup{,} and~$p^{\prime}\textrm{-}\mathfrak{Is}(B,\, U(p) \kern1pt{\cdot} \ker\mu)$\textup{,} respectively. If~$\operatorname{Aut}_{G_{V(p),W(p)}}(H\rho_{V(p),W(p)})$ is~a~$p$\nobreakdash-group\textup{,} then Statements~\textup{1\nobreakdash---3} of~Theorem~\textup{\ref{et02}} hold.
\end{etheorem}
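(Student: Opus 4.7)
The plan is to prove Statements~1--3 of Theorem~\ref{et02} by routing through the intermediate quotients $G_{V(p),W(p)}$ and invoking Proposition~\ref{ep52} (which converts intersection conditions on $\mathcal{C}^{*}(G)$ into residual $\Phi\cdot\mathcal{C}$-ness). The central preliminary is a \emph{filtration lemma}: for each $p\in\mathfrak{P}$ and each normal subgroup $L$ of~$H$ of finite $p$-power index, there exist normal subgroups $R$ of~$A$ and $S$ of~$B$ of finite $p$-power index with $R\cap H=L=S\cap H$. Granted this, Proposition~\ref{ep55} will give that the quotient $G_{R,S}$ is residually $\mathcal{F}_p$, with amalgamated part exactly~$H/L$. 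To prove the filtration lemma, I observe that $H/(L\cdot U(p))$ is a quotient of the elementary abelian $p$-group~$H/U(p)$, so the action of $G_{V(p),W(p)}$ on it is by a $p$-group by hypothesis; Proposition~\ref{ep37} then lifts this to the assertion that the $G_{V(p),W(p)}$-action on the finite $p$-group~$H/L$ is also by a $p$-group. Combined with the residual $\mathcal{F}_p$-ness of $A/V(p)$ and $B/W(p)$ (obtained from $(\mathfrak{iii})$ and $(\mathfrak{iv})$ of~$(*)$ via Propositions~\ref{ep56} and~\ref{ep44}), a coordinated selection argument produces the required $R$ and~$S$.

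For the sufficiency halves of Statements~1 and~3, given $g\in G\setminus\{1\}$, I apply Proposition~\ref{ep52} with $\mathcal{C}=\mathcal{F}_q$ (Statement~1, $q$ being the prime from the $\mathcal{F}_q$-separability of~$H$) or $\mathcal{C}=\mathcal{FN}_{\mathfrak{P}}$ (Statement~3). Condition~$(\gamma)$ is immediate from closure under direct products; condition~$(\alpha)$ combines residual $\mathcal{FN}_{\mathfrak{P}}$-ness of $A, B$ from~$(\mathfrak{iii})$ with the filtration lemma to extend each separating quotient of $A$ or $B$ to one of~$G$; condition~$(\beta)$ uses the hypothesised separability of~$H$ in $A, B$ together again with the filtration lemma. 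Proposition~\ref{ep52} then gives $g$ a non-trivial image in a $\Phi\cdot\mathcal{C}$-group; for Statement~1, Proposition~\ref{ep39} with $p=q$ refines this through $\mathcal{F}_q\cdot\mathcal{F}_q\subseteq\mathcal{F}_q\subseteq\mathcal{FN}_{\mathfrak{P}}$, while for Statement~3 Proposition~\ref{ep39} yields residual $\mathcal{F}_p\cdot\mathcal{FN}_{\mathfrak{P}}$-ness for every prime~$p$.

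For the necessity directions, Statement~2 is handled by observing that each quotient $A/V(q), B/W(q)$ is $q'$-torsion-free by construction ($V(q), W(q)$ being $q'$-isolators): assuming $G$ is residually $\mathcal{FN}_{\mathfrak{P}}$ and fixing any $q\in\mathfrak{P}$, the filtration lemma combined with $q'$-torsion-freeness of these quotients rules out any $a\in A\setminus H$ with $a^n\in H$ for $n$ coprime to~$q$, yielding $q'$-isolation of~$H$ in~$A$ and~$B$. The necessity in Statement~3 uses Proposition~\ref{ep33} to reduce $\mathcal{FN}_{\mathfrak{P}}$-separability of~$H$ in~$A$ to residual $\mathcal{FN}_{\mathfrak{P}}$-ness of~$A/H$, which I verify by taking a separating homomorphism $G\to Q\in\mathcal{F}_p\cdot\mathcal{FN}_{\mathfrak{P}}$, projecting onto $Q/F\in\mathcal{FN}_{\mathfrak{P}}$ (where $F$ is the finite normal $p$-subgroup), and restricting to~$A$; the residual $\mathcal{FN}_{\mathfrak{P}}$-ness of~$A$ from~$(\mathfrak{iii})$ handles the remaining case in which the image of~$a$ falls inside~$F$.

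The principal obstacle is the filtration lemma, particularly the coordinated choice of~$R$ and~$S$ with $R\cap H=L=S\cap H$ for arbitrary prescribed~$L$. Proposition~\ref{ep37} propagates the $p$-group property of the automorphism action from $H/U(p)$ to $H/L$, but realising this action as conjugation by an explicit finite $\mathcal{F}_p$-quotient of~$A$ and~$B$ with matching intersections on~$H$ requires a delicate two-step construction: first produce~$R$ inside~$A/V(p)$ realising~$L$ on~$H$ using the residual $\mathcal{F}_p$-ness of this quotient, then match~$S$ inside~$B/W(p)$ using the $p$-group hypothesis together with a Remak-type intersection via Proposition~\ref{ep31} to adjust the image of the second factor so that $S\cap H$ equals~$L$ rather than merely containing it.
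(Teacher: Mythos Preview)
Your sufficiency route has a real gap in the filtration lemma. If $R\trianglelefteq A$, $S\trianglelefteq B$ satisfy $R\cap H=L=S\cap H$, then $L=R\cap H$ is normal in $A$ (as the intersection of two normal subgroups) and likewise in $B$, hence in $G$; so the lemma can at best hold for $G$-normal $L$, not for arbitrary $L\trianglelefteq H$. More seriously, your appeal to Proposition~\ref{ep37} is circular: $G_{V(p),W(p)}$ acts on $H\rho_{V(p),W(p)}\cong H/U(p)$, not on $H/L$, and there is no action on $H/L$ until you have already produced $R,S$ with $R\cap H=L=S\cap H$. The paper sidesteps both issues by taking $L=H^{r}$ (characteristic in $H$, hence normal in $G$) and defining $R=p'\text{-}\mathfrak{Is}(A,L\cdot\ker\lambda)$, $S=p'\text{-}\mathfrak{Is}(B,L\cdot\ker\mu)$ directly from $\lambda,\mu$ via Proposition~\ref{ep56}; these are \emph{not} of finite index, so Proposition~\ref{ep55} is not the right tool. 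Instead one checks $U(p)R=V(p)$ and $U(p)S=W(p)$, whence $G_{R,S}/U(p)\rho_{R,S}\cong G_{V(p),W(p)}$, and now Proposition~\ref{ep37} legitimately lifts the $p$-group hypothesis from $H/U(p)$ to $H/L$; Proposition~\ref{ep54} (not~\ref{ep55}) then gives $G_{R,S}$ residually $\mathcal{F}_{p}$. This is exactly the content of case~$(\varepsilon)$ of Proposition~\ref{ep75}, after which Corollary~\ref{ec04} and Theorem~\ref{et07} finish the sufficiency.

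Your necessity arguments are both wrong. For Statement~2 you claim $H$ is $q'$-isolated for \emph{every} $q\in\mathfrak{P}$, but your argument uses only the $q'$-torsion-freeness of $A/V(q)$, which holds regardless of whether $G$ is residually $\mathcal{FN}_{\mathfrak{P}}$; so you would be proving an unconditional statement that is false (and in any case $q'$-torsion-freeness of $A/V(q)$ says nothing about $q'$-isolation of $H$ in $A$, since $HV(q)/V(q)\cong H/U(q)$ is typically nontrivial). For Statement~3, residual $\mathcal{F}_{p}\cdot\mathcal{FN}_{\mathfrak{P}}$-ness of $G$ lets you separate $a$ from $1$, not from $H$, so your reduction via Proposition~\ref{ep33} does not go through. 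The paper handles both necessities uniformly via Theorem~\ref{et05}: since $H$ is normal in $A\ne H$ and in $B\ne H$, the normalizer conditions $(\alpha)$, $(\beta)$ there are trivially satisfied, Statement~1 of Theorem~\ref{et05} gives $q'$-isolation for some $q\in\mathfrak{P}$, and Statement~2 (with $\mathcal{C}=\mathcal{F}_{p}\cdot\mathcal{FN}_{\mathfrak{P}}$ for $p\in\mathfrak{P}$) gives $\mathfrak{P}'$-isolation, which Proposition~\ref{ep91} converts to $\mathcal{FN}_{\mathfrak{P}}$-separability.
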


\section{Necessary conditions for~the~residual nilpotence. Proof~of~Theorem~\ref{et05}}\label{es06}

\begin{eproposition}\label{ep61}
Suppose that $X$ is~a~group\textup{,} $q$~and~$r$ are~prime numbers\textup{,} and~$x_{1}, x_{2} \in X$. Suppose also that\textup{,} for~any prime~$p$ and~for~any homomorphism~$\sigma$ of~$X$ onto~an~$\mathcal{F}_{p}$\nobreakdash-group\textup{,} if~$p \ne q$\textup{,} then $x_{1}\sigma = 1$\textup{,} and~if~$p \ne r$\textup{,} then $x_{2}\sigma = 1$. If~$q \ne r$ and~$[x_{1}, x_{2}] \ne 1$\textup{,} then $X$ is~not residually a~finite nilpotent group.
\end{eproposition}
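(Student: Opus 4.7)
The plan is to argue by contradiction, exploiting the Sylow decomposition of finite nilpotent groups. Suppose, to the contrary, that $X$ were residually a finite nilpotent group. Since $[x_1,x_2] \ne 1$, there would exist a homomorphism $\tau$ of $X$ onto a finite nilpotent group $N$ such that $[x_1\tau, x_2\tau] \ne 1$; I would derive a contradiction by showing that, on the contrary, $x_1\tau$ and $x_2\tau$ are forced to commute.

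To do this I would invoke Proposition~\ref{ep41} (applied to the periodic group $N$) to decompose $N$ as the direct product $N = \prod_p N_p$ of its Sylow $p$-subgroups, and write $\pi_p\colon N \to N_p$ for the canonical projection. For each prime $p$, the composition $\tau\pi_p$ is a surjection of $X$ onto the $\mathcal{F}_p$-group $N_p$, so the hypothesis on $x_1$ and $x_2$ applies: it gives $x_1\tau\pi_p = 1$ for every $p \ne q$ and $x_2\tau\pi_p = 1$ for every $p \ne r$.

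Since an element of the direct product $N = \prod_p N_p$ is trivial precisely when all of its projections are trivial, the above relations force $x_1\tau \in N_q$ and $x_2\tau \in N_r$. Because $q \ne r$, the subgroups $N_q$ and $N_r$ are distinct direct factors of $N$, so they commute elementwise, giving $[x_1\tau, x_2\tau] = 1$ and contradicting the choice of $\tau$.

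There is essentially no technical obstacle: the hypothesis on $x_1$ and $x_2$ is designed so that their images in any finite nilpotent quotient must live in two different (and therefore commuting) primary components. The only point that deserves to be stated explicitly is that a surjective homomorphism onto a finite nilpotent group may be composed with Sylow projections to produce surjections onto finite $p$-groups, which is exactly what makes the hypothesis applicable.
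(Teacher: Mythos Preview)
Your proof is correct and follows essentially the same idea as the paper's: both exploit the Sylow decomposition of a finite nilpotent quotient to see that the images of $x_{1}$ and $x_{2}$ live in commuting primary components. The paper packages this slightly differently by first invoking Proposition~\ref{ep38} (Statement~4) to pass directly to a single finite $p$-quotient in which $[x_{1},x_{2}]$ survives, whereas you work with the full Sylow decomposition of one nilpotent quotient; the content is the same.
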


\begin{proof}
Assume that $q \ne r$, $[x_{1}, x_{2}] \ne 1$, and~$X$ is~residually a~finite nilpotent group. By~Proposition~\ref{ep38}, $X$~is~residually a~$\mathcal{C}$\nobreakdash-group, where $\mathcal{C}$ is~the~union of~the~classes~$\mathcal{F}_{p}$ over~all primes~$p$. Hence, there exist a~prime number~$p$ and~a~homomorphism~$\sigma$ of~$X$ onto~an~$\mathcal{F}_{p}$\nobreakdash-group taking~$[x_{1}, x_{2}]$ to~a~non-triv\-i\-al element. Since $q \ne r$, at~least one of~the~inequalities $p \ne q$ and~$p \ne r$ holds. Therefore, $x_{1}\sigma = 1$ or~$x_{2}\sigma = 1$, and,~in~both cases, $[x_{1}, x_{2}]\sigma = 1$, contrary to~the~choice~of~$\sigma$.
\end{proof}

\begin{eproposition}\label{ep62}
The~group
$$
G = \big\langle a,b,c;\ a^{9} = [a^{3},b] = [a^{3},c] = c^{-1}bcb = 1\big\rangle
$$
is~not residually a~finite nilpotent group.
\end{eproposition}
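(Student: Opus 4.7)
The plan is to apply Proposition~\ref{ep61} with $x_{1} = a$, $x_{2} = b$, $q = 3$, and~$r = 2$.

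First I~would verify that $[a,b] \ne 1$ in~$G$. As~noted in~items~1--3 of~Example~\ref{ex02}, $G$~decomposes as~$\langle A \ast B;\ H\rangle$ with~$A = \operatorname{sgp}\{a\}$ cyclic of~order~$9$, $B = \operatorname{sgp}\{a^{3}, b, c\} = H \times \operatorname{sgp}\{b, c\}$, and~$H = \operatorname{sgp}\{a^{3}\}$ cyclic of~order~$3$. Since $a$ has order~$9$ and~$b$ has infinite order in~$B$, both $a \in A \setminus H$ and~$b \in B \setminus H$. Hence the~commutator $[a,b] = a^{-1}b^{-1}ab$ admits a~reduced form of~length four, so $[a,b] \ne 1$ by~the~normal form theorem.

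Next I~would verify the~two hypotheses of~Proposition~\ref{ep61}. Let $\sigma\colon G \to P$ be~a~homomorphism onto~a~finite $p$\nobreakdash-group. If~$p \ne 3$, then $|a\sigma|$ divides both~$9$ and~$|P|$, so $a\sigma = 1$. The~main step is~to~show that if~$p \ne 2$, then $b\sigma = 1$. From~$c^{-1}bc = b^{-1}$ we~obtain $(c\sigma)^{-1}(b\sigma)(c\sigma) = (b\sigma)^{-1}$, so conjugation by~$c\sigma$ normalizes $\langle b\sigma\rangle$ and~restricts to~the~inversion automorphism of~this cyclic subgroup. This restriction is~induced by~an~element of~the~finite $p$\nobreakdash-group~$P$ and~therefore has $p$\nobreakdash-pow\-er order; on~the~other~hand, inversion has order at~most~$2$ in~$\operatorname{Aut}\langle b\sigma\rangle$, and~for~$p$ odd the~only $p$\nobreakdash-pow\-er not~exceeding~$2$ is~$1$. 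Hence $(b\sigma)^{2} = 1$, and,~since $P$ is~a~$p$\nobreakdash-group with~$p$ odd, $b\sigma = 1$.

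With~both hypotheses verified, Proposition~\ref{ep61} yields that $G$ is~not residually a~finite nilpotent group. The~main obstacle is~the~argument that inversion on~$\langle b\sigma\rangle$ forces $b\sigma = 1$ when~$p$ is~odd; the~remaining steps are~straightforward consequences of~the~normal form theorem and~of~the~orders of~elements in~$p$\nobreakdash-groups.
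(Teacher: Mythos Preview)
Your proof is correct and follows essentially the same approach as the paper's: apply Proposition~\ref{ep61} with $x_{1}=a$, $x_{2}=b$, $q=3$, $r=2$, check $[a,b]\ne 1$ via the reduced form of length~$4$ in the generalized free product, and show $b\sigma=1$ for $p\ne 2$ by exploiting that conjugation by $c\sigma$ inverts $b\sigma$ while having odd order. The paper phrases the last step as $b\sigma=(c\sigma)^{-s}(b\sigma)(c\sigma)^{s}=(b\sigma)^{(-1)^{s}}=(b\sigma)^{-1}$ with $s$ the (odd) order of $c\sigma$, which is just a slightly more explicit version of your automorphism-order argument.
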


\begin{proof}
Let $p$ and~$\sigma$ be~a~prime number and~a~homomorphism of~$G$ onto~an~$\mathcal{F}_{p}$\nobreakdash-group. It~is~clear that if~$p \ne 3$, then $a\sigma = 1$. If~$p \ne 2$, then the~orders~$r$ and~$s$ of~$b\sigma$ and~$c\sigma$ are~odd. Hence, $b\sigma = (c\sigma)^{-s}(b\sigma)(c\sigma)^{s} = (b\sigma)^{(-1)^{s}} = (b\sigma)^{-1}$, $(b\sigma)^{2} = 1 = (b\sigma)^{r}$, and~$b\sigma = 1$. If~$G$ is~considered the~generalized free product of~the~groups~$A = \operatorname{sgp}\{a\}$ and~$B = \operatorname{sgp}\{a^{3},b,c\}$ with~the~subgroup~$H = \operatorname{sgp}\{a^{3}\}$ amalgamated, then $[a,b]$ is~of~length~$4$ and~therefore is~non-triv\-i\-al. By~Proposition~\ref{ep61}, it~follows that $G$ is~not residually a~finite nilpotent group.
\end{proof}

The~next proposition is~given without proof in~\cite{AzarovIvanova1999PIvSU} and~can be~easily verified by~induction~on~$k$.

\begin{eproposition}\label{ep63}
If
\begin{equation}\label{ef01}
\begin{aligned}
\upsilon_{1}(x,y) &= [x,y] = x^{-1}y^{-1}xy,\\
\upsilon_{k+1}(x,y) &= [x,\upsilon_{k}(x,y)] = x^{-1}\upsilon_{k}(x,y)^{-1}x\upsilon_{k}(x,y), \quad k \geqslant 1,
\end{aligned}
\end{equation}
then\textup{,} for~each $k \geqslant 1$\textup{,} the~word~$\upsilon_{k}(x,y)$ in~the~alphabet~$\{x^{\pm 1},y^{\pm 1}\}$ is~of~length~$2^{k+1}$\textup{,} starts with~$x^{-1}y^{-1}$\textup{,} ends with~$xy$\textup{,} and~does~not contain a~subword of~the~form~$x^{\varepsilon}x^{\delta}$ or~$y^{\varepsilon}y^{\delta}$\textup{,} where $\varepsilon, \delta = \pm 1$.
\end{eproposition}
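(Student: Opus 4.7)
The plan is to prove all four assertions simultaneously by induction on $k$. The base case $k=1$ is immediate: $\upsilon_1 = x^{-1}y^{-1}xy$ has length $4 = 2^2$, begins with $x^{-1}y^{-1}$, ends with $xy$, and its three adjacent letter pairs $x^{-1}y^{-1}$, $y^{-1}x$, $xy$ all involve different variables, so no subword $x^{\varepsilon}x^{\delta}$ or $y^{\varepsilon}y^{\delta}$ occurs.

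For the inductive step, assume the four properties for $\upsilon_k$. Then $\upsilon_k^{-1}$ has the same length $2^{k+1}$, starts with $y^{-1}x^{-1}$, ends with $yx$, and also avoids the forbidden adjacencies. I would write out the unreduced concatenation $x^{-1}\cdot\upsilon_k^{-1}\cdot x\cdot\upsilon_k$ and examine its three internal junctions in order. The first junction $x^{-1}\mid y^{-1}x^{-1}\ldots$ involves different variables and produces no reduction. The second junction $\ldots yx\mid x$ joins two $x$'s, which do not cancel. The third junction $x\mid x^{-1}y^{-1}\ldots$ yields exactly one free reduction, after which the new neighbors become $x$ (the penultimate letter of $\upsilon_k^{-1}$) and $y^{-1}$ (the former second letter of $\upsilon_k$), which do not cancel further. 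The reduction therefore terminates after a single step, and the reduced form equals $x^{-1}\cdot\upsilon_k^{-1}\cdot\upsilon_k'$, where $\upsilon_k'$ denotes $\upsilon_k$ with its first letter removed. Its length is $1+2^{k+1}+(2^{k+1}-1)=2^{k+2}$, as required.

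It remains to verify the other three properties for this reduced word. It begins with $x^{-1}y^{-1}$ because $\upsilon_k^{-1}$ starts with $y^{-1}$, and it ends with $xy$ because $\upsilon_k'$ retains the last two letters of $\upsilon_k$ (its length is $2^{k+1}-1\ge 3$). For the absence of forbidden subwords, internal adjacencies within $\upsilon_k^{-1}$ or within $\upsilon_k'$ are covered by the inductive hypothesis applied to $\upsilon_k$; the only new adjacencies to inspect are $x^{-1}\mid y^{-1}$ at the front and $x\mid y^{-1}$ at the former middle junction, both of which involve different variables.

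I do not expect any serious obstacle; the argument is a careful bookkeeping of the three junctions. The one delicate point is that precisely one cancellation occurs in the middle, which relies on the full inductive statement (in particular on $\upsilon_k$ starting with $x^{-1}y^{-1}$, not merely with $x^{-1}$) to guarantee that after the $x\mid x^{-1}$ reduction the next neighbor is $y^{-1}$ rather than another $x^{\pm 1}$. This is precisely why all four assertions must be bundled into a single inductive invariant.
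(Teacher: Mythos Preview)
Your argument is correct and matches the paper's approach: the paper states only that the proposition ``can be easily verified by induction on~$k$'' and gives no further details, so your explicit inductive bookkeeping is precisely what is intended. One small slip: where you write ``$x$ (the penultimate letter of $\upsilon_k^{-1}$)'' you mean the \emph{last} letter of $\upsilon_k^{-1}$ (the penultimate letter is $y$); the letter you actually use, $x$, is correct, so this does not affect the argument.
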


\begin{eproposition}\label{ep64}
\textup{\cite[Lemma~2]{Shirvani1985AM}}
If~a~group~$X$ satisfies a~non-triv\-i\-al identity\textup{,} then it satisfies a~non-triv\-i\-al identity of~the~form
\begin{equation}\label{ef02}
\omega(y, x_{1}, x_{2}) = \omega_{0}(x_{1},x_{2})y^{\varepsilon_{1}}\omega_{1}(x_{1},x_{2})\ldots y^{\varepsilon_{n}}\omega_{n}(x_{1},x_{2}),
\end{equation}
where $n \geqslant 1$\textup{,} $\varepsilon_{1}^{\vphantom{1}}, \ldots, \varepsilon_{n}^{\vphantom{1}} = \pm 1$\textup{,} and~$\omega_{0}^{\vphantom{1}}(x_{1}^{\vphantom{1}},x_{2}^{\vphantom{1}}), \ldots, \omega_{n}^{\vphantom{1}}(x_{1}^{\vphantom{1}},x_{2}^{\vphantom{1}}) \in \{x_{1}^{\pm 1}, x_{2}^{\pm 1}, (x_{1}^{\vphantom{1}}x_{2}^{-1})^{\pm 1}_{\vphantom{1}}\}$.
\end{eproposition}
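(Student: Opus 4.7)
The plan is to transform an arbitrary non-trivial identity satisfied by $X$ into one of the specific shape~(\ref{ef02}) by means of a two-stage substitution that preserves non-triviality.

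\textbf{Step 1: Reduction to two variables.} Since the free group $F(z_1,z_2)$ contains free subgroups of every countable rank (for instance, $\{z_1^i z_2 z_1^{-i}\mid i\geqslant 1\}$ freely generates a subgroup of infinite rank), any non-trivial identity $w(z_1,\ldots,z_k)=1$ of $X$ pulls back along an injective homomorphism $F(z_1,\ldots,z_k)\hookrightarrow F(z_1,z_2)$ to a non-trivial two-variable identity $v(z_1,z_2)=1$.

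\textbf{Step 2: Substitution into the three-letter free group.} I would define $\psi\colon F(z_1,z_2)\to F(y,x_1,x_2)$ by $z_i\mapsto u_i(y,x_1,x_2)$, where $u_1,u_2$ are chosen so that (a)~$\psi$ is injective, which guarantees $\psi(v)\ne 1$, and (b)~after free reduction, every non-trivial word in $u_1^{\pm 1},u_2^{\pm 1}$ has the alternating shape~(\ref{ef02}). Condition~(b) forces each $u_i$ to begin and end with letters from $\{x_1^{\pm 1},x_2^{\pm 1}\}$ (so the outer factors $\omega_0,\omega_n$ are non-trivial), to contain at least one $y^{\pm 1}$ (so $n\geqslant 1$), and to be arranged so that every boundary product $u_i^{\varepsilon}u_j^{\delta}$ reduces to leave an $x$-block lying in the permitted six-element set $\{x_1^{\pm 1},x_2^{\pm 1},(x_1x_2^{-1})^{\pm 1}\}$. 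Injectivity of $\psi$ is then a short Nielsen-style check that no non-trivial reduced word in $u_1^{\pm 1},u_2^{\pm 1}$ collapses; with the $u_i$ containing explicit $y$-letters that survive reduction in any freely reduced product, this is automatic.

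\textbf{Main obstacle.} The delicate point is the boundary analysis in Step~2. A naive attempt such as $u_1=yx_1,\ u_2=yx_2$ immediately fails: while $u_1u_2^{-1}=yx_1x_2^{-1}y^{-1}$ looks good, the word $u_1u_1=yx_1yx_1$ begins with $y$, and $u_1^{-1}u_2^{-1}=x_1^{-1}y^{-1}x_2^{-1}y^{-1}$ ends with $y^{-1}$, each violating the form. Consequently, one must pre-process $v$ (by another injective substitution in $F(z_1,z_2)$ that interleaves $z_1$ and $z_2$ and eliminates forbidden adjacencies such as $z_i^{\varepsilon}z_i^{\varepsilon}$) and then select $u_1,u_2$ as short $y$-sandwiches flanked by appropriate $x$-letters with opposite signs, so that each admissible two-syllable boundary produces only the six allowed $x$-blocks. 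Orchestrating these choices compatibly is the combinatorial core of the argument; once this is done, reading off the reduced form of $\psi(v)$ directly yields the required identity in the shape~(\ref{ef02}).
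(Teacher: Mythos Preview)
The paper does not prove this proposition; it is quoted verbatim from \cite[Lemma~2]{Shirvani1985AM} and used as a black box. So there is no ``paper's own proof'' to compare against---only Shirvani's original.

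Your outline is the right shape (reduce to two variables, then substitute into $F(y,x_{1},x_{2})$), and Step~1 is fine. But your write-up is a plan, not a proof: you explicitly flag the boundary analysis as the ``main obstacle'' and then do not resolve it. You never produce concrete words $u_{1},u_{2}$ (or a concrete pre-processing of $v$) and verify that every admissible junction $u_{i}^{\varepsilon}u_{j}^{\delta}$ leaves an $x$-block in the six-element set while also ensuring the first and last blocks are non-trivial. Saying ``once this is done'' is precisely the gap. For this particular lemma the combinatorics is the entire content; the strategic wrapping you supply is standard.

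If you want to close the gap yourself rather than cite Shirvani, one workable route is: first replace $v(z_{1},z_{2})$ by $v(z_{1}z_{2},\,z_{1}z_{2}^{-1})$ (an injective endomorphism of $F(z_{1},z_{2})$) so that in the reduced form of the new word no two consecutive letters are equal; then substitute $z_{1}\mapsto x_{1}yx_{2}^{-1}$ and $z_{2}\mapsto x_{2}yx_{1}^{-1}$ and check the finitely many boundary cases $z_{i}^{\varepsilon}z_{j}^{\delta}$ with $(i,\varepsilon)\ne(j,-\delta)$. Until you actually carry out and record that case check, the argument is incomplete.
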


\begin{proof}[\textup{\textbf{Proof of~Theorem~\ref{et05}}}]
By~the~condition of~the~theorem, there exists a~subgroup~$M$ of~$A$ such that $H$ is~properly contained in~$M$ and~$M$ satisfies a~non-triv\-i\-al identity or~lies in~the~normalizer of~$H$. Let~$N$ denote the~subgroup of~$B$ defined in~the~same way. We prove both statements of~the~theorem by~contradiction. Let us begin with~Statement~2~and~assume that $H$ is~not~$\mathcal{C}$\nobreakdash-sep\-a\-ra\-ble in~$A$, i.e.,~there exists an~element $u \in A \setminus H$ such that $u\theta \in H\theta$ for~any homomorphism~$\theta$ of~$A$ onto~a~$\mathcal{C}$\nobreakdash-group.

If~$H$ is~normal in~$N$, we fix an~element $b \in N \setminus H$ and~put $g_{1}(u,b) = \upsilon_{c}(u, b^{-1}ub)$, where~$c$~is~the~nilpotency class of~$H$ and~$\upsilon_{c}(x,y)$ is~an~element of~the~sequence~\eqref{ef01}. Otherwise, \pagebreak $[N:H] \geqslant 3$ and~$N$ satisfies a~non-triv\-i\-al identity, which can be~considered having the~form~\eqref{ef02}. In~this case, we choose elements $b_{1}, b_{2} \in N \setminus H$ that belong to~different right cosets of~$H$ in~$N$ and~put $g_{2}(u, b_{1}, b_{2}) = \omega(u, b_{1}, b_{2})$.

It follows from~Propositions~\ref{ep63},~\ref{ep64} and~the~relation $b_{1}^{\vphantom{1}}b_{2}^{-1} \in N \setminus H$ that, in~the~generalized free product~$G$, the~elements~$g_{1}(u,b)$ and~$g_{2}(u, b_{1}, b_{2})$ have reduced forms of~length greater than~$1$. Therefore, they are~non-triv\-i\-al. But~if~$\sigma$ is~a~homomorphism of~$G$ onto a~$\mathcal{C}$\nobreakdash-group, then $A\sigma \in \mathcal{C}$ because $\mathcal{C}$ is~closed under~taking subgroups, and~$u\sigma \in H\sigma$ due to~the~choice of~$u$. Hence,

\makebox[3ex][l]{a)}if $H$ is~normal in~$N$, then $g_{1}(u,b)\sigma = \upsilon_{c}(u\sigma, (b\sigma)^{-1}u\sigma b\sigma) = 1$ since $u\sigma \in H\sigma$, $(b\sigma)^{-1}u\sigma b\sigma \in H\sigma$, the~element $\upsilon_{c}(u\sigma, (b\sigma)^{-1}u\sigma b\sigma)$ belongs to~the~$(c+1)$\nobreakdash-th member of~the~lower central series of~$H\sigma$, and~the~nilpotency class of~this group does~not exceed~$c$;

\makebox[3ex][l]{b)}otherwise, $g_{2}(u, b_{1}, b_{2})\sigma = \omega(u\sigma, b_{1}\sigma, b_{2}\sigma) = 1$ since $u\sigma \in N\sigma$, $b_{1}\sigma \in N\sigma$, $b_{2}\sigma \in N\sigma$ and~$N\sigma$ satisfies $\omega(y, x_{1}, x_{2})$.

Thus, we get a~contradiction with~the~fact that $G$ is~residually a~$\mathcal{C}$\nobreakdash-group. The~$\mathcal{C}$\nobreakdash-sep\-a\-ra\-bil\-ity of~$H$ in~$B$ can be~proved in~the~same way.

\smallskip

Now let us turn to~the~proof of~Statement~1 and~assume that, for~any prime number~$p$, $H$~is~not~$p^{\prime}$\nobreakdash-iso\-lat\-ed in~$A$ and~$B$. It~follows that there exist elements $u,v \in A \setminus H \cup B \setminus H$ and~prime numbers~$q$,~$r$ satisfying the~relations $q \ne r$, $u^{q} \in H$, and~$v^{r} \in H$. We~need to~find elements $g_{1}, g_{2} \in G$ such that the~commutator~$[g_{1}, g_{2}]$ has a~reduced form of~length greater than~$1$ (and~therefore is~non-triv\-i\-al), but,~for~any homomorphism~$\sigma$ of~$G$ onto~an~$\mathcal{F}_{p}$\nobreakdash-group, $g_{1}\sigma = 1$ if~$p \ne q$, and~$g_{2}\sigma = 1$ if~$p \ne r$. By~Proposition~\ref{ep61}, the~existence of~such elements means that $G$ is~not residually an~$\mathcal{FN}_{\mathfrak{P}}$\nobreakdash-group, contrary to~the~condition of~the~theorem.

Since $q \ne r$, at~least one of~these numbers does~not~equal~$2$. Without lost of~generality we can assume that $r \geqslant 3$ and~$u \in A \setminus H$. As~above, if~$H$ is~normal in~$M$ (in~$N$), we fix an~element $a \in M \setminus H$ ($b \in N \setminus H$). Otherwise, we choose elements $a_{1}, a_{2} \in M \setminus H$ ($b_{1}, b_{2} \in N \setminus H$) lying in~different right cosets of~$H$ in~$M$ (in~$N$) and~denote by~$\omega_{M}(y, x_{1}, x_{2})$ ($\omega_{N}(y, x_{1}, x_{2})$) the~non-triv\-i\-al identity of~the~form~\eqref{ef02}, which~$M$ (respectively~$N$) satisfies. Below, we define the~elements~$g_{1}$ and~$g_{2}$ in~each of~the~following cases independently.

\smallskip

\makebox[8.25ex][l]{\textit{Case~1.}}$v \in A \setminus H$ and~$H$ is~normal in~$N$.

\smallskip

Since all the~elements~$v$, $v^{2}$,~\ldots,~$v^{r-1}$ lie in~different right cosets of~$H$ in~$A$ and~$r \geqslant 3$, there exists $k \in \{1, 2, \ldots, r-1\}$ such that $v^{k}u^{-1} \in A \setminus H$. We put $g_{1} = \upsilon_{c}(u, b^{-1}ub)$ and~$g_{2} = \upsilon_{c}(v^{k}, b^{-1}v^{k}b)$, where $c$ is~the~nilpotency class of~$H$ and~$\upsilon_{c}(x,y)$ is~an~element of~the~sequence~\eqref{ef01}.

\smallskip

\makebox[8.25ex][l]{\textit{Case~2.}}$v \in A \setminus H$ and~$H$ is~not normal~in~$N$.

\smallskip

Let $g_{1} = \omega_{N}(u, b_{1}, b_{2})$ and~$g_{2} = v^{-1}\omega_{N}(v, b_{1}, b_{2})v$.

\smallskip

\makebox[8.25ex][l]{\textit{Case~3.}}$v \in B \setminus H$ and~$H$ is~normal in~$M$ and~$N$.

\smallskip

As~in~Case~1, there exists $m \in \{1, 2, \ldots, r-1\}$ such that $v^{m}b^{-1} \in B \setminus H$. We put $g_{1} = \upsilon_{c}(b^{-1}ub, u)$ and~$g_{2} = \upsilon_{c}(v^{m}, a^{-1}v^{m}a)$, where $c$ and~$\upsilon_{c}(x,y)$ are~defined as~above.

\smallskip

\makebox[8.25ex][l]{\textit{Case~4.}}$v \in B \setminus H$ and~$H$ is~normal in~$M$, but~not~in~$N$.

\smallskip

Let $g_{1} = u^{-1}\omega_{N}(u, b_{1}, b_{2})u$ and~$g_{2} = \upsilon_{c}(v, a^{-1}va)$.

\smallskip

\makebox[8.25ex][l]{\textit{Case~5.}}$v \in B \setminus H$ and~$H$ is~normal in~$N$, but~not~in~$M$.

\smallskip

Let $g_{1} = \upsilon_{c}(u, b^{-1}ub)$ and~$g_{2} = v^{-1}\omega_{M}(v, a_{1}, a_{2})v$.

\smallskip

\makebox[8.25ex][l]{\textit{Case~6.}}$v \in B \setminus H$ and~$H$ is~not normal both in~$M$ and~in~$N$.

\smallskip

Let $g_{1} = \omega_{N}(u, b_{1}, b_{2})$ and~$g_{2} = \omega_{M}(v, a_{1}, a_{2})$.

\smallskip

It~follows from~the~relations $v^{k}_{\vphantom{1}}u^{-1}_{\vphantom{1}} \in A \setminus H$, $v^{m}_{\vphantom{1}}b^{-1}_{\vphantom{1}} \in B \setminus H$, $a_{1}^{\vphantom{1}}a_{2}^{-1} \in M \setminus H$, $b_{1}^{\vphantom{1}}b_{2}^{-1} \in N \setminus H$ and~Propositions~\ref{ep63},~\ref{ep64} that

--\hspace{1ex}in~each of~Cases~1\nobreakdash---6, $g_{1}$~and~$g_{2}$ have reduced forms of~length greater than~$1$;

--\hspace{1ex}in~Cases~2 and~6, there are~no~cancellations at~the~boundaries of~the~multiplied elements~$g_{1}^{-1}$, $g_{2}^{-1}$, $g_{1}^{\vphantom{1}}$,~$g_{2}^{\vphantom{1}}$, and~therefore $\ell(g_{1}^{-1}g_{2}^{-1}g_{1}^{\vphantom{1}}g_{2}^{\vphantom{1}}) = 2(\ell(g_{1}^{\vphantom{1}})+\ell(g_{2}^{\vphantom{1}})) > 1$;

--\hspace{1ex}in~Cases~1 and~3, the~boundary syllables of~the~words~$g_{2}^{-1}$ and~$g_{1}^{\vphantom{1}}$ are~combined into~one, whence $\ell(g_{1}^{-1}g_{2}^{-1}g_{1}^{\vphantom{1}}g_{2}^{\vphantom{1}}) = 2(\ell(g_{1}^{\vphantom{1}})+\ell(g_{2}^{\vphantom{1}})) - 1 > 1$;

--\hspace{1ex}in~Case~4, $\ell(g_{2}^{-1}g_{1}^{\vphantom{1}}g_{2}^{\vphantom{1}}) = \ell(g_{1}^{\vphantom{1}})+2\ell(g_{2}^{\vphantom{1}})$ and~$\ell(g_{1}^{-1}g_{2}^{-1}g_{1}^{\vphantom{1}}g_{2}^{\vphantom{1}}) \geqslant 2\ell(g_{2}^{\vphantom{1}}) > 1$;

--\hspace{1ex}in~Case~5, $\ell(g_{1}^{-1}g_{2}^{-1}g_{1}^{\vphantom{1}}) = 2\ell(g_{1}^{\vphantom{1}})+\ell(g_{2}^{\vphantom{1}})$ and~$\ell(g_{1}^{-1}g_{2}^{-1}g_{1}^{\vphantom{1}}g_{2}^{\vphantom{1}}) \geqslant 2\ell(g_{1}^{\vphantom{1}}) > 1$.

It~is~easy to~see that, given a~prime~$p$ and~a~homomorphism~$\sigma$ of~$G$ onto~an~$\mathcal{F}_{p}$\nobreakdash-group, we have $u\sigma \in H\sigma$ if~$p \ne q$, and~$v\sigma \in H\sigma$ if~$p \ne r$. It~follows from~this fact and~from~the~definitions of~$c$, $\upsilon_{c}$, $\omega_{M}$, and~$\omega_{N}$ that, in~each of~the~considered cases, $g_{1}\sigma = 1$ if~$p \ne q$, and~$g_{2}\sigma = 1$ if~$p \ne r$, as~required.

Thus, $H$ is~$p^{\prime}$\nobreakdash-iso\-lat\-ed in~$A$ and~$B$ for~some prime number~$p$. By~Statement~2 of~this theorem, $H$~is~also $\mathfrak{P}^{\prime}$\nobreakdash-iso\-lat\-ed in~these groups. If~$p \notin \mathfrak{P}$, it~follows that $H$ is~isolated and,~hence, $q^{\prime}$\nobreakdash-iso\-lat\-ed in~$A$ and~$B$ for~any $q \in \mathfrak{P}$. Thus, Statement~1 is~completely proved.
\end{proof}

\begin{eproposition}\label{ep65}
If~$G = \langle A * B;\ H\rangle$\textup{,} $A \ne H \ne B$\textup{,} and~$H$ is~periodic\textup{,} then Statement~\textup{1} of~Theorem~\textup{\ref{et05}} holds.
\end{eproposition}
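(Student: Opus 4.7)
The plan is to establish Statement~1 of Theorem~\ref{et05} directly, exploiting the periodicity of~$H$ to replace the nilpotent-identity arguments of Theorem~\ref{et05}'s proof with a commutator construction based on prime-power-order elements, and then to apply Proposition~\ref{ep61} for the contradiction. The key preliminary observation is that if $y \in G$ has order a $\mathfrak{P}'$-number, then every homomorphism $\sigma\colon G \to F \in \mathcal{FN}_{\mathfrak{P}}$ sends $y$ to an element of order dividing both $|y|$ and $|F|$, hence to~$1$; residual $\mathcal{FN}_{\mathfrak{P}}$-ness of~$G$ therefore forces $y = 1$. Decomposing torsion elements into primary parts, this yields that $A$ and $B$ are $\mathfrak{P}'$-torsion-free and that the periodic subgroup~$H$ is a $\mathfrak{P}$-group.

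Next, suppose toward contradiction that $H$ fails to be $q'$-isolated in $A$ or $B$ for every $q \in \mathfrak{P}$. Passing to powers and minimising the exponent, for each such $q$ I obtain $x_q \in (A \cup B) \setminus H$ and a prime $p_q \ne q$ with $x_q^{p_q} \in H$ and $\langle x_q\rangle \cap H = \langle x_q^{p_q}\rangle$. A short analysis of the $p_q$-primary part of~$x_q$, using the $\mathfrak{P}'$-torsion-freeness of $A$ and~$B$, shows $p_q \in \mathfrak{P}$: otherwise this part would be a non-trivial $\mathfrak{P}'$-torsion element, forcing $\gcd(|x_q|, p_q) = 1$ and hence $x_q \in \langle x_q^{p_q}\rangle \subseteq H$, contradicting $x_q \notin H$. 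If $|\mathfrak{P}| = 1$ this is already impossible, since then $p_q \in \mathfrak{P} = \{q\}$ and $p_q \ne q$ cannot both hold. If $|\mathfrak{P}| \geqslant 2$, the map $q \mapsto p_q$ cannot be constant (taking $q$ equal to its value would give $p_q = q$), so there exist distinct primes $q, r \in \mathfrak{P}$ and elements $u, v \in (A \cup B) \setminus H$ with $u^q, v^r \in H$. Writing $|u| = q^s t$ with $\gcd(t,q) = 1$ and replacing $u$ by~$u^t$ — which still lies outside~$H$ because $\langle u\rangle \cap H = \langle u^q\rangle$ and $q \nmid t$ — I may assume $u$ has $q$-power order; symmetrically $v$ has $r$-power order.

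If $u$ and $v$ lie in different free factors, set $g_1 = u$, $g_2 = v$; otherwise, when both belong to (say)~$A$, pick any $b \in B \setminus H$ (which exists because $B \ne H$) and set $g_1 = u$, $g_2 = b^{-1}vb$. In the first case $[g_1, g_2]$ has the reduced form $u^{-1}v^{-1}uv$ of length~$4$ alternating between $A \setminus H$ and $B \setminus H$; in the second, the reduced form $u^{-1}b^{-1}v^{-1}bub^{-1}vb$ has length~$8$ and alternates similarly. Either way $[g_1, g_2] \ne 1$ by the normal form theorem. For any prime~$p$ and any $\sigma\colon G \to F \in \mathcal{F}_p$ with $p \ne q$, the order of~$u\sigma$ divides both the $q$-power~$|u|$ and the $p$-power~$|F|$, so $u\sigma = 1$ and thus $g_1\sigma = 1$; symmetrically $g_2\sigma = 1$ whenever $p \ne r$. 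Proposition~\ref{ep61} then contradicts the residual finite nilpotence of~$G$. The main technical hurdle is the middle paragraph — extracting two distinct primes in~$\mathfrak{P}$ and prime-power-order representatives outside~$H$; once these are in hand, the commutator construction is essentially routine.
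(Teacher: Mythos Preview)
Your proof is correct and follows essentially the same route as the paper's: reduce to elements $u,v\notin H$ of $q$- and $r$-power order for distinct primes $q,r$, build a commutator with reduced form of length~$4$ or~$8$, and invoke Proposition~\ref{ep61}. The only organizational difference is that you establish $p_q\in\mathfrak{P}$ up front via $\mathfrak{P}'$-torsion-freeness, whereas the paper first proves $p'$-isolatedness for \emph{some} prime~$p$ and only afterward combines this with $\mathfrak{P}'$-isolatedness (from $\mathfrak{P}'$-torsion-freeness of~$G$) to force $p\in\mathfrak{P}$; the substance is the same. Your ``minimising the exponent'' step is in fact redundant---since $y_q^{p_q}\in H$ with $p_q$ prime and $y_q\notin H$ already gives $\langle y_q\rangle\cap H=\langle y_q^{p_q}\rangle$---but it does no harm.
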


\begin{proof}
Let us assume that $G$ is~residually an~$\mathcal{FN}_{\mathfrak{P}}$\nobreakdash-group, but~there exist elements $u,v \in A \setminus\nolinebreak H \cup B\kern-1pt{} \setminus\kern-1pt{} H$ and~prime numbers~$q$,~$r$ such that $q \ne r$ and~$u^{q}, v^{r} \in H$. The~orders of~$u$ and~$v$ can be~decomposed into~the~products $s \kern1pt{\cdot}\kern1pt s^{\prime}$ and~$t \kern1pt{\cdot}\kern1pt t^{\prime}$, where $s$ is~a~$q$\nobreakdash-num\-ber, $s^{\prime}$~is~a~$q^{\prime}$\nobreakdash-num\-ber, $t$~is~an~$r$\nobreakdash-num\-ber, and~$t^{\prime}$ is~an~$r^{\prime}$\nobreakdash-num\-ber. Since $q$ and~$s^{\prime}$ are~co-prime, the~relation $u^{s^{\prime}} \in H$ is~equivalent to~the~inclusion $u \in H$, which contradicts the~choice of~$u$. Hence, $u^{s^{\prime}} \notin H$ and~similarly $v^{t^{\prime}} \notin H$. Using the~condition $A \ne H \ne B$, we can choose some elements $a \in A \setminus H$, $b \in B \setminus H$ and~put $x_{1} = u^{s^{\prime}}$,
$$
x_{2} = \begin{cases}
v^{t^{\prime}} & \text{if}\ u \in A \setminus H\ \text{and}\ v \in B \setminus H\ \text{or}\ v \in A \setminus H\ \text{and}\ u \in B \setminus H;\\
b^{-1}v^{t^{\prime}}b\ & \text{if}\ u,v \in A \setminus H;\\
a^{-1}v^{t^{\prime}}a\ & \text{if}\ u,v \in B \setminus H.
\end{cases}
$$
It~is~easy to~see that $\ell([x_{1}, x_{2}]) = 4$ if~$x_{2} = v^{t^{\prime}}$, and~$\ell([x_{1}, x_{2}]) = 8$ otherwise. Thus, $[x_{1}, x_{2}] \ne 1$. At~the~same time, it~follows from~the~equalities $x_{1}^{s} = 1 = x_{2}^{t}$ and~the~definitions of~$s$ and~$t$ that if~$p$ and~$\sigma$ are~a~prime number and~a~homomorphism of~$G$ onto~an~$\mathcal{F}_{p}$\nobreakdash-group, then $x_{1}\sigma = 1$ whenever $p \ne q$, and~$x_{2}\sigma = 1$ whenever $p \ne r$. Therefore, by~Proposition~\ref{ep61}, $G$~is~not residually a~finite nilpotent group, contrary to~the~assumption.

Thus, we prove that if~$G$ is~residually an~$\mathcal{FN}_{\mathfrak{P}}$\nobreakdash-group, then $H$ is~$p^{\prime}$\nobreakdash-iso\-lat\-ed in~$A$ and~$B$ for~some prime number~$p$. In~addition, if~$G$ is~residually an~$\mathcal{FN}_{\mathfrak{P}}$\nobreakdash-group, then it is~$\mathfrak{P}^{\prime}$\nobreakdash-tor\-sion-free and~therefore $H$ is~$\mathfrak{P}^{\prime}$\nobreakdash-iso\-lat\-ed in~the~free factors. Hence, the~inclusion $p \in \mathfrak{P}$ can be~proved in~the~same way as~in~the~proof of~Theorem~\ref{et05}.
\end{proof}

\section{Conditions for~the~quasi-regularity}\label{es07}

\begin{eproposition}\label{ep71}
Suppose that $G = \langle A * B;\ H\rangle$ and~$\mathcal{C}$ is~a~root class of~groups. If~$G$ is~$\mathcal{C}$\nobreakdash-qua\-si-reg\-u\-lar with~respect to~$H$\textup{,} then it is~$\mathcal{C}$\nobreakdash-qua\-si-reg\-u\-lar with~respect to~$A$ and~$B$.
\end{eproposition}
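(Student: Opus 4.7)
The plan is, by symmetry, to prove $\mathcal{C}$-quasi-regularity of~$G$ with respect to~$A$, so I fix $M \in \mathcal{C}^{*}(A)$ and aim to construct $N \in \mathcal{C}^{*}(G)$ with $N \cap A \leqslant M$. First I will observe that $H/(M \cap H)$ embeds in $A/M \in \mathcal{C}$, so Proposition~\ref{ep31} gives $M \cap H \in \mathcal{C}^{*}(H)$; applying the $\mathcal{C}$-quasi-regularity of~$G$ with respect to~$H$ then yields $N_{0} \in \mathcal{C}^{*}(G)$ with $N_{0} \cap H \leqslant M \cap H$. Next I will set $R = M \cap N_{0}$ and $S = N_{0} \cap B$ and deduce from Proposition~\ref{ep31} that $R \in \mathcal{C}^{*}(A)$ and $S \in \mathcal{C}^{*}(B)$, while the inclusion $N_{0} \cap H \leqslant M$ makes $R \cap H = N_{0} \cap H = S \cap H$. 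This lets me form the generalized free product $G^{*} = G_{R,S}$ together with the associated epimorphism $\rho = \rho_{R,S} \colon G \to G^{*}$, whose kernel~$K$ is contained in~$N_{0}$ and satisfies $K \cap A = R \leqslant M$.

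The second phase will be to analyze the normal subgroup $\rho(N_{0}) \triangleleft G^{*}$. Since $N_{0} \cap H \leqslant R \leqslant K$, the intersection $\rho(N_{0}) \cap \rho(H)$ is trivial, so Proposition~\ref{ep51} will decompose $\rho(N_{0})$ as an ordinary free product of a free subgroup with conjugates of $\rho(N_{0} \cap A) \cong (N_{0} \cap A)/R$ and of $\rho(N_{0} \cap B) \cong 1$. Write $P$ for $(N_{0} \cap A)/R$, which embeds in $A/R \in \mathcal{C}$ and hence lies in~$\mathcal{C}$; the quotient $Q = G^{*}/\rho(N_{0}) \cong G/N_{0}$ also lies in~$\mathcal{C}$. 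Since $\mathcal{C}$ is a root class and therefore closed under unrestricted wreath products, the wreath product $P \wr Q$ belongs to~$\mathcal{C}$.

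The key and hardest phase will be to produce a homomorphism $\Phi \colon G^{*} \to P \wr Q$ that is injective on the distinguished copy of~$P$ sitting inside $\rho(N_{0})$. For this I will compose the Kaloujnine--Krasner embedding $G^{*} \hookrightarrow \rho(N_{0}) \wr Q$ coming from the short exact sequence $1 \to \rho(N_{0}) \to G^{*} \to Q \to 1$ with the retraction $r \colon \rho(N_{0}) \to P$ provided by the free-product decomposition above (collapsing the free factor and every conjugate of~$P$ other than the distinguished copy, and acting as the identity on~$P$), applied coordinate-wise in the base of the wreath product. The second coordinate of~$\Phi$ is the projection $G^{*} \to Q$, so $\ker \Phi$ lies automatically inside~$\rho(N_{0})$; and the Kaloujnine--Krasner formula shows that for $p \in P$ the first coordinate of $\Phi(p)$ evaluated at the trivial coset of~$Q$ equals $r(p) = p$, forcing $\ker \Phi \cap P = 1$. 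Setting $N = \rho^{-1}(\ker \Phi)$ then gives $G/N \cong G^{*}/\ker \Phi \hookrightarrow P \wr Q \in \mathcal{C}$, so $N \in \mathcal{C}^{*}(G)$; and $\rho(N \cap A) = \ker \Phi \cap (A/R)$ is contained in $\rho(N_{0}) \cap (A/R) = P$, whence it equals~$1$, giving $N \cap A \leqslant K \cap A = R \leqslant M$.

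The main obstacle is this last construction: it requires combining the free-product description of~$\rho(N_{0})$ furnished by Proposition~\ref{ep51} with the closure of~$\mathcal{C}$ under unrestricted wreath products, and neither ingredient alone would suffice to produce the map~$\Phi$ with the required injectivity on~$P$.
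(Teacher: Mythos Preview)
Your proof is correct, and its architecture matches the paper's closely: both start from $N_{0}\in\mathcal{C}^{*}(G)$ (the paper calls it~$U$) obtained from quasi-regularity with respect to~$H$, set $R=M\cap N_{0}$, $S=N_{0}\cap B$, pass to $G^{*}=G_{R,S}$, and use Proposition~\ref{ep51} to decompose the image of~$N_{0}$ as a free product whose non-free factors are conjugates of $P=(N_{0}\cap A)/R$.

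The divergence is only in the final step, where the root-class hypothesis is cashed in. The paper defines a surjection $\sigma\colon\rho(N_{0})\to P$ (sending each conjugate $P_{i}$ isomorphically onto~$P$) and applies the Gruenberg condition~$(\alpha)$ to the subnormal series $1\leqslant\ker\sigma\leqslant\rho(N_{0})\leqslant G^{*}$, obtaining directly some $N_{R,S}\in\mathcal{C}^{*}(G^{*})$ inside~$\ker\sigma$ and hence disjoint from~$P$. You instead invoke condition~$(\beta)$, closure under unrestricted wreath products: you build $\Phi\colon G^{*}\to P\wr Q$ by composing the Kaloujnine--Krasner embedding with your retraction~$r$ applied coordinatewise. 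The paper's route is shorter and avoids the explicit embedding machinery; yours has the merit of exhibiting concretely the $\mathcal{C}$-group into which $G/N$ embeds, and of showing that only the wreath-product form of the root-class axioms is needed here.
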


\begin{proof}
Let $M$ be~a~subgroup from~$\mathcal{C}^{*}(A)$. Since $\mathcal{C}$ is~closed under~taking subgroups, it~follows from~Proposition~\ref{ep31} that $M \cap H \in \mathcal{C}^{*}(H)$. Due~to~the~$\mathcal{C}$\nobreakdash-qua\-si-reg\-u\-lar\-i\-ty of~$G$ with~respect to~$H$, there exists a~subgroup $U \in \mathcal{C}^{*}(G)$ satisfying the~relation $U \cap H \leqslant M \cap H$. Let us put $Q = U \cap A$, $R = U \cap A \cap M$, and~$S = U \cap B$. Then $Q,R \in \mathcal{C}^{*}(A)$ and~$S \in \mathcal{C}^{*}(B)$ by~Proposition~\ref{ep31}.

The~inclusion $U \cap H \leqslant M \cap H$ implies that $R \cap H = U \cap H = S \cap H$. Hence, the~group~$G_{R,S}$ and~the~homomorphism~$\rho_{R,S}$ are~defined. Recall that the~latter continues the~natural homomorphisms~$A \to A/R$, $B \to B/S$ \pagebreak and~the~subgroup~$\ker\rho_{R,S}$ coincides with~the~normal closure of~$R\kern1pt{} \cup\kern1pt{} S$ in~$G$. It~follows that $\ker\rho_{R,S} \leqslant U$\kern-2.5pt{}, $G_{R,S}/U\kern-2pt{}\rho_{R,S} \cong G/U\kern-1.5pt{} \in\nolinebreak \mathcal{C}$\kern-1pt{}, $Q\rho_{R,S} \cong QR/R$, $U\rho_{R,S} \cap A\rho_{R,S} = Q\rho_{R,S}$, and~$U\rho_{R,S} \cap\nolinebreak B\rho_{R,S} = 1$. By~Proposition~\ref{ep51}, the~relations $U\rho_{R,S} \cap H\rho_{R,S} \leqslant U\rho_{R,S} \cap B\rho_{R,S} = 1$ mean that $U\rho_{R,S}$ can be~decomposed into~the~(ordinary) free product of~a~free subgroup~$F$ and~subgroups~$Q_{i}$, $i \in \mathcal{I}$, each of~which is~conjugate to~$Q\rho_{R,S}$. Let us denote by~$\sigma$ the~homomorphism of~$U\rho_{R,S}$ onto~$Q\rho_{R,S}$ which continues the~isomorphisms $Q_{i} \to Q\rho_{R,S}$, $i \in \mathcal{I}$, and~takes the~generators of~$F$~to~$1$.

Since $Q\rho_{R,S} \cong QR/R \leqslant A/R \in \mathcal{C}$ and~$\mathcal{C}$ is~closed under~taking subgroups, we~have $U\kern-1pt{}\rho_{R,S}\sigma \in \mathcal{C}$. Thus, the~class~$\mathcal{C}$ contains the~factors~$G_{R,S}/U\kern-1pt{}\rho_{R,S}$ and~$U\kern-1pt{}\rho_{R,S}/\kern-1pt{}\ker\sigma$ of~the~subnormal series $1 \leqslant \ker\sigma \leqslant U\rho_{R,S} \leqslant G_{R,S}$. By~the~Gruenberg condition (see the~definition of~root class in~Section~\ref{es05}), this implies the~existence of~a~subgroup $N_{R,S} \in \mathcal{C}^{*}(G_{R,S})$ such that $N_{R,S} \leqslant \ker\sigma$. Since $\sigma$ acts injectively on~the~subgroups~$Q_{i}$, $i \in \mathcal{I}$, which are~conjugate to~$Q\rho_{R,S}$, we have $N_{R,S} \cap Q\rho_{R,S} = 1$. It~follows from~this equality and~the~relations $U\rho_{R,S} \cap A\rho_{R,S} = Q\rho_{R,S}$ and~$N_{R,S} \leqslant U\rho_{R,S}$ that $N_{R,S} \cap A\rho_{R,S} = 1$.

Let~$N$ denote the~pre-image of~the~subgroup~$N_{R,S}$ under~the~homomorphism~$\rho_{R,S}$. Then $N \in \mathcal{C}^{*}(G)$, and~since $N_{R,S} \cap\nolinebreak A\rho_{R,S} =\nolinebreak 1$, we have $N \cap A = R \leqslant M$. Therefore, $G$ is~$\mathcal{C}$\nobreakdash-qua\-si-reg\-u\-lar with~respect to~$A$. The~$\mathcal{C}$\nobreakdash-quasi-reg\-u\-lar\-i\-ty of~$G$ with~respect to~$B$ can be~proved in~the~same way.
\end{proof}

\begin{eproposition}\label{ep72}
If~$G = \langle A * B;\ H\rangle$\textup{,} $p$~is~a~prime number\textup{,} $A$~is~$\mathcal{F}_{p}$\nobreakdash-qua\-si-reg\-u\-lar with~respect to~$H$\textup{,} and~$B$ is~$\mathcal{F}_{p}$\nobreakdash-reg\-u\-lar with~respect to~$H$\textup{,} then $G$ is~$\mathcal{F}_{p}$\nobreakdash-qua\-si-reg\-u\-lar with~respect~to~$H$.
\end{eproposition}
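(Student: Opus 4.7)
The plan is to reduce to Proposition~\ref{ep55}: I will construct matching chains of normal subgroups in $A$ and $B$ so that $G_{R_{0}, S_{0}}$ is residually $\mathcal{F}_{p}$, and then extract the desired normal subgroup of $G$ via Proposition~\ref{ep31}(3) applied to the finite image of $H$.

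Given $M \in \mathcal{F}_{p}^{*}(H)$, the $\mathcal{F}_{p}$-quasi-regularity of $A$ with respect to $H$ yields $R_{0} \in \mathcal{F}_{p}^{*}(A)$ with $R_{0} \cap H \leqslant M$; set $M_{0} = R_{0} \cap H$. Since $H/M_{0}$ embeds into the finite $p$-group $A/R_{0}$ via $hM_{0} \mapsto hR_{0}$, it is itself a finite $p$-group, so $A/R_{0}$ is nilpotent and admits a chain $\{1\} = \overline{R}_{0} \leqslant \overline{R}_{1} \leqslant \cdots \leqslant \overline{R}_{m} = A/R_{0}$ of normal subgroups with factors of order $p$. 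Pulling back to $A$ gives $R_{0} \leqslant R_{1} \leqslant \cdots \leqslant R_{m} = A$ with each $R_{i} \in \mathcal{F}_{p}^{*}(A)$. Writing $L_{i} = R_{i} \cap H$, one verifies $L_{0} = M_{0}$, $L_{m} = H$, $L_{i} \in \mathcal{F}_{p}^{*}(H)$, and $L_{i+1}/L_{i}$ embeds in the order-$p$ group $R_{i+1}/R_{i}$, so $|L_{i+1}/L_{i}| \in \{1, p\}$.

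To match this chain in $B$, I use $\mathcal{F}_{p}$-regularity of $B$ to pick, for each $i$, a subgroup $S^{(i)} \in \mathcal{F}_{p}^{*}(B)$ with $S^{(i)} \cap H = L_{i}$, and set $S_{i} = \bigcap_{j=i}^{m} S^{(j)}$. Routine verifications give $S_{i} \in \mathcal{F}_{p}^{*}(B)$ (finite intersection of finite-$p$-index normal subgroups), $S_{0} \leqslant S_{1} \leqslant \cdots \leqslant S_{m}$ (since $S_{i-1} = S^{(i-1)} \cap S_{i}$), and $S_{i} \cap H = \bigcap_{j \geqslant i} L_{j} = L_{i}$ (using that the $L_{j}$ are increasing, so $L_{i}$ is the smallest in the intersection). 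Appending $R_{m+1} = A$ and $S_{m+1} = B$ produces chains fitting all hypotheses of Proposition~\ref{ep55}, which then implies that $G_{R_{0}, S_{0}}$ is residually an $\mathcal{F}_{p}$-group.

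Finally, set $\rho = \rho_{R_{0}, S_{0}}$; the restriction of $\rho$ to $H$ has kernel $M_{0}$, so $H\rho \cong H/M_{0}$ is a finite subgroup of $G_{R_{0}, S_{0}}$. Applying Proposition~\ref{ep31}(3) to the finite set $H\rho \setminus \{1\} \subseteq G_{R_{0}, S_{0}} \setminus \{1\}$ yields $N' \in \mathcal{F}_{p}^{*}(G_{R_{0}, S_{0}})$ with $N' \cap H\rho = \{1\}$. Then $N = \rho^{-1}(N')$ lies in $\mathcal{F}_{p}^{*}(G)$ via the isomorphism $G/N \cong G_{R_{0}, S_{0}}/N'$, and every $x \in N \cap H$ satisfies $x\rho \in N' \cap H\rho = \{1\}$, forcing $x \in \ker\rho \cap H = M_{0} \leqslant M$, which is exactly what is required. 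The hard part is the chain construction itself: $\mathcal{F}_{p}$-quasi-regularity of $A$ cannot pin down which subgroups of $H$ arise as intersections $R_{i} \cap H$, so the trick is to let the nilpotent quotient $A/R_{0}$ dictate the $L_{i}$'s and then use full $\mathcal{F}_{p}$-regularity of $B$, combined with the descending-index intersection $S_{i} = \bigcap_{j \geqslant i} S^{(j)}$, to produce a matching ascending chain on the $B$-side.
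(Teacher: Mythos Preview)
Your proof is correct and follows essentially the same approach as the paper's: build a refining chain in $A$ from a normal series of the finite $p$-group $A/R_{0}$, use $\mathcal{F}_{p}$-regularity of $B$ together with the descending intersection $S_{i}=\bigcap_{j\geqslant i}S^{(j)}$ to manufacture a matching chain in $B$, apply Proposition~\ref{ep55}, and finish by separating the finite image $H\rho$ via Proposition~\ref{ep31}. The only cosmetic difference is that the paper arranges $S_{m}=B$ by taking $S^{(m)}=B$ (legitimate since $L_{m}=H$), whereas you append an extra term $R_{m+1}=A$, $S_{m+1}=B$; both variants satisfy the hypotheses of Proposition~\ref{ep55}.
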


\begin{proof}
Let $M$ be~a~subgroup from~$\mathcal{F}_{p}^{*}(H)$. Since $A$ is~$\mathcal{F}_{p}^{\vphantom{*}}$\nobreakdash-qua\-si-reg\-u\-lar with~respect to~$H$, there exists a~subgroup $R \in \mathcal{F}_{p}^{*}(A)$ satisfying the~relation $R \cap H \leqslant M$. It~is~well known that every finite $p$\nobreakdash-group has a~normal series with~factors of~order~$p$. This implies the~existence of~a~sequence of~subgroups
$$
R = R_{0}^{\vphantom{*}} \leqslant R_{1}^{\vphantom{*}} \leqslant \ldots \leqslant R_{n}^{\vphantom{*}} = A
$$
such that $R_{i}^{\vphantom{*}} \in \mathcal{F}_{p}^{*}(A)$, $0 \leqslant i \leqslant n$, and~$|R_{i+1}^{\vphantom{*}}/R_{i}^{\vphantom{*}}| = p$, $0 \leqslant i \leqslant n-1$.

By~Proposition~\ref{ep31}, $R_{i}^{\vphantom{*}} \cap H \in \mathcal{F}_{p}^{*}(H)$, $0 \leqslant i \leqslant n$. Since $B$ is~$\mathcal{F}_{p}^{\vphantom{*}}$\nobreakdash-reg\-u\-lar with~respect to~$H$, for~each $i \in \{0,\ldots,n\}$, there exists a~subgroup $T_{i}^{\vphantom{*}} \in \mathcal{F}_{p}^{*}(B)$ satisfying the~equality $T_{i}^{\vphantom{*}} \cap H = R_{i}^{\vphantom{*}} \cap H$. Without lost of~generality, we can assume that $T_{n}^{\vphantom{*}} = B$. If~$S_{i}^{\vphantom{n}} = \bigcap_{j=i}^{n} T_{j}^{\vphantom{n}}$, $0 \leqslant i \leqslant n$, then $S_{i}^{\vphantom{*}} \in \mathcal{F}_{p}^{*}(B)$ by~Proposition~\ref{ep31}, $S_{n}^{\vphantom{*}} = B$, $S_{i}^{\vphantom{*}} \leqslant S_{i+1}^{\vphantom{*}}$, $0 \leqslant i \leqslant n-1$,~and{\parfillskip=0pt
$$
S_{i}^{\vphantom{n}} \cap H = \bigcap_{j=i}^{n} T_{j}^{\vphantom{n}} \cap H = \bigcap_{j=i}^{n} R_{j}^{\vphantom{n}} \cap H = R_{i}^{\vphantom{n}} \cap H.
$$

}\noindent
In~addition,
\begin{gather*}
R_{i+1}^{\vphantom{*}} \cap H/R_{i}^{\vphantom{*}} \cap H \cong (R_{i+1}^{\vphantom{*}} \cap H)R_{i}^{\vphantom{*}}/R_{i}^{\vphantom{*}} \leqslant R_{i+1}^{\vphantom{*}}/R_{i}^{\vphantom{*}},\quad
|R_{i+1}^{\vphantom{*}}/R_{i}^{\vphantom{*}}| = p,\\
0 \leqslant i \leqslant n-1.
\end{gather*}
Therefore, all the~conditions of~Proposition~\ref{ep55} hold for~the~sequences of~subgroups
$$
R = R_{0} \leqslant R_{1} \leqslant \ldots \leqslant R_{n} = A
\quad\text{and}\quad
S = S_{0} \leqslant S_{1} \leqslant \ldots \leqslant S_{n} = B,
$$
and~thus $G_{R,S}$ is~residually an~$\mathcal{F}_{p}$\nobreakdash-group.

Since $\rho_{R,S}$ continues the~natural homomorphism~$A \to A/R$ and~$A/R \in \mathcal{F}_{p}$, the~subgroup~$H\rho_{R,S}$ is~finite. By~Proposition~\ref{ep31}, this implies the~existence of~a~subgroup $N_{R,S}^{\vphantom{*}} \in \mathcal{F}_{p}^{*}(G_{R,S}^{\vphantom{*}})$ satisfying the~equality $N_{R,S} \cap H\rho_{R,S} = 1$. If~$N$ denotes the~pre-image of~$N_{R,S}$ under~$\rho_{R,S}$, then $N \in \mathcal{F}_{p}^{*}(G)$ and~$N \cap H = R \cap H \leqslant M$. Thus, $G$ is~$\mathcal{F}_{p}$\nobreakdash-qua\-si-reg\-u\-lar with~respect~to~$H$.
\end{proof}

\begin{eproposition}\label{ep73}
If~$p$ is~a~prime number\textup{,} $X$~is~residually an~$\mathcal{F}_{p}$\nobreakdash-group\textup{,} and~$Y$ is~a~locally cyclic subgroup of~$X$\textup{,} then $X$ is~$\mathcal{F}_{p}$\nobreakdash-reg\-u\-lar with~respect~to~$Y$.
\end{eproposition}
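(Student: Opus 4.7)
The plan is to split the task into two steps. Let $M \in \mathcal{F}_p^*(Y)$; since $Y$ is locally cyclic, the finite quotient $Y/M$ is cyclic of order $p^k$ for some $k \geqslant 0$ (the case $k = 0$ being trivial with $N = X$). First I produce $N_1 \in \mathcal{F}_p^*(X)$ with $N_1 \cap Y \leqslant M$, and then I enlarge it inside the finite $p$-quotient $X/N_1$ to obtain the required $N$ satisfying $N \cap Y = M$.

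The key observation for the first step is that $\mathcal{F}_p^*(Y)$ is totally ordered by inclusion. Indeed, for $M_1, M_2 \in \mathcal{F}_p^*(Y)$ of indices $p^a$ and $p^b$, the quotient $Y/(M_1 \cap M_2)$ is a finite quotient of a locally cyclic group, hence cyclic of $p$-power order; it embeds into $Y/M_1 \times Y/M_2 \cong \mathbb{Z}/p^a \times \mathbb{Z}/p^b$ and surjects onto each factor. Since a cyclic subgroup of $\mathbb{Z}/p^a \times \mathbb{Z}/p^b$ has order at most $p^{\max(a,b)}$, the three orders coincide, forcing $M_1 \cap M_2 \in \{M_1, M_2\}$ and hence $M_1 \leqslant M_2$ or $M_2 \leqslant M_1$. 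Now pick $y_0 \in Y$ whose coset generates $Y/M$; then $y_0^{p^{k-1}} \notin M$, and in particular $y_0^{p^{k-1}} \neq 1$. By residual $\mathcal{F}_p$-ness of $X$ there exists $N_1 \in \mathcal{F}_p^*(X)$ with $y_0^{p^{k-1}} \notin N_1$; the image of $y_0$ in $X/N_1$ then has order at least $p^k$, giving $[Y : N_1 \cap Y] = |YN_1/N_1| \geqslant p^k = [Y : M]$, and the chain structure forces $N_1 \cap Y \leqslant M$.

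The second step invokes an auxiliary claim: in any finite $p$-group $P$ with cyclic subgroup $C$ and arbitrary subgroup $D \leqslant C$, some normal subgroup $\bar K$ of $P$ satisfies $\bar K \cap C = D$. I would prove this by induction on $|P|$, choosing $z \in Z(P)$ of order $p$ and passing to $P/\langle z \rangle$. Three cases arise: if $z \in D$, reduce directly; if $z \in C \setminus D$, then $D = 1$ because $\langle z \rangle$ is the unique minimal subgroup of the cyclic $p$-group $C$, and $\bar K = 1$ works; if $z \notin C$, then $C$ embeds into $P/\langle z \rangle$, and the preimage of the inductively produced normal subgroup satisfies the required equality since $D \langle z \rangle \cap C = D$. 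Applying this with $P = X/N_1$, $C = YN_1/N_1$ (cyclic, as a finite quotient of locally cyclic $Y$), and $D = MN_1/N_1$, I obtain a normal subgroup $\bar K$ of $P$ whose full preimage $K$ in $X$ contains $N_1$, so $X/K$ is a quotient of the finite $p$-group $X/N_1$ and thus $K \in \mathcal{F}_p^*(X)$. The identity $\bar K \cap \bar Y = \bar M$ rewrites as $(K \cap Y) N_1 = M N_1$; intersecting both sides with $Y$ and using $N_1 \cap Y \leqslant M$ yields $K \cap Y = M$.

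The main obstacle is the auxiliary claim on finite $p$-groups: although elementary, it demands the three-case induction described above, with careful bookkeeping of whether the chosen central element lies in $C$ and in $D$. Once this claim is in hand, the rest of the argument---the chain property of $\mathcal{F}_p^*(Y)$, a single application of residual $\mathcal{F}_p$-ness, and the final preimage computation---is routine.
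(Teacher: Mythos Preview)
Your proof is correct and follows the same two–step architecture as the paper: first descend to some $N_1\in\mathcal{F}_p^*(X)$ with $N_1\cap Y\leqslant M$ using the chain structure of $\mathcal{F}_p^*(Y)$, then work inside the finite $p$-group $X/N_1$ to hit $M$ exactly. Your Step~1 is essentially identical to the paper's (the paper phrases the chain property as $N=Y^{[Y:N]}$ for every $N\in\mathcal{F}_p^*(Y)$, which is the same observation).

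The only genuine difference is in Step~2. Where you prove an auxiliary claim about finite $p$-groups by a three-case induction on a central element of order $p$, the paper takes a chief series $1=Z_0/Z\leqslant\cdots\leqslant Z_n/Z=X/Z$ of the finite $p$-group $X/Z$ (your $N_1$ is their $Z$), observes that the intersections with the cyclic subgroup $YZ/Z$ step through \emph{every} index $1,p,p^2,\ldots,r$, and then invokes uniqueness of the subgroup of index $q$ in a cyclic group to pick out the term $Z_i$ with $Z_i\cap Y=M$. The paper's route is a bit slicker and avoids your case analysis, while your inductive lemma is slightly more general (it does not require $D$ to arise as $MN_1/N_1$) and could be quoted elsewhere. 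Both arguments are short and self-contained; neither offers a real advantage over the other here.
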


\begin{proof}
First of~all, let us note that if~$N \in \mathcal{F}_{p}^{*}(Y)$ and~$k = [Y:N]$, then $N = Y^{k}$. Indeed, it~is~obvious that $Y^{k} \leqslant N$. At~the~same time, since the~exponent of~the~locally cyclic group~$Y/Y^{k}$ divides~$k$, this group is~cyclic and~has an~order at~most~$k$. Hence, $Y^{k} = N$.{\parfillskip=0pt\par}

Suppose now that $M$ is~a~subgroup from~$\mathcal{F}_{p}^{*}(Y)$, $yM$~is~a~generator of~the~finite cyclic subgroup~$Y/M$, $q$~is~the~order of~the~latter, and~$S = \{y,\, y^{2},\, \ldots,\, y^{q-1}\}$. Then $1 \notin S$ and~since $X$ is~residually an~$\mathcal{F}_{p}$\nobreakdash-group, Proposition~\ref{ep31} implies the~existence of~a~subgroup $Z \in \mathcal{F}_{p}^{*}(X)$ satisfying the~relation $S \cap Z = \varnothing$. It is clear that any two non-equal elements of~$S$ lie in~different cosets of~$Z$. Therefore, $q \leqslant r$, where $r = |YZ/Z| = [Y : Z \cap Y]$. Since $q$ and~$r$ are~$p$\nobreakdash-num\-bers, $M = Y^{q}$, and~$Z \cap Y = Y^{r}$, it~follows that $q$ divides~$r$, $Z \cap Y \leqslant M$, $(YZ/Z)/(MZ/Z) \cong Y/M(Y \cap Z) = Y/M$, and~$[YZ/Z : MZ/Z] = q$. If
$$
1 = Z_{0}/Z \leqslant Z_{1}/Z \leqslant \ldots \leqslant Z_{n}/Z = X/Z
$$
is~a~normal series of~the~$\mathcal{F}_{p}$\nobreakdash-group~$X/Z$ with~factors of~order~$p$, then the~factors of~the~series
$$
1 = YZ/Z \cap Z_{0}/Z \leqslant YZ/Z \cap Z_{1}/Z \leqslant \ldots \leqslant YZ/Z \cap Z_{n}/Z = YZ/Z
$$
are~of~order~$1$ or~$p$. Therefore,
$$
\big\{[YZ/Z : YZ/Z \cap Z_{i}/Z] \mid 0 \leqslant i \leqslant n\big\} = \big\{1, p, p^{2}, \ldots, r\big\}.
$$
Since the~finite cyclic group~$YZ/Z$ contains only one subgroup of~index~$q$, we have $Z_{i}/Z \cap YZ/Z = MZ/Z$ for~some $i \in \{0, 1, \ldots, n\}$. It~easily follows from~the~last equality and~the~inclusion $Z \cap Y \leqslant M$ that $Z_{i}^{\vphantom{*}} \cap Y = M$. Since $Z_{i}^{\vphantom{*}}/Z \in \mathcal{F}_{p}^{*}(X/Z)$, the~relation $Z_{i}^{\vphantom{*}} \in \mathcal{F}_{p}^{*}(X)$ holds. Thus, $X$ is~$\mathcal{F}_{p}^{\vphantom{*}}$\nobreakdash-reg\-u\-lar with~respect~to~$Y$.
\end{proof}

\begin{eproposition}\label{ep74}
Suppose that $X$ is~a~group\textup{,} $Y$~is~a~subgroup of~$X$\textup{,} $p$~is~a~prime number\textup{,} and~$\sigma$ is~a~homomorphism of~$X$ such that $\ker\sigma \cap Y \leqslant p^{\prime}\textrm{-}\mathfrak{Is}(Y,1)$. If~$X\sigma$ is~$\mathcal{F}_{p}$\nobreakdash-qua\-si-reg\-u\-lar with~respect to~$Y\sigma$\textup{,} then $X$ is~$\mathcal{F}_{p}$\nobreakdash-qua\-si-reg\-u\-lar with~respect~to~$Y$.
\end{eproposition}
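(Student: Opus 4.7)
The plan is to reduce the statement to an application of the quasi-regularity of $X\sigma$ by first showing that every finite $p$-index normal subgroup of $Y$ automatically contains $\ker\sigma \cap Y$. Concretely, fix an arbitrary subgroup $M \in \mathcal{F}_{p}^{*}(Y)$; the goal is to produce $N \in \mathcal{F}_{p}^{*}(X)$ with $N \cap Y \leqslant M$.

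The first step is the key observation that $M$ is $p^{\prime}$-isolated in $Y$. Indeed, if $y^{n} \in M$ for some $p^{\prime}$-number $n$, then the image of $y$ in the finite $p$-group $Y/M$ has order dividing both $n$ and a power of $p$, hence equal to $1$, so $y \in M$. By minimality of the $p^{\prime}$-isolator, this yields $p^{\prime}\textrm{-}\mathfrak{Is}(Y,1) \leqslant M$, and therefore the hypothesis $\ker\sigma \cap Y \leqslant p^{\prime}\textrm{-}\mathfrak{Is}(Y,1)$ gives $\ker\sigma \cap Y \leqslant M$.

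The second step is to transfer $M$ to the image: since $\sigma$ restricted to $Y$ has kernel contained in $M$, the homomorphism theorem gives $Y\sigma / M\sigma \cong Y/M \in \mathcal{F}_{p}$, so $M\sigma \in \mathcal{F}_{p}^{*}(Y\sigma)$. By the assumed $\mathcal{F}_{p}$-quasi-regularity of $X\sigma$ with respect to $Y\sigma$, there exists $N^{\prime} \in \mathcal{F}_{p}^{*}(X\sigma)$ satisfying $N^{\prime} \cap Y\sigma \leqslant M\sigma$. Let $N$ be the full preimage of $N^{\prime}$ under $\sigma$; then $X/N \cong X\sigma/N^{\prime} \in \mathcal{F}_{p}$, so $N \in \mathcal{F}_{p}^{*}(X)$.

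The last step is to verify $N \cap Y \leqslant M$. If $y \in N \cap Y$, then $y\sigma \in N^{\prime} \cap Y\sigma \leqslant M\sigma$, so $y\sigma = m\sigma$ for some $m \in M$, which means $m^{-1}y \in \ker\sigma \cap Y \leqslant M$ by the first step, and hence $y \in M$. There is essentially no obstacle here — the whole argument rests on the first step, which is the clean but crucial point that finite $p$-index normal subgroups are automatically $p^{\prime}$-isolated; everything else is a straightforward pullback of a witness to quasi-regularity through $\sigma$.
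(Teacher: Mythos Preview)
Your proof is correct and follows essentially the same route as the paper's: both observe that any $M \in \mathcal{F}_{p}^{*}(Y)$ is $p^{\prime}$-isolated in~$Y$, hence contains $p^{\prime}\textrm{-}\mathfrak{Is}(Y,1)$ and therefore $\ker\sigma \cap Y$, then push $M$ forward, apply quasi-regularity in~$X\sigma$, and pull back the witness. The paper condenses your element chase in the last step into the single line $N \cap Y \leqslant M(\ker\sigma \cap Y) = M$, but the argument is the same.
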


\begin{proof}
Let $M$ be~a~subgroup from~$\mathcal{F}_{p}^{*}(Y)$. Then $M\sigma \in \mathcal{F}_{p}^{*}(Y\sigma)$, and~because $X\sigma$ is~$\mathcal{F}_{p}$\nobreakdash-qua\-si-reg\-u\-lar with~respect to~$Y\sigma$, there exists a~subgroup $N_{\sigma}^{\vphantom{*}} \in \mathcal{F}_{p}^{*}(X\sigma)$ satisfying the~relation $N_{\sigma} \cap Y\sigma \leqslant M\sigma$. It~is~clear that the~pre-image~$N$ of~$N_{\sigma}$ under~$\sigma$ belongs to~$\mathcal{F}_{p}^{*}(X)$. Since $M$ is, obviously, $p^{\prime}$\nobreakdash-iso\-lat\-ed in~$Y$, the~inclusion $p^{\prime}\textrm{-}\mathfrak{Is}(Y,1) \leqslant M$ holds. Now it follows from~the~relations $N_{\sigma} \cap Y\sigma \leqslant M\sigma$, $M \leqslant Y$, and~$\ker\sigma \cap Y \leqslant p^{\prime}\textrm{-}\mathfrak{Is}(Y,1)$ that $N \cap Y \leqslant M(\ker\sigma \cap Y) = M$. Thus, $X$ is~$\mathcal{F}_{p}$\nobreakdash-qua\-si-reg\-u\-lar with~respect~to~$Y$.
\end{proof}

\begin{eproposition}\label{ep75}
Suppose that the~group $G = \langle A * B;\ H\rangle$ and~a~set of~primes~$\mathfrak{P}$ satisfy~$(*)$. Suppose also that $\lambda$~and~$\mu$ are~homomorphisms which exist due~to~this condition\textup{,} $p \in \mathfrak{P}$\textup{,} and~the~symbols~$H(p)$\textup{,} $U(p)$\textup{,} $V(p)$\textup{,} and~$W(p)$ denote the~subgroups~$p\textrm{-}\mathfrak{Is}(H,1)$\textup{,} $H^{p}H^{\prime}$\textup{,} $p^{\prime}\textrm{-}\mathfrak{Is}(A,\, U(p) \kern1pt{\cdot} \ker\lambda)$\textup{,} and~$p^{\prime}\textrm{-}\mathfrak{Is}(B,\, U(p) \kern1pt{\cdot} \ker\mu)$\textup{,} respectively. Then $G$ is~$\mathcal{F}_{p}$\nobreakdash-quasi-reg\-u\-lar with~respect to~$H$ if~at~least one of~the~following statements holds\textup{:}

\makebox[4ex][l]{$(\alpha)$}$H$~is~locally cyclic\textup{;}

\makebox[4ex][l]{$(\beta\kern.3pt)$}$H$~lies in~the~center of~$A$~or~$B$\textup{;}

\makebox[4ex][l]{$(\gamma\kern1pt)$}$H$~is~a~retract of~$A$~or~$B$\textup{;}

\makebox[4ex][l]{$(\kern.95pt\delta\kern1pt)$}$H$~is~periodic\textup{,} and~there exist sequences of~subgroups
\begin{gather*}
1 = Q_{0} \leqslant Q_{1} \leqslant \ldots \leqslant Q_{n} = H(p),\\
R_{0} \leqslant R_{1} \leqslant \ldots \leqslant R_{n} = A, \quad 
S_{0} \leqslant S_{1} \leqslant \ldots \leqslant S_{n} = B
\end{gather*}
such that
\begin{gather*}
R_{i}^{\vphantom{*}} \in \mathcal{F}_{p}^{*}(A),\quad 
S_{i}^{\vphantom{*}} \in \mathcal{F}_{p}^{*}(B),\quad 
R_{i}^{\vphantom{*}} \cap H(p) = Q_{i}^{\vphantom{*}} = S_{i}^{\vphantom{*}} \cap H(p),\quad 
0 \leqslant i \leqslant n,\\
|Q_{i+1}/Q_{i}| = p,\quad 0 \leqslant i \leqslant n-1;
\end{gather*}

\makebox[4ex][l]{$(\kern1pt\varepsilon\kern1pt)$}$H$~is~normal in~$A$ and~$B$\textup{,} and~the~group~$\operatorname{Aut}_{G_{V(p),W(p)}}(H\rho_{V(p),W(p)})$\textup{,} which is~defined due~to~Proposition~\textup{\ref{ep56},} is~a~$p$\nobreakdash-group.
\end{eproposition}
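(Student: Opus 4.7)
The plan is to handle the five cases $(\alpha)$--$(\varepsilon)$ independently, in each one producing, for a given $M\in\mathcal{F}_{p}^{*}(H)$, a subgroup $N\in\mathcal{F}_{p}^{*}(G)$ with $N\cap H\leq M$. In every case, Proposition~\ref{ep43}(1) (applied with $\mathfrak{P}$ replaced by $\{p\}$, valid because $\mathcal{BN}_{\mathfrak{P}}\subseteq\mathcal{BN}_{p}$ and $\lambda,\mu$ land in $\mathcal{BN}_{\mathfrak{P}}$) already yields $\mathcal{F}_{p}$-quasi-regularity of $A$ and $B$ with respect to $H$; for $(\alpha)$--$(\gamma)$ the remaining work is to establish $\mathcal{F}_{p}$-regularity of one of the two factors and then invoke Proposition~\ref{ep72}.

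For $(\alpha)$ I would observe that residual $\mathcal{FN}_{\mathfrak{P}}$-ness of $A,B$ combined with the Sylow decomposition of finite nilpotent groups implies residual $\mathcal{F}_{p}$-ness; since $H$ is locally cyclic, Proposition~\ref{ep73} then supplies the regularity. For $(\beta)$, assuming WLOG that $H\leq Z(B)$, Proposition~\ref{ep43}(2) applied to $\mu$ gives the $\mathcal{F}_{p}$-regularity of $B$. For $(\gamma)$, assuming WLOG that $H$ is a retract of $B$ via $\pi\colon B\to H$, the kernel of the composite $B\xrightarrow{\pi}H\to H/M$ is a subgroup in $\mathcal{F}_{p}^{*}(B)$ intersecting $H$ exactly in $M$.

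For $(\delta)$ I would use that $H$, being periodic and embedded via $\mu$ in the nilpotent group $B\mu$, is itself periodic nilpotent and hence decomposes as $H=H(p)\times H(p')$ by Proposition~\ref{ep41}. The $p$-power index of $R_{i},S_{i}$ in $A,B$ forces them to contain $H(p')$, giving $R_{i}\cap H=Q_{i}H(p')=S_{i}\cap H$. Proposition~\ref{ep55} then shows $G_{R_{0},S_{0}}$ is residually $\mathcal{F}_{p}$, and with $\sigma=\rho_{R_{0},S_{0}}$ one has $\ker\sigma\cap H=H(p')=p'\textrm{-}\mathfrak{Is}(H,1)$ while $H\sigma\cong H(p)$ is a finite $p$-group, so Proposition~\ref{ep31}(3) yields $\mathcal{F}_{p}$-quasi-regularity of $G\sigma$ with respect to $H\sigma$ and Proposition~\ref{ep74} lifts it back to $G$.

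Case $(\varepsilon)$ will be the principal obstacle, and the plan there is to reduce to $(\delta)$ by constructing, for each $M\in\mathcal{F}_{p}^{*}(H)$, sequences $1=Q_{0}\leq\dots\leq Q_{n}=H$, $R_{i}\in\mathcal{F}_{p}^{*}(A)$, $S_{i}\in\mathcal{F}_{p}^{*}(B)$ satisfying the hypotheses of Proposition~\ref{ep55} with $Q_{0}\leq M$. The hypothesis that $\operatorname{Aut}_{G_{V(p),W(p)}}(H\rho_{V(p),W(p)})$ is a $p$-group says that $G$ acts on the elementary abelian $\mathbb{F}_{p}$-space $H/H^{p}H'$ through a $p$-group, so any orbit on its finite-$p$-index subgroups has $p$-power size and intersecting over an orbit yields a $G$-invariant subgroup of finite $p$-power index (the orbit trick). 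Iterating this along the descending chain $H\geq H^{p}H'\geq (H^{p}H')^{p}(H^{p}H')'\geq\dots$ should produce, for general $M$, a $G$-invariant $Q_{0}\leq M$ of finite $p$-power index in $H$, which a composition series refines into the required chain; the matching $R_{i},S_{i}$ will come from the $\mathcal{F}_{p}$-quasi-regularity of $A,B$. The main difficulty will be enforcing the simultaneous equalities $R_{i}\cap H=Q_{i}=S_{i}\cap H$ rather than the mere containments naturally supplied by quasi-regularity, and this is precisely where the full strength of the $p$-group hypothesis on $\operatorname{Aut}_{G_{V(p),W(p)}}(H\rho_{V(p),W(p)})$ must be exploited.
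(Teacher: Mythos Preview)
Your treatment of $(\beta)$, $(\gamma)$, and $(\delta)$ is correct and essentially matches the paper (your $(\beta)$ and $(\gamma)$ are in fact slightly more direct than the paper's, which passes through an auxiliary quotient $G_{R,S}$ before invoking Propositions~\ref{ep43} and~\ref{ep72}). However, there are genuine gaps in $(\alpha)$ and~$(\varepsilon)$.

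In $(\alpha)$, the assertion that residual $\mathcal{FN}_{\mathfrak{P}}$-ness of $A$ implies residual $\mathcal{F}_{p}$-ness for a fixed $p\in\mathfrak{P}$ is false (a cyclic group of order~$3$ with $\mathfrak{P}=\{2,3\}$ and $p=2$ is a counterexample), so Proposition~\ref{ep73} cannot be applied to $A$ directly. The paper repairs this by first passing to $A/R$ with $R=p'\textrm{-}\mathfrak{Is}(A,\ker\lambda)$; this quotient is a $p'$-torsion-free $\mathcal{BN}_{p}$-group and hence residually $\mathcal{F}_{p}$ by Proposition~\ref{ep44}, after which Propositions~\ref{ep73}, \ref{ep72}, and~\ref{ep74} apply. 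In $(\varepsilon)$ your reduction to Proposition~\ref{ep55} is not what the paper does and is incomplete: the orbit trick only controls the $G$-action on $H/H^{p}H'$, not on the deeper Frattini sections your iteration would need, and the difficulty you yourself flag---promoting the containments $R_{i}\cap H\leq Q_{i}$ supplied by quasi-regularity to equalities---is real and unresolved. The paper sidesteps both problems by a different route: set $L=H^{r}$ with $r=[H:M]$, pass to $G_{R,S}$ with $R=p'\textrm{-}\mathfrak{Is}(A,\,L\cdot\ker\lambda)$ and $S=p'\textrm{-}\mathfrak{Is}(B,\,L\cdot\ker\mu)$ so that $H\rho_{R,S}\cong H/L$ is a \emph{finite} $p$-group, invoke Proposition~\ref{ep37} (an automorphism of a finite $p$-group has $p$-power order once its induced action on the Frattini quotient does) to lift the $p$-group hypothesis from $H/U(p)$ to $\operatorname{Aut}_{G_{R,S}}(H\rho_{R,S})$, and then apply Proposition~\ref{ep54} to conclude that $G_{R,S}$ is residually $\mathcal{F}_{p}$. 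Propositions~\ref{ep37} and~\ref{ep54} are the two key ingredients your plan is missing.
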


\begin{proof}
First of~all, let us note that, by~Condition~$(*)$, $H$~can be~embedded in~a~$\mathcal{BN\kern-1pt{}}_{\mathfrak{P}}$\nobreakdash-group and~therefore is~itself a~$\mathcal{BN\kern-1pt{}}_{\mathfrak{P}}$\nobreakdash-group due~to~Proposition~\ref{ep42}.\kern-2pt{} We give the~proof of~the~$\mathcal{F}_{p}$\nobreakdash-qua\-si-reg\-u\-lar\-ity of~$G$ independently for~each of~the~statements~$(\alpha)$\nobreakdash---$(\kern1pt\varepsilon\kern1pt)$.

\smallskip

\makebox[4ex][l]{$(\alpha)$}By~Proposition~\ref{ep56}, the~subgroups~$R = p^{\prime}\textrm{-}\mathfrak{Is}(A,\ker\lambda)$ and~$S = p^{\prime}\textrm{-}\mathfrak{Is}(B,\ker\mu)$ are~normal in~$A$ and~$B$, respectively, and~$R \cap H = p^{\prime}\textrm{-}\mathfrak{Rt}(H,1) = S \cap H$. The~groups $A\rho_{R,S} \cong A/R$ and~$B\rho_{R,S} \cong B/S$ have no~$p^{\prime}$\nobreakdash-tor\-sion and~are~isomorphic to~quotient groups of~the~$\mathcal{BN}_{\mathfrak{P}}$\nobreakdash-groups~$A\lambda$ and~$B\mu$. Hence, they belong to~$\mathcal{BN}_{\mathfrak{P}}$ due~to~Proposition~\ref{ep42}. Since $\mathcal{BN}_{\mathfrak{P}} \subseteq \mathcal{BN}_{p}$, these groups are~residually $\mathcal{F}_{p}$\nobreakdash-groups by~Proposition~\ref{ep44} and~therefore they are~$\mathcal{F}_{p}$\nobreakdash-reg\-u\-lar with~respect to~the~locally cyclic subgroup~$H\rho_{R,S}$ by~Proposition~\ref{ep73}. It~now follows from~Proposition~\ref{ep72} that $G_{R,S}$ is~$\mathcal{F}_{p}$\nobreakdash-qua\-si-reg\-u\-lar with~respect to~$H\rho_{R,S}$. Since $\rho_{R,S}$ continues the~natural homomorphism~$A \to A/R$, we have
$$
\ker\rho_{R,S} \cap H = R \cap H = p^{\prime}\textrm{-}\mathfrak{Rt}(H,1) = p^{\prime}\textrm{-}\mathfrak{Is}(H,1).
$$
Hence, $G$ is~$\mathcal{F}_{p}$\nobreakdash-qua\-si-reg\-u\-lar with~respect to~$H$ by~Proposition~\ref{ep74}.

\smallskip

\makebox[4ex][l]{$(\beta\kern.3pt)$}For~definiteness, let $H$ be~central in~$B$. If~$R = \ker\lambda$ and~$S = \ker\mu$, then $R \cap H = S \cap H = 1$ and~therefore the~group~$G_{R,S}$ and~the~homomorphism~$\rho_{R,S}$ are~defined. Since $\mathcal{BN}_{\mathfrak{P}}\kern-1pt{} \subseteq\kern-1pt{} \mathcal{BN}_{p}$ and~$\rho_{R,S}$ continues~$\lambda$ and~$\mu$, we have $A\rho_{R,S}\kern-1pt{} \cong\kern-1pt{} A\lambda\kern-1pt{} \in\kern-1pt{} \mathcal{BN}_{p}$, $B\rho_{R,S}\kern-1pt{} \cong\kern-1pt{} B\mu\kern-1pt{} \in\kern-1pt{} \mathcal{BN}_{p}$, and~$\ker\rho_{R,S} \cap H = 1$. It~follows from~these relations and~Propositions~\ref{ep43},~\ref{ep72}, and~\ref{ep74} that $A\rho_{R,S}$ is~$\mathcal{F}_{p}$\nobreakdash-qua\-si-reg\-u\-lar with~respect to~$H\rho_{R,S}$, $B\rho_{R,S}$~is~$\mathcal{F}_{p}$\nobreakdash-reg\-u\-lar with~respect to~$H\rho_{R,S}$, $G_{R,S}$~is~$\mathcal{F}_{p}$\nobreakdash-qua\-si-reg\-u\-lar with~respect to~$H\rho_{R,S}$, and~$G$ is~$\mathcal{F}_{p}$\nobreakdash-qua\-si-reg\-u\-lar with~respect~to~$H$.

\smallskip

\makebox[4ex][l]{$(\gamma\kern1pt)$}For~definiteness, let $H$ be~a~retract of~$B$, i.e.,~there exists a~normal subgroup~$S$ of~$B$ satisfying the~conditions $S \cap H = 1$ and~$B = SH$. If,~as~above, $R = \ker\lambda$, then $R \cap H = 1 = S \cap H$ and~$G_{R,S} = A\rho_{R,S}$. Since $A\rho_{R,S} \cong A\lambda \in \mathcal{BN}_{\mathfrak{P}} \subseteq \mathcal{BN}_{p}$, it~follows from~these relations and~Propositions~\ref{ep43},~\ref{ep74} that $G_{R,S}$ is~$\mathcal{F}_{p}$\nobreakdash-qua\-si-reg\-u\-lar with~respect to~$H\rho_{R,S}$ and~$G$ is~$\mathcal{F}_{p}$\nobreakdash-qua\-si-reg\-u\-lar with~respect~to~$H$.

\smallskip

\makebox[4ex][l]{$(\kern.95pt\delta\kern1pt)$}Let $M$ be~a~subgroup from~$\mathcal{F}_{p}^{*}(H)$. By~Proposition~\ref{ep41}, the~$\mathcal{BN}_{\mathfrak{P}}$\nobreakdash-group~$H$ can be~decomposed into~the~direct product of~its subgroups~$H(p)$ and~$H(p^{\prime}) = p^{\prime}\textrm{-}\mathfrak{Rt}(H,1)$. It~is~clear that the~subgroups~$M$, $R_{i}$, and~$S_{i}$, $0 \leqslant i \leqslant n$, are~$p^{\prime}$\nobreakdash-iso\-lat\-ed in~$H$, $A$, and~$B$, respectively. Therefore, $H(p^{\prime}) \leqslant M \cap R_{i} \cap S_{i}$, $0 \leqslant i \leqslant n$. Since $R_{i} \cap H(p) = Q_{i} = S_{i} \cap H(p)$, the~equalities $R_{i} \cap H = H(p^{\prime}) \kern1pt{\cdot}\kern1pt Q_{i} = S_{i} \cap H$ hold. We also have
\begin{gather*}
(H(p^{\prime}) \kern1pt{\cdot}\kern1pt Q_{i+1})/(H(p^{\prime}) \kern1pt{\cdot}\kern1pt Q_{i}) \cong Q_{i+1}/Q_{i}(H(p^{\prime}) \cap Q_{i+1}) = Q_{i+1}/Q_{i},
\quad
|Q_{i+1}/Q_{i}| = p,\\
0 \leqslant i \leqslant n-1.
\end{gather*}
Hence, if~$R = R_{0}$ and~$S = S_{0}$, then, by~Proposition~\ref{ep55}, $G_{R,S}$~is~residually an~$\mathcal{F}_{p}$\nobreakdash-group. Since the~group~$A\rho_{R,S} \cong A/R$ is~finite, Proposition~\ref{ep31} implies the~existence of~a~subgroup $N_{R,S}^{\vphantom{*}} \in \mathcal{F}_{p}^{*}(G_{R,S}^{\vphantom{*}})$ satisfying the~equality $N_{R,S}^{\vphantom{*}} \cap A\rho_{R,S}^{\vphantom{*}} = 1$. Let~$N$ denote the~pre-image of~$N_{R,S}^{\vphantom{*}}$ under~$\rho_{R,S}^{\vphantom{*}}$. Then $N \in \mathcal{F}_{p}^{*}(G)$ and $N \cap A = \ker\rho_{R,S}^{\vphantom{*}} \cap A = R$ because $\rho_{R,S}^{\vphantom{*}}$ continues the~natural homomorphism~$A \to\nolinebreak A/R$. It~follows that $N \cap H = R \cap H = H(p^{\prime}) \leqslant M$ and~$G$~is~$\mathcal{F}_{p}$\nobreakdash-qua\-si-reg\-u\-lar with~respect~to~$H$.

\smallskip

\makebox[4ex][l]{$(\kern1pt\varepsilon\kern1pt)$}Let again $M$ be~a~subgroup from~$\mathcal{F}_{p}^{*}(H)$. Suppose also that $r\kern-1pt{} =\kern-1pt{} [H:M]$ and~$L\kern-1pt{} =\nolinebreak\kern-1pt{} H^{r}$\kern-1pt{}. As~above, to~complete the~proof it is~sufficient to~indicate a~subgroup $N \in \mathcal{F}_{p}^{*}(G)$ satisfying the~condition $N \cap H \leqslant M$. If~$r = 1$, then $G$ is~the~required subgroup because $G \cap H = H = M$ and~$G \in \mathcal{F}_{p}^{*}(G)$. Thus, we can assume that $r > 1$ and~therefore $L \leqslant U(p)$. Let us put $R = p^{\prime}\textrm{-}\mathfrak{Is}(A,\, L \kern1pt{\cdot} \ker\lambda)$ and~$S = p^{\prime}\textrm{-}\mathfrak{Is}(B,\, L \kern1pt{\cdot} \ker\mu)$. Since $L$ is~normal in~$G$ and~$p^{\prime}$\nobreakdash-iso\-lat\-ed in~$H$, it~follows from~Proposition~\ref{ep56} that $R$ is~normal in~$A$, $S$~is~normal in~$B$, and~$R \cap H = L = S \cap H$. The~group~$H/L$ is~periodic, has no~$p^{\prime}$\nobreakdash-tor\-sion, and~satisfies the~relations
$$
H/L = H/H \cap R \cong HR/R \cong H\rho_{R,S} \in \mathcal{BN}_{\mathfrak{P}} \subseteq \mathcal{BN}_{p}
$$
because $\rho_{R,S}$ continues the~natural homomorphism~$A\kern-1.5pt{} \to\kern-2pt{} A/\kern-1pt{}R$, $H\kern-2pt{} \in\kern-1pt{} \mathcal{BN}\kern-1pt{}_{\mathfrak{P}}$\kern-.5pt{}, and~the~class~$\mathcal{BN}\kern-1pt{}_{\mathfrak{P}}$ is~closed under~taking quotient groups due~to~Proposition~\ref{ep42}. Hence, this group is~finite by~Proposition~\ref{ep45}. Let us show that $\operatorname{Aut}_{G_{R,S}}(H\rho_{R,S}) \in \mathcal{F}_{p}$.

Since $U(p)$ is~normal in~$G$, the~subgroup~$U_{R,S} = U(p)\rho_{R,S}$ is~normal in~$G_{R,S}$. By~Proposition~\ref{ep37}, if~$\alpha\kern-1.5pt{} \in\kern-3pt{} \operatorname{Aut}_{G\kern-.5pt{}_{R,S}}(\kern-.5pt{}H\kern-1.5pt{}\rho\kern-.5pt{}_{R,S})$\kern-.5pt{}, $\overline{\alpha}$ is~the~automorphism of~the~group $H\kern-1.5pt{}\rho\kern-.5pt{}_{R,S}/(\kern-.5pt{}H\kern-1.5pt{}\rho\kern-.5pt{}_{R,S})^{p}(\kern-.5pt{}H\kern-1.5pt{}\rho\kern-.5pt{}_{R,S})^{\prime}$ induced by~$\alpha$, and~the~order of~$\overline{\alpha}$ is~a~$p$\nobreakdash-num\-ber, then the~order of~$\alpha$ is~also a~$p$\nobreakdash-num\-ber. It~is~easy to~see that $(H\rho_{R,S})^{p}(H\rho_{R,S})^{\prime} = U_{R,S}$ and~if~$\alpha = \widehat{g}\vert_{H\rho_{R,S}}$ for~some $g \in G_{R,S}$, then $\overline{\alpha} = \widehat{gU_{R,S}}\vert_{H\rho_{R,S}/U_{R,S}}$ (here the~symbol~$\widehat{x}$ denotes the~inner automorphism given by~$x$). Thus, it~is~sufficient to~prove that $\operatorname{Aut}_{G_{R,S}/U_{R,S}}(H\rho_{R,S}/U_{R,S})$ is~a~$p$\nobreakdash-group.

Let us note that $U(p)R = V(p)$ and~$U(p)S = W(p)$. Indeed, it~follows from~the~inclusion $L \leqslant U(p)$ that $R \leqslant V(p)$ and~therefore $U(p)R \leqslant V(p)$. Since $R$ is~$p^{\prime}$\nobreakdash-iso\-lat\-ed in~$A$ and,~as~proven above, $HR/R$~is~finite, the~quotient group~$A/R$ is~$p^{\prime}$\nobreakdash-tor\-sion-free and~the~finite subgroup~$U(p)R/R$ is~$p^{\prime}$\nobreakdash-iso\-lat\-ed in~this group. Hence, the~subgroup~$U(p)R$ is~$p^{\prime}$\nobreakdash-iso\-lat\-ed in~$A$. The~last fact and~the~inclusion $U(p) \kern1pt{\cdot} \ker\lambda \leqslant U(p)R$ mean that $V(p) \leqslant U(p)R$. The~equality $U(p)S = W(p)$ can be~proved in~the~same~way.

It is~easy to~see that $G_{R,S}/U_{R,S}$ is~the~generalized free product of~the~groups~$A\rho_{R,S}/U_{R,S}$ and~$B\rho_{R,S}/U_{R,S}$ with~the~subgroup~$H\rho_{R,S}/U_{R,S}$ amalgamated. Since $\rho_{R,S}$ continues the~natural homomorphisms~$A \to A/R$ and~$B \to B/S$, the~following relations hold:
\begin{gather*}
A\rho_{R,S}/U_{R,S} \cong (A/R)/(U(p)R/R) \cong A/V(p),\\
B\rho_{R,S}/U_{R,S} \cong (B/S)/(U(p)S/S) \cong B/W(p).
\end{gather*}
The~indicated isomorphisms define an~isomorphism of~$G_{R,S}/U_{R,S}$ onto~$G_{V(p),W(p)}$, which maps~$H\rho_{R,S}/U_{R,S}$ onto~$H\rho_{V(p),W(p)}$ and~thus induces an~isomorphism of~the~groups
$$
\operatorname{Aut}_{G_{R,S}/U_{R,S}}(H\rho_{R,S}/U_{R,S})
\quad\text{and}\quad
\operatorname{Aut}_{G_{V(p),W(p)}}(H\rho_{V(p),W(p)}).
$$
By~the~condition of~the~proposition, the~latter is~a~$p$\nobreakdash-group, as~required.

So,~$\operatorname{Aut}_{G_{R,S}}(H\rho_{R,S}) \in \mathcal{F}_{p}$ and~$H\rho_{R,S}$ is~finite. Due~to~Propositions~\ref{ep54} and~\ref{ep31}, it~follows that $G_{R,S}$ is~residually an~$\mathcal{F}_{p}$\nobreakdash-group and~there exists a~subgroup $N_{R,S}^{\vphantom{*}} \in \mathcal{F}_{p}^{*}(G_{R,S}^{\vphantom{*}})$ satisfying the~relation $N_{R,S} \cap H\rho_{R,S} = 1$. If~$N$ denotes the~pre-image of~$N_{R,S}$ under~$\rho_{R,S}$, then $N \in \mathcal{F}_{p}^{*}(G)$ and~$N \cap H \leqslant \operatorname{ker}\rho_{R,S} \cap H = R \cap H$ because $\rho_{R,S}$ continues the~natural homomorphism~$A \to A/R$. Since $R \cap H = L$ and~$L \leqslant M$ by~the~definition of~$L$, it~follows that $N$ is~the~desired subgroup.
\end{proof}

\section{Main theorem}\label{es08}

As~noted in~Section~\ref{es02}, the~next theorem serves as~the~main tool that allows one to~use results on~the~residual $p$\nobreakdash-fi\-nite\-ness of~the~group $G = \langle A * B;\ H\rangle$ to~study the~residual nilpotence of~this group.

\begin{etheorem}\label{et07}
Suppose that $G = \langle A * B;\ H\rangle$ and~$\mathfrak{P}$ is~a~non-empty set of~primes. Suppose also that the~subgroups~$1$ and~$H$ are~$\mathcal{FN}_{\mathfrak{P}}$\nobreakdash-sep\-a\-ra\-ble in~each of~the~free factors and\textup{,}~for~any $p \in \mathfrak{P}$\textup{,} $G$~is~$\mathcal{F}_{p}$\nobreakdash-qua\-si-reg\-u\-lar with~respect to~$H$. Then the~following statements hold.

\textup{1.}\hspace{1ex}The~group~$G$ is~residually a~$\Phi \kern1pt{\cdot}\kern1pt \mathcal{FN}_{\mathfrak{P}}$\nobreakdash-group and~residually an~$\mathcal{F}_{p} \kern1pt{\cdot}\kern1pt \mathcal{FN}_{\mathfrak{P}}$\nobreakdash-group for~each prime~$p$.

\textup{2.}\hspace{1ex}If\textup{,}~for~some $q \in \mathfrak{P}$\textup{,} the~set $U = q^{\prime}\textrm{-}\mathfrak{Rt}(H,1)$ is~a~subgroup\textup{,} which is~$\mathcal{F}_{q}$\nobreakdash-sep\-a\-ra\-ble in~$H$\textup{,} and~$H$ is~$\mathcal{F}_{q}$\nobreakdash-sep\-a\-ra\-ble in~each of~the~free factors\textup{,} then $G$ is~residually an~$\mathcal{FN}_{\mathfrak{P}}$\nobreakdash-group.
\end{etheorem}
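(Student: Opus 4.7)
The plan is to derive Statement~1 from~Proposition~\ref{ep52} applied with~$\mathcal{C} = \mathcal{FN}_{\mathfrak{P}}$ and then~Proposition~\ref{ep39}, and to obtain Statement~2 by running the same construction with the fixed prime~$q$. The common crux is to establish residual~$\mathcal{F}_{p}$-ness of an auxiliary generalized free product via the free-product decomposition supplied by~Proposition~\ref{ep51}.

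For Statement~1, I would verify conditions~$(\alpha)$, $(\beta)$, $(\gamma)$ of~Proposition~\ref{ep52}. Condition~$(\gamma)$ is immediate from~Proposition~\ref{ep31}, since~$\mathcal{FN}_{\mathfrak{P}}$ is closed under subgroups and finite direct products. For~$(\alpha)$, given~$a \in A \setminus \{1\}$, residual~$\mathcal{FN}_{\mathfrak{P}}$-ness of~$A$ together with projection of the quotient onto a suitable Sylow~$p$-subgroup (with $p \in \mathfrak{P}$ chosen so that~$aM$ is not killed) yields $M \in \mathcal{F}_{p}^{*}(A)$ with~$a \notin M$. Applying the~$\mathcal{F}_{p}$-quasi-regularity of~$G$ with respect to~$H$ to the subgroup~$M \cap H \in \mathcal{F}_{p}^{*}(H)$ produces $N \in \mathcal{F}_{p}^{*}(G)$ with $N \cap H \leqslant M \cap H$. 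Setting $R = M \cap (N \cap A)$ and $S = N \cap B$, one checks $R \in \mathcal{F}_{p}^{*}(A)$, $S \in \mathcal{F}_{p}^{*}(B)$, $R \cap H = N \cap H = S \cap H$ and $a \notin R$, so the homomorphism~$\rho_{R,S}\colon G \to G_{R,S}$ of~Section~\ref{es05} is defined with~$\rho_{R,S}(a) \neq 1$. Condition~$(\beta)$ is handled by the same construction starting from an~$M \in \mathcal{FN}_{\mathfrak{P}}^{*}(A)$ with~$a \notin HM$, furnished by the~$\mathcal{FN}_{\mathfrak{P}}$-separability of~$H$ in~$A$.

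The key claim is that $G_{R,S}$ is residually~$\mathcal{F}_{p}$. Setting $P = N\rho_{R,S}$ and using that $\ker\rho_{R,S}$ is the~normal closure of~$R \cup S$ in~$G$ and hence lies in~$N$, one gets $G_{R,S}/P \cong G/N \in \mathcal{F}_{p}$ and $P \cap H\rho_{R,S} = (N \cap H)\rho_{R,S} = 1$ (because $N \cap H = R \cap H \leqslant R \leqslant \ker\rho_{R,S}$). Proposition~\ref{ep51} then decomposes~$P$ as an ordinary free product of a~free subgroup and conjugates of the finite~$p$-groups~$(N \cap A)/R$ and~$(N \cap B)/S$, so~$P$ is residually~$\mathcal{F}_{p}$; since~$\mathcal{F}_{p}$ is a~root class closed under extensions, $G_{R,S}$ is residually~$\mathcal{F}_{p}$ as well. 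Pulling back an~$\mathcal{F}_{p}$-quotient of~$G_{R,S}$ separating~$\rho_{R,S}(a)$ furnishes $X \in \mathcal{F}_{p}^{*}(G) \subseteq \mathcal{FN}_{\mathfrak{P}}^{*}(G)$ witnessing~$(\alpha)$ or~$(\beta)$. Proposition~\ref{ep52} then yields residual~$(\Phi \kern1pt{\cdot}\kern1pt \mathcal{FN}_{\mathfrak{P}})$-ness, and Proposition~\ref{ep39} upgrades this to residual~$(\mathcal{F}_{p} \kern1pt{\cdot}\kern1pt \mathcal{FN}_{\mathfrak{P}})$-ness for every prime~$p$.

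For Statement~2, I would split on the form of~$g \in G \setminus \{1\}$. If~$g$ is conjugate to an element of~$A \cup B$, the~construction of Statement~1 already produces an~$\mathcal{F}_{p}$-quotient of~$G$ not killing~$g$, which lies in~$\mathcal{FN}_{\mathfrak{P}}$. If~$g$ has reduced form~$g_{1}\cdots g_{n}$ with~$n > 1$, the~$\mathcal{F}_{q}$-separability of~$H$ in~$A$ and~$B$ lets us choose, for each syllable~$g_{i}$, some $M_{i} \in \mathcal{F}_{q}^{*}(A)$ or~$\mathcal{F}_{q}^{*}(B)$ with~$g_{i} \notin HM_{i}$; intersecting these with the~subgroup supplied by~$\mathcal{F}_{q}$-quasi-regularity yields $R \in \mathcal{F}_{q}^{*}(A)$ and $S \in \mathcal{F}_{q}^{*}(B)$ for which $\rho_{R,S}(g)$ still has reduced form of length~$n$, and the free-product argument above (with~$q$ in place of~$p$) shows $G_{R,S}$ is residually~$\mathcal{F}_{q}$. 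Pulling back gives an~$\mathcal{F}_{q} \subseteq \mathcal{FN}_{\mathfrak{P}}$ quotient of~$G$ not killing~$g$; combining the two cases, $G$ is residually~$\mathcal{FN}_{\mathfrak{P}}$. I expect the main obstacle to lie in the free-product step establishing residual~$\mathcal{F}_{p}$-ness of~$G_{R,S}$: this hinges on the~Karrass--Solitar-type decomposition of Proposition~\ref{ep51} together with the~classical preservation of residual~$p$-finiteness under ordinary free products, and on the~$\mathcal{F}_{q}$-separability of~$U$ in~$H$ (which, combined with the~$\mathcal{F}_{q}$-separability of~$H$ in~$A$ and~$B$, controls the~separation of elements lying in~the~$q'$-torsion of~$H$ when choosing the~subgroups~$M_{i}$ for syllables $g_{i}$ that enter~$H$).
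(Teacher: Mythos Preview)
Your argument for Statement~1 is correct and close to the paper's. The only structural difference is that the paper first invokes Proposition~\ref{ep71} to pass from $\mathcal{F}_{p}$-quasi-regularity with respect to~$H$ to $\mathcal{F}_{p}$-quasi-regularity with respect to~$A$ and~$B$, and then verifies $(\alpha)$ and~$(\beta)$ by decomposing an $\mathcal{FN}_{\mathfrak{P}}$-quotient of~$A$ into its Sylow pieces and pulling each back through the quasi-regularity with respect to~$A$; you instead inline the proof of Proposition~\ref{ep71}, building the auxiliary $G_{R,S}$ directly and showing it is residually~$\mathcal{F}_{p}$ via Proposition~\ref{ep51}. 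Both routes land in the same place, and your treatment of~$(\beta)$ can be completed by observing that $A\rho_{R,S}$ is finite, so Proposition~\ref{ep31} supplies an $\mathcal{F}_{p}^{*}(G_{R,S})$-subgroup meeting it trivially, whence its preimage~$X$ satisfies $X \cap A = R$ and $a \notin H(X \cap A) = HR$.

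Your argument for Statement~2 is genuinely different from the paper's and, as written, does not use the hypothesis on~$U$ at all. The paper exploits that hypothesis globally: from the $\mathcal{F}_{q}$-separability of~$H$ it deduces $q'\text{-}\mathfrak{Rt}(A,1) = U = q'\text{-}\mathfrak{Rt}(B,1)$, so $U$ is normal in~$G$; it then shows $U$ is $\mathcal{F}_{q}$-separable in~$A$ and~$B$, that $H/U$ is $\mathcal{F}_{q}$-separable in $A/U$ and $B/U$, and that $G/U$ is $\mathcal{F}_{q}$-quasi-regular with respect to $A/U$ and $B/U$, whence Proposition~\ref{ep53} gives $G/U$ residually~$\mathcal{F}_{q}$. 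The case split is then $g \notin U$ (handled via $G \to G/U$) versus $g \in U \leqslant A$ (handled via Statement~1 of Proposition~\ref{ep52}). Your route bypasses~$U$: for~$g$ not conjugate into $A \cup B$, the syllables of a reduced form lie in $(A \setminus H) \cup (B \setminus H)$, and $\mathcal{F}_{q}$-separability of~$H$ in each factor together with $\mathcal{F}_{q}$-quasi-regularity with respect to~$H$ lets you build $R,S$ with $g\rho_{R,S}$ of reduced length~$>1$, after which your free-product argument gives $G_{R,S}$ residually~$\mathcal{F}_{q}$. This is correct (the free product of a free group with finite $q$-groups is residually~$\mathcal{F}_{q}$, and an extension of such a group by a finite $q$-group remains residually~$\mathcal{F}_{q}$ by taking cores). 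In particular your closing remark that $\mathcal{F}_{q}$-separability of~$U$ in~$H$ ``controls the separation of elements lying in the $q'$-torsion of~$H$ when choosing the subgroups~$M_{i}$ for syllables $g_{i}$ that enter~$H$'' is mistaken: syllables in a reduced form of length~$>1$ never lie in~$H$, so no such control is needed in your scheme.
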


\begin{proof}
By~Proposition~\ref{ep71}, for~any $p \in \mathfrak{P}$, the~$\mathcal{F}_{p}$\nobreakdash-qua\-si-reg\-u\-lar\-i\-ty of~$G$ with~respect to~$H$ implies the~$\mathcal{F}_{p}$\nobreakdash-qua\-si-reg\-u\-lar\-i\-ty of~this group with~respect to~$A$ and~$B$. Let us show that Conditions~$(\alpha)$\nobreakdash---$(\gamma)$ from~Proposition~\ref{ep52} hold for~the~group~$G$ and~the~class $\mathcal{C} = \mathcal{FN}_{\mathfrak{P}}$.{\parfillskip=0pt{}\par}

Indeed, if~$X,Y \in \mathcal{FN}_{\mathfrak{P}}^{*}(G)$ and~$Z = X \cap Y$, then $Z \in \mathcal{FN}_{\mathfrak{P}}^{*}(G)$ by~Proposition~\ref{ep31}. Hence, $(\gamma)$~holds. Suppose that $K = 1$ or~$K = H$, and~$a \in A \setminus K$. By~the~condition of~the~theorem, $K$~is~$\mathcal{FN}_{\mathfrak{P}}^{\vphantom{*}}$\nobreakdash-sep\-a\-ra\-ble in~$A$ and~therefore $a \notin KM$ for~some subgroup $M \in\nolinebreak \mathcal{FN}_{\mathfrak{P}}^{*}(A)$. Due~to~Proposition~\ref{ep41}, the~$\mathcal{FN}_{\mathfrak{P}}^{\vphantom{*}}$\nobreakdash-group~$A/M$ can be~decomposed into~the~direct product of~its Sylow subgroups~$T_{1}^{\vphantom{n}}/M$, $T_{2}^{\vphantom{n}}/M$,~\ldots,~$T_{n}^{\vphantom{n}}/M$. If~$M_{i}^{\vphantom{n}}$, $1 \leqslant\nolinebreak i \leqslant\nolinebreak n$, denotes the~subgroup~$\prod_{j \ne i} T_{j}^{\vphantom{n}}$, then $\bigcap_{i=1}^{n} M_{i}^{\vphantom{n}}/M = 1$ and~therefore $\bigcap_{i=1}^{n} M_{i}^{\vphantom{n}} =\nolinebreak M$. It~is~clear also that if~the~subgroup~$T_{i}^{\vphantom{n}}/M$ corresponds to~a~number~$p_{i}^{\vphantom{n}} \in \mathfrak{P}$, then $M_{i}^{\vphantom{n}} \in \mathcal{F}_{p_{i}}^{*}(A)$. Hence, it~follows from~the~$\mathcal{F}_{p_{i}}^{\vphantom{*}}$\nobreakdash-qua\-si-reg\-u\-lar\-i\-ty of~$G$ with~respect to~$A$ that there exists a~subgroup $N_{i}^{\vphantom{*}} \in\nolinebreak \mathcal{F}_{p_{i}}^{*}(G)$ satisfying the~relation $N_{i}^{\vphantom{n}} \cap A \leqslant M_{i}^{\vphantom{n}}$. If~$N = \bigcap_{i=1}^{n} N_{i}^{\vphantom{n}}$, then $N \cap A \leqslant\nolinebreak \bigcap_{i=1}^{n} M_{i}^{\vphantom{n}} =\nolinebreak M$, $a \notin K(N \cap A)$, and,~by~Proposition~\ref{ep31}, $N \in \mathcal{FN}_{\mathfrak{P}}^{*}(G)$ because $\mathcal{F}_{p_{i}}^{*}(G) \subseteq \mathcal{FN}_{\mathfrak{P}}^{*}(G)$ for~any $i \in \{1, 2, \ldots, n\}$. Since the~element~$a$ is~chosen arbitrarily, we have
$$
\bigcap_{X \in \mathcal{FN}_{\mathfrak{P}}^{*}(G)} K(X \cap A) = K.
$$
A~similar argument can be~used to~prove the~equalities from~Conditions~$(\alpha)$ and~$(\beta)$ of~Proposition~\ref{ep52}, which relate to~the~group~$B$. Thus, all the~conditions of~the~indicated proposition hold.

By~Propositions~\ref{ep52} and~\ref{ep39}, $G$~is~residually a~$\Phi \kern1pt{\cdot}\kern1pt \mathcal{FN}_{\mathfrak{P}}$\nobreakdash-group and~any $\Phi \kern1pt{\cdot}\kern1pt \mathcal{FN}_{\mathfrak{P}}$\nobreakdash-group is~residually an~$\mathcal{F}_{p} \kern1pt{\cdot}\kern1pt \mathcal{FN}_{\mathfrak{P}}$\nobreakdash-group for~every $p \in \mathfrak{P}$. Hence, Statement~1 of~the~theorem holds. Let us prove Statement~2.

Due~to~Proposition~\ref{ep32}, the~$\mathcal{F}_{q}$\nobreakdash-sep\-a\-ra\-bil\-ity of~$H$ in~$A$ and~$B$ implies that $H$ is~$q^{\prime}$\nobreakdash-iso\-lat\-ed in~these groups. Therefore, $q^{\prime}\textrm{-}\mathfrak{Rt}(A,1) = U = q^{\prime}\textrm{-}\mathfrak{Rt}(B,1)$. It~follows that $U$ is~normal in~$A$, in~$B$, and~hence in~$G$. Let us show that this subgroup is~$\mathcal{F}_{q}$\nobreakdash-sep\-a\-ra\-ble in~$A$ and~$B$, $H/U$~is~$\mathcal{F}_{q}$\nobreakdash-sep\-a\-ra\-ble in~$A/U$ and~$B/U$, and~$G/U$ is~$\mathcal{F}_{q}$\nobreakdash-qua\-si-reg\-u\-lar with~respect to~$A/U$ and~$B/U$.

Indeed, let $a \in A \setminus U$. To~prove the~$\mathcal{F}_{q}^{\vphantom{*}}$\nobreakdash-sep\-a\-ra\-bil\-ity of~$U$ in~$A$, it~is~sufficient to~find a~subgroup $X \in \mathcal{F}_{q}^{*}(A)$ satisfying the~relation $a \notin UX$. Since $H$ is~$\mathcal{F}_{q}^{\vphantom{*}}$\nobreakdash-sep\-a\-ra\-ble in~$A$, if~$a \notin\nolinebreak H$, there exists a~subgroup $L \in \mathcal{F}_{q}^{*}(A)$ such that $a \notin HL$. Then $a \notin UL$ and~therefore $L$ is~the~desired subgroup. Let~$a \in H$. It~follows from~the~$\mathcal{F}_{q}^{\vphantom{*}}$\nobreakdash-sep\-a\-ra\-bil\-ity of~$U$ in~$H$ that $a \notin UM$ for~some subgroup $M \in \mathcal{F}_{q}^{*}(H)$. Since $G$ is~$\mathcal{F}_{q}^{\vphantom{*}}$\nobreakdash-qua\-si-reg\-u\-lar with~respect to~$H$, there exists a~subgroup $N \in \mathcal{F}_{q}^{*}(G)$ satisfying the~condition $N \cap H \leqslant M$. If~$a \in U(N \cap A)$, it follows from~the~inclusions $a \in H$ and~$U \leqslant H$ that $a \in U(N \cap H) \leqslant UM$, in~contradiction with~the~choice of~$M$. Hence, $a \notin U(N \cap A)$, and~because $N \cap A \in \mathcal{F}_{q}^{*}(A)$ by~Proposition~\ref{ep31}, $N \cap A$ is~the~desired subgroup.

The~$\mathcal{F}_{q}^{\vphantom{*}}$\nobreakdash-sep\-a\-ra\-bil\-ity of~$H/U$ in~$A/U$ is~proved similarly. Namely, if~$aU \in (A/U) \setminus (H/U)$, then $a \in A \setminus H$ and~$a \notin HL$ for~some subgroup $L \in \mathcal{F}_{q}^{*}(A)$ since $H$ is~$\mathcal{F}_{q}^{\vphantom{*}}$\nobreakdash-sep\-a\-ra\-ble in~$A$. It~easily follows that $LU/U \in \mathcal{F}_{q}^{*}(A/U)$ and~$aU \notin (H/U)(LU/U)$. Therefore, $H/U$ is~$\mathcal{F}_{q}^{\vphantom{*}}$\nobreakdash-sep\-a\-ra\-ble in~$A/U$.

Finally, if~$M/U \in \mathcal{F}_{q}^{*}(A/U)$, then $M \in \mathcal{F}_{q}^{*}(A)$ and~because $G$ is~$\mathcal{F}_{q}^{\vphantom{*}}$\nobreakdash-qua\-si-reg\-u\-lar with~respect to~$A$, there exists a~subgroup $N \in \mathcal{F}_{q}^{*}(G)$ satisfying the~condition $N \cap A \leqslant M$. It~is~obvious that $N$ is~$q^{\prime}$\nobreakdash-iso\-lat\-ed in~$G$. Hence, $U \leqslant N$ and~$N/U \cap A/U \leqslant M/U$. Since $N/U \in \mathcal{F}_{q}^{*}(G/U)$, it~follows that $G/U$ is~$\mathcal{F}_{q}$\nobreakdash-qua\-si-reg\-u\-lar with~respect to~$A/U$. The~$\mathcal{F}_{q}$\nobreakdash-sep\-a\-ra\-bil\-ity of~$U$ and~$H/U$ in~$B$ and~$B/U$, respectively, as~well as~the~$\mathcal{F}_{q}$\nobreakdash-qua\-si-reg\-u\-lar\-i\-ty of~$G/U$ with~respect to~$B/U$ can be~proved in~a~similar way.

By~Proposition~\ref{ep33}, the~$\mathcal{F}_{q}$\nobreakdash-sep\-a\-ra\-bil\-ity of~$U$ in~$A$ and~$B$ means that $A/U$ and~$B/U$ are~residually $\mathcal{F}_{q}$\nobreakdash-groups. Therefore, it~follows from~Proposition~\ref{ep53} that $G/U$ is~residually an~$\mathcal{F}_{q}$\nobreakdash-group. Let us now choose an~element $g \in G \setminus \{1\}$ and~indicate a~homomorphism of~$G$ onto~an~$\mathcal{FN}_{\mathfrak{P}}$\nobreakdash-group that takes~$g$ to~a~non-triv\-i\-al element.

If~$g \notin U$, then $gU \ne 1$ and~the~natural homomorphism~$G \to G/U$ can be~extended to~the~desired~one because $G/U$ is~residually an~$\mathcal{F}_{q}$\nobreakdash-group and~$\mathcal{F}_{q} \subseteq \mathcal{FN}_{\mathfrak{P}}$. If~$g \in U$, then the~existence of~the~required homomorphism follows from~Statement~1 of~Proposition~\ref{ep52}, which, as~above, is~applied to~the~group~$G$ and~the~class~$\mathcal{C} = \mathcal{FN}_{\mathfrak{P}}$. Thus, $G$ is~residually an~$\mathcal{FN}_{\mathfrak{P}}$\nobreakdash-group.
\end{proof}

\begin{ecorollary}\label{ec04}
Suppose that the~group $G = \langle A * B;\ H\rangle$ and~a~set of~primes~$\mathfrak{P}$ satisfy~$(*)$. Suppose also that\textup{,} for~each $p \in \mathfrak{P}$\textup{,} $G$~is~$\mathcal{F}_{p}$\nobreakdash-qua\-si-reg\-u\-lar with~respect to~$H$. Then Statements~\textup{1} and~\textup{2} of~Theorem~\textup{\ref{et01}} hold.
\end{ecorollary}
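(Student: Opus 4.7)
The plan is to deduce both statements of Theorem~\ref{et01} by verifying the hypotheses of Theorem~\ref{et07} under the present assumptions and then invoking each of its two statements in turn. The $\mathcal{F}_{p}$\nobreakdash-qua\-si-reg\-u\-lar\-i\-ty of~$G$ with~respect to~$H$ for every $p \in \mathfrak{P}$ is given directly in the~corollary, so the~only remaining tasks are to~install the~required separability of~$1$ and~$H$ in~$A$ and~$B$, and to~verify the~additional condition on~$q^{\prime}\textrm{-}\mathfrak{Rt}(H,1)$ appearing in~Statement~2 of~Theorem~\ref{et07}.

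First I~would observe that Part~$(\mathfrak{iii})$ of~$(*)$ says exactly that the~trivial subgroup is~$\mathcal{FN}_{\mathfrak{P}}$\nobreakdash-sep\-a\-ra\-ble in~$A$ and~$B$. For~Statement~2 of~Theorem~\ref{et01}, $H$~is~assumed $\mathcal{FN}_{\mathfrak{P}}$\nobreakdash-sep\-a\-ra\-ble in~the~free factors, so all hypotheses of~Theorem~\ref{et07} are met and~Statement~1 of~that theorem delivers the~claim about residual $\Phi \kern1pt{\cdot}\kern1pt \mathcal{FN}_{\mathfrak{P}}$\nobreakdash-ness and~$\mathcal{F}_{p} \kern1pt{\cdot}\kern1pt \mathcal{FN}_{\mathfrak{P}}$\nobreakdash-ness. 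The~``in~particular'' clause about metanilpotence would be handled by picking any $p \in \mathfrak{P}$ and noting that an~$\mathcal{F}_{p} \kern1pt{\cdot}\kern1pt \mathcal{FN}_{\mathfrak{P}}$\nobreakdash-group is~an~extension of~a~finite nilpotent $\mathfrak{P}$\nobreakdash-group by~a~finite nilpotent $\mathfrak{P}$\nobreakdash-group and~is~therefore a~finite metanilpotent $\mathfrak{P}$\nobreakdash-group.

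For~Statement~1 of~Theorem~\ref{et01}, I~would first note that the~assumed $\mathcal{F}_{q}$\nobreakdash-sep\-a\-ra\-bil\-ity of~$H$ in~$A$ and~$B$ automatically implies $\mathcal{FN}_{\mathfrak{P}}$\nobreakdash-sep\-a\-ra\-bil\-ity since $\mathcal{F}_{q} \subseteq \mathcal{FN}_{\mathfrak{P}}$, so again the~standing hypotheses of~Theorem~\ref{et07} are in place. To~invoke Statement~2 of~that theorem it~remains to~check that $U = q^{\prime}\textrm{-}\mathfrak{Rt}(H,1)$ is~a~subgroup and~is~$\mathcal{F}_{q}$\nobreakdash-sep\-a\-ra\-ble in~$H$. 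The~key point is~that Part~$(\mathfrak{iv})$ of~$(*)$ provides a~homomorphism of~$A$ onto~a~$\mathcal{BN}_{\mathfrak{P}}$\nobreakdash-group that~acts injectively on~$H$, so $H$ itself lies in~$\mathcal{BN}_{\mathfrak{P}} \subseteq \mathcal{BN}_{q}$ by~the~closure properties of~this class recorded in~Proposition~\ref{ep42}. Proposition~\ref{ep44}$(\alpha)$, applied to~$H$ with~the~set of~primes~$\{q\}$, then yields that $U$ is~a~subgroup $\mathcal{FN}_{\{q\}}$\nobreakdash-sep\-a\-ra\-ble (hence $\mathcal{F}_{q}$\nobreakdash-sep\-a\-ra\-ble) in~$H$, and~Statement~2 of~Theorem~\ref{et07} completes the~argument.

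I~do~not anticipate any genuine obstacle: the~corollary is essentially a~direct consequence of~Theorem~\ref{et07} once the~embedding $H \in \mathcal{BN}_{\mathfrak{P}}$ coming from~$(*)$ is~combined with~Propositions~\ref{ep42} and~\ref{ep44} to~extract the~necessary root-theoretic separability information about the~amalgamated subgroup.
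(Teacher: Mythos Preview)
Your proposal is correct and follows essentially the same route as the paper's proof: both reduce to Theorem~\ref{et07} after noting that Condition~$(*)$ gives the $\mathcal{FN}_{\mathfrak{P}}$-separability of~$1$ in~$A$ and~$B$, that $\mathcal{F}_{q}\subseteq\mathcal{FN}_{\mathfrak{P}}$ upgrades the separability hypothesis on~$H$, and that the embedding $H\in\mathcal{BN}_{\mathfrak{P}}\subseteq\mathcal{BN}_{q}$ together with Propositions~\ref{ep42} and~\ref{ep44} makes $U=q^{\prime}\textrm{-}\mathfrak{Rt}(H,1)$ a subgroup $\mathcal{F}_{q}$-separable in~$H$. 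The paper additionally cites Proposition~\ref{ep41} alongside~\ref{ep42} and~\ref{ep44}, but your appeal to Proposition~\ref{ep44}$(\alpha)$ alone already delivers both that $U$ is a subgroup and that it is $\mathcal{F}_{q}$-separable, so nothing is missing.
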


\begin{proof}
By~Condition~$(*)$, $A$~and~$B$ are~residually $\mathcal{FN}_{\mathfrak{P}}$\nobreakdash-groups. This is~equivalent to~the $\mathcal{FN}_{\mathfrak{P}}$\nobreakdash-sep\-a\-ra\-bil\-ity of~the~trivial subgroup in~these groups. Thus, Statement~2 of~Theorem~\ref{et01} immediately follows from~Theorem~\ref{et07}. Let us prove Statement~1.

If~$q \in \mathfrak{P}$, then $\mathcal{F}_{q} \subseteq \mathcal{FN}_{\mathfrak{P}}$ and~the~$\mathcal{F}_{q}$\nobreakdash-sep\-a\-ra\-bil\-ity of~$H$ in~$A$ and~$B$ means that $H$ is~$\mathcal{FN}_{\mathfrak{P}}$\nobreakdash-sep\-a\-ra\-ble in~these groups. By~Condition~$(*)$, $H$~can be~embedded in~a~$\mathcal{BN}_{\mathfrak{P}}$\nobreakdash-group. Since $\mathcal{BN}_{\mathfrak{P}} \subseteq \mathcal{BN}_{q}$, Propositions~\ref{ep42},~\ref{ep41}, and~\ref{ep44} imply that $H \in \mathcal{BN}_{q}$, the~set $U = q^{\prime}\textrm{-}\mathfrak{Rt}(H,1)$ is~a~subgroup, and~this subgroup is~$\mathcal{F}_{q}$\nobreakdash-sep\-a\-ra\-ble in~$H$. Thus, Statement~1 of~Theorem~\ref{et01} follows from~Statement~2 of~Theorem~\ref{et07}.
\end{proof}

\section{Proof of~Theorems~\ref{et01}---\ref{et04}~and~\ref{et06}}\label{es09}

\begin{eproposition}\label{ep91}
Suppose that the~group $G = \langle A * B;\ H\rangle$ and~a~set of~primes~$\mathfrak{P}$ satisfy~$(*)$. Then $H$ is~$\mathfrak{P}^{\prime}$\nobreakdash-iso\-lat\-ed in~$A$ and~$B$ if and~only~if~it is~$\mathcal{FN}_{\mathfrak{P}}$\nobreakdash-sep\-a\-ra\-ble in~these groups. In~particular\textup{,} if~$H$ is~periodic\textup{,} then it is~$\mathcal{FN}_{\mathfrak{P}}$\nobreakdash-sep\-a\-ra\-ble in~$A$ and~$B$.
\end{eproposition}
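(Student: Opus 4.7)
The plan is to dispatch the ``only if'' direction via Proposition~\ref{ep32} and concentrate on the converse by arranging Proposition~\ref{ep44}$(\beta)$ for the pair $(A,H)$. Since every $\mathcal{FN}_{\mathfrak{P}}$-group is periodic with element orders being $\mathfrak{P}$-numbers, Proposition~\ref{ep32} instantly gives that the $\mathcal{FN}_{\mathfrak{P}}$-separability of $H$ in $A$ or $B$ entails the $\mathfrak{P}^{\prime}$-isolatedness of $H$ there.

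Before addressing the converse, I would note that $A$ must be $\mathfrak{P}^{\prime}$-torsion-free: an element of $\mathfrak{P}^{\prime}$-order would have image of order dividing both $n$ and the $\mathfrak{P}$-order of any $\mathcal{FN}_{\mathfrak{P}}$-quotient of $A$, hence trivial, so by~$(\mathfrak{iii})$ the element itself is trivial. Consequently $H$ is $\mathfrak{P}^{\prime}$-torsion-free as well, and $A$ is even residually a $\mathfrak{P}^{\prime}$-torsion-free $\mathcal{BN}_{\mathfrak{P}}$-group because every $\mathcal{FN}_{\mathfrak{P}}$-group lies in that class.

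To apply Proposition~\ref{ep44}$(\beta)$ with $X=A$ and $Y=H$ I still need a homomorphism of $A$ onto a $\mathfrak{P}^{\prime}$-torsion-free $\mathcal{BN}_{\mathfrak{P}}$-group that is injective on $H$. The map $\lambda$ from~$(\mathfrak{iv})$ need not have a $\mathfrak{P}^{\prime}$-torsion-free image, but this is easy to patch: by Proposition~\ref{ep41} applied to the nilpotent group $A\lambda$, the set $T=\mathfrak{P}^{\prime}\textrm{-}\mathfrak{Rt}(A\lambda,1)$ is a normal subgroup, and $A\lambda/T \in \mathcal{BN}_{\mathfrak{P}}$ by Proposition~\ref{ep42}. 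Let $\lambda^{\prime}$ be the composition of $\lambda$ with the projection $A\lambda \to A\lambda/T$. If $h \in H \cap \ker\lambda^{\prime}$, then $(h\lambda)^{n} = 1$ for some $\mathfrak{P}^{\prime}$-number $n$, whence $h^{n} \in H \cap \ker\lambda = 1$, and the $\mathfrak{P}^{\prime}$-torsion-freeness of $H$ forces $h=1$. So $\lambda^{\prime}$ satisfies the hypothesis of Proposition~\ref{ep44}$(\beta)$, yielding that $\mathfrak{P}^{\prime}\textrm{-}\mathfrak{Rt}(A,H)$ is a subgroup and is $\mathcal{FN}_{\mathfrak{P}}$-separable in $A$. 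Because this set is now a subgroup, the general remark of Section~\ref{es01} gives $\mathfrak{P}^{\prime}\textrm{-}\mathfrak{Rt}(A,H)=\mathfrak{P}^{\prime}\textrm{-}\mathfrak{Is}(A,H)$, and the hypothesis that $H$ is $\mathfrak{P}^{\prime}$-isolated in $A$ collapses this to $H$. Hence $H$ itself is $\mathcal{FN}_{\mathfrak{P}}$-separable in $A$; exchanging $\lambda$ for $\mu$ handles $B$ in the same way.

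For the periodic statement $\mathfrak{P}^{\prime}$-isolatedness is automatic: if $a \in A$ satisfies $a^{n} \in H$ for a $\mathfrak{P}^{\prime}$-number $n$, then $a^{n}$ has finite order $k$ since $H$ is periodic, so $a^{nk}=1$; the order $s$ of $a$ divides $nk$ and, because $A$ is $\mathfrak{P}^{\prime}$-torsion-free, must be a $\mathfrak{P}$-number coprime to $n$, whereupon a Bezout identity $un+vs=1$ displays $a=(a^{n})^{u}\in H$. The main obstacle, I expect, is the injectivity check for the refined map $\lambda^{\prime}$ on $H$; the rest is essentially book-keeping with the propositions already at our disposal.
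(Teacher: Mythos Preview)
Your proof is correct and follows essentially the same route as the paper's: both modify $\lambda$ by quotienting $A\lambda$ by its $\mathfrak{P}'$-torsion to land in a $\mathfrak{P}'$-torsion-free $\mathcal{BN}_{\mathfrak{P}}$-group, verify injectivity on $H$ via the $\mathfrak{P}'$-torsion-freeness of $H$, and then invoke Proposition~\ref{ep44}$(\beta)$; the periodic case is handled identically by showing $\mathfrak{P}'$-isolatedness is automatic. Your write-up is in fact slightly more explicit than the paper's in spelling out the B\'ezout step and the reduction $\mathfrak{P}'\textrm{-}\mathfrak{Rt}(A,H)=\mathfrak{P}'\textrm{-}\mathfrak{Is}(A,H)=H$.
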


\begin{proof}
Let $\lambda$ and~$\mu$ be~homomorphisms which exist due~to~Condition~$(*)$. By~Proposition~\ref{ep41}, the~torsion subgroup of~the~nilpotent group~$A\lambda$ can be~decomposed into~the~direct product of~the~subgroups~$\mathfrak{P}\textrm{-}\mathfrak{Rt}(A\lambda, 1)$ and~$\mathfrak{P}^{\prime}\textrm{-}\mathfrak{Rt}(A\lambda, 1)$, normal in~$A\lambda$. Since $A$ is~residually an~$\mathcal{FN}_{\mathfrak{P}}$\nobreakdash-group, it~follows from~Proposition~\ref{ep32} that both~$A$ and~$H$ have no~$\mathfrak{P}^{\prime}$\nobreakdash-tor\-sion. The~injectivity of~$\lambda$ on~$H$ implies that $H\lambda \cap \mathfrak{P}^{\prime}\textrm{-}\mathfrak{Rt}(A\lambda, 1) = 1$. Therefore, the~composition of~$\lambda$ and~the~natural homomorphism~$A\lambda \to A\lambda/\mathfrak{P}^{\prime}\textrm{-}\mathfrak{Rt}(A\lambda, 1)$ still acts injectively on~$H$. By~Proposition~\ref{ep42}, the~quotient group~$A\lambda/\mathfrak{P}^{\prime}\textrm{-}\mathfrak{Rt}(A\lambda, 1)$ belongs to~the~class~$\mathcal{BN}_{\mathfrak{P}}^{\kern1pt{}\text{tf}}$ of~all $\mathfrak{P}^{\prime}$\nobreakdash-tor\-sion-free $\mathcal{BN}_{\mathfrak{P}}^{\vphantom{\text{tf}}}$\nobreakdash-groups. Since $\mathcal{FN}_{\mathfrak{P}}^{\vphantom{\text{tf}}} \subseteq \mathcal{BN}_{\mathfrak{P}}^{\kern1pt{}\text{tf}}$, $A$~is~residually a~$\mathcal{BN}_{\mathfrak{P}}^{\kern1pt{\text{tf}}}$\nobreakdash-group. Now it follows from~Proposition~\ref{ep44} that if~$H$ is~$\mathfrak{P}^{\prime}$\nobreakdash-iso\-lat\-ed in~$A$, then it is~$\mathcal{FN}_{\mathfrak{P}}$\nobreakdash-sep\-a\-ra\-ble in~this group. The~converse statement is~implied by~Proposition~\ref{ep32}. Similarly, it~can be~proved that $H$ is~$\mathfrak{P}^{\prime}$\nobreakdash-iso\-lat\-ed in~$B$ if and~only~if~it is~$\mathcal{FN}_{\mathfrak{P}}$\nobreakdash-sep\-a\-ra\-ble in~this group.

Assume now that $H$ is~a~periodic group. If~an~element $a \in A$ and~a~number $q \in \mathfrak{P}^{\prime}$ are~such that $a^{q} \in H$, then the~order~$r$ of~$a$ is~finite. Since $A$ is~$\mathfrak{P}^{\prime}$\nobreakdash-tor\-sion-free, $r$~and~$q$ are~co-prime. Hence, $a \in H$ and~$H$ is~$\mathfrak{P}^{\prime}$\nobreakdash-iso\-lat\-ed in~$A$. As~above, it~follows that $H$ is~$\mathcal{FN}_{\mathfrak{P}}$\nobreakdash-sep\-a\-ra\-ble in~$A$. The~$\mathcal{FN}_{\mathfrak{P}}$\nobreakdash-sep\-a\-ra\-bil\-ity of~$H$ in~$B$ can be~proved similarly.
\end{proof}

\begin{eproposition}\label{ep92}
Suppose that $G = \langle A * B;\ H\rangle$\textup{,} $\mathfrak{P}$ is~a~non-empty set of~primes\textup{,} $H$~is~periodic and~can be~embedded in~a~$\mathcal{BN}_{\mathfrak{P}}$\nobreakdash-group. If~$G$ is~residually an~$\mathcal{F}_{q} \kern1pt{\cdot}\kern1pt \mathcal{FN}_{\mathfrak{P}}$\nobreakdash-group for~each $q \in \mathfrak{P}$\textup{,} then\textup{,} for~any $p \in \mathfrak{P}$\textup{,} there exist sequences of~subgroups described in~Statement~\textup{1} of~Theorem~\textup{\ref{et03}}.
\end{eproposition}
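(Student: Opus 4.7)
The plan is to first show that $H(p)$ is finite and that the hypotheses produce enough $\mathcal{F}_p$\nobreakdash-quotients of~$G$ to separate $H(p)$ from $\{1\}$, and then to extract the required sequences from a chief series of a suitable finite $p$\nobreakdash-quotient of~$G$.

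Since $H$ embeds in a $\mathcal{BN}_{\mathfrak{P}}$\nobreakdash-group, it lies in $\mathcal{BN}_{\mathfrak{P}}$ itself by Proposition~\ref{ep42}; being periodic, it decomposes via Proposition~\ref{ep41} into the direct product of its subgroups $H(r) = r\textrm{-}\mathfrak{Rt}(H,1) = r\textrm{-}\mathfrak{Is}(H,1)$ over all primes~$r$. The hypothesis implies that $G$ is residually a finite $\mathfrak{P}$\nobreakdash-group, so $H$ is $\mathfrak{P}^{\prime}$\nobreakdash-tor\-sion-free by Proposition~\ref{ep32}, whence $H(r) = 1$ for $r \notin \mathfrak{P}$. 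For $r \in \mathfrak{P}$, the subgroup $H(r)$ lies in $\mathcal{BN}_r$, is periodic and $r^{\prime}$\nobreakdash-tor\-sion-free, and is therefore finite by Proposition~\ref{ep45}; in particular $H(p)$ is finite.

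The main step is to prove that, for every $h \in H(p) \setminus \{1\}$, there exists a homomorphism of~$G$ onto a finite $p$\nobreakdash-group not killing~$h$. When $\mathfrak{P} = \{p\}$ the class $\mathcal{F}_p \kern1pt{\cdot}\kern1pt \mathcal{FN}_{\mathfrak{P}}$ collapses to $\mathcal{F}_p$, so the hypothesis gives this directly. Otherwise, I fix some $q \in \mathfrak{P} \setminus \{p\}$ and use the residual $\mathcal{F}_q \kern1pt{\cdot}\kern1pt \mathcal{FN}_{\mathfrak{P}}$\nobreakdash-ness of~$G$ to obtain $\sigma\colon G \to L$ with $L \in \mathcal{F}_q \kern1pt{\cdot}\kern1pt \mathcal{FN}_{\mathfrak{P}}$ and $h\sigma \ne 1$. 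Writing $L$ as an extension of a normal $\mathcal{F}_q$\nobreakdash-subgroup~$N$ by a nilpotent $\mathfrak{P}$\nobreakdash-quotient~$L/N$, the fact that $h\sigma$ has $p$\nobreakdash-power order while $N$ is a $q$\nobreakdash-group with $q \ne p$ forces $h\sigma \notin N$; projecting~$L/N$ onto its Sylow $p$\nobreakdash-factor then yields the required homomorphism. I expect this to be the main obstacle, since the hypothesis does not obviously give residual $\mathcal{F}_p$\nobreakdash-ness of~$G$.

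Intersecting the finitely many kernels produced above via Proposition~\ref{ep31} gives a subgroup $N_0 \in \mathcal{F}_p^*(G)$ with $N_0 \cap H(p) = 1$. In the finite $p$\nobreakdash-group~$G/N_0$, into which $H(p)$ embeds, I choose a chief series $1 = L_0 \leqslant L_1 \leqslant \ldots \leqslant L_M = G/N_0$ with factors of order~$p$ and let $\tilde L_i$ denote its preimage in~$G$. The intersections $\tilde L_i \cap H(p)$ form a weakly increasing chain from $1$ to $H(p)$ with each factor of order $1$ or~$p$; listing its distinct values as $Q_0 = 1 < Q_1 < \ldots < Q_n = H(p)$ and taking $\alpha(j) = \max\{i : \tilde L_i \cap H(p) = Q_j\}$ yields indices $\alpha(0) < \alpha(1) < \ldots < \alpha(n) = M$. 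Setting $R_j = \tilde L_{\alpha(j)} \cap A$ and $S_j = \tilde L_{\alpha(j)} \cap B$, all the clauses of Statement~1 of Theorem~\ref{et03} follow immediately, with $R_j \in \mathcal{F}_p^*(A)$ and $S_j \in \mathcal{F}_p^*(B)$ by Proposition~\ref{ep31}.
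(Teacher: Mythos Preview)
Your proof is correct and follows essentially the same route as the paper's: first establish that $H(p)$ is finite via Propositions~\ref{ep41}, \ref{ep42}, \ref{ep45}; then, distinguishing the cases $\mathfrak{P}=\{p\}$ and $\mathfrak{P}\ne\{p\}$, produce a subgroup $N\in\mathcal{F}_p^{*}(G)$ meeting $H(p)$ trivially by passing through an $\mathcal{F}_q\cdot\mathcal{FN}_{\mathfrak{P}}$\nobreakdash-quotient with $q\ne p$ and projecting onto the Sylow $p$\nobreakdash-part; finally, slice a normal series of $G/N$ with factors of order~$p$ against $H(p)$, $A$, and $B$. The only cosmetic differences are that the paper uses Proposition~\ref{ep31} to get a single $\sigma$ injective on all of $H(p)$ before passing to the $p$\nobreakdash-quotient (you do it element by element and intersect afterwards), and the paper phrases the thinning of the series less explicitly than your $\alpha(j)=\max\{\,i:\tilde L_i\cap H(p)=Q_j\,\}$.
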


\begin{proof}
Let $p \in \mathfrak{P}$, and~let $H(p) = p\textrm{-}\mathfrak{Is}(H,1)$. Since $H$ can be~embedded in~a~$\mathcal{BN}_{\mathfrak{P}}$\nobreakdash-group, it~follows from~Propositions~\ref{ep41} and~\ref{ep42} that $H(p) = p\textrm{-}\mathfrak{Rt}(H,1)$, $H \in \mathcal{BN}_{\mathfrak{P}}$, and~$H(p) \in \mathcal{BN}_{\mathfrak{P}} \subseteq \mathcal{BN}_{p}$. By~Proposition~\ref{ep45}, the~$p^{\prime}$\nobreakdash-tor\-sion-free periodic $\mathcal{BN}_{p}$\nobreakdash-group~$H(p)$ is~finite. Let us show that there exists a~homomorphism of~$G$ onto~an~$\mathcal{F}_{p}$\nobreakdash-group which acts injectively~on~$H(p)$.

Indeed, if~$\mathfrak{P} = \{p\}$, then $\mathcal{F}_{p} \kern1pt{\cdot}\kern1pt \mathcal{FN}_{\mathfrak{P}} = \mathcal{F}_{p}$, $G$~is~residually an~$\mathcal{F}_{p}$\nobreakdash-group, and~the~existence of~the~required homomorphism follows from~Proposition~\ref{ep31}. Therefore, we can assume further that $\mathfrak{P}$ contains a~number~$q$ which is~not equal to~$p$. Since $G$ is~residually an~$\mathcal{F}_{q} \kern1pt{\cdot}\kern1pt \mathcal{FN}_{\mathfrak{P}}$\nobreakdash-group and~the~class~$\mathcal{F}_{q} \kern1pt{\cdot}\kern1pt \mathcal{FN}_{\mathfrak{P}}$ is~closed under~taking subgroups and~direct products of~a~finite number of~factors, Proposition~\ref{ep31} implies that $G$ has a~homomorphism~$\sigma$ onto~an~$\mathcal{F}_{q} \kern1pt{\cdot}\kern1pt \mathcal{FN}_{\mathfrak{P}}$\nobreakdash-group which acts injectively on~$H(p)$. By~the~definition of~the~class~$\mathcal{F}_{q} \kern1pt{\cdot}\kern1pt \mathcal{FN}_{\mathfrak{P}}$, $G\sigma$~contains an~$\mathcal{F}_{q}$\nobreakdash-sub\-group $M \in \mathcal{FN}_{\mathfrak{P}}^{*}(G\sigma)$. Let $\varepsilon$ denote the~natural homomorphism~$G\sigma \to G\sigma/M$. Due~to~Proposition~\ref{ep41}, the~$\mathcal{FN}_{\mathfrak{P}}$\nobreakdash-group~$G\sigma\varepsilon$ can be decomposed into~the~direct product of~the~subgroups~$p\textrm{-}\mathfrak{Rt}(G\sigma\varepsilon, 1) \in \mathcal{F}_{p}$ and~$p^{\prime}\textrm{-}\mathfrak{Rt}(G\sigma\varepsilon, 1)$. Since $H(p)$ is~a~$p$\nobreakdash-group and~$q \ne p$, the~following relations hold:
$$
H(p)\sigma \cap M = 1 = H(p)\sigma \varepsilon \cap p^{\prime}\textrm{-}\mathfrak{Rt}(G\sigma\varepsilon, 1).
$$
Hence,\kern-1pt{} the\kern-.5pt{}~composition\kern-.5pt{} of\kern-.5pt{}~$\sigma$\kern-2pt{},\kern-1pt{} $\varepsilon$\kern-.5pt{},\kern-1pt{} and\kern-.5pt{}~the\kern-.5pt{}~natural\kern-.5pt{} homomorphism\kern-.5pt{} $G\sigma\varepsilon\kern-1pt{} \to\kern-1pt{} G\sigma \varepsilon/(p^{\prime}\textrm{-}\mathfrak{Rt}(G\sigma\varepsilon\kern-.5pt{},\kern-.5pt{} 1)\kern-.5pt{})$ is~the~desired~map.

So,~there exists a~subgroup $N \in \mathcal{F}_{p}^{*}(G)$ that meets~$H(p)$ trivially. Let
$$
1 = X_{0}/N \leqslant X_{1}/N \leqslant \ldots \leqslant X_{m}/N = G/N
$$
be~a~normal series of~the~$\mathcal{F}_{p}$\nobreakdash-group~$G/N$ with~factors of~order~$p$. Suppose also that the~series
$$
1 = G_{0}/N \leqslant G_{1}/N \leqslant \ldots \leqslant G_{n}/N = G/N
$$
is~obtained from~the~previous one by~removing some terms in~such a~way that there are~no~duplicates among the~subgroups~$G_{i} \cap H(p)$, $0 \leqslant i \leqslant n$. Then
$$
|G_{i+1} \cap H(p)/G_{i} \cap H(p)| = p,\quad 
0 \leqslant i \leqslant n-1,
$$
and,~by~Proposition~\ref{ep31}, the~subgroups~$Q_{i} = G_{i} \cap H(p)$, $R_{i} = G_{i} \cap A$, and~$S_{i} = G_{i} \cap B$, $0 \leqslant i \leqslant n$, are~the~desired ones.
\end{proof}

\begin{proof}[\textup{\textbf{Proof of~Theorems~\ref{et01},~\ref{et03},~and~\ref{et06}}}]
Here, by~Statements~1\nobreakdash---3 of~Theorem~\ref{et06} we mean Statements~1\nobreakdash---3 of~Theorem~\ref{et02}. Theorem~\ref{et05} and~Propositions~\ref{ep91},~\ref{ep65} imply

--\hspace{1ex}Statement~2 of~Theorem~\ref{et06} and~the~necessity of~the~conditions of~Statement~3 of~the same theorem;

--\hspace{1ex}the~$\mathcal{FN}_{\mathfrak{P}}$\nobreakdash-sep\-a\-ra\-bil\-ity of~$H$ in~$A$ and~$B$, when the~conditions of~Theorem~\ref{et03} hold;

--\hspace{1ex}the~property of~$H$ to~be~$q^{\prime}$\nobreakdash-iso\-lat\-ed, which is~contained in~Statement~2 of~Theorem~\ref{et03}.

Since $\mathcal{FN}_{\mathfrak{P}} \subseteq \mathcal{F}_{q} \kern1pt{\cdot}\kern1pt \mathcal{FN}_{\mathfrak{P}}$ for~any $q \in \mathfrak{P}$, Proposition~\ref{ep92} completes the~proof of~Statement~2 of~Theorem~\ref{et03} and~the~necessity of~the~conditions of~Statement~3 of~the~same theorem. Statements~1 and~2 of~Theorem~\ref{et01}, Statements~1 of~Theorems~\ref{et03} and~\ref{et06}, and~also the~sufficiency of~the~conditions of~Statements~3 of~Theorems~\ref{et03} and~\ref{et06} follow from~Proposition~\ref{ep75} and~Corollary~\ref{ec04}.
\end{proof}

\begin{proof}[\textup{\textbf{Proof of~Theorem~\ref{et02}}}]
Let $\lambda$ and~$\mu$ be~homomorphisms which exist due~to~Condition~$(*)$. Suppose also that $p \in \mathfrak{P}$ and~$U(p)$, $V(p)$, and~$W(p)$ are~the~subgroups defined in~the~same way as~in~Theorem~\ref{et06}. Denote by~$\sigma$ the~mapping of~$G/U(p)$ to~$G_{V(p),W(p)}$ which takes a~coset~$gU(p)$ to~the~element~$g\rho_{V(p),W(p)}$, where $g \in G$. It~follows from~the~relations $U(p) \leqslant V(p) \cap W(p) \leqslant \ker\rho_{V(p),W(p)}$ that this mapping is~correctly defined. It~is~also easy to~see that $\sigma$~is~a~surjective homomorphism. Since
$$
(H/U(p))\sigma = H\rho_{V(p),W(p)},\quad
(A/U(p))\sigma = A\rho_{V(p),W(p)},\quad
(B/U(p))\sigma = B\rho_{V(p),W(p)},
$$
$\sigma$~induces a~homomorphism of~the~group $\mathfrak{G}(p) = \operatorname{Aut}_{G/U(p)}(H/U(p))$ onto~the~group
$$
\overline{\mathfrak{G}}(p) = \operatorname{Aut}_{G_{V(p),W(p)}}(H\rho_{V(p),W(p)}),
$$
which maps the~subgroups
$$
\mathfrak{A}(p) = \operatorname{Aut}_{A/U(p)}(H/U(p))
\quad\text{and}\quad
\mathfrak{B}(p) = \operatorname{Aut}_{B/U(p)}(H/U(p))\kern4pt\mbox{}
$$
onto~the~subgroups
$$
\overline{\mathfrak{A}}(p) = \operatorname{Aut}_{A\rho_{V(p),W(p)}}(H\rho_{V(p),W(p)})
\quad\text{and}\quad
\overline{\mathfrak{B}}(p) = \operatorname{Aut}_{B\rho_{V(p),W(p)}}(H\rho_{V(p),W(p)}),
$$
respectively. \pagebreak

Let $\varepsilon$ denote the~natural homomorphism~$A \to A/V(p)$. Since 
$\rho_{V(p),W(p)}$ extends~$\varepsilon$, we have $\overline{\mathfrak{A}}(p) \cong \operatorname{Aut}_{A\varepsilon}(H\varepsilon)$. It~follows from~Proposition~\ref{ep42} and~the~inclusions $A\lambda \in \mathcal{BN}_{\mathfrak{P}}$ and~$\ker\lambda \leqslant V(p)$ that $A\varepsilon \in \mathcal{BN}_{\mathfrak{P}}$. Obviously, $A\varepsilon$ is~a~$p^{\prime}$\nobreakdash-tor\-sion-free group, which belongs to~$\mathcal{BN}_{p}$ because $p \in \mathfrak{P}$. Hence, it~is~residually an~$\mathcal{F}_{p}$\nobreakdash-group by~Proposition~\ref{ep44}, and~$H\varepsilon \in\nolinebreak \mathcal{BN}_{p}$ by~Proposition~\ref{ep42}. Since $U(p)$ is~$p^{\prime}$\nobreakdash-iso\-lat\-ed in~$H$, Proposition~\ref{ep56} implies that $H \cap V(p) = U(p)$ and~$H\varepsilon \cong H/H \cap V(p) = H/U(p)$. Therefore, $H\varepsilon$~is~a~periodic $p^{\prime}$\nobreakdash-tor\-sion-free $\mathcal{BN}_{p}$\nobreakdash-group, which is~finite by~Proposition~\ref{ep45}. Now it follows from~Proposition~\ref{ep34} that $\operatorname{Aut}_{A\varepsilon}(H\varepsilon) \in \mathcal{F}_{p}$. The~inclusion $\overline{\mathfrak{B}}(p) \in \mathcal{F}_{p}$ can be~proved in~a~similar~way.

It~is~clear that the~group~$\overline{\mathfrak{G}}(p)$ is~generated by~its subgroups $\overline{\mathfrak{A}}(p)$ and~$\overline{\mathfrak{B}}(p)$. Moreover, when abelian, it~coincides with~the~product of~these subgroups. Therefore, any of~Conditions~$(\alpha)$\nobreakdash---$(\gamma)$ implies that $\overline{\mathfrak{G}}(p) \in \mathcal{F}_{p}$. Thus, we can use Theorem~\ref{et06}, which says that Statements~1\nobreakdash---3 hold.
\end{proof}

\begin{proof}[\textup{\textbf{Proof of~Theorem~\ref{et04}}}]
The~$\mathcal{FN}_{\mathfrak{P}}$\nobreakdash-sep\-a\-ra\-bil\-ity of~$H$ in~$A$ and~$B$ follows from~Proposition~\ref{ep91}. Let us fix a~number $p \in \mathfrak{P}$ and~put $H(p) = p\textrm{-}\mathfrak{Is}(H,1)$.

It is~clear that the~set $A(p) = p\textrm{-}\mathfrak{Rt}(A,1)$ is~invariant under~any automorphism of~$A$. Since the~subgroup~$\operatorname{sgp}\{\tau A\}$ can be~embedded into~a~$\mathcal{BN}_{\mathfrak{P}}$\nobreakdash-group and,~in~particular, is~nilpotent, Proposition~\ref{ep41} and~the~equality $A(p) = p\textrm{-}\mathfrak{Rt}(\operatorname{sgp}\{\tau A\},1)$ imply that $A(p)$ is~a~subgroup. The~obvious relation $p\textrm{-}\mathfrak{Rt}(H,1) = A(p) \cap H$ means that the~set~$p\textrm{-}\mathfrak{Rt}(H,1)$ is~also a~subgroup and~hence coincides with~$H(p)$. Since $A(p)\kern-1pt{} \leqslant\kern-1.5pt{} \operatorname{sgp}\{\tau A\}$ and~$\mathcal{BN\kern-1pt{}}_{\mathfrak{P}}\kern-1pt{} \subseteq\nolinebreak\kern-1pt{} \mathcal{BN\kern-1pt{}}_{p}$, it~follows from~Propositions~\ref{ep42},~\ref{ep45}, and~\ref{ep43} that $A(p) \in \mathcal{BN}_{p}$, $A(p)$~is~finite, and~$A$ is $\mathcal{F}_{p}$\nobreakdash-qua\-si-reg\-u\-lar with~respect to~$A(p)$. The~last fact and~the~inclusion $1 \in \mathcal{F}_{p}^{*}(A(p))$ imply the~existence of~a~subgroup $C \in \mathcal{F}_{p}^{*}(A)$ satisfying the~relation $C \cap A(p) = 1$. A~similar argument allows one to~assert that $B(p)$ is~a~finite normal subgroup of~$B$ and~$D \cap\nolinebreak B(p) = 1$ for~some subgroup $D \in \mathcal{F}_{p}^{*}(B)$. Since $p$ is~chosen arbitrarily and~Theorem~\ref{et03} is~already proved, it~remains to~show that, for~this~$p$, the~sequences of~subgroups described in~Statement~1 of~Theorem~\ref{et03} and~the~normal series from~Statement~1 of~the~theorem to~be~proved exist simultaneously. Let
\begin{gather*}
1 = Q_{0} \leqslant Q_{1} \leqslant \ldots \leqslant Q_{n} = H(p),\\
R_{0} \leqslant R_{1} \leqslant \ldots \leqslant R_{n} = A,
\quad\text{and}\quad
S_{0} \leqslant S_{1} \leqslant \ldots \leqslant S_{n} = B
\end{gather*}
be~the~sequences from~Theorem~\ref{et03}. It~is~clear that 
\begin{gather*}
H(p) = p\textrm{-}\mathfrak{Rt}(H,1) \leqslant A(p) \cap B(p),\\
\begin{aligned}
(R_{0} \cap C) \cap H(p) \leqslant R_{0} \cap C \cap A(p) &= 1 = R_{0} \cap H(p),\\
(S_{0} \cap D) \cap H(p) = S_{0} \cap D \cap B(p) &= 1 = S_{0} \cap H(p).
\end{aligned}
\end{gather*}
By~Proposition~\ref{ep31}, $R_{0} \cap C \in \mathcal{F}_{p}^{*}(A)$ and~$S_{0} \cap D \in \mathcal{F}_{p}^{*}(B)$. Therefore, we can further assume without loss of~generality that $R_{0} \leqslant C$, $S_{0} \leqslant D$, and~hence $R_{0} \cap A(p) = 1 = S_{0} \cap B(p)$.{\parfillskip=0pt\par}

Since any finite $p$\nobreakdash-group has a~normal series with~factors of~order~$p$, the~sequences
\begin{equation}\label{ef03}
R_{0} \leqslant R_{1} \leqslant \ldots \leqslant R_{n} = A
\quad \text{and} \quad
S_{0} \leqslant S_{1} \leqslant \ldots \leqslant S_{n} = B
\end{equation}
can be~refined in~such a~way that the~orders of~their factors are~also equal to~$p$. Let
$$
R_{0} = M_{0} \leqslant M_{1} \leqslant \ldots \leqslant M_{r} = A
\quad\text{and}\quad
S_{0} = N_{0} \leqslant N_{1} \leqslant \ldots \leqslant N_{s} = B
$$
be~the~results of~this refinement. If~$K_{i} = M_{i} \cap A(p)$ and~$L_{j} = N_{j} \cap B(p)$, where $0 \leqslant i \leqslant r$ and~$0 \leqslant j \leqslant s$, then the~members of~the~series
\begin{equation}\label{ef04}
1 = K_{0} \leqslant K_{1} \leqslant \ldots \leqslant K_{r} = A(p)
\quad \text{and} \quad
1 = L_{0} \leqslant L_{1} \leqslant \ldots \leqslant L_{s} = B(p)
\end{equation}
are normal in~$A$ and~$B$, and~their factors are~of~order~$1$~or~$p$.

The~equality $H(p) = p\textrm{-}\mathfrak{Rt}(H,1)$ means that
\begin{gather*}
A(p) \cap H = H(p) = B(p) \cap H,\quad 
K_{i} \cap H = M_{i} \cap H(p),\quad 
L_{j} \cap H = N_{j} \cap H(p),\\ 
0 \leqslant i \leqslant r,\quad
0 \leqslant j \leqslant s.
\end{gather*}
Since $R_{i} \cap H(p) = Q_{i} = S_{i} \cap H(p)$, $0 \leqslant i \leqslant n$, and~$|Q_{i+1}/Q_{i}| = p$, $0 \leqslant i \leqslant n-1$, the~members of~any refinements of~the~sequences~\eqref{ef03} meet~$H(p)$ in~the~subgroups~$Q_{i}$, $0 \leqslant i \leqslant n$. It~follows that
\begin{gather*}
\{K_{i} \cap H \mid 0 \leqslant i \leqslant r\} = 
\{M_{i} \cap H(p) \mid 0 \leqslant i \leqslant r\} = 
\{Q_{i} \mid 0 \leqslant i \leqslant n\} =\\
\{N_{j} \cap H(p) \mid 0 \leqslant j \leqslant s\} = 
\{L_{j} \cap H \mid 0 \leqslant j \leqslant s\}.
\end{gather*}
Thus, all the~conditions of~Theorem~\ref{et04} are~satisfied for~the~series~\eqref{ef04} if~we remove duplicate members from~them.

Now let
\begin{equation}\label{ef05}
1 = A_{0} \leqslant A_{1} \leqslant \ldots \leqslant A_{k} = A(p) 
\quad \text{and} \quad
1 = B_{0} \leqslant B_{1} \leqslant \ldots \leqslant B_{m} = B(p)
\end{equation}
be~the~series described in~Statement~1 of~the~theorem to~be~proved. Then
$$
1 = A_{0} \cap H(p) \leqslant A_{1} \cap H(p) \leqslant \ldots \leqslant A_{k} \cap H(p) = H(p)
$$
is~a~normal series of~$H(p)$, whose factors are~of~order~$1$ or~$p$. Since
$$
\mbox{}\kern2pt\{A_{i} \cap H \mid 0 \leqslant i \leqslant k\} = 
\{B_{j} \cap H \mid 0 \leqslant j \leqslant m\},
$$
the~equality
$$
\{A_{i} \cap H(p) \mid 0 \leqslant i \leqslant k\} = 
\{B_{j} \cap H(p) \mid 0 \leqslant j \leqslant m\}
$$
holds. Hence, by~removing certain members of~\eqref{ef05}, we can get normal series
$$
1 = U_{0} \leqslant U_{1} \leqslant \ldots \leqslant U_{n} = A(p)
\quad\text{and}\quad
1 = V_{0} \leqslant V_{1} \leqslant \ldots \leqslant V_{n} = B(p)
$$
of~the~same length that satisfy the~relations
\begin{align*}
U_{i} \cap H(p) = V_{i} \cap H(p),\quad 0 &\leqslant i \leqslant n,\\
|U_{i+1} \cap H(p)/U_{i} \cap H(p)| = p,\quad 0 &\leqslant i \leqslant n-1.
\end{align*}
Let us put $R_{i} = U_{i}C$, $S_{i} = V_{i}D$, and~$Q_{i} = U_{i} \cap H(p)$, where $0 \leqslant i \leqslant n$.

Since $C \in \mathcal{F}_{p}^{*}(A)$ and~$D \in \mathcal{F}_{p}^{*}(B)$, we have $R_{i}^{\vphantom{*}} \in \mathcal{F}_{p}^{*}(A)$ and~$S_{i}^{\vphantom{*}} \in \mathcal{F}_{p}^{*}(B)$. It~follows from~the~equalities $C \cap A(p) = 1 = D \cap B(p)$ and~the~inclusions $U_{i} \leqslant A(p)$, $V_{i} \leqslant B(p)$, and~$H(p) \leqslant A(p) \cap B(p)$ that
$$
R_{i} \cap H(p) = U_{i} \cap H(p) = Q_{i} = V_{i} \cap H(p) = S_{i} \cap H(p), \quad 0 \leqslant i \leqslant n.
$$
Thus, the~sequences
\begin{gather*}
1 = Q_{0} \leqslant Q_{1} \leqslant \ldots \leqslant Q_{n} = H(p),\\
R_{0} \leqslant R_{1} \leqslant \ldots \leqslant R_{n} = A,
\quad\text{and}\quad
S_{0} \leqslant S_{1} \leqslant \ldots \leqslant S_{n} = B
\end{gather*}
satisfy all the~conditions of~Theorem~\ref{et03}.
\end{proof}


\begin{thebibliography}{99}



\bibitem{Azarov1997SMJ}
\textit{Azarov\hspace{.7ex}D.\hspace{.5ex}N.}\hspace{1ex}On~residual finiteness of~free products of~groups with~amalgamated subgroup, Sib. Math.~J. 38 (1) (1997) 1\nobreakdash--9. DOI:~\href{https://doi.org/10.1007/BF02674894}{10.1007/BF02674894}.

\bibitem{Azarov1998MN}
\textit{Azarov\hspace{.7ex}D.\hspace{.5ex}N.}\hspace{1ex}On~the~residual nilpotence of~free products of~free groups with~cyclic amalgamation, Math. Notes 64 (1) (1998) 3\nobreakdash--7. DOI:~\href{https://doi.org/10.1007/BF02307189}{10.1007/BF02307189}.

\bibitem{Azarov2006BIvSU}
\textit{Azarov\hspace{.7ex}D.\hspace{.5ex}N.}\hspace{1ex}On~the~residual $p$\nobreakdash-fi\-nite\-ness of~free product of~two nilpotent groups with~finite amalgamated subgroups, Bull. Ivanovo State Univ. 7 (2006) 102\nobreakdash--106 (in~Russian).

\bibitem{Azarov2015SMJ}
\textit{Azarov\hspace{.7ex}D.\hspace{.5ex}N.}\hspace{1ex}Approximability of~generalized free products of~groups with~amalgamated normal subgroup by~some classes of~finite groups, Sib. Math.~J. 56 (2) (2015) 206\nobreakdash--216.\newline DOI:~\href{https://doi.org/10.1134/S0037446615020020}{10.1134/S0037446615020020}.

\bibitem{Azarov2016SMJ}
\textit{Azarov\hspace{.7ex}D.\hspace{.5ex}N.}\hspace{1ex}A~criterion for~the~$\mathcal{F}_{\pi}$\nobreakdash-re\-sid\-u\-ality of~free products with~amalgamated cyclic subgroup of~nilpotent groups of~finite ranks, Sib. Math.~J. 57 (3) (2016) 377\nobreakdash--384.\newline DOI:~\href{https://doi.org/10.1134/S0037446616030010}{10.1134/S0037446616030010}.

\bibitem{Azarov2017IVM}
\textit{Azarov\hspace{.7ex}D.\hspace{.5ex}N.}\hspace{1ex}Residual $p$\nobreakdash-fi\-nite\-ness of~generalized free products of~groups, Russ. Math. 61 (5) (2017) 1\nobreakdash--6. DOI:~\href{https://doi.org/10.3103/S1066369X17050012}{10.3103/S1066369X17050012}.

\bibitem{AzarovIvanova1999PIvSU}
\textit{Azarov\hspace{.7ex}D.\hspace{.5ex}N.,\hspace{.9ex}Ivanova\hspace{.7ex}E.\hspace{.5ex}A.}\hspace{1ex}To~the~question on~the~residual nilpotence of~a~free product with~amalgamation of~locally nilpotent groups, Proc. Ivanovo State Univ. Math. 2 (1999) 5\nobreakdash--7 (in~Russian).

\bibitem{BardakovBellingeri2009CA}
\textit{Bardakov\hspace{.7ex}V.\hspace{.5ex}G.,\hspace{.9ex}Bellingeri\hspace{.7ex}P.}\hspace{1ex}On~residual properties of~pure braid groups of~closed surfaces, Comm. Algebra 37 (5) (2009) 1481\nobreakdash--1490. DOI:~\href{https://doi.org/10.1080/00927870802068664}{10.1080/00927870802068664}.

\bibitem{BaumslagG1962MZ}
\textit{Baumslag\hspace{.7ex}G.}\hspace{1ex}On~generalised free products, Math.~Z. 78 (1) (1962) 423\nobreakdash--438.\newline DOI:~\href{https://doi.org/10.1007/BF01195185}{10.1007/BF01195185}.

\bibitem{BaumslagG1963TAMS}
\textit{Baumslag\hspace{.7ex}G.}\hspace{1ex}On~the~residual finiteness of~generalized free products of~nilpotent groups, Trans. Amer. Math. Soc. 106 (12) (1963) 193\nobreakdash--209. DOI:~\href{https://doi.org/10.2307/1993762}{10.2307/1993762}.

\bibitem{BaumslagG1967TAMS}
\textit{Baumslag\hspace{.7ex}G.}\hspace{1ex}Groups with~the~same lower central sequence as~a~relatively free group. I~The~Groups, Trans. Amer. Math. Soc. 129 (2) (1967) 308\nobreakdash--321. DOI:~\href{https://doi.org/10.2307/1994377}{10.2307/1994377}.

\bibitem{BaumslagG2010IJM}
\textit{Baumslag\hspace{.7ex}G.}\hspace{1ex}Some reflections on~proving groups residually tor\-sion-free nilpotent.~I, Illinois J. Math. 54 (1) (2010) 315\nobreakdash--325. DOI:~\href{https://doi.org/10.1215/ijm/1299679751}{10.1215/ijm/1299679751}.

\bibitem{BaumslagGCleary2006JGT}
\textit{Baumslag\hspace{.7ex}G.,\hspace{.9ex}Cleary\hspace{.7ex}S.}\hspace{1ex}Parafree one-re\-la\-tor groups, J.~Group Theory 9 (2) (2006) 191\nobreakdash--201.\newline DOI:~\href{https://doi.org/10.1515/JGT.2006.013}{10.1515/JGT.2006.013}.

\bibitem{BaumslagGMikhailov2014GGD}
\textit{Baumslag\hspace{.7ex}G.,\hspace{.9ex}Mikhailov\hspace{.7ex}R.}\hspace{1ex}Residual properties of~groups defined by~basic commutators, Groups Geom. Dyn. 8 (3) (2014) 621\nobreakdash--642. DOI:~\href{https://doi.org/10.4171/GGD/242}{10.4171/GGD/242}.

\bibitem{BellingeriGervaisGuaschi2008JA}
\textit{Bellingeri\hspace{.7ex}P.,\hspace{.9ex}Gervais\hspace{.7ex}S.,\hspace{.9ex}Guaschi\hspace{.7ex}J.}\hspace{1ex}Lower central series of~Artin--Tits and~surface braid groups, J.~Algebra 319 (4) (2008) 1409\nobreakdash--1427. DOI:~\href{https://doi.org/10.1016/j.jalgebra.2007.10.023}{10.1016/j.jalgebra.2007.10.023}.

\bibitem{BobrovskiiSokolov2010AC}
\textit{Bobrovskii\hspace{.7ex}P.\hspace{.5ex}A.,\hspace{.9ex}Sokolov\hspace{.7ex}E.\hspace{.5ex}V.}\hspace{1ex}The~cyclic subgroup separability of~certain generalized free products of~two groups, Algebra Colloq. 17 (4) (2010) 577\nobreakdash--582. DOI:~\href{https://doi.org/10.1142/S1005386710000556}{10.1142/S1005386710000556}.

\bibitem{ClementMajewiczZyman2017}
\textit{Clement\hspace{.7ex}A.\hspace{.5ex}E.,\hspace{.9ex}Majewicz\hspace{.7ex}S.,\hspace{.9ex}Zyman\hspace{.7ex}M.}\hspace{1ex}The~theory of~nilpotent groups (Birkh\"auser, Cham, 2017). DOI:~\href{https://doi.org/10.1007/978-3-319-66213-8}{10.1007/978-3-319-66213-8}.

\bibitem{Doniz1996JA}
\textit{Doniz\hspace{.7ex}D.}\hspace{1ex}Residual properties of~free products of~infinitely many nilpotent groups amalgamating cycles, J.~Algebra 179 (3) (1996) 930\nobreakdash--935. DOI:~\href{https://doi.org/10.1006/jabr.1996.0045}{10.1006/jabr.1996.0045}.

\bibitem{DuchampKrob1992SF}
\textit{Duchamp\hspace{.7ex}G.,\hspace{.9ex}Krob\hspace{.7ex}D.}\hspace{1ex}The~lower central series of~the~free partially commutative group, Semigroup Forum 45 (1) (1992) 385\nobreakdash--394. DOI:~\href{https://doi.org/10.1007/BF03025778}{10.1007/BF03025778}.

\bibitem{FalkRandell1988CM}
\textit{Falk\hspace{.7ex}M.,\hspace{.9ex}Randell\hspace{.7ex}R.}\hspace{1ex}Pure braid groups and~products of~free groups, Contemp. Math. 78 (1988) 217\nobreakdash--228. DOI:~\href{https://doi.org/10.1090/conm/078/975081}{10.1090/conm/078/975081}.

\bibitem{Fuchs21973}
\textit{Fuchs\hspace{.7ex}L.}\hspace{1ex}Infinite abelian groups. Volume~II (Academic Press, New York, 1973).

\bibitem{Gildenhuys1975JAMS}
\textit{Gildenhuys\hspace{.7ex}D.}\hspace{1ex}One-re\-la\-tor groups that are~residually of~prime power order, J.~Austral. Math. Soc. 19 (4) (1975) 385\nobreakdash--409. DOI:~\href{http://doi.org/10.1017/S1446788700034431}{10.1017/S1446788700034431}.

\bibitem{Gruenberg1957PLMS}
\textit{Gruenberg\hspace{.7ex}K.\hspace{.5ex}W.}\hspace{1ex}Residual properties of~infinite soluble groups, Proc. London Math. Soc. s3\nobreakdash-7 (1) (1957) 29\nobreakdash--62. DOI:~\href{http://doi.org/10.1112/plms/s3-7.1.29}{10.1112/plms/s3-7.1.29}.

\bibitem{Hall1954PLMS}
\textit{Hall\hspace{.7ex}P.}\hspace{1ex}The~splitting properties of~relatively free groups, Proc. Lond. Math. Soc. s3\nobreakdash-4 (1) (1954) 343\nobreakdash--356. DOI:~\href{https://doi.org/10.1112/plms/s3-4.1.343}{10.1112/plms/s3-4.1.343}.

\bibitem{Higman1964JA}
\textit{Higman\hspace{.7ex}G.}\hspace{1ex}Amalgams of~$p$\nobreakdash-groups, J.~Algebra 1 (3) (1964) 301\nobreakdash--305.\newline DOI:~\href{https://doi.org/10.1016/0021-8693(64)90025-0}{10.1016/0021-8693(64)90025-0}.

\bibitem{Hirsch1946PLMS}
\textit{Hirsch\hspace{.7ex}K.\hspace{.5ex}A.}\hspace{1ex}On~infinite soluble groups~(III), Proc. London Math. Soc. s2\nobreakdash-49 (1) (1946) 184\nobreakdash--194. DOI:~\href{https://doi.org/10.1112/plms/s2-49.3.184}{10.1112/plms/s2-49.3.184}.

\bibitem{Ivanova2004PhD}
\textit{Ivanova\hspace{.7ex}E.\hspace{.5ex}A.}\hspace{1ex}On~the~residual nilpotence of~generalized free products, PhD.~Thesis (Ivanovo State Univ., 2004) (in~Russian).

\bibitem{KargapolovMerzlyakov1982}
\textit{Kargapolov\hspace{.7ex}M.\hspace{.5ex}I.,\hspace{.9ex}Merzlyakov\hspace{.7ex}Yu.\hspace{.5ex}I.}\hspace{1ex}Fundamentals of~the~theory of~groups, 3rd~ed. (Nauka, Moscow, 1982) (in~Russian).

\bibitem{KarrassSolitar1970TAMS}
\textit{Karrass\hspace{.7ex}A.,\hspace{.9ex}Solitar\hspace{.7ex}D.}\hspace{1ex}The~subgroups of~a~free product of~two groups with~an~amalgamated subgroup, Trans. Amer. Math. Soc. 150 (1) (1970) 227\nobreakdash--255. DOI:~\href{https://doi.org/10.1090/S0002-9947-1970-0260879-9}{10.1090/S0002-9947-1970-0260879-9}.

\bibitem{KimLeeMcCarron2008KM}
\textit{Kim\hspace{.7ex}G.,\hspace{.9ex}Lee\hspace{.7ex}Y.,\hspace{.9ex}McCarron\hspace{.7ex}J.}\hspace{1ex}Residual $p$\nobreakdash-fi\-nite\-ness of~certain generalized free products of~nilpotent groups, Kyungpook Math.~J. 48 (3) (2008) 495\nobreakdash--502. DOI:~\href{https://doi.org/10.5666/KMJ.2008.48.3.495}{10.5666/KMJ.2008.48.3.495}.

\bibitem{KimMcCarron1993JA}
\textit{Kim\hspace{.7ex}G.,\hspace{.9ex}McCarron\hspace{.7ex}J.}\hspace{1ex}On~amalgamated free products of~residually $p$\nobreakdash-fi\-nite groups, J.~Algebra 162 (1) (1993) 1\nobreakdash--11. DOI:~\href{https://doi.org/10.1006/jabr.1993.1237}{10.1006/jabr.1993.1237}.

\bibitem{KimTang1998JA}
\textit{Kim\hspace{.7ex}G.,\hspace{.9ex}Tang\hspace{.7ex}C.\hspace{.5ex}Y.}\hspace{1ex}On~generalized free products of~residually finite $p$\nobreakdash-groups, J.~Algebra 201 (1) (1998) 317\nobreakdash--327. DOI:~\href{https://doi.org/10.1006/jabr.1997.7256}{10.1006/jabr.1997.7256}.

\bibitem{Labute2015Arxiv}
\textit{Labute\hspace{.7ex}J.}\hspace{1ex}Residually tor\-sion-free nilpotent one relator groups, arXiv:1503.05167 [math.GR].\newline DOI:~\href{https://doi.org/10.48550/arXiv.1503.05167}{10.48550/arXiv.1503.05167}.

\bibitem{Loginova1999}
\textit{Loginova\hspace{.7ex}E.\hspace{.5ex}D.}\hspace{1ex}Residual finiteness of~the~free product of~two groups with~commuting subgroups, Sib. Math.~J. 40 (2) (1999) 341\nobreakdash--350. DOI:~\href{https://doi.org/10.1007/s11202-999-0013-8}{10.1007/s11202-999-0013-8}.

\bibitem{LyndonSchupp1980}
\textit{Lyndon\hspace{.7ex}R.\hspace{.5ex}C.,\hspace{.9ex}Schupp\hspace{.7ex}P.\hspace{.5ex}E.}\hspace{1ex}Combinatorial group theory (Springer--Verlag, New York, 1977).\newline DOI:~\href{https://doi.org/10.1007/978-3-642-61896-3}{10.1007/978-3-642-61896-3}.

\bibitem{Magnus1935MA}
\textit{Magnus\hspace{.7ex}W.}\hspace{1ex}Beziehungen zwischen Gruppen und~Idealen in~einem speziellen Ring, Math. Ann. 111 (1) (1935) 259\nobreakdash--280. DOI:~\href{http://doi.org/10.1007/BF01472217}{10.1007/BF01472217}.

\bibitem{Magnus1937JRAM}
\textit{Magnus\hspace{.7ex}W.}\hspace{1ex}\"Uber Beziehungen zwischen h\"oheren Kommutatoren, J.~reine angew. Math. 1937 (177) (1937) 105\nobreakdash--115. DOI:~\href{https://doi.org/10.1515/crll.1937.177.105}{10.1515/crll.1937.177.105}.

\bibitem{Malcev1958PIvSU}
\textit{Mal'cev\hspace{.7ex}A.\hspace{.5ex}I.}\hspace{1ex}On~homomorphisms onto~finite groups, Ivanov. Gos. Ped. Inst. Ucen. Zap. 18 (1958) 49\nobreakdash--60 (in~Russian). See also: \textit{Mal'cev\hspace{.7ex}A.\hspace{.5ex}I.}\hspace{1ex}On~homomorphisms onto~finite groups, Transl. Am. Math. Soc. 2 (119) (1983) 67\nobreakdash--79.

\bibitem{Marin2012CM}
\textit{Marin\hspace{.7ex}I.}\hspace{1ex}Residual nilpotence for~generalizations of~pure braid groups. In:~\textit{Bjorner\hspace{.7ex}A., Cohen\hspace{.7ex}F., De\hspace{.7ex}Concini\hspace{.7ex}C., Procesi\hspace{.7ex}C., Salvetti\hspace{.7ex}M.} (eds) Configuration spaces: geometry, combinatorics and~topology. CRM Series (Edizioni della Normale, Pisa, 2012), 389\nobreakdash--401. DOI:~\href{https://doi.org/10.1007/978-88-7642-431-1_18}{10.1007/978-88-7642-431-1\_18}.

\bibitem{Markushevich1991GD}
\textit{Markushevich\hspace{.7ex}D.\hspace{.5ex}G.}\hspace{1ex}The~$D_{n}$ generalized pure braid group, Geom. Dedicata 40 (1) (1991) 73\nobreakdash--96. DOI:~\href{https://doi.org/10.1007/BF00181653}{10.1007/BF00181653}.

\bibitem{Moldavanskii2018CA}
\textit{Moldavanskii\hspace{.7ex}D.\hspace{.5ex}I.}\hspace{1ex}On~the~residual properties of~Baumslag--Solitar groups, Comm. Algebra 46 (9) (2018) 3766\nobreakdash--3778. DOI:~\href{https://doi.org/10.1080/00927872.2018.1424867}{10.1080/00927872.2018.1424867}.

\bibitem{RozovSokolov2016BTSUMM}
\textit{Rozov\hspace{.7ex}A.\hspace{.5ex}V.,\hspace{.9ex}Sokolov\hspace{.7ex}E.\hspace{.5ex}V.}\hspace{1ex}On~the~residual nilpotence of~free products of~nilpotent groups with~central amalgamated subgroups, Tomsk State
Univ. J. Math. Mech. 44 (6) (2016), 34\nobreakdash--44 (in~Russian). DOI:~\href{https://doi.org/10.17223/19988621/44/3}{10.17223/19988621/44/3}.

\bibitem{Sela2001PM}
\textit{Sela\hspace{.7ex}Z.}\hspace{1ex}Diophantine geometry over~groups I:~Ma\-ka\-nin--Raz\-bo\-rov diagrams, Publ. Math. IH\'ES 93 (2001) 31\nobreakdash--105. URL:~\href{http://www.numdam.org/item/PMIHES_2001__93__31_0/}{http://www.numdam.org/item/PMIHES\_2001\_\_93\_\_31\_0/}.

\bibitem{Shirvani1985AM}
\textit{Shirvani\hspace{.7ex}M.}\hspace{1ex}On~residually finite HNN-ex\-ten\-sions, Arch. Math. 44 (2) (1985) 110\nobreakdash--115.\newline DOI:~\href{https://doi.org/10.1007/BF01194073}{10.1007/BF01194073}.

\bibitem{Sokolov2005MN}
\textit{Sokolov\hspace{.7ex}E.\hspace{.5ex}V.}\hspace{1ex}On~the~approximability by~finite $p$\nobreakdash-groups of~free products of~groups with~normal amalgamation, Math. Notes 78 (1) (2005) 114\nobreakdash--119. DOI:~\href{https://doi.org/10.1007/s11006-005-0104-1}{10.1007/s11006-005-0104-1}.

\bibitem{Sokolov2015CA}
\textit{Sokolov\hspace{.7ex}E.\hspace{.5ex}V.}\hspace{1ex}A~characterization of~root classes of~groups, Comm. Algebra 43 (2) (2015) 856\nobreakdash--860. DOI:~\href{http://doi.org/10.1080/00927872.2013.851207}{10.1080/00927872.2013.851207}.

\bibitem{Sokolov2021SMJ1}
\textit{Sokolov\hspace{.7ex}E.\hspace{.5ex}V.}\hspace{1ex}The~root-class residuality of~the~fundamental groups of~graphs of~groups, Sib. Math.~J. 62 (4) (2021) 719\nobreakdash--729. DOI:~\href{https://doi.org/10.1134/S0037446621040145}{10.1134/S0037446621040145}.

\bibitem{Sokolov2023JGT}
\textit{Sokolov\hspace{.7ex}E.\hspace{.5ex}V.}\hspace{1ex}On~the~separability of~subgroups of~nilpotent groups by~root classes of~groups, J.~Group Theory 26 (4) (2023) 751\nobreakdash--777. DOI:~\href{https://doi.org/10.1515/jgth-2022-0021}{10.1515/jgth-2022-0021}.

\bibitem{SokolovTumanova2016SMJ}
\textit{Sokolov\hspace{.7ex}E.\hspace{.5ex}V.,\hspace{.9ex}Tumanova\hspace{.7ex}E.\hspace{.5ex}A.}\hspace{1ex}Sufficient conditions for~the~root-class residuality of~certain generalized free products, Sib. Math.~J. 57 (1) (2016) 135\nobreakdash--144. DOI:~\href{http://doi.org/10.1134/S0037446616010134}{10.1134/S0037446616010134}.

\bibitem{SokolovTumanova2020IVM}
\textit{Sokolov\hspace{.7ex}E.\hspace{.5ex}V.,\hspace{.9ex}Tumanova\hspace{.7ex}E.\hspace{.5ex}A.}\hspace{1ex}On~the~root-class residuality of~certain free products of~groups with~normal amalgamated subgroups, Russ. Math. 64 (3) (2020) 43\nobreakdash--56.\newline DOI:~\href{http://doi.org/10.3103/S1066369X20030044}{10.3103/S1066369X20030044}.

\bibitem{SokolovTumanova2023SMJ}
\textit{Sokolov\hspace{.7ex}E.\hspace{.5ex}V.,\hspace{.9ex}Tumanova\hspace{.7ex}E.\hspace{.5ex}A.}\hspace{1ex}The~root-class residuality of~some generalized free products and HNN-ex\-ten\-sions, Sib. Math.~J. 64 (2) (2023) 393\nobreakdash--406. DOI:~\href{https://doi.org/10.1134/S003744662302012X}{10.1134/S003744662302012X}.

\bibitem{Toinet2013GGD}
\textit{Toinet\hspace{.7ex}E.}\hspace{1ex}Conjugacy $p$\nobreakdash-sep\-a\-ra\-bil\-ity of~right-angl\-ed Artin groups and~applications, Groups Geom. Dyn. 7 (3) (2013) 751\nobreakdash--790. DOI:~\href{https://doi.org/10.4171/ggd/205}{10.4171/ggd/205}.

\bibitem{Tumanova2014MN}
\textit{Tumanova\hspace{.7ex}E.\hspace{.5ex}A.}\hspace{1ex}On~the~residual $\pi$\nobreakdash-fi\-nite\-ness of~generalized free products of~groups, Math. Notes 95 (4) (2014) 544\nobreakdash--551. DOI:~\href{https://doi.org/10.1134/S0001434614030262}{10.1134/S0001434614030262}.

\bibitem{Tumanova2015IVM}
\textit{Tumanova\hspace{.7ex}E.\hspace{.5ex}A.}\hspace{1ex}On~the~root-class residuality of~generalized free products with~a~normal amalgamation, Russ Math. 59 (10) (2015) 23\nobreakdash--37. DOI:~\href{https://doi.org/10.3103/S1066369X15100035}{10.3103/S1066369X15100035}.

\bibitem{Varsos1996HJM}
\textit{Varsos\hspace{.7ex}D.}\hspace{1ex}The~residual nilpotence of~the~fundamental group of~certain graphs of~groups, Houston J. Math. 22 (2) (1996) 233\nobreakdash--248. URL:~\href{https://www.math.uh.edu/~hjm/vol22-2.html}{https://www.math.uh.edu/${}^{\sim}$hjm/vol22-2.html}.

\bibitem{WongTangGan2006RMJM}
\textit{Wong\hspace{.7ex}P.\hspace{.5ex}C.,\hspace{.9ex}Tang\hspace{.7ex}C.\hspace{.5ex}K.,\hspace{.9ex}Gan\hspace{.7ex}H.\hspace{.5ex}W.}\hspace{1ex}Generalized free products of~residually $p$\nobreakdash-fi\-nite groups, Rocky Mt. J. Math. 36 (5) (2006) 1729\nobreakdash--1742. DOI:~\href{https://doi.org/10.1216/rmjm/1181069394}{10.1216/rmjm/1181069394}.

\bibitem{Yakushev2000PIvSU}
\textit{Yakushev\hspace{.7ex}A.\hspace{.5ex}V.}\hspace{1ex}The~residual $p$\nobreakdash-fi\-nite\-ness of~splitting extensions of~groups, Proc. Ivanovo State Univ. Math. 3 (2000) 119\nobreakdash--124 (in~Russian).




\end{thebibliography}
\end{document}